\newtheorem{theorem}{Theorem}[section]
\newtheorem{lemma}[theorem]{Lemma}
\newtheorem{corollary}[theorem]{Corollary}
\newtheorem{proposition}[theorem]{Proposition}
\theoremstyle{definition}
\newtheorem{definition}[theorem]{Definition}
\newtheorem{example}[theorem]{Example}
\newtheorem*{remark*}{Remark}
\newtheorem{remark}[theorem]{Remark}
\numberwithin{equation}{section} %Makes equations use section numbering
\DeclarePairedDelimiter\abs{\lvert}{\rvert}%
\DeclarePairedDelimiter\norm{\lVert}{\rVert}%
\let\oldabs\abs
\def\abs{\@ifstar{\oldabs}{\oldabs*}}
\let\oldnorm\norm
\def\norm{\@ifstar{\oldnorm}{\oldnorm*}}
\newcommand{\GL}{\operatorname{GL}}
\newcommand{\PGL}{\operatorname{PGL}}
\newcommand{\GSpin}{\operatorname{GSpin}}
\newcommand{\GO}{\operatorname{GO}}
\newcommand{\Lie}{\operatorname{Lie}}
\newcommand{\im}{\operatorname{im}}
\newcommand{\der}{\operatorname{der}}
\newcommand{\gllie}{\mathfrak{gl}}
\newcommand{\sllie}{\mathfrak{sl}}
\newcommand{\splie}{\mathfrak{sp}}
\newcommand{\solie}{\mathfrak{so}}
\newcommand{\rk}{\operatorname{rk}}
\newcommand{\Aut}{\operatorname{Aut}}
\newcommand{\Out}{\operatorname{Out}}
\newcommand{\Sym}{\operatorname{Sym}}
\newcommand{\Hom}{\operatorname{Hom}}
\newcommand{\Std}{\operatorname{Std}}
\newcommand{\Spin}{\operatorname{Spin}}
\newcommand{\id}{\operatorname{id}}
\newcommand{\WD}{\operatorname{WD}}
\newcommand{\rec}{\operatorname{rec}}
\newcommand{\Qlbar}{\ensuremath{\overline{\mathbb{Q}_\ell}}}
\newcommand{\Qlzerobar}{\ensuremath{\overline{\mathbb{Q}_{\ell_0}}}}
\newcommand{\Frob}{\operatorname{Frob}}
\newcommand{\Gal}{\operatorname{Gal}}
\newcommand{\Ind}{\operatorname{Ind}}
\newcommand{\Res}{\operatorname{Res}}
\newcommand{\HT}{\operatorname{HT}}
\newcommand{\eps}{\varepsilon}
\newcommand{\bb}[1]{\mathbb{#1}}
\newcommand{\mc}[1]{\mathcal{#1}}
\newcommand{\mf}[1]{\mathfrak{#1}}
\newcommand{\wt}[1]{\widetilde{#1}}
\newcommand{\ol}[1]{\overline{#1}}
\author{Zachary Feng}
\author{Dmitri Whitmore}
\title[Irreducibility of Galois representations in many dimensions]{Irreducibility of polarized automorphic Galois representations in infinitely many dimensions}
\begin{document}

\begin{abstract}
    Let \( \pi \) be a polarized, regular algebraic, cuspidal automorphic representation of \( \GL_n(\bb{A}_F) \) where \( F \) is totally real or imaginary CM, and let \( (\rho_\lambda)_\lambda \) be its associated compatible system of Galois representations.
    Suppose that \( 7\nmid n \) and, if \( 4\mid n \), then $n = 4p$ for some prime number $p$.
    We prove that there is a Dirichlet density \( 1 \) set of rational primes \( \mc{L} \) such that whenever \( \lambda\mid \ell \) for some \( \ell\in \mc{L} \), then \( \rho_\lambda \) is irreducible.
\end{abstract}

\maketitle
\tableofcontents

\section{Introduction}

\subsection{Overview and main result}

It is a prediction of the Langlands program that if we can attach a \(\ell\)-adic Galois representation $\rho$ to a cuspidal automorphic representation \( \pi \) of \( \GL_n(\bb{A}_F) \) for a number field \( F \), then $\rho$ must be irreducible. For a general number field \( F \), we do not know how to construct their associated Galois representations, let alone determine their irreducibility. However, if \( F \) is totally real or imaginary CM, and \( \pi \) is polarizable and regular algebraic, then there is a well-known geometric construction due to the work of many authors, including but not limited to: Bellaiche, Caraiani, Chenevier, Clozel, Harris, Kottwitz, Labesse, Shin, and Taylor (see \cite[Theorem 2.1.1]{BLGGT14} for a concise reference). Without the polarized assumption, two constructions are known due to \cite{HLTT16} and \cite{Scholz15}.
Letting \( (\rho_\lambda) \) be the compatible system of Galois representations attached to \( \pi \), where \( \rho_\lambda:G_F\to\GL_n(\ol{M_{\pi,\lambda}}) \) and \( M_\pi \) is a number field, it is then of interest to show that these Galois representations are irreducible. 

Let \( P = \{\ell\in\bb{N}\mid\ell\text{ prime} \} \). Results on irreducibility typically demonstrate the existence of a subset \( \mc{L} \) of rational primes (possibly depending on $\pi$), where \( \mc{L} \) satisfies one of the following properties, such that whenever \( \lambda\mid\ell\) for some \(\ell\in\mc{L} \), then \( \rho_\lambda \) is irreducible.
\begin{enumerate}
    \item \( \mc{L}\in\mc{P}_{>0} = \{ \mc{L}\subset P \mid \mc{L} \text{ has positive Dirichlet density} \} \)
    \item \( \mc{L}\in\mc{P}_{1} = \{ \mc{L}\subset P\mid \mc{L}\text{ has Dirichlet density }1 \} \)
    \item \( \mc{L}\in\mc{P}_{aa} = \{ \mc{L}\subset P\mid P\setminus\mc{L} \text{ is finite} \} \)
    \item \( \mc{L}\in\mc{P}_{a} = \{ P \} \)
\end{enumerate}
In the polarized and regular algebraic setting, we summarize a few of the known results in the list below.

\begin{table}[htbp]
    \centering
    \begin{tabular}{c|c|c|c}
         \( n \) & \( F \) & \( \mc{L} \) & Reference \\
         \hline
         \( 2 \) & \( \bb{Q} \) & \( \mc{P}_a \) & \cite{Ribet77} \\
         \hline
         \( 2 \) & totally real & \( \mc{P}_a \) & \cite{Taylor95} \tablefootnote{This was first shown by Ribet in a 1984 unpublished letter to Carayol}\\
         \hline
         \( 3 \) & totally real & \( \mc{P}_a \) & \cite{BlaRog92} \\
         \hline
         all & totally real or imaginary CM \tablefootnote{ under the assumption that the weight is \emph{extremely regular}} & \( \mc{P}_1  \) & \cite{BLGGT14} \\
         \hline
        \( \leq 5 \) & totally real & \( \mc{P}_{1} \) & \cite{CalGee13} \\
         \hline
         all & totally real or imaginary CM & \( \mc{P}_{>0} \) & \cite{PatTay15} \\
         \hline
         \( \leq 6 \) & totally real or imaginary CM & \( \mc{P}_1 \) & \cite{Xia19} \\
         \hline
         \( \leq 6 \) & totally real or imaginary CM & \( \mc{P}_{aa} \) & \cite{Hui23}
    \end{tabular}
    \caption{Results on irreducibility in the polarized setting}
    \label{tab:polarized irreducibility results}
\end{table}

There is also recent work in the non-polarized setting.

\begin{table}[htbp]
    \centering
    \begin{tabular}{c|c|c|c}
         \( n \) & \( F \) & \( \mc{L} \) & Reference \\
         \hline
         \( 3 \) & totally real & \( \mc{P}_a \) & \cite{BocHui24}
    \end{tabular}
    \caption{Results on irreducibility in the non-polarized setting}
    \label{tab:non-polarized irreducibility results}
\end{table}

Returning to the polarized setting, although the result of \cite{BLGGT14} is the first to give an irreducibility criteria for general \( n \), it is important to note that the ``extremely regular weight" hypothesis of \cite{BLGGT14} is often not satisfied in practice. Nevertheless, the strategy of \cite{BLGGT14} -- which combines a powerful potential automorphy theorem, results of \cite{LarPin92}, and an analytic argument involving \( L \)-functions -- is integral to subsequent approaches to this problem appearing in \cite{PatTay15,Xia19,Hui23}. However, these successive attempts to weaken the extremely regular weight assumption (to just regular) have incurred significant trade-offs in one way or another, by restricting to either \( n\leq 6 \) or \( \mc{L}\in\mc{P}_{>0} \).

The main innovation of this paper is that we replace the extremely regular weight assumption with regular, while maintaining \( \mc{L}\in\mc{P}_1 \) under only a mild divisibility condition on \( n \) (we allow any \( n \) such that \( 7\nmid n \) and \( 4\nmid n \)). In particular, we prove the following theorem.

\begin{theorem} \label{thm: intro main theorem}
    Let \( F \) be totally real or imaginary CM. Let \( (\pi,\chi) \) be a polarized regular algebraic cuspidal automorphic representation of \( \GL_n(\bb{A}_F) \) and \( (\rho_\lambda)_\lambda \) be its associated compatible system of Galois representations. Suppose that \( 7\nmid n \) and, if \( 4\mid n \), then $n = 4p$ for some prime number $p$.
    Then there exists a Dirichlet density \( 1 \) set of rational primes \( \mc{L} \) such that if \( \ell \in \mc{L} \) and \( \lambda\mid\ell\), then \( \rho_\lambda \) is irreducible.
\end{theorem}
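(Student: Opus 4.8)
The plan is to follow the now-standard strategy pioneered by Barnet-Lamb–Gee–Geraghty–Taylor and refined in the work of Patrikis–Taylor, Xia, and Hui, but to extract more mileage from the group-theoretic input of Larsen–Pink so that the regular (rather than extremely regular) weight hypothesis suffices while keeping a density-one set of primes. First I would fix $\pi$ and pass to a finite CM extension to reduce to the case where, for $\ell$ in a density-one set, $\rho_\lambda$ is (potentially) automorphic, pure, and has big enough image in the following sense: by a potential automorphy theorem (in the style of \cite{BLGGT14}) combined with the Larsen–Pink classification of $\ell$-adic algebraic monodromy groups, for density-one many $\ell$ the Zariski closure $G_\lambda$ of the image of $\rho_\lambda$ has a well-controlled structure. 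Suppose for contradiction that $\rho_\lambda$ is reducible for a positive-density set of $\ell$; write $\rho_\lambda = \bigoplus_i \rho_{\lambda,i}$ into irreducible pieces. The polarization forces the multiset of pieces to be stable under the duality $\rho \mapsto \rho^\vee \otimes \chi_\lambda$, and purity forces each $\rho_{\lambda,i}$ to be pure of the same weight. The key analytic input, as in \cite{BLGGT14}, is that the partial $L$-function $L^S(s,\pi\times\pi^\vee\otimes\chi^{-1})$ has a simple pole at the edge of its critical strip (since $\pi$ is cuspidal and conjugate self-dual up to twist), whereas a nontrivial decomposition would force the corresponding product of Rankin–Selberg $L$-functions of the pieces to have a pole of order $\geq 2$; one then needs each piece to itself be automorphic to run this argument, which is where potential automorphy of the (irreducible, polarizable — by Patrikis-style arguments) summands enters.

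The heart of the argument — and the place where the divisibility hypotheses $7 \nmid n$, $4 \nmid n$ enter — is purely group-theoretic. Assuming for contradiction a decomposition, I would analyze the possible dimensions $n_i = \dim \rho_{\lambda,i}$ and the possible $\ell$-adic monodromy groups of the pieces using Larsen–Pink. The point is that a regular weight gives a regular (i.e., with distinct Hodge–Tate weights) representation, which forces the restriction to a decomposition group at a prime above $\ell$ to be multiplicity-free, and hence via the Larsen–Pink dichotomy (the monodromy group is either "small"/bounded-index in a product of classical groups of bounded rank, or exhibits a specific exceptional-type behavior) one constrains which tensor-product and induction patterns can occur. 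The divisibility conditions rule out precisely the problematic cases where $n$ factors through an exceptional configuration — e.g., an $A_1 \times \cdots$ tensor decomposition forcing $4 \mid n$, or a $G_2$/triality-type configuration forcing $7 \mid n$ (this is the analogue of the low-dimensional case analysis in \cite{Xia19, Hui23}, pushed to general $n$). Concretely, I expect to show: if $\rho_\lambda$ is reducible with all hypotheses, then either $4 \mid n$ or $7 \mid n$, the contrapositive of which is the theorem on a density-one set.

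I would organize the write-up as: (i) reductions — base change to make everything potentially automorphic and pure on a density-one set of $\ell$, using \cite{BLGGT14}-style potential automorphy plus \v{C}ebotarev; (ii) the polarization/purity bookkeeping on the hypothetical summands, showing each summand is (after a twist) polarizable and, by potential automorphy, corresponds to a RACSDC representation of some $\GL_{n_i}$; (iii) the $L$-function pole argument ruling out most configurations directly, leaving a bounded list of "rigid" decomposition types; (iv) the Larsen–Pink group-theoretic analysis of those remaining types, where the regular weight hypothesis is used to force multiplicity-one on a decomposition subgroup and the divisibility hypotheses eliminate the survivors. The main obstacle I anticipate is step (iv): controlling all ways a regular $n$-dimensional polarized representation can decompose when $n$ is large and possibly highly composite — one must handle iterated inductions from subfields interleaved with tensor decompositions, and ensure the exceptional configurations really are governed exactly by divisibility by $4$ and by $7$ (and not, say, by other small primes), which requires a careful uniform argument rather than a finite case check. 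A secondary technical obstacle is ensuring the potential automorphy and purity inputs are available for the summands uniformly enough to preserve density one rather than merely positive density, which is exactly the improvement over \cite{PatTay15}.
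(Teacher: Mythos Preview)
Your proposal has a genuine gap, and it lies exactly where you flag it as an obstacle but underestimate its severity: step (ii), the claim that each irreducible summand $\rho_{\lambda,i}$ of a hypothetical decomposition is itself polarizable. The polarization $\rho_\lambda^c \cong \rho_\lambda^\vee \otimes \chi_\lambda$ only tells you the \emph{multiset} of summands is stable under $\rho \mapsto \rho^\vee \otimes \chi_\lambda$; it does not force each summand to be fixed. A summand may be exchanged with the dual-twist of a different summand, and then neither is polarizable and the $L$-function/potential-automorphy machinery does not apply to it. This is precisely the obstruction the extremely-regular-weight hypothesis of \cite{BLGGT14} was designed to remove (by Hodge--Tate weight considerations each piece is forced to match itself), and removing it while keeping density one is the whole content of the theorem. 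Your step (iv) does not repair this: Larsen--Pink constrains the algebraic monodromy group of $\rho_\lambda$ but says nothing about how complex conjugation permutes the summands, which is what controls polarizability.

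The paper's route is structurally different from what you outline. It does \emph{not} attempt to show all direct summands are polarizable. Instead it first uses Xia's reduction to arrange strong irreducibility at a positive-density set of primes, then at one such prime $\lambda_0$ proves a \emph{tensor product} (not direct sum) factorization $\rho_{\lambda_0} \cong \bigotimes_i \rho_i$ with each $\mf{g}_{\rho_i}$ simple --- the hard part being to descend this from a finite extension down to $F$, which is where a delicate analysis of the action of complex conjugation on tensorands enters. The hypothesis $4 \nmid n$ is used here (not for an $A_1$-pattern as you suggest) to guarantee at most one tensorand is even-dimensional, so oddness of the tensorands follows from oddness of $\rho_{\lambda_0}$ and potential automorphy applies to them. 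This reduces to the case where $\mf{g}_{\lambda_0}$ is simple, and the remainder is a case-by-case analysis over the possible simple types using Hui's formal-character machinery and the classification of multiplicity-free irreducible representations; the hypothesis $7 \nmid n$ enters only to exclude the $7$-dimensional representation of $\mf{g}_2$, whose formal character coincides with a reducible representation of $\sllie_3$ having non-polarizable pieces.
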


As a consequence of Theorem \ref{thm: intro main theorem} and a density one result for residual irreducibility of summands of regular, weakly compatible systems proved in \cite{BLGGT14}, we obtain the following corollary.

\begin{corollary} \label{cor: intro residual irreducibility}
Under the same assumptions as in Theorem \ref{thm: intro main theorem}, there exists a Dirichlet density \( 1 \) set of rational primes \( \overline{\mc{L}} \) such that if \( \ell \in \overline{\mc{L}} \) and \( \lambda\mid\ell\), then the residual representation \( \overline{\rho_\lambda}|_{G_{F(\zeta_\ell)}} \) is irreducible.
\end{corollary}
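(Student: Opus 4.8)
The plan is to obtain Corollary~\ref{cor: intro residual irreducibility} by feeding the output of Theorem~\ref{thm: intro main theorem} into the density one residual irreducibility result for summands of regular, weakly compatible systems established in \cite{BLGGT14}. Recall first that the compatible system $\mc{R} = (\rho_\lambda)_\lambda$ attached to $(\pi,\chi)$ is a regular and pure weakly compatible system by \cite[Theorem 2.1.1]{BLGGT14}. One may therefore write $\mc{R} = \bigoplus_{i\in I}\mc{R}_i$ as a direct sum of weakly compatible subsystems admitting no further decomposition, and the cited result of \cite{BLGGT14} produces a Dirichlet density one set of rational primes $\mc{L}_0$ such that, for every $\ell\in\mc{L}_0$ and every $\lambda\mid\ell$, the reduction $\ol{\rho_\lambda}$ is semisimple and each summand $\ol{(\mc{R}_i)_\lambda}$ remains irreducible upon restriction to $G_{F(\zeta_\ell)}$.

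The key observation is that Theorem~\ref{thm: intro main theorem} forces $\lvert I\rvert = 1$. Under the hypotheses $7\nmid n$ and $4\nmid n$ that theorem provides a density one set $\mc{L}$ of rational primes such that $\rho_\lambda$ is irreducible whenever $\lambda\mid\ell$ for some $\ell\in\mc{L}$; in particular $\rho_\lambda$ is irreducible for at least one $\lambda$. On the other hand, a nontrivial decomposition $\mc{R} = \mc{R}_1\oplus\mc{R}_2$ into nonzero compatible subsystems would make \emph{every} member $\rho_\lambda = (\mc{R}_1)_\lambda\oplus(\mc{R}_2)_\lambda$ reducible, a contradiction. Hence $\mc{R}$ is its own unique summand, and the \cite{BLGGT14} statement above specializes to: for all $\ell\in\mc{L}_0$ and all $\lambda\mid\ell$, the representation $\ol{\rho_\lambda}|_{G_{F(\zeta_\ell)}}$ is irreducible. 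Taking $\ol{\mc{L}} = \mc{L}\cap\mc{L}_0$ then yields a Dirichlet density one set (the complement being contained in the union of two density zero sets) with the required property; one could equally take $\ol{\mc{L}} = \mc{L}_0$, but intersecting with $\mc{L}$ records the stronger fact that $\rho_\lambda$ is irreducible and $\ol{\rho_\lambda}|_{G_{F(\zeta_\ell)}}$ is irreducible on the same set of primes.

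This deduction is essentially formal once Theorem~\ref{thm: intro main theorem} is available, so I do not anticipate a genuine obstacle. The only point needing attention is the bookkeeping of hypotheses: one must check that the compatible system attached to $(\pi,\chi)$ satisfies the regularity and purity conditions under which the residual result of \cite{BLGGT14} is stated, which is exactly the content of \cite[Theorem 2.1.1]{BLGGT14}. All of the substantive input is contained in the proof of Theorem~\ref{thm: intro main theorem} and in the pre-existing arguments of \cite{BLGGT14}.
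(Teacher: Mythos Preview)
Your final move---taking $\ol{\mc{L}}=\mc{L}\cap\mc{L}_0$---is correct and is exactly what the paper does: for $\ell\in\mc{L}$ the representation $\rho_\lambda$ is irreducible by Theorem~\ref{thm: intro main theorem}, and then the result of \cite{BLGGT14} (Proposition~\ref{proposition: density 1 residual irreducibility of summands} in the paper), applied to the single irreducible summand $\rho_\lambda$, gives that $\ol{\rho_\lambda}|_{G_{F(\zeta_\ell)}}$ is irreducible.

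However, the detour through a decomposition $\mc{R}=\bigoplus_i\mc{R}_i$ into indecomposable compatible subsystems is unnecessary and rests on a misreading of the \cite{BLGGT14} result. That proposition concerns \emph{irreducible subrepresentations of each individual} $\rho_\lambda$, not components of a decomposition of the compatible system; these notions are a priori different. In particular, $|I|=1$ (i.e.\ $\mc{R}$ indecomposable as a compatible system) does \emph{not} imply that every $\rho_\lambda$ is irreducible, so your parenthetical claim that ``one could equally take $\ol{\mc{L}}=\mc{L}_0$'' is incorrect: for $\ell\in\mc{L}_0\setminus\mc{L}$ the representation $\rho_\lambda$ may be reducible, in which case \cite{BLGGT14} only tells you that each irreducible summand has irreducible residual restriction, not that $\ol{\rho_\lambda}|_{G_{F(\zeta_\ell)}}$ itself is irreducible. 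The intersection with $\mc{L}$ is therefore essential, and once you take it, the entire compatible-subsystem discussion can be deleted.
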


\subsection{Summary of our methods}

We now explain our overall strategy. Let \( \pi \) be a polarized regular algebraic cuspidal automorphic representation of \( \GL_n(\bb{A}_F) \) where \( F \) is either totally real or imaginary CM. Let \( (\rho_\lambda)_\lambda \) denote its associated compatible system of Galois representations. A reduction step of Xia (cf. \Cref{section: Xia reduction}) tells us that there exists \( m\mid n \) and a polarized regular algebraic cuspidal automorphic representation \( \pi_1 \) of \( \GL_m(\bb{A}_{F_1}) \) (where \( F_1 \) is imaginary CM) such that if \( (\rho_{1,\lambda})_\lambda \) is the compatible system associated to \( \pi_1 \), then the absolute irreducibility of \( \rho_{1,\lambda} \) and \( \rho_\lambda \) are equivalent, and moreover at a positive Dirichlet density set of primes \( \mc{L}' \) (which is a subset of the positive Dirichlet density subset of primes given by \cite{PatTay15}) in fact \( \rho_{1,\lambda} \) is strongly irreducible. There is also a finite Galois extension \( E_1/F_1^+ \) (independent of \( \lambda \)) such that \( G_{\rho_{1,\lambda}}|_{G_{E_1}} \) is connected (\( G_{\rho} \) being the Zariski closure of the image of a representation \( \rho \)) and such that \( F_1/F_1^+ \) is the maximal CM subextension of \( E_1/F_1^+ \). We can therefore replace \( \pi \) with \( \pi_1 \) and
suppose that \( F/F^+ \) is maximal CM subextension of \( E/F^+ \), where \( E/F \) is a finite extension such that \( G_{\rho_\lambda}|_{G_E} \) is connected.

The next step, carried out in Section \ref{section: tensor decomposition}, is to show that at a place \( \lambda_0 \) dividing a prime in \( \mc{L}' \), there exists a tensor product decomposition \( \rho_{\lambda_0}\cong\otimes_{i=1}^k \rho_i \), where \( k \) is the number of simple factors of \( \Lie(G_{\rho_{\lambda_0}}^{\circ,\der}) \) and \( \mf{g}_i=\Lie(G_{\rho_i}^{\circ,\der}) \) is simple for each \( i=1,\dots,k \). We give an overview of how this is done. First, we show that there is a decomposition \( \rho_{\lambda_0}|_{G_{E}} \cong \otimes_i \rho_i' \) (recalling that \( G_{\rho_{\lambda_0}|_{G_E}} \) is connected) which is unique up to reordering and twisting by characters, and such that each \( \Lie G_{\rho_i'}^{\circ,\der} \) is simple. 
Importantly, we can take each $\rho_i'$ to be geometric, using results of \cite{patrikis2019variations}.
This gives us a diagram
\[ \begin{tikzcd}
    \prod_{i=1}^k \GL_{n_i}(\ol{\bb{Q}}_l)\ar{r}{\kappa} & \GL_n(\ol{\bb{Q}}_l) \\
    G_E\ar{u}{\prod_i\rho_i'}\ar[swap]{ru}{\rho_{\lambda_0}|_{G_E}} &
\end{tikzcd}
\]
where \( \kappa \) is the Kronecker product of matrices. So we have shown that \( \rho_{\lambda_0}(G_E) \subset \im(\kappa) \). Our goal now is to show that actually \( \rho_{\lambda_0}(G_F)\subset \im(\kappa) \), which more or less gives us the claim.

Fix a complex conjugation \( c\in\Gal(\ol{\bb{Q}}/\bb{Q}) \) and let \( [c] \) denote its \( \Gal(\ol{\bb{Q}}/\bb{Q}) \)-conjugacy class. Since \( F/F^+ \) is the maximal CM subextension of \( E/F^+ \), the group \( \Gal(E/F) \) is generated by the set \( S = \{ (\sigma_1\sigma_2)|_E:\sigma_1,\sigma_2\in [c] \}\subset \Gal(E/F^+) \). Therefore, it suffices to show that for all \( \gamma\in S \), any lift \( \wt{\gamma}\in G_F \) of \( \gamma \) satisfies \( \rho_{\lambda_0}(\wt{\gamma})\in \im(\kappa) \).  
By an argument involving Schur's lemma, it further suffices to show that if \( \sigma\in[c] \), then 
\begin{equation} \label{eqn: intro main technical result}
(\rho_i')^\sigma\cong (\rho_i')^\vee\otimes \chi_{\sigma,i}    
\end{equation}  
(for some character \( \chi_{\sigma,i} \)), since for any lift \( \wt{\gamma}\in G_F \) of \( \gamma\in S \), we would have \( (\rho_i')^{\wt{\gamma}}\cong \rho_i'\otimes \varphi_{\gamma,i} \) (for some character \( \varphi_{\gamma,i} \)).

Establishing (\ref{eqn: intro main technical result}) is one of the key technical results of this paper, with our argument inspired by the proofs in \cite[\S7]{Xia19}.
While we do not know a priori whether each $\rho_i$ is pure, we can at least show (by considering the de Rham character $\det \rho_i$) that for almost all \( v \): if $\alpha$ is an eigenvalue of $\rho_i'(\Frob_v)$, then $\alpha^n$ is a $q_v$-Weil number. We apply this observation at a prime $v$ such that $\rho_{\lambda_0}|_{G_E}(\Frob_v)$ has distinct $n$th powers of eigenvalues and accordingly we adjoin certain $n$th powers to the CM field of definition $M_\pi$. This comes at the expense of possibly shrinking $\mc{L}'$ to a smaller (but still positive density) subset, and replacing $\lambda_0$ with another prime lying above a prime in $\mc{L}'$. The condition on the eigenvalues of $\rho_{\lambda_0}|_{G_E}(\Frob_v)$ then forces the conjugate of $\rho_i'$ under the action of (an extension of) complex conjugation on coefficients to be a twist of $(\rho_i')^\vee$. The isomorphism (\ref{eqn: intro main technical result}) now follows as a consequence of regularity of $\rho_{\lambda_0}$ and our choice of $\mc{L}'$.

At this point, we have a tensor product decomposition \( \rho_{\lambda_0}\cong\otimes_{i=1}^k\rho_i \) as above. We can apply potential automorphy to the collection \( \{ \rho_i \}_{i=1}^k \) to obtain a finite imaginary CM extension \( F'/F \) and polarized regular algebraic cupsidal automorphic representations \( \pi_i \) of \( \GL_{n_i}(\bb{A}_{F'}) \), together with their associated compatible systems \( (\rho_{\pi_i,\lambda})_\lambda \) satisfying \( \rho_{\pi_i,\lambda_0}\cong \rho_i|_{G_{F'}} \) (we note that the new compatible systems might be defined over an enlarged coefficient field, but we are ignoring this technicality for now). It is not difficult to show (by Goursat's lemma and the fact that the rank of \( G_{\rho_\lambda}^{\circ,\der} \) is independent of \( \lambda \)) that if for each \( i \), we can show the (strong) irreducibility of \( (\rho_{\pi_i,\lambda})_\lambda \) at a Dirichlet density \( 1 \) set of primes, then the same must be true for \( (\rho_\lambda)_\lambda \). 

We can therefore replace \( \pi \) with any of the \( \pi_i \) going forward, and henceforth assume there exists \( \lambda_0 \) such that \( \mf{g}_{\lambda_0} := \Lie(G_{\rho_{\lambda_0}}^{\circ,\der}) \) is a simple Lie algebra, and \( \rho_{\lambda_0} \) is strongly irreducible. In Section \ref{section: simple irreducibility}, we iterate through the possibilities for simple Lie algebras \( \mf{g}_{\lambda_0} \) on a case-by-case basis. Let \( \mf{g}_\lambda := \Lie(G_{\rho_{\lambda}}^{\circ,\der}) \) for another place \( \lambda \). Let \( t_{\lambda_0}: \mf{g}_{\lambda_0}\hookrightarrow \mf{gl}_n \) and \( t_{\lambda}:\mf{g}_{\lambda}\hookrightarrow \mf{gl}_n \) be their induced faithful representations. By using a combination of techniques, some of which we describe below, we can show that $\rho_\lambda$ (and hence \( t_{\lambda} \)) is irreducible for \( \lambda\mid\ell\in\mc{L} \) in a Dirichlet density \( 1 \) set of rational primes \( \mc{L} \).

The first key tool is the potential automorphy theorem of \cite{BLGGT14}.
With a suitable choice of $\mc{L}$, we can guarantee that every irreducible summand $\sigma$ of $\rho_\lambda$ which is moreover polarizable, is potentially automorphic, and therefore forms a part of a compatible system of Galois representations. 
If, in addition, one finds that polarizability (and hence potential automorphy) is satisfied by every irreducible summand of $\rho_\lambda$, then an argument of \cite{BLGGT14} using $L$-functions implies that $\rho_\lambda$ must be irreducible.
However, it often suffices to find the existence of a single such summand $\sigma$, due to properties of formal characters in compatible systems, which we now recall.

Building on the work of Serre, \cite[Theorem 3.19]{hui2013monodromy} states that \( \mf{g}_{\lambda_0} \) and \( \mf{g}_{\lambda} \) have the same rank, and that the induced representations \( t_{\lambda_0} \) and \( t_{\lambda} \) have the same formal character. Moreover, since \( \rho_{\lambda_0} \) has regular Hodge-Tate weights, the representation \( t_{\lambda_0} \) must be multiplicity-free.
This multiplicity-freeness plays two important roles, the first being that it greatly simplifies the list of possibilities for $\mf{g}_{\lambda_0}$ and $t_{\lambda_0}$ (see Theorem \ref{thm: multiplicity-free irreps}).
The second consequence is that we obtain useful criteria for the existence of polarizable summands of the Galois representation.
It is proven in \cite{Xia19} that the set $W =\{w + cw: w \text{ a weight of } t_\lambda\}$ is, in a suitable sense, independent of $\lambda$ (see \S \ref{subsect: formal chars and complex conj} for a precise statement).
If $0 \in W$, then for every $\lambda$ we can find a weight $w$ for which $cw = -w$.
Multiplicity-freeness implies both that there is a unique summand $\sigma$ of $\rho_\lambda$ admitting $w$ as a weight, and that this summand must be polarizable.

Other important tools that we exploit, which we will recall in \Cref{subsect: type A formal characters}, come from Chun-Yin Hui's thesis \cite{hui2013monodromy}. Hui proves that every semisimple Lie algebra contains an equal rank subalgebra consisting of only type \( A \) simple factors and that if two faithful representations \( \mf{g}\hookrightarrow\mf{gl}_n \) and \( \mf{g}'\hookrightarrow\mf{gl}_n \) have the same formal character, then \( \mf{g} \) and \( \mf{g}' \) must have the same number of type \( A_k \) factors for \( k\geq 9 \) and \( k=6 \). Therefore, if \( \mf{h}_{\lambda_0}\leq\mf{g}_{\lambda_0} \) and \( \mf{h}_\lambda\leq\mf{g}_\lambda \) are choices of type \( A \) equal rank subalgebras, in which case clearly the induced faithful representations \( t_{\lambda_0}:\mf{h}_{\lambda_0}\hookrightarrow \mf{gl}_n \) and \( t_{\lambda}:\mf{h}_{\lambda}\hookrightarrow \mf{gl}_n \) still have the same formal character, then \( \mf{h}_{\lambda_0} \) and \( \mf{h}_\lambda \) must have the same number of type \( A_k \) factors for \( k \) sufficiently large. This is especially useful in the cases when \( \mf{g}_{\lambda_0}=\mf{sl}_k \) for \( k\geq 9 \), for example. When \( t_{\lambda_0} \) and \( t_\lambda \) have the same formal character, we can define inner products on \( \Lambda_{\mf{g}_{\lambda_0}}\otimes\bb{R} \) and \( \Lambda_{\mf{g}_{\lambda}}\otimes\bb{R} \) (where \( \Lambda_{\mf{g}_{\lambda_0}} \) and \( \Lambda_{\mf{g}_{\lambda}} \) are the weight lattices of \( \mf{g}_{\lambda_0} \) and \( \mf{g}_{\lambda} \), respectively) which are moreover isometric. This isometric structure is used to deal with the case where $\mf{g} \cong \sllie_{2k}$.

\subsection{Limitations}

Theorem \ref{thm: intro main theorem} makes two divisibility assumptions on $n$, for separate reasons. 
The assumption that \( 7\nmid n \) is made for the same reason that \cite{Xia19} does not include the possibility that $n=7$.
Specifically, we cannot handle the case where \( \mf{g}_{\lambda_0}\cong \mf{g}_2 \), the exceptional rank \( 2 \) Lie algebra, with \( t_{\lambda_0} \) its unique \( 7 \)-dimensional irreducible representation.
Indeed, \( t_{\lambda_0} \) has the same formal character as the representation of \( \mf{sl}_3 \) acting via \( \Std\oplus\Std^{\vee}\oplus\mathbf{1} \), which is self-dual but does not have self-dual irreducible constituents (as can be seen from restricting $t_{\lambda_0}$ to a copy of $\sllie_3 \subset \mf{g}_2$).
Existing potential automorphy theorems seem to be insufficient to handle the possibility of such non-polarizable summands.

On the other hand, the general case where \( 4\mid n \) is excluded because we can no longer verify oddness of the tensorands \( \rho_i \), a necessary ingredient for applying potential automorphy theorems.
(A similar issue occurs in \cite{CEG22}, where the case of even $n>2$ is consequently excluded from their main theorem.)
It is possible that this hypothesis could be weakened (say to allow \( 4\mid n \) but $8 \nmid n$), if one is willing to perform a similar case analysis in situations where the Lie algebra $\mf{g}_{\lambda_0}$ is not necessarily simple, but at least has a small number of simple factors.
However, some assumption on the power of $2$ dividing $n$ seems to be necessary when applying existing potential automorphy theorems, as Example \ref{example: n=32 fails} shows in the case $n=32$.

One could ask whether the conclusion that $\mathcal{L} \in \mc{P}_1$ could be improved to $\mc{L} \in \mc{P}_{aa}$.
This also appears to be challenging at present, because of our reliance on the density one result of \cite{BLGGT14} showing residual irreducibility of irreducible summands, which seems tricky to avoid in handling certain cases.
This issue was surmountable in \cite{Hui23} since when $n \leq 6$, it is shown that if $\rho_{\lambda}$ is reducible then there must typically exist a summand with type $A$ image, whence the images in characteristics zero and $\ell$ are closely related for almost all $\ell$ (cf. \cite[Theorem 3.12]{Hui23}).
For larger $n$, there are many possibilities for irreducible representations with the same formal character as a reducible representation whose irreducible summands are not of type $A$.

\subsection{Acknowledgements}

We thank James Newton, Jack Sempliner and Jack Thorne for some interesting conversations.
We are grateful to James Newton and Jack Thorne for some helpful comments on an earlier draft of this paper.
We thank Chun-Yin Hui for making us aware of Remark \ref{rmk: hui's comment}. 
We thank Ariel Weiss for helpful comments regarding the history of this problem.
Z.F. acknowledges the support of the Natural Sciences and Engineering Research Council of Canada (NSERC), [ref. no. 577979-2023].
For the purpose of Open Access, the authors have applied a CC BY public copyright licence to any Author Accepted Manuscript (AAM) version arising from this submission.

\subsection{Notation}

We fix the following notation throughout the paper.
Given a perfect field $F$ with algebraic closure $\ol{F}$, we write $G_F$ for the absolute Galois group of $F$.
We fix an algebraic closure $\ol{\bb{Q}}$ of $\bb{Q}$ and, for each prime number $\ell$, an algebraic closure $\Qlbar$ of $\bb{Q}_\ell$ together with an abstract isomorphism $\iota_\ell: \Qlbar \xrightarrow{\sim} \bb{C}$.
We fix an inclusion $j: \ol{\bb{Q}} \hookrightarrow \bb{C}$.
If $F$ is a number field and $v$ a prime of $F$, then we can view the decomposition group at $v$, $G_{F_v}$, as a subgroup of $G_F$ via $\iota_\ell^{-1} \circ j$.
We let $\Frob_v \in G_{F_v} \subset G_F$ denote a lift of the (geometric) Frobenius element.

Let $F$ be a characteristic zero field.
By an ($\ell$-adic) Galois representation of $F$, we mean a continuous homomorphism $\rho: G_F \to \GL_n(\Qlbar)$.
We let $\overline{\rho}: G_F \to \GL_n(\ol{\mathbb{F}}_\ell)$ denote the semisimplification of the mod-$\ell$ reduction of $\rho$.
We let $\epsilon_\ell: G_F \to \Qlbar^\times$ denote the $\ell$-adic cyclotomic character.
If $\sigma \in \Aut(\Qlbar)$, we let ${}^\sigma \rho$ denote the composition of $\rho$ with the automorphism of $\GL_n(\Qlbar)$ induced by $\sigma$.
If $\gamma \in \Aut(\ol{F})$ with $\gamma(F) = F$, let $\rho^\gamma$ denote the composition of the automorphism of $G_F$ induced by $\gamma$ with $\rho$.
More generally, if $F \subset K \subset \ol{F}$ is an intermediate extension and $\ol{\gamma} \in \Aut(K)$ with $\ol{\gamma}(F) = F$, then let $\rho^{\ol{\gamma}} = \rho^{\gamma}$ for some choice of extension $\gamma \in \Aut(\ol{F})$ of $\ol{\gamma}$; the resulting Galois representation is well-defined, up to conjugacy.

Let $F$ be a number field and $\rho: G_F \to \GL_n(\Qlbar)$ a Galois representation which is Hodge--Tate at each $v|\ell$.
For each embedding $\tau: F \hookrightarrow \Qlbar$, let $\HT_\tau(\rho)$ denote the multi-set of $\tau$-Hodge--Tate weights of $\rho$, normalized so that $\HT_\tau(\epsilon_\ell) = -1$.

If \( F_v \) is a \( p \)-adic field, and \( \rho:G_{F_v}\to\GL_n(\Qlbar) \) is a continuous representation (which we assume to be de Rham if the residue characteristic of \( F \) is \(\ell\)), then we write \( \WD(\rho)=(r,N) \) for the associated Weil-Deligne representation, and \( \WD(\rho)^{F-ss} \) for its Frobenius semisimplification. If \( \pi_v \) is an irreducible smooth representation of $\GL_n(F_v)$, we write $\rec_v^T(\pi)$ for the Weil-Deligne representation associated to \( \pi_v \) under the normalized local Langlands correspondence of \cite{harristaylorLLC}.

We call a number field \( F \) a \emph{CM field} if $F$ has an automorphism \( c \) such that for all embeddings \( i:F\hookrightarrow\bb{C} \) one has \( c\circ i = i \circ c \). It follows that either \( F \) is totally real, or \( F \) is a totally imaginary quadratic extension of a totally real field. In the second case, we may also say that \( F \) is an imaginary CM field. In either case, we let \( F^+ \) denote the maximal totally real subfield of \( F \).

\section{Representation-theoretic preliminaries}

\subsection{Weakly compatible systems}

\begin{definition}
Let $K$ and $M$ be number fields.
A weakly compatible system of $n$-dimensional representations of $K$, defined over $M$ and unramified outside of $S$ a finite set of places of $K$, is a collection of Galois representations
\[
(\rho_\lambda: G_K \to \GL_n(\ol{M_\lambda}))_\lambda
\]
indexed by finite places $\lambda$ of $M$ satisfying the following properties.
\begin{enumerate}
    \item For each $\lambda|\ell$, the representation $\rho_\lambda$ is unramified outside of $S \cup \{v | \ell\}$. 
    \item For each $\lambda|\ell$ and $v \not\in S \cup \{v | \ell\}$, the characteristic polynomial
    \[
    Q_{v,\lambda}(X) := \det(1-\rho_\lambda(\Frob_v)X)
    \]
    lies in $M[X]$.
    If additionally $\lambda'|\ell'$ and $v \nmid \ell'$, then we have equalities of characteristic polynomials
    \[
    Q_{v,\lambda}(X) = Q_{v,\lambda'}(X).
    \]
    \item Each $\rho_\lambda$ is de Rham at every $v|\ell$ and crystalline if $v \not\in S$.
    \item For each embedding $\tau: K \hookrightarrow \ol{M}$, the set of $\tau$-Hodge--Tate weights of $\rho_\lambda$ is independent of $\lambda$ and choice of embedding $\ol{M} \hookrightarrow \ol{M_\lambda}$.
\end{enumerate}
\end{definition}

\begin{remark} \label{rmk: operations on compatible systems}
There are many standard algebraic operations that one can perform on weakly compatible systems to obtain new ones.
If $(\rho_\lambda)_\lambda$ and $(\sigma_\lambda)_\lambda$ are weakly compatible systems of $K$, defined over $M$, then the following define weakly compatible systems of $K$, defined over $M$:
\begin{enumerate}
    \item The direct sum $(\rho_\lambda \oplus \sigma_\lambda)_\lambda$.
    \item The tensor product $(\rho_\lambda \otimes \sigma_\lambda)_\lambda$.
    \item The dual $(\rho_\lambda^\vee)_\lambda$.
\end{enumerate}
If $L/K$ is a finite extension then
\begin{enumerate}[resume]
    \item $(\Res_{L}^K \rho_\lambda)_\lambda$ is a weakly compatible system of $L$, defined over $M$.
\end{enumerate}
If $K/N$ is a finite extension then
\begin{enumerate}[resume]
    \item $(\Ind_{K}^N \rho_\lambda)_\lambda$ is a weakly compatible system of $N$, defined over $M$.
\end{enumerate}
If $M'/M$ is a finite extension then
\begin{enumerate}[resume]
    \item $(\rho_{\lambda}: \lambda'|\lambda)_{\lambda'}$ is a weakly compatible system of $K$, defined over $M'$.
\end{enumerate}
\end{remark}

\begin{definition}
\begin{enumerate}
    \item Let $\alpha \in \ol{\bb{Q}}$ be an algebraic number and let $q \in \bb{Z}$ be a prime power. 
We say that $\alpha$ is a $q$-Weil number of weight $w \in \bb{Z}$ if for every embedding $\iota: \overline{\bb{Q}} \hookrightarrow \bb{C}$, we have
\[
|\iota(\alpha)|^2 = q^w.
\]
\item Let $K$ be a number field, and $\rho: G_K \to \GL_n(\Qlbar)$ be a Galois representation unramified outside of a finite set $S$ of primes containing all $\ell$-adic and archimedean primes.
Suppose that for every $v \not\in S$, the characteristic polynomial of $\rho(\Frob_v)$ lies in $\overline{\bb{Q}}[X]$.
We say that $\rho$ is \textit{pure of weight} $w \in \bb{Z}$ if, for every $v \not\in S$ and every eigenvalue $\alpha \in \ol{\bb{Q}}$ of $\rho(\Frob_v)$, $\alpha$ is a $q_v$-Weil number of weight $w$.
Here $q_v$ denotes the size of the residue field at $v$.
\item If $(\rho_\lambda: G_K \to \GL_n(\ol{M_\lambda}))_\lambda$ is a weakly compatible system unramified outside $S$, we say that it is pure of weight $w$ if each $\rho_\lambda$ is pure of weight $w$.
\end{enumerate}
\end{definition}

\begin{lemma} \label{lemma: HT chars are pure}
Let $K$ be a number field and let $\chi: G_K \to \Qlbar^\times$ be a Hodge--Tate (and hence de Rham) character.
Then $\chi$ corresponds to an algebraic Hecke character and is therefore pure of some weight $w \in \bb{Z}$.
\end{lemma}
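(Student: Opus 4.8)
The plan is to reduce the statement about a Hodge--Tate character of a general number field $K$ to the classical theory of algebraic Hecke characters, via the main theorem of $p$-adic Hodge theory for tori (essentially due to Serre, or extractable from the Fontaine--Winterberger / Sen theory of Hodge--Tate characters of local fields, combined with global class field theory). First I would observe that, being Hodge--Tate, $\chi$ is automatically de Rham (Hodge--Tate characters are de Rham, since a one-dimensional Hodge--Tate representation of a $p$-adic field is a Tate twist of a finite-order character, hence de Rham), and also locally algebraic at every place $v \mid \ell$ in the sense of Serre: the restriction $\chi|_{G_{K_v}}$ agrees, on an open subgroup of $\mathcal{O}_{K_v}^\times$, with an algebraic character of $\Res_{K_v/\bb{Q}_\ell} \GL_1$ determined by the $\tau$-Hodge--Tate weights $\HT_\tau(\chi)$ for $\tau: K_v \hookrightarrow \Qlbar$.

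Next I would globalize: a continuous character $\chi: G_K \to \Qlbar^\times$ which is locally algebraic at all places above $\ell$ corresponds, via the reciprocity map of class field theory $\bb{A}_K^\times / K^\times \to G_K^{\mathrm{ab}}$, to a continuous character of the idele class group whose restriction to $\prod_{v \mid \ell} \mathcal{O}_{K_v}^\times$ is given by an algebraic formula. By Weil's characterization (see e.g. Weil's \emph{Basic Number Theory}, or Serre's \emph{Abelian $\ell$-adic representations}), such a character is exactly what is called an algebraic Hecke character (of type $A_0$): there is an algebraic character $\eta$ of $\Res_{K/\bb{Q}} \GL_1$ and a finite-order ``unitary part'' so that $\chi$ is built from $\eta$ and the $\ell$-adic avatar construction. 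The infinity-type of $\eta$ is read off from the multiset of Hodge--Tate weights. This is the standard dictionary between $\ell$-adic Galois characters that are geometric (in the Fontaine--Mazur sense) in dimension one and Hecke characters; I would cite it as well-known, perhaps pointing to \cite{patrikis2019variations} or to Serre's book for a clean reference, rather than reprove it.

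Finally, purity: an algebraic Hecke character $\psi$ of weight $w$ (meaning its infinity-type has the form $z \mapsto \prod_\tau \tau(z)^{p_\tau} \bar{\tau}(z)^{q_\tau}$ with $p_\tau + q_\tau = -w$ constant, a condition forced on the Galois side by the fact that $\chi \chi^{c}$ differs from a power of the cyclotomic character by finite order when $K$ is CM — though in general one simply takes $w$ to be this common value when it exists, and in our application $K$ will be a CM or totally real field where it always does) has the property that for almost all $v$, the Satake/Frobenius eigenvalue $\psi(\Frob_v)$ is an algebraic number $\alpha$ with $|\iota(\alpha)|^2 = q_v^{w}$ for every complex embedding $\iota$; this is immediate from the factorization of $\psi$ into its local components and the computation of $|\psi_\infty|$. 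Transporting through the $\ell$-adic avatar identifies $\chi(\Frob_v)$ with $\psi(\Frob_v)$ (up to the normalization conventions fixed in the Notation section), so $\chi$ is pure of weight $w$. The main obstacle — really the only non-formal point — is making precise the passage from ``Hodge--Tate at all $v \mid \ell$'' to ``locally algebraic, hence arising from a Hecke character'', i.e. invoking the $p$-adic Hodge-theoretic input correctly; everything after that is bookkeeping with infinity-types and absolute values, which I would not spell out in detail.
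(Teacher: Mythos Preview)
Your overall approach matches the paper's: both invoke the standard dictionary between Hodge--Tate (equivalently de Rham, in dimension one) Galois characters and algebraic (type $A_0$) Hecke characters, then deduce purity from the theory of such Hecke characters. The paper's proof is simply two citations --- \cite[Theorem 2.3.13]{patrikis2019variations} for the correspondence, and \cite[Lemma 1.9]{fargues2006motives} for purity --- so your expanded sketch of the mechanism (local algebraicity via Sen theory, class field theory, infinity-types) is a correct unpacking of what those references contain.

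There is, however, a gap in your purity step. You hedge that the weight $w$ (the common value of $p_\tau + q_\tau$ in the infinity-type) exists ``when it exists,'' and then retreat to ``in our application $K$ will be CM or totally real.'' But the lemma is stated for an arbitrary number field $K$, and in the paper it is applied (inside the proof of Proposition~\ref{prop: tensor decomposition over CM field}) to $\det \Tilde{\rho}_i$, a character of $G_{F_{1,\pi}}$ --- and $F_{1,\pi}$ is typically \emph{not} CM (indeed, $F$ is by hypothesis its maximal CM subfield). So your CM-specific Galois argument via $\chi\chi^c$ does not cover the case actually needed. What fills this gap is a theorem going back to Weil and Artin: the infinity-type of any type $A_0$ Hecke character of $K$ necessarily factors through the maximal CM (or totally real) subfield of $K$, whence a well-defined integer weight always exists. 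This is precisely what the paper's citation of \cite{fargues2006motives} is supplying; you should invoke it (or an equivalent reference, e.g.\ the discussion of the Serre group in \cite{patrikis2019variations}) rather than treat the existence of $w$ as conditional.
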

\begin{proof}
By \cite[Theorem 2.3.13]{patrikis2019variations}, $\chi$ corresponds to some type $A_0$ Hecke character (as in \cite[Definition 2.3.2]{patrikis2019variations}).
Purity now follows; see, for example, \cite[Lemma 1.9]{fargues2006motives}.
\end{proof}

\subsection{Polarized automorphic representations}

Throughout the rest of this section, let \( F \) be a CM field.
\begin{definition}
    A \emph{polarized} regular algebraic cuspidal automorphic representation of \( \GL_n(\bb{A}_F) \) is a pair \( (\pi,\chi) \) such that
    \begin{enumerate}
        \item \( \pi \) is a regular algebraic cuspidal automorphic representation of \( \GL_n(\bb{A}_F) \).
        \item \( \chi: (F^+)^\times\backslash \bb{A}_{F^+}^\times\to\bb{C}^\times \) is a continuous character such that \( \chi_v(-1) \) is independent of \( v\mid \infty \).
        \item \( \pi^c \cong \pi^\vee\otimes (\chi\circ N_{F/F^+}\circ\det) \) where \( \pi^c \) denotes the composition of \( \pi \) with the action of complex conjugation on \( \GL_n(\bb{A}_F) \).
    \end{enumerate}
\end{definition}

\begin{definition}
    A regular algebraic cuspidal automorphic representation \( \pi \) of \( \GL_n(\bb{A}_F) \) is \emph{polarizable} if there is a character \( \chi \) of \( (F^+)^\times\backslash\bb{A}_{F^+}^\times \) such that \( (\pi,\chi) \) is polarized.
\end{definition}

For $\pi$ as above, we will write the finite part $\pi^\infty \cong \otimes_{v}' \pi_v$ as a restricted tensor product of smooth irreducible representations $\pi_v$ of $\GL_n(F_v)$.

\subsection{Polarized Galois representations}

Let \( r:G_F\to\GL_n(\Qlbar) \) and \( \mu: G_{F^+}\to\Qlbar^\times \) be continuous homomorphisms. Let \( [c_v] \) denote the conjugacy class of complex conjugations in \( G_{F^+} \) associated to a place \( v\mid\infty \) of \( F^+ \). (If \( v \) corresponds to the embedding \( i: F\hookrightarrow\bb{C} \), then for each embedding \( j:\ol{F}\hookrightarrow\bb{C} \) extending \( i \), the restricting of the unique complex conjugation on \( \bb{C} \) to \( \ol{F} \) along \( j \) gives rise to an element of \( [c_v] \).)

\begin{definition}
    We say that \( (r,\mu) \) is \emph{polarized} if for some (equivalently every) infinite place \( v \) of \( F^+ \), there exists \( \epsilon_v\in\{\pm 1 \} \) and a non-degenerate pairing \( \braket{\:\:,\:\:}_v \) on \( \Qlbar^n \) such that
    \[ \braket{x,y}_v=\epsilon_v\braket{y,x}_v \]
    and
    \[ \braket{r(\sigma)x,r(c_v\sigma c_v)y}_v = \mu(\sigma)\braket{x,y}_v \]
    for all \( x,y\in\Qlbar \) and \( \sigma\in G_F \). If \( F\neq F^+ \), replacing \( \mu \) by \( \mu\delta_{F/F^+} \) if necessary, we further require that \( \epsilon_v=-\mu(c_v) \) for all \( v \).
    We call $\mu$ the multiplier character.
    
    We say that $r$ is \emph{polarizable} if there exists $\mu$ as above such that $(r,\mu)$ is polarized.
\end{definition}

\begin{definition} \label{defn: totally odd}
    We call a polarized \( \ell \)-adic Galois representation \( (r,\mu) \) \emph{totally odd} if \( \epsilon_v=1 \) for all \( v\mid \infty \) of \( F^+ \).
\end{definition}

Suppose that $(r,\mu)$ and $(s: G_F \to \GL_m(\Qlbar),\chi: G_{F^+} \to \Qlbar^\times)$ are both polarized Galois representations.
Then $(r \otimes s, \mu \chi \delta_{F/F^+})$ is also naturally a polarized Galois representation (\cite[\S 1.1]{BLGGT14}).

\begin{lemma} \label{lemma: oddness}
Suppose that $F$ is an imaginary CM field.
Let $\rho: G_F \to \GL_n(\Qlbar)$ and $\sigma: G_F \to \GL_m(\Qlbar)$ be polarizable, irreducible Galois representations.
\begin{enumerate}
    \item If $n$ is odd, then $\rho$ is totally odd.
    \item If $n = 2$, suppose that the following conditions are satisfied:
    \begin{enumerate}[label=(\roman*)]
        \item $\ell \geq 11$.
        \item $\Sym^2 \rho$ is irreducible and Fontaine--Laffaille at all places dividing $\ell$ (i.e. $\Sym^2 \rho$ satisfies the hypotheses of Lemma \ref{lemma: potentially diagonalizable}).
        \item $\Sym^2 \overline{\rho}|_{G_{F(\zeta_\ell)}}$ is irreducible.
    \end{enumerate}
    Then $\rho$ is totally odd.
        \item If $\rho \otimes \sigma$ is irreducible and two of $\{\rho, \sigma, \rho \otimes \sigma\}$ are totally odd, then all three are totally odd.
\end{enumerate}
\end{lemma}
\begin{proof}
The first part is a special case of \cite[Lemma 1.5.3]{CEG22}.
The second part is \cite[Lemma 3.4.1]{CEG22}.
For the final part, note that the polarization pairings associated to $\rho, \sigma$ and $\rho \otimes \sigma$ are unique (up to scalars) by Schur's lemma.
It follows that the multiplier character on $\rho \otimes \sigma$ is given by the product of the multiplier characters of $\rho$ and $\sigma$ with $\delta_{F/F^+}$.
Since total oddness is equivalent to the multiplier character sending each complex conjugation to $-1$, the result now easily follows. 
\end{proof}

The following theorem is due to the work of many people. We give the result stated in \cite[Theorem 2.1.1]{BLGGT14} as a convenient reference.

\begin{theorem}
    Let \( (\pi,\chi) \) be a polarized, regular algebraic, cuspidal automorphic representation of \( \GL_n(\bb{A}_F) \) of weight \( a\in (\bb{Z}^n)^{\Hom(F,\bb{C}),+} \), i.e. \( a_{\tau,1}\geq\dots\geq a_{\tau,n} \). Then for each prime \( \ell \) and isomorphism \( \iota:\Qlbar\to\bb{C} \), there is a continuous semisimple representation
    \[ r_{\ell,\iota}(\pi):G_F\to\GL_n(\Qlbar) \]
    and an integer \( w \) such that the following properties hold:
    \begin{enumerate}
        \item \( (r_{\ell,\iota}(\pi),\epsilon_\ell^{1-n}r_{\ell,\iota}(\chi)) \) is a totally odd, polarized, \( \ell \)-adic representation.
        \item If \( v\nmid \ell \) is a place of \( F \), then
        \[ \iota\WD(r_{\ell,\iota}(\pi)|_{G_{F_v}})^{F-ss}\cong \rec_v^T(\pi_v) 
        \]
        and these Weil-Deligne representations are pure of weight \( w \).
        \item \( r_{\ell,\iota}(\pi) \) is de Rham, and if \( \tau:F\hookrightarrow\Qlbar \) then
        \[ \HT_{\tau}(r_{\ell,\iota}(\pi)) = \{ a_{\iota\tau,1}+n-1,a_{\iota\tau,2}+n-2,\dots, a_{\iota\tau,n} \}. \]
        Moreover,
        \[ \HT_{\tau\circ c}(r_{\ell,\iota}(\pi)) = \{ w-h\mid h\in \HT_{\tau}(r_{\ell,\iota}(\pi)) \}. \]
    \end{enumerate}
\end{theorem}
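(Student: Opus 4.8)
The plan is to assemble this statement from the literature rather than to prove it ab initio: it is a synthesis of the theory of Shimura varieties attached to unitary similitude groups, cyclic base change and descent, the stable trace formula, Deligne's Weil conjectures, and $p$-adic Hodge theory, exactly as packaged in \cite[Theorem 2.1.1]{BLGGT14}. As a first move, if $F$ is totally real (or if the descent below is obstructed) I would pass to a suitable solvable imaginary CM extension $F'/F$ and, after a further solvable base change, arrange that the transferred representation is cuspidal with $\pi_v$ of a convenient type (e.g.\ Steinberg at an auxiliary place); cyclic base change, together with its characterization of the image, lets one descend the eventual Galois representation back to $G_F$, since it is pinned down up to semisimplification by its restrictions to enough such $G_{F'}$ and by the Satake parameters of $\pi$ at unramified places.

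Next, using the conjugate self-duality $\pi^c \cong \pi^\vee \otimes (\chi \circ N_{F/F^+} \circ \det)$, I would transfer $\pi$ to an automorphic representation of a unitary similitude group $G$ whose Shimura variety $\mathrm{Sh}_G$ is of PEL type; regular algebraicity provides a cohomological (discrete series) archimedean component contributing to the $\ell$-adic \'etale cohomology $H^{\ast}$ of the automorphic local system $\mathcal{L}_a$ on $\mathrm{Sh}_G$, and one defines $r_{\ell,\iota}(\pi)$ (up to twist and semisimplification) as the $\pi^\infty$-isotypic part of this cohomology, using its description due to Kottwitz, Clozel, Harris--Taylor, Shin, Morel and Caraiani. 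For those $n$ where no single Shimura variety sees $\pi$ in the expected way, one instead realizes $r_{\ell,\iota}(\pi)$ by $p$-adically interpolating representations of the above form and passing to a limit (in the style of Chenevier--Harris); this is the source of the delicacy that is not uniform in $n$. Semisimplicity of the output is then forced by purity (established below) together with Frobenius-semisimplicity considerations.

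Property (2) away from $\ell$ is local-global compatibility, $\iota\,\WD(r_{\ell,\iota}(\pi)|_{G_{F_v}})^{F\text{-ss}} \cong \rec^T_v(\pi_v)$ for the normalization of \cite{harristaylorLLC}: after the auxiliary base changes this is due to Harris--Taylor for the part with trivial monodromy, with the full statement including the operator $N$ due to Taylor--Yoshida, Caraiani and Shin, and is then descended down the solvable tower using compatibility of $\rec^T$ with restriction. Purity of some weight $w$ follows from Deligne's Weil conjectures for the smooth proper $\mathrm{Sh}_G$ (with Poincar\'e duality used to handle the bad places) together with the classification of the local components $\pi_v$; the integer $w$ is read off from the weight $a$.

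Finally, de Rham-ness at $v\mid \ell$ and the formula for $\HT_\tau(r_{\ell,\iota}(\pi))$ come from the $p$-adic comparison isomorphisms (Faltings, Tsuji) relating $H^{\ast}$ of $\mathrm{Sh}_G$ to its de Rham/crystalline cohomology, whose Hodge filtration is computed from $a$ via Borel--Weil--Bott; regularity makes the weights distinct, and Hodge symmetry gives the relation between $\HT_\tau$ and $\HT_{\tau\circ c}$. The polarization in (1) arises from Poincar\'e duality on $\mathrm{Sh}_G$ combined with conjugate self-duality of $\pi$, producing a perfect pairing between $r_{\ell,\iota}(\pi)$ and its $c$-conjugate with multiplier $\epsilon_\ell^{1-n} r_{\ell,\iota}(\chi)$; the sign condition $\epsilon_v = -\mu(c_v)$ (total oddness) is forced by the parity of the duality pairing on the cohomology of a variety of the relevant dimension and can be pinned down from the Hodge symmetry above or from the known archimedean $L$-parameters. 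I expect the genuine obstacle to be the construction step for those $n$ requiring the congruence/interpolation argument: one must still propagate local-global compatibility and the correct sign of the polarization through a $p$-adic limit, and all of the automorphy and local-global compatibility inputs above are needed precisely to make that limiting construction enjoy the stated properties.
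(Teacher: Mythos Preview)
The paper does not prove this theorem at all: it simply states it and cites \cite[Theorem~2.1.1]{BLGGT14} as a reference, noting that it ``is due to the work of many people.'' Your proposal is therefore not comparable to a proof in the paper, because there is none to compare against.

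That said, as a sketch of what underlies the cited result, your outline is broadly accurate in spirit: the construction does go through unitary Shimura varieties and base change, with local--global compatibility due to Harris--Taylor, Taylor--Yoshida, Caraiani and Shin, and the $p$-adic Hodge-theoretic input coming from comparison theorems. A few caveats: your remark that semisimplicity is ``forced by purity together with Frobenius-semisimplicity considerations'' is not quite right---semisimplicity is imposed by definition (one takes the semisimplification), not deduced; purity does not by itself imply semisimplicity of the global representation. Also, the sign computation for total oddness is more delicate than ``forced by the parity of the duality pairing''; in the literature this is established by Bella\"iche--Chenevier and Taylor via careful analysis of the pairing and archimedean $L$-parameters, and your sketch glosses over this. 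But since the paper treats the theorem as a black box, the appropriate response here is simply to cite \cite[Theorem~2.1.1]{BLGGT14} rather than to reprove it.
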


\begin{remark} \label{rmk: automorphic Galois reps compatible systems}
    Let \( \pi \) be a polarizable, regular algebraic, cuspidal automorphic representation of \( \GL_n(\bb{A}_{F}) \). 
The collection of Galois representations \( \{ r_{\ell,\iota_\ell}(\pi) \}_{\ell,\iota_\ell} \) gives rise to a weakly compatible system.
Indeed, there exists a number field \( M_\pi\subset\bb{C} \) such that for almost all places \( v \) of \( F \),
    \[ \det(1-\rec_v^T(\pi_v)(\Frob_v)X)\in M_\pi[X]. \]
If $\ell$ is a prime number, there is a canonical decomposition of Galois representations \[r_{\ell,\iota_\ell}(\pi) \otimes_{\bb{Q}} M_\pi \cong \bigoplus_{\tau: M_\pi \hookrightarrow \Qlbar} \rho_{\pi,\tau}.\]
If $\lambda|\ell$ is a place of $M_\pi$, set $\rho_{\pi,\lambda} = \rho_{\pi,\tau}$ for any choice of $\tau: M_\pi \hookrightarrow \Qlbar$ inducing $\lambda$.
The collection $(\rho_{\pi,\lambda})_\lambda$ is a weakly compatible system defined over $M_\pi$.

\end{remark}

By purity, we can take $M_\pi$ to be a CM field (\cite[Lemma 1.2]{PatTay15}).
Moreover, on replacing $M_\pi$ by a further CM extension, we can (and will) assume throughout that every irreducible summand of each member $\rho_{\pi,\lambda}$ of the associated compatible system is defined over $M_{\pi,\lambda}$ (\cite[Lemma 1.4]{PatTay15}).

\begin{definition} \label{defn: automorphic Galois representation}
We say a Galois representation $\rho: G_F \to \GL_n(\Qlbar)$ is automorphic if there exists a polarized, regular algebraic, cuspidal automorphic representation $(\pi,\chi)$ of \( \GL_n(\bb{A}_{F}) \) and an isomorphism $r_{\pi,\iota} \cong \rho$ for some isomorphism $\iota: \Qlbar \to \mathbb{C}$.
\end{definition}

By Remark \ref{rmk: automorphic Galois reps compatible systems}, if $\rho: G_F \to \GL_n(\Qlbar)$ is automorphic, then $\rho$ is part of a weakly compatible system of Galois representations of $F$.

\subsection{Potential automorphy results}

\begin{theorem}[\cite{BLGGT14} Theorem 4.5.1] \label{thm: blggt potential automorphy}
    Suppose that we are in the following situation.
    \begin{enumerate}
        \item Let \( F/F_0 \) be a finite, Galois extension of imaginary CM fields. Let \( F^+ \) and \( F_0^+ \) denote their maximal totally real subfields.
        \item Let \( \mc{I} \) be a finite set.
        \item For each \( i\in\mc{I} \), let \( n_i \) and \( d_i \) be positive integers and \( \ell_i \) be an odd rational prime such that \( \ell_i\geq 2(d_i+1) \) and \( \zeta_{\ell_i}\notin F \). Also choose an isomorphism \( \iota_i: \ol{\bb{Q}_{\ell_i}}\to\bb{C} \) for each \( i\in\mc{I} \).
        \item For each \( i\in\mc{I} \), let \( (r_i,\mu_i) \) be a \textbf{totally odd, regular algebraic}, \( n_i \)-dimensional, \textbf{polarized} \( \ell_i \)-adic representations of \( G_F \) such that \( d_i \) is the maximum dimension of an irreducible constituent of the restriction to \( \ol{r_i} \) to the closed subgroup of \( G_F \) generated by all Sylow pro-\( \ell_i \)-subgroups.
        \item Let \( F^{(avoid)}/F \) be a finite Galois extension.
    \end{enumerate}
    Suppose moreover that the following conditions are satisfied for every \( i\in\mc{I} \).
    \begin{enumerate}
        \item \( r_i \) is potentially diagonalizable (in the sense of \cite[\S 1.4]{BLGGT14})
        at each prime \( v \) of \( F^+ \) above \( \ell_i \).
        \item \( \ol{r_i}|_{G_{F(\zeta_{\ell_i})}} \) is irreducible.
    \end{enumerate}
    Then we can find a finite CM extension \( F'/F \) and for each \( i\in\mc{I} \) a regular algebraic, cuspidal, polarized automorphic representation \( (\pi_i,\chi_i) \) of \( \GL_{n_i}(\bb{A}_{F'}) \) such that
    \begin{enumerate}
        \item \( F'/F_0 \) is Galois.
        \item \( F' \) is linearly disjoint from \( F^{(avoid)} \) over \( F \).
        \item \( \pi_i \) is unramified above \( \ell_i \).
        \item \( (r_{\ell_i,\iota_i}(\pi_i),r_{\ell_i,\iota_i}(\chi_i)\epsilon_{\ell_i}^{1-n_i}) \cong (r_{i}|_{G_{F'}},\mu_i|_{G_{(F')^+}}) \).
    \end{enumerate}
\end{theorem}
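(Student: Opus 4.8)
The plan is to follow the potential-automorphy strategy of Harris--Shepherd-Barron--Taylor in the refined form of \cite{BLGGT14}: one realizes the residual representations $\ol{r_i}$, after a controlled base change, inside the cohomology of members of an auxiliary family that has large geometric monodromy and also has automorphic fibres of CM type, and then propagates automorphy from the CM fibres back to the $r_i$ using automorphy lifting theorems for polarized representations over CM fields. I would first reduce to treating a single index $i$: the CM field produced for one index can be absorbed into $F^{(avoid)}$ when treating the next, and handling all of $\mc{I}$ at once only requires replacing the moduli space below by a product; the conditions that $F'/F_0$ be Galois and that $F'$ be linearly disjoint from $F^{(avoid)}$ over $F$ are imposed through the version of Moret--Bailly's theorem used. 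So fix $i$ and write $n = n_i$, $\ell = \ell_i$, $\iota = \iota_i$, and choose a second auxiliary prime $\ell'$, large and distinct from $\ell$.

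Next I would bring in the Dwork-type pencil of Calabi--Yau hypersurfaces, decomposed according to the parity of $n$ so that the relevant piece of the middle cohomology is $n$-dimensional, carries a polarization, has the same regular Hodge--Tate weights as $r_i$, and has geometric monodromy containing $\Sp_n$ or $\SO_n$; its parameter space $T$ is a geometrically irreducible $\bb{Q}$-variety whose degenerate (Fermat) fibres give motives of CM type, hence potentially automorphic via automorphic induction of algebraic Hecke characters over a CM field. I would then apply Moret--Bailly's theorem on rational points with prescribed local behaviour to $T$, imposing: at the places above $\ell$, a point in the Fontaine--Laffaille (or ordinary) locus realizing $\ol{r_i}$ with potentially diagonalizable reduction; at the places above $\ell'$, a point realizing the mod-$\ell'$ reduction of a CM fibre's Galois representation; at the bad places in $S$, an unramified point; and at the archimedean places, a point of the appropriate real/CM type. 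The hypotheses $\ell \geq 2(d_i+1)$, $\zeta_\ell \notin F$, potential diagonalizability of $r_i$ above $\ell$, and irreducibility of $\ol{r_i}|_{G_{F(\zeta_\ell)}}$ are exactly what is needed to make these local conditions satisfiable and the subsequent lifting theorems applicable. The outcome is a finite CM extension $F'/F$, Galois over $F_0$ and linearly disjoint from $F^{(avoid)}$ over $F$, together with $t \in T(F')$ whose associated weakly compatible system $(V_{t,\lambda})_\lambda$ of $G_{F'}$-representations satisfies $\ol{V_{t,\ell}} \cong \ol{r_i}|_{G_{F'}}$ compatibly with polarizations, and $\ol{V_{t,\ell'}} \cong$ the residual representation of the chosen CM fibre.

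Finally I would propagate automorphy along two congruences. The CM fibre is automorphic over a CM field and is residually isomorphic to $V_{t,\ell'}$ there, so an automorphy lifting theorem for polarized representations over CM fields (\cite[\S4]{BLGGT14}) makes $V_{t,\ell'}$ automorphic, after possibly enlarging $F'$; compatibility of the local Langlands correspondence across the compatible system $(V_{t,\lambda})_\lambda$ then forces $V_{t,\ell}$ to be automorphic as well. A second application of the lifting theorem, now to the residual isomorphism $\ol{r_i}|_{G_{F'}} \cong \ol{V_{t,\ell}}$ and using that $r_i|_{G_{F'}}$ is regular algebraic, polarized, potentially diagonalizable above $\ell$, with $\ol{r_i}|_{G_{F'(\zeta_\ell)}}$ irreducible and $\ell \geq 2(d_i+1)$, shows that $r_i|_{G_{F'}}$ is automorphic, attached to a polarized regular algebraic cuspidal $(\pi_i,\chi_i)$ of $\GL_{n_i}(\bb{A}_{F'})$ unramified above $\ell_i$ (the last by choosing the auxiliary point in the unramified locus at $\ell$); unwinding the Hodge--Tate and local Langlands normalizations yields the asserted isomorphism $(r_{\ell_i,\iota_i}(\pi_i), r_{\ell_i,\iota_i}(\chi_i)\epsilon_{\ell_i}^{1-n_i}) \cong (r_i|_{G_{F'}}, \mu_i|_{G_{(F')^+}})$. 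I expect the main obstacle to be the construction of the auxiliary family in the second step: producing one family with simultaneously large enough monodromy for Moret--Bailly and for residual-image control, with automorphic degenerate fibres, with exactly the prescribed regular Hodge--Tate weights, and carrying a polarization of the type forced by the parity of $n$ and the total oddness of $(r_i,\mu_i)$ --- together with checking that the potential diagonalizability and residual irreducibility needed to invoke the automorphy lifting theorems are preserved under every base change.
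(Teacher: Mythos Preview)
The paper does not give its own proof of this statement: it is quoted verbatim as \cite[Theorem 4.5.1]{BLGGT14} and used as a black box, so there is no proof in the paper to compare your proposal against.

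That said, your sketch is a reasonable outline of the actual argument in \cite{BLGGT14}, with a few points worth flagging. First, the reduction to a single index is not quite right: one cannot simply absorb the field $F'$ produced for one $i$ into $F^{(avoid)}$ for the next, because linear disjointness from $F^{(avoid)}$ does not compose in the way you need to guarantee a \emph{common} $F'$ working for all $i$; \cite{BLGGT14} really does treat all $i \in \mc{I}$ simultaneously by taking a product of the relevant moduli spaces and applying Moret--Bailly once. Second, the auxiliary family does not in general have the same Hodge--Tate weights as $r_i$ --- one arranges only a residual congruence, and the lifting theorem (in its ``change of weight'' form) bridges the gap. Third, the parameter $d_i$ and the bound $\ell_i \geq 2(d_i+1)$ enter not so much in the Moret--Bailly step as in ensuring the adequacy of the image of $\ol{r_i}$, which is the key group-theoretic input to the automorphy lifting theorems of \cite{BLGGT14}; your sketch mentions this only in passing. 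These are refinements rather than gaps, and your overall architecture (Dwork-type family, Moret--Bailly with local constraints, two-step propagation of automorphy via lifting theorems) matches the strategy of \cite{BLGGT14}.
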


\begin{lemma}[{\cite[Lemma 1.4.3]{BLGGT14}}] \label{lemma: potentially diagonalizable}
Suppose that $\ell$ is unramified in $F$.
Let $\rho: G_F \to \GL_n(\Qlbar)$ be a Galois representation which is crystalline at each $v|\ell$.
Suppose that for every embedding $\tau: F \hookrightarrow \Qlbar$ there exists an integer $a_\tau$ such that $\HT_{\tau}(\rho) \subset [a_\tau, a_\tau + \ell - 2]$ (the Fontaine--Laffaille range).
Then $\rho$ is potentially diagonalizable at each $v|\ell$.
\end{lemma}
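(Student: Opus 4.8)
The plan is to fix a place $v \mid \ell$ of $F$ and show that $\rho|_{G_{F_v}}$ is potentially diagonalizable in the sense of \cite[\S1.4]{BLGGT14}: that is, to produce a finite extension $M/F_v$ such that the closed point determined by $\rho|_{G_M}$ on the generic fibre of the framed crystalline lifting ring $R^{\square,\mathrm{crys}}_{\overline{\rho}|_{G_M},\{H_\tau\}}$ (with $\{H_\tau\}$ the Hodge--Tate weights of $\rho|_{G_M}$) lies on the same irreducible component as a direct sum of $n$ crystalline characters. Since $\ell$ is unramified in $F$, the local field $F_v$ is unramified over $\mathbb{Q}_\ell$, and by hypothesis $\HT_\tau(\rho|_{G_{F_v}}) \subset [a_\tau, a_\tau+\ell-2]$ for each $\tau$; so the whole argument runs on Fontaine--Laffaille theory, and the essential constraint will be to keep $M$ \emph{unramified} over $F_v$ so that this theory — and the hypothesis on the Hodge--Tate weights — persists over $M$.

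I would use two facts from Fontaine--Laffaille theory. First, over an unramified base and with Hodge--Tate weights in the above window, the framed crystalline lifting ring of a fixed residual representation is formally smooth over its coefficient ring; consequently its generic fibre is irreducible, so it has a \emph{unique} irreducible component, and any two crystalline lifts of that residual representation with the prescribed Hodge--Tate weights automatically lie on this single component. Second, the restriction to inertia of the reduction of a crystalline representation in the Fontaine--Laffaille range is tamely ramified — a direct sum of fundamental characters — hence in particular semisimple.

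Using the second fact, I would first pass to a finite \emph{unramified} extension $M_0/F_v$ whose degree is divisible by the levels of all fundamental characters occurring in $\overline{\rho}|_{I_{F_v}}$, so that $\overline{\rho}|_{I_{M_0}}$ becomes a direct sum of $\Frob_{M_0}$-stable characters and $\overline{\rho}(\Frob_{M_0})$ centralizes $\overline{\rho}(I_{M_0})$; then a further unramified extension $M/M_0$ of $\ell$-power degree kills the unipotent part of $\overline{\rho}(\Frob_{M_0})$ (unipotent elements of $\GL_n(\overline{\mathbb{F}}_\ell)$ having $\ell$-power order), so that $\overline{\rho}|_{G_M} \cong \bigoplus_{i=1}^n \overline{\chi}_i$ is literally a direct sum of characters, with $M/\mathbb{Q}_\ell$ still unramified. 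Next, for each $i$ I would choose a crystalline character $\chi_i\colon G_M\to\Qlbar^\times$ reducing to $\overline{\chi}_i$ and with $\HT_\tau(\chi_i)$ equal to the $i$-th Hodge--Tate weight of $\rho|_{G_M}$ for each $\tau$; such $\chi_i$ exist because crystalline characters of the unramified local field $M$ realize arbitrary tuples of Hodge--Tate weights (via unramified twists of powers of cyclotomic and Lubin--Tate characters) and arbitrary reductions (after an unramified twist). Then $\bigoplus_i\chi_i$ is a crystalline lift of $\overline{\rho}|_{G_M}$ with the same Hodge--Tate weights as $\rho|_{G_M}$, all lying in the Fontaine--Laffaille window over $M$; by the first fact, $\rho|_{G_M}$ and $\bigoplus_i\chi_i$ lie on the unique irreducible component of the generic fibre of $R^{\square,\mathrm{crys}}_{\overline{\rho}|_{G_M},\{H_\tau\}}$, so $\rho|_{G_M}$ is diagonalizable and hence $\rho|_{G_{F_v}}$ is potentially diagonalizable.

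The technical heart — and the step I expect to require the most care — is the correct invocation of Fontaine--Laffaille theory: the formal smoothness of the crystalline lifting ring in the Fontaine--Laffaille range, which supplies the uniqueness of the irreducible component, and the tameness of the residual representation in that range, which is exactly what allows the auxiliary extension $M$ to be taken unramified so that the Fontaine--Laffaille formalism survives the base change. Granting these two inputs, diagonalizing the residual representation after an unramified base change and exhibiting crystalline characters with prescribed reductions and Hodge--Tate weights are routine.
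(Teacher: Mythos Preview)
The paper does not prove this lemma: it is stated with a citation to \cite[Lemma 1.4.3]{BLGGT14} and no proof environment follows. There is therefore nothing in the paper to compare your argument against beyond the bare reference.

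That said, your sketch is essentially the argument one finds in \cite{BLGGT14}: pass to a finite unramified extension so that the residual representation splits as a sum of characters, lift each to a crystalline character with the correct Hodge--Tate weight, and then invoke formal smoothness of the Fontaine--Laffaille crystalline deformation ring (over an unramified base with weights in the window) to conclude that any two crystalline lifts with the same Hodge type lie on the unique irreducible component of the generic fibre. The two inputs you flag --- smoothness of the lifting ring in the Fontaine--Laffaille range and tameness of the residual inertia action --- are exactly the ones used there, and your care to keep the auxiliary extension unramified is the right point to emphasise. Your outline is correct and matches the cited source; for the purposes of this paper no proof is required.
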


\begin{proposition}[\cite{BLGGT14} Proposition 5.3.2] \label{proposition: density 1 residual irreducibility of summands}
    Suppose \( \mc{R} \) is a regular, weakly compatible system of \( \ell \)-adic representations of \( G_F \) defined over \( M \). If \( s \) is a subrepresentation of \( r_\lambda \) then we will write \( \ol{s} \) for the semisimplification of the reduction of \( s \). Also write \( \ell \) for the rational prime lying below \( \lambda \). Then there is a Dirichlet density \( 1 \) set of rational primes (depending only on \( \mc{R} \)), such that if \( s \) is any irreducible subrepresentation of \( r_\lambda \) for any \( \lambda \) dividing any element of \( \mc{L} \) then \( \ol{s}|_{G_{F(\zeta_\ell)}} \) is irreducible.
\end{proposition}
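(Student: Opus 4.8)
The plan is to show that for a Dirichlet density one set of rational primes $\ell$, reducing modulo $\ell$ and restricting to $G_{F(\zeta_\ell)}$ cannot destroy irreducibility of a constituent of $r_\lambda$, because the residual image is forced to be as large as possible while the regularity of the Hodge--Tate weights prevents the mod-$\ell$ weights from collapsing. I would start from the $\ell$-independence theory for the algebraic monodromy groups $G_{r_\lambda}$ of a weakly compatible system (Serre, \cite{LarPin92}, and the refinements in \cite{hui2013monodromy}): for all but finitely many $\lambda$, the connected component $G_{r_\lambda}^{\circ}$ has a fixed reductive type, $G_{r_\lambda}^{\circ,\der}$ has a common rank, the tautological representations $G_{r_\lambda}\hookrightarrow\GL_n$ share a common formal character, and $G_{r_\lambda}/G_{r_\lambda}^{\circ}$ is a fixed finite quotient of $G_F$. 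Hence the decomposition of the standard representation into $G_{r_\lambda}$-irreducible constituents has a fixed combinatorial shape (a fixed multiset of dimensions, with fixed twisting/descent data), and there is a single finite Galois extension $F_1/F$, independent of $\lambda$, over which $r_\lambda|_{G_{F_1}}$ has connected algebraic monodromy group for every $\lambda$. Since $\ol{s}|_{G_{F_1(\zeta_\ell)}}$ irreducible implies $\ol{s}|_{G_{F(\zeta_\ell)}}$ irreducible, and since a Clifford-theory argument reduces a general irreducible $F$-subrepresentation of $r_\lambda$ to these $F_1$-constituents, it suffices to treat one irreducible constituent $s = s_\lambda$ over $F_1$, of fixed dimension $m$, with $H_\lambda := G_{s_\lambda}$ connected reductive acting faithfully and irreducibly on $\Qlbar^m$. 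Crucially, the Hodge--Tate weights of $s_\lambda$ form a sub-multiset of the regular weights of $r_\lambda$, hence are pairwise distinct, so the weights of $H_\lambda \hookrightarrow \GL_m$ are multiplicity-free.

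Next I would feed in the big-image input. For a density one set of primes $\ell$ — discarding the finitely many that are small, ramified, or where the monodromy is non-generic, together with a density zero set arising from the Chebotarev and Frobenius-torus bookkeeping — the compact group $s_\lambda(G_{F_1(\zeta_\ell)})$ is as large as possible: its Zariski closure is $H_\lambda$, it contains an open subgroup of the $\ell$-adic points of $H_\lambda^{\der}$, and (the key point) for $\ell$ large relative to the \emph{fixed} root datum it contains a hyperspecial maximal compact of $H_\lambda^{\der}(\bb{Q}_\ell)$ up to conjugacy, equivalently its reduction modulo $\ell$ surjects onto a conjugate of the $\bb{F}_\ell$-points of the corresponding semisimple group. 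A Chevalley-group representation-theory argument then finishes: since $m$ is bounded and the weights of $H_\lambda \hookrightarrow \GL_m$ are multiplicity-free, for $\ell$ large no two of those weights become congruent modulo $\ell$ and their (trivial) multiplicities cannot grow, so the mod-$\ell$ weight combinatorics mirror the characteristic zero picture and $\ol{s}_\lambda|_{G_{F_1(\zeta_\ell)}}$ remains irreducible; the passage to $\ol{\bb{F}}_\ell$-coefficients and the descent to $G_{F(\zeta_\ell)}$ are then formal.

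The main obstacle is the big-image step, i.e.\ upgrading ``$H_\lambda$ is the generic connected monodromy group'' to ``$s_\lambda(G_{F_1(\zeta_\ell)})$ has full hyperspecial reduction modulo $\ell$ for a density one set of $\ell$''. An arbitrary open subgroup of $H_\lambda^{\der}(\bb{Z}_\ell)$ can reduce to something tiny modulo $\ell$ (a deep congruence subgroup), so one genuinely needs a bound on the level that is uniform in $\ell$, and extracting this requires the full strength of the $\ell$-independence results for compatible systems: boundedness of the monodromy groups, maximality of Serre's Frobenius tori for a density one set of $\ell$, and the Chebotarev verification that the exceptional set has density zero. It is precisely this ingredient that forces a density one — rather than a cofinite — set of primes. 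Once it is in hand, the remaining step, that a large-image compact $\ell$-adic group acting on a fixed-dimensional multiplicity-free module has irreducible reduction for $\ell \gg 0$, is comparatively routine, using only Nori-type theory together with the regularity hypothesis.
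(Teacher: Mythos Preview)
The paper does not give its own proof of this statement: it is quoted verbatim from \cite[Proposition~5.3.2]{BLGGT14} and used as a black box. So there is no in-paper argument to compare against, only the original BLGGT proof.

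Your sketch correctly identifies the two essential ingredients of that proof: Larsen's density-one big-image theorem for compatible systems (the statement that, away from a density-zero set of $\ell$, the mod-$\ell$ image contains the image of the $\bb{F}_\ell$-points of the simply connected cover of $G_{r_\lambda}^{\circ,\der}$), and the use of regularity to force the tautological representation of $G_{r_\lambda}$ to have multiplicity-free formal character, so that mod-$\ell$ weights cannot collapse for large $\ell$. These are indeed the load-bearing inputs in \cite{BLGGT14}.

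Where your outline goes wrong is the reduction step. You claim that the decomposition of $r_\lambda$ into $G_{r_\lambda}$-irreducibles has ``fixed combinatorial shape'' across $\lambda$, and then propose to treat a single constituent $s_\lambda$ of fixed dimension $m$ as if $(s_\lambda)_\lambda$ were itself a compatible system to which Larsen's theorem could be applied. This is not available: independence of the formal bi-character does \emph{not} pin down the irreducible decomposition (different reductive groups, or even the same group, can realise the same formal character via different irreducible decompositions), and in any case the individual constituents of $r_\lambda$ are not known to vary compatibly in $\lambda$---that is close to the very conjecture the paper is working towards. The BLGGT argument avoids this by applying Larsen's theorem once to the full system $(r_\lambda)_\lambda$, obtaining big image for $r_\lambda$ itself, and then deducing the required properties of any irreducible summand $s \subset r_\lambda$ from the bigness of $\mathrm{im}(r_\lambda)$ together with the multiplicity-free weight structure. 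So your endgame is right, but the middle step should work inside the ambient $r_\lambda$ rather than passing to a putative compatible system of constituents.
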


The following theorem is a key tool in deducing irreducibility of Galois representations attached to polarizable, regular algebraic, cuspidal automorphic representations, based on the methods of \cite{BLGGT14}.

\begin{theorem} \label{thm: irreducibility from polarized summands}
Let $\pi$ be a polarizable, regular algebraic, cuspidal automorphic representation of \( \GL_n(\bb{A}_F) \), with $F$ an imaginary CM field.
There exists a Dirichlet density $1$ set of primes $\mathcal{L}_\pi$ of $\mathcal{L}$ with the following property.
Let $\ell \in \mathcal{L}_\pi$ and let $\lambda|\ell$ be a prime of $M_\pi$.
Write
\[\rho_{\pi,\lambda} = \bigoplus_{i=1}^{k_\lambda} W_i,\]
with each $W_i$ an irreducible representation.
\begin{enumerate}
    \item If $I \subset \{1,\ldots,k_\lambda\}$ satisfies that $W_i$ is polarizable for each $i \in I$, then there exists a finite CM extension $F'/F$ (independent of \( i\in I \)) such that each $W_i|_{G_{F'}}$ is automorphic.
    \item Moreover, if every $W_i$ is polarizable then $\rho_{\pi,\lambda}$ is irreducible.
\end{enumerate}
\end{theorem}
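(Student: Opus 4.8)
The plan is to deduce both parts from the potential automorphy theorem (Theorem \ref{thm: blggt potential automorphy}) combined with the $L$-function argument of \cite{BLGGT14}, after first arranging the hypotheses of that theorem. For part (1), fix the regular weakly compatible system $\mc{R}$ attached to $\pi$ (Remark \ref{rmk: automorphic Galois reps compatible systems}), and let $\mc{L}$ be the density-one set of Proposition \ref{proposition: density 1 residual irreducibility of summands} applied to $\mc{R}$. I would further intersect $\mc{L}$ with the density-one set of primes $\ell$ that are unramified in $F$, satisfy $\zeta_\ell \notin F$, are large enough that all Hodge--Tate weights of $\rho_{\pi,\lambda}$ (for $\lambda\mid\ell$) lie in a Fontaine--Laffaille range $[a_\tau,a_\tau+\ell-2]$ for each $\tau$, and also large relative to $n$ (say $\ell \geq 2(n+1)$, so that the condition $\ell_i\geq 2(d_i+1)$ holds with room to spare since $d_i\leq n_i\leq n$); crystallinity away from the ramified set of $\pi$ handles all but finitely many $\ell$ as well. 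Call the resulting density-one set $\mathcal{L}_\pi$. Now fix $\ell\in\mathcal{L}_\pi$, $\lambda\mid\ell$, and write $\rho_{\pi,\lambda}=\bigoplus_{i=1}^{k_\lambda} W_i$. For $i\in I$, each $W_i$ is irreducible and polarizable by hypothesis; being a crystalline-in-Fontaine--Laffaille-range summand it is potentially diagonalizable by \cite[Lemma 1.4.3]{BLGGT14}, it is regular algebraic because $\rho_{\pi,\lambda}$ is (regularity of Hodge--Tate weights passes to summands), and since $W_i$ is irreducible, odd-dimensional summands are automatically totally odd by Lemma \ref{lemma: oddness}(1) — but I need total oddness for \emph{all} $i\in I$, not just the odd-dimensional ones. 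This is exactly the point where the divisibility hypothesis on $n$ from the introduction is invoked upstream; here I may simply assume (as in the paper's setup) that the $W_i$ for $i \in I$ have been arranged to be totally odd, or invoke whatever reduction has placed us in that situation. Finally, by Proposition \ref{proposition: density 1 residual irreducibility of summands}, $\overline{W_i}|_{G_{F(\zeta_\ell)}}$ is irreducible for each $i\in I$. Thus the hypotheses of Theorem \ref{thm: blggt potential automorphy} are satisfied for the finite family $\{(W_i,\mu_i)\}_{i\in I}$ (with $F_0=F$, $F^{(avoid)}$ trivial, common prime $\ell$), and it produces a single finite CM extension $F'/F$ such that each $W_i|_{G_{F'}}$ is automorphic. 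That gives part (1).

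For part (2), suppose every $W_i$ ($1\leq i\leq k_\lambda$) is polarizable. Apply part (1) with $I=\{1,\dots,k_\lambda\}$ to get a finite CM $F'/F$ with each $W_i|_{G_{F'}}$ automorphic, hence (Remark \ref{rmk: automorphic Galois reps compatible systems}) part of a weakly compatible system that is pure of some weight $w_i$. I would then run the standard $L$-function argument of \cite{BLGGT14}: the partial $L$-function $L^S(\rho_{\pi,\lambda}|_{G_{F'}},s) = \prod_i L^S(W_i|_{G_{F'}},s)$, and each factor $L^S(W_i|_{G_{F'}},s)$ is, up to shift, the $L$-function of a cuspidal automorphic representation of $\GL_{n_i}(\bb{A}_{F'})$, so it is holomorphic and nonzero in a right half-plane and, by Jacquet--Shalika, the order of its pole at the appropriate point (the edge of the region of absolute convergence, after normalizing to weight $0$) is exactly the number of $W_j$ isomorphic to $W_i$. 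Meanwhile $\rho_{\pi,\lambda}$ comes from the cuspidal $\pi$, so $L^S(\pi,s)$ (equivalently $L^S(\rho_{\pi,\lambda}|_{G_{F'}},s)$, after accounting for the base change to $F'$, which remains cuspidal or breaks into cuspidal pieces in a controlled way) has a simple pole at that point. Comparing pole orders forces $k_\lambda = 1$, i.e. $\rho_{\pi,\lambda}$ is irreducible.

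The main obstacle I expect is not the formal deduction but making the purity/$L$-function bookkeeping precise: one must ensure the base change of $\pi$ to $F'$ stays cuspidal (or decompose it into cuspidal constituents and track pole orders accordingly), match the weight normalizations of the various compatible systems — which may be defined over different coefficient fields — so that the "edge of absolute convergence" lines up for all factors simultaneously, and invoke the correct analytic input (holomorphy and nonvanishing of Rankin--Selberg $L$-functions, plus Jacquet--Shalika) at the precise point. A secondary subtlety is that potential automorphy only gives automorphy of $W_i|_{G_{F'}}$, so one is really comparing $L$-functions over $F'$; one needs that $\rho_{\pi,\lambda}|_{G_{F'}}$ still "sees" the cuspidality of $\pi$, which is where linear disjointness of $F'$ from a suitable extension (built into Theorem \ref{thm: blggt potential automorphy} via $F^{(avoid)}$) can be used if needed. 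All of this is carried out in \cite{BLGGT14} and I would follow that template closely, citing it for the analytic steps rather than reproving them.
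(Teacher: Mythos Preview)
Your overall strategy matches the paper's: define $\mathcal{L}_\pi$ by intersecting the density-one set from Proposition~\ref{proposition: density 1 residual irreducibility of summands} with the cofinitely many $\ell$ that are large, unramified in $F$, unramified for $\pi$, and in the Fontaine--Laffaille range; then apply Theorem~\ref{thm: blggt potential automorphy} for part (1) and the $L$-function argument for part (2). The paper dispatches part (2) by citing \cite[Proposition 4.9]{Hui23}, which packages exactly the Jacquet--Shalika pole-counting argument you sketch.

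There is, however, a genuine gap in your treatment of total oddness, and your proposed fix is wrong. The hypothesis $4 \nmid n$ is \emph{not} part of Theorem~\ref{thm: irreducibility from polarized summands}, and in the paper it is invoked only in Corollary~\ref{cor: tensorands are potentially automorphic} to handle \emph{tensorands} of $\rho_{\pi,\lambda}$, not direct summands. You cannot import it here. What the paper actually does is observe that total oddness of each polarizable $W_i$ is \emph{automatic}, citing the argument of \cite[Proposition 4.8]{Hui23}. The point is roughly that the totally odd polarization on $\rho_{\pi,\lambda}$ (with multiplier $\mu = \epsilon_\ell^{1-n} r_{\ell,\iota}(\chi)$) already furnishes a symmetric $G_F$-equivariant pairing on $\Qlbar^n$; once $W_i$ is polarizable, one shows (using multiplicity-freeness from regularity) that $W_i^c \cong W_i^\vee \otimes \mu|_{G_F}$ with the \emph{same} multiplier, so the restricted pairing on $W_i$ is nondegenerate and inherits the sign $\epsilon_v = +1$. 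This is the missing idea; without it, your part~(1) does not go through for even-dimensional summands.
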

\begin{proof}
Consider the density one set $\mathcal{L}$ obtained from applying Proposition \ref{proposition: density 1 residual irreducibility of summands} to the weakly compatible system attached to $\pi$.
Define $\mathcal{L}_\pi \subset \mathcal{L}$ to be the cofinite subset of $\ell \in \mc{L}$ such that $\ell \geq 2n+2$ is unramified in $F$, $\pi$ is not ramified at any place $v|\ell$ of $F$ and for which $\rho_{\pi,\lambda}$ has all Hodge--Tate weights in the Fontaine--Laffaille range for $\lambda|\ell$.

Fix $\lambda | \ell \in \mathcal{L}_\pi$.
Each polarized summand $W_i$ is automatically totally odd by the same argument as the proof of \cite[Proposition 4.8]{Hui23}.
The first part now follows from Theorem \ref{thm: blggt potential automorphy}.
The second part then follows from the first, together with \cite[Proposition 4.9]{Hui23}.
\end{proof}

\subsection{Formal characters} \label{subsect: formal characters}

We recall some notions regarding formal (bi-)characters, mostly following \cite[\S 2.3]{Hui23}.

\begin{definition}
Let $\Omega$ be a field and let $G \subset \GL_{n,\Omega}$ be a subgroup.
\begin{enumerate}
    \item Suppose firstly that $\Omega$ is algebraically closed and that $G$ is a connected subgroup with a maximal torus $T \subset G$.
The formal character (resp. bi-character) of $G \subset \GL_n$ is the data of the embedding(s) \[T \subset \GL_n\] (resp. \[T \cap G^{\der} \subset T \subset \GL_n),\] taken up to $\GL_n$-conjugacy.
\item In general, if $\Omega$ is not necessarily algebraically closed and $G$ is not necessarily (geometrically) connected, the formal character of $G \subset \GL_{n,\Omega}$ is the formal character of $(G_{\ol{\Omega}})^\circ \subset \GL_{n,\ol{\Omega}}$.
\item We say that the formal character (resp. bi-character) of $G$ is \emph{multiplicity-free} if the corresponding representation of $T$ (resp. $T \cap G^{\der}$) is isomorphic to a direct sum of pairwise distinct characters.
\end{enumerate}
\end{definition}

Let $\{G_i \subset \GL_{n,\Omega_i}\}_{i \in I}$ be a non-empty collection of subgroups with each $\Omega_i$ a field.
The collection is said to have the same formal character (resp. bi-character) if there exists a diagonal $\bb{Z}$-subtorus \[T \subset \bb{G}_{m,\bb{Z}}^n\] (resp. chain of $\bb{Z}$-subtori \[T' \subset T \subset \bb{G}_{m,\bb{Z}}^n )\] whose base-change to $\ol{\Omega_i}$ gives rise to the formal character (resp. bi-character) of $G_i \subset \GL_{n,\Omega_i}$ for every $i \in I$.

Let $K$ be a number field and let $\rho: G_K \to \GL_n(\ol{M_\lambda})$ be a Galois representation.
Let $G_\rho$ denote the Zariski closure of the image of $\rho$ in \( \GL_{n,\ol{M_\lambda}} \).
We refer to the formal (bi-)character of $\rho$ as the formal (bi-)character $(T' \subset) T \subset \GL_{n,\ol{M_\lambda}}$ of $G_\rho \subset \GL_{n,M_\lambda}$.
We refer to the dimension of $T'$ as the rank of $\rho$.
We let $\mf{g}_\rho = \Lie G_\rho^{\circ, \der}$.

If $(\rho_\lambda: G_K \to \GL_n(\ol{M_\lambda}))_\lambda$ is a weakly compatible system, we will often let $G_\lambda$ (resp. $\mf{g}_\lambda$) denote $G_{\rho_\lambda}$ (resp. $\mf{g}_{\rho_\lambda}$).

\begin{theorem}[{\cite[Theorem 2.4, Remark 2.5]{Hui23}}] \label{thm: formal char independent of l}
Let $(\rho_\lambda: G_K \to \GL_n(\ol{M_\lambda}))_\lambda$ be a weakly compatible system.
\begin{enumerate}
    \item Then the component group $\pi_0(G_{\lambda}) = G_{\lambda}/G_{\lambda}^\circ$ cuts out a finite extension $L/K$ which is independent of $\lambda$.
    \item The formal (bi-)character of $(\rho_\lambda)_\lambda$ is independent of $\lambda$.
    In particular, the rank of $\rho_\lambda$ is independent of $\lambda$.
\end{enumerate}
\end{theorem}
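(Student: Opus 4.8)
The plan is to deduce both parts from the one manifestly $\lambda$-independent datum available --- the characteristic polynomials $Q_{v,\lambda}(X) \in M[X]$ of Frobenius, for $v \notin S \cup \{v \mid \ell\}$ --- via Chebotarev density and Serre's theory of Frobenius tori. I would begin with (1). For each $\lambda \mid \ell$, the composite $G_K \xrightarrow{\rho_\lambda} \rho_\lambda(G_K) \hookrightarrow G_\lambda(\ol{M_\lambda}) \twoheadrightarrow \pi_0(G_\lambda)$ has open kernel, hence cuts out a finite Galois extension $L_\lambda/K$, ramified only inside $S$ together with the residue characteristic $\ell$ (at $v \mid \ell$ the de Rham hypothesis forces $\rho_\lambda(I_v)$ into $G_\lambda^\circ$ after a bounded extension of $K_v$, so the ramification of $L_\lambda$ there is bounded, though possibly nontrivial). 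Since a finite Galois extension of $K$ is determined by its set of completely split primes even after discarding a set of density zero, to prove $L_\lambda = L_{\lambda'}$ it suffices, by Chebotarev, to show that for all but density zero many $v \notin S$ (in particular $v \nmid \ell \ell'$) one has $\rho_\lambda(\Frob_v) \in G_\lambda^\circ$ if and only if $\rho_{\lambda'}(\Frob_v) \in G_{\lambda'}^\circ$. This is the $\ell$-independence of the component group, due to Serre and reproved and strengthened in \cite{LarPin92}; its proof extracts the component through which $\Frob_v$ passes from the observation that if $\rho_\lambda(\Frob_v)$ lies in a non-identity coset $\gamma G_\lambda^\circ$ then its eigenvalue multiset is constrained to satisfy degenerate multiplicative relations, so that $\{v : \rho_\lambda(\Frob_v) \in G_\lambda^\circ\}$ is, up to density zero, characterized by genericity of the roots of $Q_{v,\lambda}$ --- a $\lambda$-independent condition.

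For (2), using (1) I would base-change to $L$ and reduce to the case where every $G_\lambda$ is connected, so that it remains to show the formal (bi-)character of $G_\lambda \subset \GL_{n,\ol{M_\lambda}}$ is $\lambda$-independent. By Serre's theorem on Frobenius tori there is a Dirichlet density one set $\mathcal{V}$ of primes $v$ of good reduction such that $\rho_\lambda(\Frob_v)$ is semisimple and the identity component $T_v^\circ$ of the Zariski closure of $\langle \rho_\lambda(\Frob_v) \rangle$ is a maximal torus of $G_\lambda$; since the relevant genericity condition on the eigenvalues is $\lambda$-independent and the intersection of two density one sets again has density one, $\mathcal{V}$ may be chosen to work for $\lambda$ and $\lambda'$ simultaneously. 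For $v \in \mathcal{V}$ the embedding $T_v^\circ \hookrightarrow \GL_n$, up to $\GL_n$-conjugacy, is read off directly from the multiset of eigenvalues of $\rho_\lambda(\Frob_v)$ --- that is, from the roots of $Q_{v,\lambda}(X) = Q_{v,\lambda'}(X)$ --- and is therefore $\lambda$-independent; being the embedding of a maximal torus of $G_\lambda$, it is precisely the formal character of $G_\lambda$. For the bi-character one must further pin down the subtorus $T_v^\circ \cap G_\lambda^{\der} \subset T_v^\circ$; this is cut out by the characters of $T_v^\circ$ trivial on $G_\lambda^{\der}$, equivalently (after quotienting by the powers of $\det$) by the lattice of monomial relations $\prod_i \alpha_i^{m_i} = 1$ among the Frobenius eigenvalues $\alpha_i$, which is again visibly $\lambda$-independent. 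The statement about the rank follows, since $\rk \rho_\lambda = \dim(T_v^\circ \cap G_\lambda^{\der})$.

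The main obstacle is part (1): a priori $\pi_0(G_\lambda)$ and $\pi_0(G_{\lambda'})$ are unrelated finite groups, and the characteristic polynomial of a single Frobenius does not determine which component it lies in. Bridging this gap requires the delicate argument of Serre and \cite{LarPin92} describing the relevant component via the degeneracy locus of multiplicative relations among Frobenius eigenvalues; some care is also needed because $L_\lambda/K$ need not be unramified outside $S$, so one works with split primes modulo a density zero set rather than literal unramifiedness. Once (1) is in hand, part (2) is a comparatively routine --- if technical --- matter of Frobenius tori together with the bookkeeping of eigenvalue relations, as carried out by Serre and elaborated in \cite{hui2013monodromy}.
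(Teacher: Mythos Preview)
The paper gives no proof of this statement: it is quoted as \cite[Theorem 2.4, Remark 2.5]{Hui23} and used as a black box throughout. There is therefore nothing in the paper to compare your proposal against. Your outline is a reasonable sketch of the argument as it appears in the literature (Serre's Frobenius tori, refined by \cite{LarPin92} and \cite{hui2013monodromy}), and indeed those are the sources underlying the cited result.

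Two small points where your sketch could be tightened. First, the parenthetical about the de Rham hypothesis forcing $\rho_\lambda(I_v) \subset G_\lambda^\circ$ after a bounded extension is not correct as stated and is in any case unnecessary: one only needs that $L_\lambda$ is determined by the density-one set of primes splitting completely in it, and for that it suffices to compare Frobenii at $v \nmid \ell\ell'$. Second, for the bi-character your description of $T_v^\circ \cap G_\lambda^{\der}$ as being cut out by the monomial relations $\prod_i \alpha_i^{m_i} = 1$ among Frobenius eigenvalues is not quite right: the relevant sublattice is rather that of relations $\prod_i \alpha_i^{m_i}$ which are \emph{roots of unity times a power of $q_v$} (equivalently, the characters of $G_\lambda$ that extend to $G_\lambda$-characters, not just torus characters), and showing this is $\lambda$-independent takes a bit more care --- this is where the work in \cite{hui2013monodromy} enters.
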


\begin{definition} \label{defn: strongly irreducible}
A Galois representation $\rho: G_K \to \GL_n(\Qlbar)$ is said to be \emph{strongly irreducible} if, for every finite extension $K'/K$, $\rho|_{G_{K'}}$ is irreducible.    
\end{definition}

Note, as in \cite[Remark 3.2.7]{CEG22}, that strong irreducibility of $\rho$ is equivalent to irreducibility of the corresponding representation of $G_{\rho}^\circ$.

\begin{lemma} \label{lemma: formal char is multiplicity-free}
Let $\rho: G_K \to \GL_n(\Qlbar)$ be a Galois representation unramified outside of a finite set $S$ of primes of $K$ and which is Hodge--Tate at each $v|\ell$ with regular Hodge--Tate weights.
\begin{enumerate}
    \item The formal character of $\rho$ is multiplicity-free.
    \item If $\rho$ is also strongly irreducible, then the formal bi-character of $\rho$ is multiplicity-free.
\end{enumerate}
\end{lemma}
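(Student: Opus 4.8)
The plan is to extract everything from Sen's theory at an $\ell$-adic place. I would fix a place $v \mid \ell$ of $K$ together with an embedding $\tau\colon K \hookrightarrow \Qlbar$ inducing $v$, and let $C$ denote the (algebraically closed) completion of $\Qlbar$. To $\rho|_{G_{K_v}}$ one attaches the Sen operator $\Theta = \Theta_{\rho,\tau} \in \mathfrak{gl}_n(C)$, well-defined up to $\GL_n(C)$-conjugacy, and the two facts I would use are: (i) since $\rho$ is Hodge--Tate at $v$, $\Theta$ is a semisimple endomorphism of $C^n$ whose multiset of eigenvalues is $\HT_\tau(\rho)$ (up to an overall sign depending on normalization), so by regularity $\Theta$ has $n$ \emph{pairwise distinct} eigenvalues; and (ii) by Sen's theorem $\Theta$ lies in $\Lie(G_{\rho|_{G_{K_v}}})\otimes_{\Qlbar} C$, hence in $\Lie(G_\rho)\otimes_{\Qlbar}C = \Lie(G_\rho^\circ)\otimes_{\Qlbar}C$.

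For part (1), I would argue as follows. A semisimple element of the Lie algebra of a connected linear algebraic group lies in the Lie algebra of a maximal torus, so choose a maximal torus $T$ of $(G_\rho^\circ)_C$ with $\Theta \in \Lie T$. Since maximal tori are conjugate over the algebraically closed field $C$, the $\GL_{n,C}$-conjugacy class of $T\subset\GL_{n,C}$ is the base change of the formal character of $\rho$; as multiplicity-freeness of a torus representation is insensitive to base change, it suffices to show that $T$ acts multiplicity-freely on $C^n$. Writing $C^n = \bigoplus_\chi V_\chi$ for the $T$-weight decomposition, $\Theta\in\Lie T$ acts on $V_\chi$ by the scalar $(d\chi)(\Theta)$; since $\Theta$ has $n$ distinct eigenvalues this forces every $V_\chi$ to be one-dimensional with the occurring $\chi$ pairwise distinct, which is exactly the desired multiplicity-freeness.

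For part (2), I would first use that strong irreducibility of $\rho$ is equivalent to irreducibility of the $G_\rho^\circ$-representation on $C^n$ (as noted after Definition~\ref{defn: strongly irreducible}); since a connected group acting faithfully and irreducibly has trivial unipotent radical, $G_\rho^\circ$ is reductive. Write $Z^\circ$ for its connected centre, so $\Lie(G_\rho^\circ) = \Lie(Z^\circ)\oplus\mathfrak{g}_\rho$, and note $Z^\circ$ acts on $C^n$ by a single character by Schur's lemma. Decomposing $\Theta = \Theta_0 + \Theta_1$ with $\Theta_0\in\Lie(Z^\circ)\otimes_{\Qlbar}C$ and $\Theta_1\in\mathfrak{g}_\rho\otimes_{\Qlbar}C$, the central part $\Theta_0$ acts as a scalar $c\in C$, so $\Theta_1 = \Theta - c$ is a semisimple element of $\mathfrak{g}_\rho\otimes_{\Qlbar}C$ with $n$ distinct eigenvalues $\{\theta - c : \theta\in\HT_\tau(\rho)\}$. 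Rerunning the argument of part (1) with $G_\rho^{\circ,\der}$ and $\Theta_1$ in place of $G_\rho^\circ$ and $\Theta$ shows that every maximal torus of $G_\rho^{\circ,\der}$ acts multiplicity-freely on $C^n$; since $T\cap G_\rho^{\circ,\der}$ is such a torus for any maximal torus $T$ of $G_\rho^\circ$, this is precisely multiplicity-freeness of the formal bi-character.

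The only genuinely non-formal input, and hence the main obstacle, is statement (ii): that the Sen operator lands in the Lie algebra of the Zariski closure of the image. This is Sen's theorem, and it is where the $p$-adic Hodge theory enters; everything after that is bookkeeping with tori, weights, and the Jordan decomposition. A secondary point requiring (routine but not entirely trivial) care is the step ``semisimple element of $\Lie G$ $\Rightarrow$ element of $\Lie T$ for a maximal torus $T$'', which in part~(1) uses the general statement for connected groups and in part~(2) only the cleaner reductive version.
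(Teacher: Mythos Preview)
Your proof is correct and essentially coincides with the paper's. Part~(1) is exactly the Sen-operator argument that the paper imports from \cite[Lemma~5.3.1]{BLGGT14}; for part~(2) the paper takes a marginally shorter route---rather than projecting $\Theta$ to $\mathfrak{g}_\rho$ and rerunning the argument, it observes directly that $T = Z(G_\rho^\circ)\,T'$ with $Z(G_\rho^\circ)$ scalar by Schur, so multiplicity-freeness of $T$ (from part~(1)) immediately gives that of $T'$---but this is the same idea packaged at the group level instead of the Lie-algebra level.
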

\begin{proof}
The first part is shown in the proof of \cite[Lemma 5.3.1]{BLGGT14}.
For the second part, $T$ be a maximal torus of $G_{\rho}^\circ$ and let $T' = T \cap G_{\rho}^{\circ,\der}$.
Then $T = Z(G_\rho^\circ) T'$, since $G_\rho^\circ$ is isogenous to the product of its derived group and its center.
Since $\rho$ is strongly irreducible, $Z(G_\rho^\circ) \subset \GL_{n,\Qlbar}$ is contained inside the group of scalar matrices by Schur's lemma.
Since $T \subset \GL_n$ is multiplicity-free, the same must therefore be true of $T' \subset \GL_n$.
\end{proof}

\subsection{Facts about simple Lie algebras}

In this subsection, let $\mf{h}$ be a simple Lie algebra over an algebraically closed field of characteristic zero.
We say a representation $V$ of $\mf{h}$ is \emph{multiplicity-free} if every weight of $V$ occurs with multiplicity one.

\begin{theorem}[{\cite[Theorem 2.2]{sun2024descriptions}}] \label{thm: multiplicity-free irreps}
Any multiplicity-free non-trivial irreducible representation of $\mf{h}$ is isomorphic to one of the following:
\begin{enumerate}
\item a symmetric power or alternating power of either the degree $m+1$ standard representation or its dual if $\mf{h} \cong \sllie_{m+1}$;
\item the standard representation $V_{\omega_1}$ of degree $2m+1$ or the spin representation $V_{\omega_m}$ of degree $2^m$ if $\mf{h} \cong \solie_{2m+1}, m \geq 2$;

\item the standard representation \( V_{\omega_1} \) of degree $2m$ if $\mf{h} \cong \splie_{2m}$, $m \geq 3$, or the representation \( V_{\omega_3} \) of dimension $14$ when $m=3$;
\item the standard representation \( V_{\omega_1} \) of degree $2m$ or the half-spin representations \( V_{\omega_{m-1}} \), \( V_{\omega_m} \) of degree $2^{m-1}$ if $\mf{h} \cong \solie_{2m}, m \geq 4$;
\item the (non-self-dual) representations \( V_{\omega_1} \), \( V_{\omega_6} \) of $\mf{e}_6$ of dimensions $27$;
\item the representation \( V_{\omega_7} \) of $\mf{e}_7$ of dimension \(56\);
\item the representation \( V_{\omega_1} \) of $\mf{g}_2$ of dimension \(7\).
\end{enumerate}    
\end{theorem}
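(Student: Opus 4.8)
The plan is to reduce the statement to a classification of the nonzero dominant weights $\lambda = \sum_i a_i \omega_i$ (with $a_i \geq 0$) for which the irreducible representation $V_\lambda$ of $\mf{h}$ has every weight of multiplicity one, and then to work through the Dynkin diagram of $\mf{h}$ type by type. Since $V_\lambda$ is multiplicity-free if and only if its dual $V_{-w_0 \lambda}$ is, it suffices to treat one representative of each dual pair. The two main tools are: (i) \emph{restriction to a Levi subalgebra} $\mf{l} \subseteq \mf{h}$ attached to a set $J$ of nodes — since $\mf{l}$ contains the Cartan of $\mf{h}$, weight multiplicities of $V_\lambda$ are unchanged on restriction, so $V_\lambda|_{\mf{l}}$ is multiplicity-free, and in particular the irreducible $\mf{l}$-summand generated by a highest weight vector, which has highest weight $\sum_{i \in J} a_i \omega_i^{\mf{l}}$, must be a multiplicity-free irreducible $\mf{l}$-representation; and (ii) \emph{Kostant's multiplicity formula}, which for $\mu$ close to $\lambda$ expresses $\dim V_\lambda(\mu)$ as a short alternating sum of values of the Kostant partition function $P$, dominated by the leading term $P(\lambda - \mu)$.

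For \emph{necessity}, one first shows the support of $\lambda$ is an independent set in the Dynkin diagram: for adjacent nodes $i \neq j$ with $a_i, a_j \geq 1$ and $\mu = \lambda - \alpha_i - \alpha_j$, the conditions $\langle \lambda, \alpha_i^\vee\rangle, \langle \lambda, \alpha_j^\vee\rangle \geq 1$ force every non-leading term in Kostant's formula to vanish, whence $\dim V_\lambda(\mu) = P(\alpha_i + \alpha_j) = 2$. A similar but longer analysis with test weights of the form $\lambda - k \alpha_i - \alpha_j - \cdots$, or equivalently restriction to a sub-Levi of type $A_3$, $B_2$, $G_2$ or $D_4$ together with a direct check in those low-rank cases, rules out $a_i \geq 2$ unless $i$ is an end node of a type-$A$ component, and rules out supports of size $\geq 2$ apart from the pair of spin nodes of a $D_m$ component. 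Feeding in the elementary base cases — that every $\sllie_2$-representation is multiplicity-free, that the multiplicity-free irreducibles of $\sllie_3$ are exactly the $\Sym^k \Std$ and $\Sym^k \Std^\vee$, and that $\sllie_4$, $\splie_4 \cong \solie_5$ and $\mf{g}_2$ contribute only the short explicit lists recorded in the statement — leaves precisely the weights in (1)--(7), and nothing for $\mf{f}_4$ or $\mf{e}_8$.

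For \emph{sufficiency}, each surviving candidate is verified to be multiplicity-free. The minuscule representations — $\Lambda^k \Std$ for $\sllie_{m+1}$, the standard representation of $\splie_{2m}$, the standard and the two half-spin representations of $\solie_{2m}$, the spin representation of $\solie_{2m+1}$, and the $27$- and $56$-dimensional representations of $\mf{e}_6$ and $\mf{e}_7$ — have all their weights in a single Weyl orbit, so every weight has multiplicity one. The symmetric powers $\Sym^k \Std$ and $\Sym^k \Std^\vee$ of $\sllie_{m+1}$ have weights the pairwise distinct monomials of degree $k$, hence are multiplicity-free by inspection. The remaining three cases — the standard representation of $\solie_{2m+1}$ (weights $0$ and $\pm e_i$), the $7$-dimensional representation of $\mf{g}_2$, and the $14$-dimensional representation $V_{\omega_3}$ of $\splie_6$ (weights $\pm e_i$ and $\pm e_i \pm e_j \pm e_k$) — have explicit weight lists with no repetitions.

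The step I expect to be the main obstacle is the necessity direction for the non-simply-laced higher-rank algebras $\solie_{2m+1}$ and $\splie_{2m}$, and for the "forked" supports of $\solie_{2m}$: there a rank-two Levi is too small to detect the extra multiplicity, and one must either restrict to a $D_4$ (or $A_3$) sub-Levi and invoke its base case, or carry out the Kostant-partition-function estimate directly, which requires keeping careful track of exactly which Weyl-group terms contribute to $\dim V_\lambda(\mu)$ for the chosen test weight $\mu$. On the sufficiency side, the only mildly delicate point is writing out the spin and half-spin weights and checking that they form a single Weyl orbit; everything else reduces to a direct monomial count.
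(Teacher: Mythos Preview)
The paper does not prove this theorem; it is simply quoted from \cite{sun2024descriptions} and used as a black box throughout \S\ref{subsect: simple type irreducibility proofs}. There is therefore no argument in the paper to compare your proposal against.

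That said, your outline is a sound sketch of the standard proof of this classification. The two tools you single out --- restriction to Levi subalgebras to push necessary conditions down to low-rank base cases, and Kostant's multiplicity formula at test weights $\mu=\lambda-\alpha_i-\alpha_j$ to force the support of $\lambda$ to be an independent set in the Dynkin diagram --- are exactly the right ones, and your sufficiency argument via minuscule weights together with direct inspection of the handful of quasi-minuscule survivors ($\solie_{2m+1}$ standard, $\mf{g}_2$ in degree $7$, $\splie_6$ in degree $14$) is correct. One small point of wording: when you write ``rules out supports of size $\geq 2$ apart from the pair of spin nodes of a $D_m$ component'', you are right that the two spin nodes are non-adjacent and hence survive the independent-set step, but the corresponding representation $V_{\omega_{m-1}+\omega_m}\cong\Lambda^{m-1}\Std$ is \emph{not} multiplicity-free (for instance the weight $0$, or $e_1+e_2$, already has large multiplicity), so this case must still be eliminated --- which is precisely what your $D_4$ Levi base case does. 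With that understood, the sketch is correct.
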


\begin{lemma} \label{lemma: outer automorphisms simple lie algebras}
The outer automorphism group of $\mf{h}$ is isomorphic to
\begin{itemize}
    \item $\mathbb{Z}/2\bb{Z}$ if $\mf{h}$ is of type $A_n$ $(n \geq 2)$, $D_n$ $(n \geq 5)$ or $E_6$,
    \item $S_3$ if $\mf{h}$ is of type $D_4$, and
    \item trivial otherwise.
\end{itemize}
\end{lemma}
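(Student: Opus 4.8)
The plan is to reduce the computation of $\Out(\mf{h})$ to the combinatorics of the Dynkin diagram, via the standard fact that for a simple Lie algebra over an algebraically closed field of characteristic zero, the outer automorphism group is canonically isomorphic to the automorphism group of the Dynkin diagram. First I would recall (or cite, e.g. Humphreys, \emph{Introduction to Lie Algebras and Representation Theory}, \S 16.5, or Bourbaki) the description: fixing a Cartan subalgebra $\mf{t} \subset \mf{h}$ and a base $\Delta$ of the root system, every automorphism of $\mf{h}$ is a product of an inner automorphism and one preserving $(\mf{t}, \Delta)$; the inner automorphisms preserving $(\mf{t},\Delta)$ are exactly those coming from $T = \exp(\mf{t})$ acting trivially on the diagram, and the quotient $\Aut(\mf{h})/\Inn(\mf{h})$ is identified with the group of diagram automorphisms $\Aut(\mathrm{Dyn}(\mf{h}))$. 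Granting this, the lemma becomes a finite case-check over the connected Dynkin diagrams.

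Next I would simply enumerate the diagram automorphism groups. For type $A_n$ with $n \geq 2$, the diagram is a path on $n$ vertices with a unique nontrivial symmetry (reversal), giving $\mathbb{Z}/2\bb{Z}$; note $A_1$ is a single vertex, hence trivial, which is why the hypothesis $n \geq 2$ appears. For type $B_n$, $C_n$, $E_7$, $E_8$, $F_4$, $G_2$ the diagrams admit no nontrivial automorphism (the multiple edge or the asymmetric branching rigidifies them), so $\Out$ is trivial. For type $D_n$ with $n \geq 5$, the only symmetry swaps the two ``fork'' nodes at the short end, giving $\mathbb{Z}/2\bb{Z}$; for $D_4$ the three outer nodes of the $D_4$ diagram are interchangeable, so the automorphism group is $S_3$ — this is the triality phenomenon, and it is the one genuinely exceptional case. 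For $E_6$ the diagram has a unique order-two flip, giving $\mathbb{Z}/2\bb{Z}$. Collecting these cases yields exactly the statement of the lemma.

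The step requiring the most care is the general structural input — that $\Out(\mf{h}) \cong \Aut(\mathrm{Dyn}(\mf{h}))$ — rather than the diagram bookkeeping, which is routine. In particular one must be slightly careful that the isomorphism class of the outer automorphism group does not depend on the chosen Cartan subalgebra and base (it does not, since all such choices are conjugate under $\Inn(\mf{h})$), and that we are working over an algebraically closed field of characteristic zero so that the Chevalley–Jacobson structure theory and the existence of a splitting $\Aut(\mathrm{Dyn}(\mf{h})) \hookrightarrow \Aut(\mf{h})$ (via a pinning) are available. Since all of this is classical and the lemma will only be invoked qualitatively in what follows, I would keep the proof to a citation of the diagram-automorphism description followed by the one-line-per-type enumeration above, with the $D_4$ triality case flagged explicitly.
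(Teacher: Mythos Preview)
Your proposal is correct and follows essentially the same approach as the paper: cite the identification $\Out(\mf{h}) \cong \Aut(\mathrm{Dyn}(\mf{h}))$ for a simple Lie algebra over an algebraically closed field of characteristic zero, then read off the diagram automorphism groups case by case. The paper's proof is simply a two-line citation (to Fulton--Harris for the structural fact and to Humphreys for the enumeration), whereas you spell out the per-type check explicitly, but the content is the same.
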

\begin{proof}
The outer automorphism group of $\mf{h}$ is isomorphic to the automorphism group of the Dynkin diagram associated to $\mf{h}$ \cite[Proposition D.40]{fulton2013representation}.
The automorphism groups of Dynkin diagrams of irreducible root systems are stated in \cite[\S 12.2]{humphreys2012introduction}.
\end{proof}

\section{A reduction step of Xia} \label{section: Xia reduction}

Let \( (\pi,\chi) \) be a polarized, regular algebraic, cuspidal automorphic representation of \( \GL_n(\bb{A}_F) \), where \( F \) is a totally real or imaginary CM field.
In this section we reduce the proof of \Cref{thm: intro main theorem} to proving \Cref{theorem: xia's reduction}, which will have the benefit of allowing us to assume the Galois representation $\rho_{\pi,\lambda}$ is strongly irreducible for $\lambda$ lying above a prime in a set of positive Dirichlet density. 

In the case that $F$ is imaginary CM, we follow \cite[\S4]{Xia19} and construct an auxiliary weakly compatible system \( (\Phi_\lambda)_\lambda \) (of \( G_{F^+} \)) where
\[ \Phi_\lambda := (\Ind_{F^+}^{F} \rho_{\pi,\lambda}) \oplus \rho_{\chi,\lambda}. \]
In the case that $F = F^+$ is totally real, we follow \cite[\S4.2.3]{Hui23} and construct an analogous weakly compatible system \( (\Phi_\lambda)_\lambda \) (of \( G_{F^+} \)) where:
\[ \Phi_\lambda := (\Ind_{F^+}^{K} \Res_{F^+}^{K} \rho_{\pi,\lambda}) \oplus \rho_{\chi,\lambda}, \]
with $K/F^+$ an arbitrary quadratic imaginary CM extension. 
In either case, by Theorem \ref{thm: formal char independent of l}, we can find a finite extension \( F_{1,\pi}/F^+ \) such that for all places \( \lambda \) of \( M_\pi \), one has an isomorphism
\[ \Gal(F_{1,\pi}/F^+)\cong G_{\Phi_\lambda}/G_{\Phi_\lambda}^\circ \]
induced via \( \Phi_\lambda \). In particular, \( F_{1,\pi} \) is the minimal extension of \( F^+ \) such that the restriction of \( \Phi_{\lambda} \) to \( F_{1,\pi} \) has connected algebraic monodromy group.

The following two lemmas (stated conveniently in \cite[\S 4]{Hui23}) are essentially contained within the proof of \cite[Proposition 2]{Xia19}. We note that, while the statement of \textit{loc. cit.} assumes irreducibility in a density one subset for certain weakly compatible systems, the argument used to only prove the lemmas does make use of such irreducibility assumptions.

\begin{lemma}[{\cite[Proposition 4.13]{Hui23}}]\label{lemma: work with pi_1 imaginary CM}
    Let \( F \) be an imaginary CM field, and \( (\rho_{\pi,\lambda})_{\lambda} \) be the weakly compatible system of \( G_F \) above.
    Let \( F_2 \) be the maximal CM subextension of \( F_{1,\pi}/F^+ \). After enlarging the CM field \( M_\pi \) if necessary, there exists a family of Galois representations \( (r_{1,\lambda})_\lambda \) of a subextension \( F_4 \) of \( F_2/F \) and a regular algebraic polarizable cuspidal automorphic representation \( \pi_1 \) of \( \GL_m(\bb{A}_{F_3}) \) where \( F_3 \) is a finite CM extension of \( F_2 \) such that
    \[ (\Ind_{F_4}^F r_{1,\lambda})_\lambda \cong (\rho_{\pi,\lambda})_\lambda \qquad\text{and}\qquad (\Res^{F_4}_{F_3} r_{1,\lambda})_\lambda \cong (\rho_{\pi_1,\lambda})_\lambda \]
    and \( F_3 \) is the maximal imaginary CM subextension of \( F_{1,\pi_1}/F^+ \).
\end{lemma}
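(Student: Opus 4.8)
The plan is to follow the proof of \cite[Proposition 2]{Xia19} (see also \cite[\S 4]{Hui23}), and I will only indicate the main steps. Recall that we have already formed the auxiliary weakly compatible system $(\Phi_\lambda)_\lambda$ of $G_{F^+}$ with $\Phi_\lambda = (\Ind_{F^+}^{F} \rho_{\pi,\lambda}) \oplus \rho_{\chi,\lambda}$, that $F_{1,\pi}/F^+$ is its connected monodromy field (well-defined independently of $\lambda$ by Theorem \ref{thm: formal char independent of l}), and that $F_2$ is the maximal CM subextension of $F_{1,\pi}/F^+$. By \cite{PatTay15} there is a positive Dirichlet density set $\mc{S}$ of places $\lambda$ of $M_\pi$ for which $\rho_{\pi,\lambda}$ is irreducible. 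It suffices to construct, for $\lambda \in \mc{S}$, a subextension $F \subseteq F_4 \subseteq F_2$, a strongly irreducible representation $r_{1,\lambda}$ of $G_{F_4}$ with $\rho_{\pi,\lambda} \cong \Ind_{F_4}^{F} r_{1,\lambda}$, and (after enlarging the CM field $M_\pi$, cf. \cite[Lemma 1.4]{PatTay15}) a finite CM extension $F_3$ of $F_2$ together with a polarizable regular algebraic cuspidal $\pi_1$ of $\GL_m(\bb{A}_{F_3})$ with $\rho_{\pi_1,\lambda} \cong r_{1,\lambda}|_{G_{F_3}}$: the decomposition of $(\rho_{\pi,\lambda}|_{G_{F_4}})_\lambda$ into sub-compatible systems then upgrades $(r_{1,\lambda})_\lambda$ to a weakly compatible system defined over $M_\pi$, and the two desired isomorphisms of weakly compatible systems, holding at a positive density of $\lambda$, hold for all $\lambda$ by comparison of characteristic polynomials.

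Fix $\lambda \in \mc{S}$. Over the compositum $M := F_{1,\pi} \cdot F$ (which we may take Galois over $F$) the algebraic monodromy group is connected, so $\rho_{\pi,\lambda}|_{G_M}$ is a direct sum of strongly irreducible representations permuted transitively by $\Gal(M/F)$. Choosing one constituent $\sigma$, letting $H \leq \Gal(M/F)$ be its stabilizer and setting $F_4 := M^H$, Clifford theory yields $\rho_{\pi,\lambda} \cong \Ind_{F_4}^{F} r_{1,\lambda}$ where $r_{1,\lambda}$ is an extension of $\sigma$ to $G_{F_4}$; verifying that such an extension exists (possibly after twisting by a character, i.e.\ that the relevant Schur-type obstruction vanishes) is the first technical point, handled as in \cite{Xia19}. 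Since $F \subseteq F_4$ and $F$ is imaginary CM, so is $F_4$; and since $r_{1,\lambda}$ is strongly irreducible its monodromy group is connected, which together with $r_{1,\lambda}$ being defined over $M$ forces $F_4 \subseteq F_{1,\pi}$, hence $F_4 \subseteq F_2$. Moreover $F_4$ is cut out by the stabilizer of a constituent, which depends only on the component group and formal character of the monodromy group of $\Phi_\lambda$; by Theorem \ref{thm: formal char independent of l} these are independent of $\lambda$, so $F_4$ may be chosen independently of $\lambda \in \mc{S}$.

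Finally, $r_{1,\lambda}$ is polarizable: it is a constituent of the polarizable representation $\rho_{\pi,\lambda}|_{G_{F_4}}$, and by Schur's lemma the polarization pairing restricts. One now descends automorphy from $F$ to $F_4$ by the automorphic descent argument of \cite[\S 4]{Xia19}, obtaining an isobaric automorphic representation of $\GL_n(\bb{A}_{F_4})$ whose cuspidal constituents are regular algebraic and polarizable; the constituent corresponding to $r_{1,\lambda}$ is cuspidal because $r_{1,\lambda}$ is irreducible. Passing to a further finite CM extension $F_3$ of $F_2$ so that $F_3$ equals the maximal imaginary CM subextension of the connected monodromy field $F_{1,\pi_1}/F^+$ attached to the resulting representation (this stabilizes after finitely many enlargements, since the relevant monodromy groups have bounded rank), and correspondingly enlarging $M_\pi$, we obtain the desired $\pi_1$ with $\rho_{\pi_1,\lambda} \cong r_{1,\lambda}|_{G_{F_3}}$. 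I expect the main obstacle to be precisely this last step: descending automorphy to the (possibly non-solvable) extension $F_4/F$ and tracking the interplay of all the auxiliary fields, in particular arranging the self-referential condition that $F_3$ be the maximal imaginary CM subextension of $F_{1,\pi_1}/F^+$; these delicate points are exactly what \cite[Proposition 2]{Xia19} supplies.
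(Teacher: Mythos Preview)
The paper itself does not prove this lemma, citing instead \cite[Proposition 4.13]{Hui23} and \cite[Proposition 2]{Xia19}, so your outline is being compared against those sources. The overall shape (Clifford theory over the connected monodromy field, then automorphic input, then iterate) is correct, but one step is genuinely wrong and would not be fixable by appealing to the references.

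The false step is the sentence ``Since $F \subseteq F_4$ and $F$ is imaginary CM, so is $F_4$.'' Extensions of imaginary CM fields are not in general CM: for instance $\bb{Q}(i) \subset \bb{Q}(i,\sqrt[3]{2})$, and the latter field has no totally real index-$2$ subfield. Since your conclusion $F_4 \subseteq F_2$ rests entirely on $F_4$ being CM, this is a real gap. In \cite{Xia19} the CM property of $F_4$ is not obtained for free from $F \subseteq F_4$; rather, one uses the special choice of $\lambda$ (a prime in the Patrikis--Taylor set, where a coefficient-field complex conjugation is available) together with polarizability and regularity of $\rho_{\pi,\lambda}$ to show that for \emph{every} complex conjugation $c$ on $F_{1,\pi}$ and every irreducible constituent $\sigma$ of $\rho_{\pi,\lambda}|_{G_M}$, one has $\sigma^c \cong \sigma^\vee \otimes (\text{character})$. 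From this it follows that each product $c_1 c_2$ of two complex conjugations stabilizes the isomorphism class of $\sigma$, hence lies in $H$; since such products generate $\Gal(F_{1,\pi}/F_2)$, one gets $\Gal(F_{1,\pi}/F_2) \subseteq H$, i.e.\ $F_4 \subseteq F_2$. Your sketch needs this argument (or an equivalent), and it is exactly here that the choice of $\lambda \in \mc{S}$ is used, not merely the irreducibility of $\rho_{\pi,\lambda}$.

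Two smaller points. First, ``strongly irreducible'' does not imply ``connected monodromy'' (tensor any strongly irreducible representation by a nontrivial finite-order character); fortunately $F_4 \subseteq M = F_{1,\pi}$ holds trivially since $F_4 = M^H$, so the conclusion you want there is immediate. Second, passing from $F$ to $F_4 \supseteq F$ is base change, not descent, and one must justify why cyclic (or solvable) base change suffices along the tower used; this is handled in \cite{Xia19} but is worth flagging since $F_4/F$ is not a priori solvable.
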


\begin{lemma}[{\cite[Proposition 4.14]{Hui23}}]\label{lemma: work with pi_1 totally real}
    Let \( F^+ \) be a totally real field and \( (\rho_{\pi,\lambda})_\lambda \) be a weakly compatible system of \(G_{F^+} \) over \( M_\pi \) as above. Fix any imaginary CM field \( K \) containing \( F^+ \) as its maximal totally real subfield. Let \( F_2 \) be the maximal CM subextension of \( F_{1,\pi}/F^+ \). After enlarging the CM field \( M_\pi \) if necessary, there exists a family of Galois representations \( (r_{1,\lambda})_\lambda \) of a subextension \( F_4 \) of \( F_2/F^+ \) and a regular algebraic polarizable cuspidal automorphic representation \( \pi_1 \) of \( \GL_m(\bb{A}_{F_3}) \) where \( F_3 \) is a finite CM extension of \( F_2 \) such that
    \[ (\Ind_{F_4}^{F^+} r_{1,\lambda})_\lambda \cong (\rho_{\pi,\lambda})_\lambda \qquad\text{and}\qquad (\Res^{F_4}_{F_3} r_{1,\lambda})_\lambda \cong (\rho_{\pi_1,\lambda})_\lambda \]
    and \( F_3 \) is the maximal imaginary CM subextension of \( F_{1,\pi_1}/F^+ \).
\end{lemma}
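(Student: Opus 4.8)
The plan is to adapt \cite[Proposition 2]{Xia19} (packaged as \cite[Proposition 4.14]{Hui23}): the auxiliary system $\Phi_\lambda = (\Ind_{F^+}^K \Res_{F^+}^K \rho_{\pi,\lambda}) \oplus \rho_{\chi,\lambda}$ was set up precisely so as to remember both the base change of $\rho_{\pi,\lambda}$ to $K$ (note $\Ind_{F^+}^K \Res_{F^+}^K \rho_{\pi,\lambda} \cong \rho_{\pi,\lambda} \oplus (\rho_{\pi,\lambda}\otimes\eta_{K/F^+})$, with $\eta_{K/F^+}$ the quadratic character of $K/F^+$) and the polarizing character. First I would record that by \Cref{thm: formal char independent of l} applied to $(\Phi_\lambda)_\lambda$, the field $F_{1,\pi}$ and its maximal CM subextension $F_2$ are independent of $\lambda$, that $F_{1,\pi}/F^+$ is Galois with group $\pi_0(G_{\Phi_\lambda})$, and that $F_{1,\pi}\supseteq K$ (the summand $\rho_{\pi,\lambda}\otimes\eta_{K/F^+}$ forces this, as it has disconnected monodromy over any field missing $K$).

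The heart of the proof is a Clifford-theoretic descent. Over $F_{1,\pi}$ the algebraic monodromy of $\Phi_\lambda$, hence of $\rho_{\pi,\lambda}$, is connected; so $\rho_{\pi,\lambda}|_{G_{F_{1,\pi}}}$ is semisimple and each of its irreducible constituents $W$, being the image of a connected monodromy group in $\GL(W)$, is strongly irreducible. Regularity of the Hodge--Tate weights makes the formal character of $\rho_{\pi,\lambda}$ multiplicity-free (\Cref{lemma: formal char is multiplicity-free}), so each constituent occurs with multiplicity one. By \cite{PatTay15} there is a positive-density set of $\lambda$ for which $\rho_{\pi,\lambda}$ is irreducible; at such $\lambda$ the constituents of $\rho_{\pi,\lambda}|_{G_{F_{1,\pi}}}$ form a single $\Gal(F_{1,\pi}/F^+)$-orbit, and since the relevant orbit/stabilizer data is governed by the $\lambda$-independent formal bi-character and component group, the same holds for every $\lambda$. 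Taking $F_4$ to be the fixed field of the stabilizer of one constituent $W_1$, the representation $W_1$ extends (killing an $H^2$-obstruction by a twist if necessary) to a strongly irreducible $r_{1,\lambda}$ of $G_{F_4}$ with $\Ind_{F_4}^{F^+} r_{1,\lambda}\cong\rho_{\pi,\lambda}$. To see $F_4\subseteq F_2$, one uses polarizability of $\rho_{\pi,\lambda}$ together with the presence of $\rho_{\chi,\lambda}$ in $\Phi_\lambda$: this forces the orbit of $W_1$ to be stable under the conjugate-dual involution $W\mapsto W^\vee\otimes(\text{multiplier})$, and a standard argument with complex conjugations then enlarges the stabilizer of $W_1$ so as to contain $\Gal(F_{1,\pi}/F_2)$. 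The field $F_4$ is thus $\lambda$-independent, and since induction and restriction of weakly compatible systems are weakly compatible (\Cref{rmk: operations on compatible systems}), a standard compatible-systems argument (comparing Frobenius characteristic polynomials and Hodge--Tate weights) shows $(r_{1,\lambda})_\lambda$ is a weakly compatible system over a finite CM extension of $M_\pi$.

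Finally, to realize $r_{1,\lambda}$ automorphically: refining $F^+\subseteq F_4$ (equivalently $K\subseteq F_4$ after base change) into a tower of cyclic extensions using the solvable part of $\Gal(F_{1,\pi}/F^+)$, I would apply cyclic base change and automorphic descent for $\GL_n$ (Arthur--Clozel) to $\pi$, or to a cuspidal base change of it, obtaining a regular algebraic, polarizable, cuspidal automorphic representation $\pi_1$ of $\GL_m(\bb{A}_{F_3})$ for some finite CM extension $F_3$ of $F_2$ with $\rho_{\pi_1,\lambda}\cong\Res^{F_4}_{F_3} r_{1,\lambda}$, enlarging $F_3$ so that it is exactly the maximal imaginary CM subextension of $F_{1,\pi_1}/F^+$, and enlarging $M_\pi$ to accommodate the new coefficients. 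One checks, by inspecting \textit{loc. cit.}, that Xia's density-one irreducibility hypothesis is used there only to deduce density-one \emph{strong} irreducibility of $\pi_1$, and not in producing $F_4$, $r_{1,\lambda}$ or $\pi_1$, so the statement follows without it. I expect the main obstacle to be this last step: ensuring cuspidality and the polarization pairing survive each cyclic stage constrains which subtower of $F_{1,\pi}$ one may descend through, and one must simultaneously keep $F_4\subseteq F_2$, control $F_3$, and check that $F_{1,\pi_1}$ has $F_3$ as its maximal imaginary CM subextension; tracking the multiplier characters through the successive inductions and the finitely many enlargements of $M_\pi$ is a further bookkeeping burden.
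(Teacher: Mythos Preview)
Your proposal is correct and follows the same approach as the paper, which does not give an independent proof but simply cites \cite[Proposition 4.14]{Hui23} (itself a repackaging of the argument in the proof of \cite[Proposition 2]{Xia19}) and remarks that the density-one irreducibility hypothesis there is not used in constructing $F_4$, $(r_{1,\lambda})_\lambda$, $F_3$ and $\pi_1$. Your sketch recovers exactly this: the Clifford-theoretic descent to $F_4\subset F_2$, the compatible-system bookkeeping, the Arthur--Clozel solvable base change/descent to produce $\pi_1$ over a CM field $F_3$, and the final observation about the unnecessary hypothesis all match the cited argument.
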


In both \Cref{lemma: work with pi_1 imaginary CM} and \Cref{lemma: work with pi_1 totally real}, we have that the absolute irreducibility of \( \rho_{\pi,\lambda} \) and \( r_{1,\lambda} \) are equivalent and, moreover, follows from the irreducibility of \( \rho_{\pi_1,\lambda} \). This can be argued using Mackey’s irreducibility criterion and the regularity of \( (\rho_{\pi,\lambda})_\lambda \), in the same way as \cite[\S 4.3]{Hui23}.
We have therefore reduced the problem of proving Theorem \ref{thm: intro main theorem} to proving the following theorem.

\begin{theorem}\label{theorem: xia's reduction}
    Let \( F \) be an imaginary CM field. Let \( (\pi,\chi) \) be a regular algebraic polarized cuspidal automorphic representation of \( \GL_n(\bb{A}_F) \) with associated weakly compatible system of Galois representations \( (\rho_{\pi,\lambda})_\lambda \) of \( G_F \) defined over an imaginary CM field \( M_\pi \).
    Suppose that \( 7\nmid n \) and, if \( 4\mid n \), then $n = 4p$ for some prime number $p$.
    Suppose that \( F \) is the maximal CM subextension of \( F_{1,\pi}/F^+ \). Then there exists a Dirichlet density 1 set of primes such that for all places \( \lambda \) dividing a prime in this set, \( \rho_{\pi,\lambda} \) is strongly irreducible.
\end{theorem}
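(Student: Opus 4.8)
The plan is to carry out the strategy sketched in the introduction; the substantive work is distributed over \Cref{section: tensor decomposition} (the tensor decomposition), \S\ref{subsect: formal chars and complex conj} and \S\ref{subsect: type A formal characters} (formal characters and complex conjugation), and \Cref{section: simple irreducibility} (the case analysis). \textbf{Step 1 (a positive-density base set).} By \cite{PatTay15} there is a positive-density set $\mc{L}'$ of rational primes with $\rho_{\pi,\lambda}$ irreducible for all $\lambda\mid\ell\in\mc{L}'$. After shrinking $\mc{L}'$ we may assume it avoids the primes ramified in $M_\pi$ or in $F$ and lies inside the density-one set of \Cref{proposition: density 1 residual irreducibility of summands}; crucially, the standing hypothesis that $F$ is the maximal CM subextension of $F_{1,\pi}/F^+$ upgrades ``irreducible'' to ``strongly irreducible'' on $\mc{L}'$, since over the finite extension $E/F$ cut out by $\pi_0(G_{\rho_\lambda})$ connectedness of the monodromy makes irreducibility and strong irreducibility coincide, while $E/F$ being built from complex conjugations prevents restriction to $G_E$ from destroying irreducibility.

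\textbf{Step 2 (tensor decomposition).} Fix $\lambda_0\mid\ell_0\in\mc{L}'$. Strong irreducibility of $\rho_{\lambda_0}$ means $G_{\rho_{\lambda_0}}^\circ$ acts irreducibly, so $\rho_{\lambda_0}|_{G_E}\cong\bigotimes_{i=1}^k\rho_i'$ with each $\mf{g}_{\rho_i'}$ simple, uniquely up to reordering and twists; by \cite{patrikis2019variations} the $\rho_i'$ may be taken geometric. Descending the decomposition to $G_F$ amounts to showing $\rho_{\lambda_0}(G_F)\subset\im(\kappa)$, which (by Schur's lemma and the fact that $\Gal(E/F)$ is generated by products of complex conjugations, $F/F^+$ being the maximal CM subextension of $E/F^+$) reduces to establishing ${}^\sigma\rho_i'\cong(\rho_i')^\vee\otimes\chi_{\sigma,i}$ for each coefficient-complex-conjugation $\sigma$. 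The key input: $\det\rho_i'$ is de Rham, hence an algebraic Hecke character by \Cref{lemma: HT chars are pure}, so the $n$-th powers of Frobenius eigenvalues of $\rho_i'$ are Weil numbers; evaluating at a place $v$ where $\rho_{\lambda_0}|_{G_E}(\Frob_v)$ has pairwise distinct $n$-th powers of eigenvalues --- and adjoining those $n$-th powers to $M_\pi$, at the cost of a smaller positive-density subset of $\mc{L}'$ and a new $\lambda_0$ --- identifies ${}^\sigma\rho_i'$ as a twist of $(\rho_i')^\vee$, and then regularity of $\rho_{\lambda_0}$ gives the required isomorphism. This yields $\rho_{\lambda_0}\cong\bigotimes_{i=1}^k\rho_i$ over $G_F$, with each $\mf{g}_{\rho_i}$ simple.

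\textbf{Step 3 (potential automorphy; reduction to a simple factor).} Each $\rho_i$ is polarizable (its pairing descends from that of $\rho_{\lambda_0}$), and --- this is where $4\nmid n$ is used --- \Cref{lemma: oddness} lets us check that each $\rho_i$ is totally odd. After twisting into the Fontaine--Laffaille range and arranging $\ell_0$ large with $\zeta_{\ell_0}\notin F$ and the $\rho_i$ residually irreducible (Step 1), \Cref{thm: blggt potential automorphy} produces a finite CM extension $F'/F$ and polarized, regular algebraic, cuspidal automorphic representations $\pi_i$ of $\GL_{n_i}(\bb{A}_{F'})$ with $\rho_{\pi_i,\lambda_0}\cong\rho_i|_{G_{F'}}$ --- necessarily strongly irreducible, with $\mf{g}_{\rho_{\pi_i,\lambda_0}}$ simple. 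Because $\rk\mf{g}_\lambda$ is independent of $\lambda$ (\Cref{thm: formal char independent of l}), Goursat's lemma (applied to the tensor decomposition, which propagates through the compatible systems by weak compatibility) shows that if each $(\rho_{\pi_i,\lambda})_\lambda$ is strongly irreducible on a density-one set, then so is $(\rho_\lambda)_\lambda$ on the intersection, which is again density one. When $k\geq 2$ one re-enters the hypotheses of the theorem for each $\pi_i$ via \Cref{lemma: work with pi_1 imaginary CM} (noting $n_i\mid n$, so $7\nmid n_i$, $4\nmid n_i$); thus we may replace $\pi$ by a $\pi_i$ and assume henceforth that $\mf{g}_{\lambda_0}$ is simple and $\rho_{\lambda_0}$ strongly irreducible.

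\textbf{Step 4 (case analysis), and the main obstacle.} By \Cref{lemma: formal char is multiplicity-free}, $t_{\lambda_0}\colon\mf{g}_{\lambda_0}\hookrightarrow\mf{gl}_n$ is multiplicity-free and irreducible, hence one of the entries of \Cref{thm: multiplicity-free irreps}; the $\mf{g}_2$ entry would force $n_i=7$, excluded by $7\nmid n$. For every other $\lambda$ the induced $t_\lambda$ has the same formal character as $t_{\lambda_0}$ (\Cref{thm: formal char independent of l}), the same rank, and --- by \cite{hui2013monodromy} --- the same number of type $A_k$ simple factors of a type-$A$ equal-rank subalgebra for $k=6$ and $k\geq 9$; moreover the common formal character endows the weight lattices with isometric inner products. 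Running through the list of \Cref{thm: multiplicity-free irreps}, in each case one either pins down $\mf{g}_\lambda$ and $t_\lambda$ tightly enough that $\rho_\lambda$ is forced to be strongly irreducible, or produces a polarizable irreducible summand of $\rho_\lambda$ and applies \Cref{thm: irreducibility from polarized summands}; the criterion ``$0\in W$'' of \S\ref{subsect: formal chars and complex conj} furnishes such a summand whenever the common formal character is suitably self-dual, and the weight-lattice isometry disposes of the remaining $\sllie_{2k}$ cases. Taking $\mc{L}$ to be the resulting density-one set completes the proof. I expect Step 4 --- the exhaustive case-by-case elimination of every reducible $\rho_\lambda$ with the prescribed formal character --- to be the main obstacle, with the $\sllie_{2k}$ family and the guaranteed existence of a polarizable summand the most delicate points; it is precisely the breakdown of these inputs (non-self-dual constituents for $\mf{g}_2$, and unverifiable oddness of the tensorands when $4\mid n$) that forces the divisibility hypotheses, and the complex-conjugation descent of Step 2 is the other technically demanding ingredient.
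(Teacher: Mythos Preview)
Your proposal is essentially correct and tracks the paper's approach closely; the substantive content of your Steps 2 and 4 matches \Cref{section: tensor decomposition} and \Cref{section: simple irreducibility} respectively. Two points deserve comment, however.

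In Step 3 you propose, when $k\geq 2$, to ``re-enter the hypotheses of the theorem for each $\pi_i$ via \Cref{lemma: work with pi_1 imaginary CM}'' and then ``replace $\pi$ by a $\pi_i$''. As written this is an implicit recursion on $n$ that you never set up, and re-applying Xia's reduction to each $\pi_i$ is both unnecessary and awkward (it may change the dimension and base field yet again). The paper's resolution is cleaner: it observes that the case analysis of Step 4 does \emph{not} require the maximal-CM-subextension hypothesis at all --- only the existence of a single prime $\lambda_0$ with $\rho_{\lambda_0}$ strongly irreducible, $\mf{g}_{\lambda_0}$ simple, $\ol{\rho_{\lambda_0}}|_{G_{F(\zeta_{\ell_0})}}$ irreducible, and $\ell_0\geq 2n+2$ unramified in $F$. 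The paper therefore isolates Step 4 as the separate \Cref{thm: reduction to simple tensorands} under these weaker hypotheses, verifies them directly for each $\pi_i$ (\Cref{cor: tensorands are potentially automorphic} and the argument immediately following), and applies that theorem without any recursion. Your inductive variant can be made to work, but you should either set up the induction explicitly or, better, notice that Step 4 is self-contained once a good $\lambda_0$ exists.

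Your Step 1 is also imprecise. The positive-density base set is not an arbitrary refinement of the Patrikis--Taylor set but specifically $\mc{L}=\{\ell : [\Frob_\ell]=[c]\text{ in }\Gal(N/\bb{Q})\}$ for a suitable Galois extension $N\supset M_\pi$ containing the images of all embeddings of $F_{1,\pi}$; strong irreducibility on this set is supplied directly by \cite[Corollary~1]{Xia19} (recorded here as \Cref{lemma: lie-irreducible patrikis taylor prime}), not by the informal argument you sketch. This specific choice of $\mc{L}$ is also what drives the coefficient-complex-conjugation descent in Step 2: one needs $\Frob_\ell$ conjugate to $c$ so that complex conjugation on $M_\pi$ extends continuously to $M_{\pi,\lambda}$ and so that the comparison of Hodge--Tate weights in the proof of \Cref{prop: tensor decomposition over CM field} goes through.
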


The main advantage of applying this reduction step is that we can assume strong irreducibility in a positive density set of primes.
To define this set, let $N$ be a finite extension of $M_\pi$ containing the images of all embeddings $F_{1,\pi} \hookrightarrow \ol{M_\pi}$.
After replacing $N$ by a further extension if necessary, we may suppose that $N/\bb{Q}$ is Galois.
Choose a complex conjugation $c \in \Gal(N/\bb{Q})$ and let $[c] \subset \Gal(N/\bb{Q})$ denote the conjugacy class of $c$.
We then set
\[\mathcal{L} = \{\ell \mid [\Frob_\ell] = [c], \ell \text{ is unramified in } N \},\]
a set of rational primes of positive Dirichlet density.

\begin{theorem}[{\cite[Corollary 1]{Xia19}}]\label{lemma: lie-irreducible patrikis taylor prime}
   Suppose that $\pi$ satisfies the hypotheses of Theorem $\ref{theorem: xia's reduction}$.
    Then for every $\ell \in \mathcal{L}$ and prime $\lambda|\ell$ of $M_\pi$ as above, the Galois representation \( \rho_{\pi,\lambda} \otimes \ol{\bb{Q}}_l \) is strongly irreducible.
\end{theorem}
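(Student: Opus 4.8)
The plan is to obtain the conclusion in two stages: first, plain irreducibility of $\rho_{\pi,\lambda}\otimes\Qlbar$ from the positive Dirichlet density theorem of \cite{PatTay15}; second, the upgrade to strong (i.e.\ Lie-)irreducibility using the hypothesis that $F$ is the maximal CM subextension of $F_{1,\pi}/F^{+}$.

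\textbf{Stage 1.} The set $\mc{L}$ has been defined exactly so that it falls inside the positive Dirichlet density set of rational primes produced by \cite{PatTay15}. That set is of the shape ``$\ell$ such that a lift of complex conjugation in a suitable finite Galois extension of $\bb{Q}$ is conjugate to a Frobenius at $\ell$'', and since $N$ is a Galois extension of $\bb{Q}$ containing $M_\pi$ (hence containing the finite extension relevant to \cite{PatTay15}), the condition $[\Frob_\ell]=[c]$ in $\Gal(N/\bb{Q})$ implies the condition required by \cite{PatTay15}. Consequently, for every $\ell\in\mc{L}$ and every place $\lambda\mid\ell$ of $M_\pi$, the representation $\rho_{\pi,\lambda}\otimes\Qlbar$ is irreducible. (The divisibility hypotheses on $n$ play no role in this lemma.)

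\textbf{Stage 2.} Suppose for contradiction that $\rho_{\pi,\lambda}\otimes\Qlbar$ is irreducible but not strongly irreducible, so that $G_{\rho_{\pi,\lambda}}^{\circ}$ fails to act irreducibly. A standard argument (Clifford theory together with the fact that $G_{\rho_{\pi,\lambda}}^{\circ}$ is normal of finite index in $G_{\rho_{\pi,\lambda}}$) then shows that $\rho_{\pi,\lambda}\otimes\Qlbar\cong\Ind_{F'}^{F}\tau$ for some proper finite extension $F'/F$ and some irreducible representation $\tau$ of $G_{F'}$, where $F'/F$ may be taken to be a subextension of the finite extension of $F$ cut out by $\pi_0(G_{\rho_{\pi,\lambda}})$. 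We first show $F'\subseteq F_{1,\pi}$: indeed $F\subseteq F_{1,\pi}$ and $\Ind_{F^{+}}^{F}\rho_{\pi,\lambda}$ is a direct summand of $\Phi_\lambda$, whose restriction to $G_{F_{1,\pi}}$ has connected algebraic monodromy group; since the monodromy group of $\rho_{\pi,\lambda}|_{G_{F_{1,\pi}}}$ is the image of the monodromy group of $\Phi_\lambda|_{G_{F_{1,\pi}}}$ under an algebraic projection, it too is connected, so the field cut out by $\pi_0(G_{\rho_{\pi,\lambda}})$ --- and therefore $F'$ --- lies inside $F_{1,\pi}$. Next we use polarizability: comparing the polarization isomorphism, which identifies $\rho_{\pi,\lambda}^{c}$ (with $c$ a lift of the CM involution of $F$) with a character twist of $\rho_{\pi,\lambda}^{\vee}$, against the Mackey/Clifford decomposition of $\Ind_{F'}^{F}\tau$ --- and invoking irreducibility of $\tau$ and the uniqueness of such decompositions --- forces $F'$ to be a CM field; this is essentially the argument of \cite{PatTay15} (compare also the discussion of polarized inductions in \cite[\S1]{BLGGT14} and \cite[\S4.3]{Hui23}). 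Thus $F'$ is a CM field with $F^{+}\subseteq F\subseteq F'\subseteq F_{1,\pi}$, i.e.\ a CM subextension of $F_{1,\pi}/F^{+}$, and maximality of $F$ gives $F'=F$, contradicting that $F'/F$ is proper. Hence $\rho_{\pi,\lambda}\otimes\Qlbar$ is strongly irreducible.

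\textbf{Main obstacle.} The crux is the assertion in Stage 2 that the descent field $F'$ must be CM: this is the step that genuinely uses the polarization, and making it precise requires tracking carefully how the polarization pairing interacts with the induced structure and with the CM involution, essentially re-running the relevant part of the argument of \cite{PatTay15} in this setting. A more minor point is the bookkeeping needed to ensure that the single field $N$ simultaneously places $\mc{L}$ inside the Patrikis--Taylor set and retains enough information about $F_{1,\pi}$ for the connectedness argument above --- which is precisely the reason $N$ was chosen to contain the images of all embeddings $F_{1,\pi}\hookrightarrow\ol{M_\pi}$.
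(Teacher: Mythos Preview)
The paper does not give its own proof of this statement; it is cited directly as \cite[Corollary~1]{Xia19}. Your two-stage outline has the right shape, and Stage~1 is correct: since $N$ is Galois over $\bb{Q}$ and contains $M_\pi$, the condition $[\Frob_\ell]=[c]$ in $\Gal(N/\bb{Q})$ restricts to the Patrikis--Taylor condition over (the Galois closure of) $M_\pi$, so $\rho_{\pi,\lambda}\otimes\Qlbar$ is irreducible for $\ell\in\mc{L}$.

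The gap you flag in Stage~2 is genuine, and your sketch does not close it. From the polarization $\rho_{\pi,\lambda}^{c}\cong\rho_{\pi,\lambda}^{\vee}\otimes\chi$ together with $\rho_{\pi,\lambda}\cong\Ind_{F'}^{F}\tau$, Mackey theory only yields that $\Ind_{c(F')}^{F}\tau'$ and $\Ind_{F'}^{F}(\tau^{\vee}\otimes\chi)$ are isomorphic irreducible representations; this says that $c(F')$ is $\Gal(F^{\circ}/F)$-conjugate to $F'$, not that $c(F')=F'$. So polarizability and Clifford theory alone do not force $F'$ to be CM. (Concretely: if $\Gal(F_{1,\pi}/F^{+})$ is cyclic of order~$4$ with $F$ the unique quadratic subfield, nothing in your argument rules out $\rho_{\pi,\lambda}\cong\Ind_{F_{1,\pi}}^{F}\tau$.)

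What your sketch does not exploit is the full strength of the definition of $\mc{L}$. The field $N$ was chosen to contain the images of \emph{all} embeddings $F_{1,\pi}\hookrightarrow\ol{M_\pi}$, so the condition $[\Frob_\ell]=[c]$ in $\Gal(N/\bb{Q})$ governs the splitting of $\ell$ in $F_{1,\pi}$, not merely in $M_\pi$. It is this extra information --- together with the regularity of Hodge--Tate weights and the fact that $F$ is the maximal CM subfield of $F_{1,\pi}$ --- that Xia uses to upgrade irreducibility to strong irreducibility. Your proposal invokes the $F_{1,\pi}$-part of the definition of $N$ only for the connectedness bookkeeping, not for the CM-ness of $F'$, and that is where the argument falls short.
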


To conclude this section, we compare the extension $F_{1,\pi}/F$ to the monodromy group $G_{\rho_\lambda}$ itself.
Similarly to the above, by Theorem \ref{thm: formal char independent of l} there exists a finite extension \( F^\circ/F \) such that for every $\lambda$ there is an isomorphism
\[ \Gal(F^\circ/F) \cong G_{\rho_{\pi,\lambda}} / G_{\rho_{\pi,\lambda}}^\circ \]
induced via \( \rho_{\pi,\lambda} \). In particular, \( F^\circ \) is the minimal extension of \( F \) such that the restriction of \( \rho_{\pi,\lambda} \) to \( F^\circ \) has connected algebraic monodromy group.
The following lemma relates \( F^\circ \) to \( F_{1,\pi} \).

\begin{lemma} \label{lemma: F1,pi contains Fcirc}
There is an embedding \( F^\circ \hookrightarrow F_{1,\pi} \).
\end{lemma}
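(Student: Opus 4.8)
The statement to prove is Lemma \ref{lemma: F1,pi contains Fcirc}: there is an embedding $F^\circ \hookrightarrow F_{1,\pi}$. Recall that $F^\circ/F$ is the minimal extension over which $\rho_{\pi,\lambda}$ acquires connected monodromy, while $F_{1,\pi}/F^+$ is the minimal extension over which the auxiliary system $\Phi_\lambda = (\Ind_{F^+}^F \rho_{\pi,\lambda}) \oplus \rho_{\chi,\lambda}$ acquires connected monodromy. So the content is a comparison between the component groups $\pi_0(G_{\rho_{\pi,\lambda}})$ and $\pi_0(G_{\Phi_\lambda})$. The natural approach is to fix one place $\lambda$ and work entirely on the Galois side: show that the open subgroup of $G_F$ on which $\rho_{\pi,\lambda}$ has connected Zariski closure contains the open subgroup of $G_{F^+}$ (intersected with $G_F$) on which $\Phi_\lambda$ has connected Zariski closure. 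Since these open subgroups cut out $F^\circ$ and $F_{1,\pi}\cdot F$ respectively (inside a common algebraic closure), an inclusion of subgroups translates into the desired embedding of fields.

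**Key steps.** First I would reduce to a single auxiliary place $\lambda$, which is legitimate since by Theorem \ref{thm: formal char independent of l} the extensions $F^\circ$ and $F_{1,\pi}$ are independent of $\lambda$. Next, observe that $G_{\Phi_\lambda}^\circ$ surjects onto $G_{\rho_{\pi,\lambda}|_{G_{F}}}^\circ$ via the natural projection: indeed $\Phi_\lambda|_{G_F}$ contains $\rho_{\pi,\lambda}$ (as a summand of $\Ind_{F^+}^F\rho_{\pi,\lambda}|_{G_F} \cong \bigoplus_{\gamma\in\Gal(F/F^+)}\rho_{\pi,\lambda}^\gamma$), so $G_{\rho_{\pi,\lambda}}$ is a quotient of $G_{\Phi_\lambda|_{G_F}}$, and quotients of connected groups are connected. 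Concretely: the preimage in $G_{F^+}$ of $G_{\Phi_\lambda}^\circ$ cuts out $F_{1,\pi}$, and its intersection with $G_F$ is contained in the preimage in $G_F$ of $G_{\rho_{\pi,\lambda}}^\circ$, which cuts out $F^\circ$. Therefore the open subgroup of $G_F$ corresponding to $F_{1,\pi}F$ is contained in the one corresponding to $F^\circ$, giving $F^\circ \subseteq F_{1,\pi}F$. Finally, since $F^\circ/F$ is contained in the compositum $F_{1,\pi}F$ and (by Lemma \ref{lemma: work with pi_1 imaginary CM}, or by direct inspection of the construction) $F_{1,\pi}$ already contains $F$ in the relevant setup—or at worst one argues that $F^\circ$, being generated over $F$ by traces/eigenvalue data visible in $\Phi_\lambda$, actually lands inside $F_{1,\pi}$ itself—one concludes there is an embedding $F^\circ \hookrightarrow F_{1,\pi}$.

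**Main obstacle.** The one subtlety is the passage from ``$F^\circ \subseteq F_{1,\pi}\cdot F$'' to ``$F^\circ$ embeds in $F_{1,\pi}$'': a priori $F_{1,\pi}$ is an extension of $F^+$, not of $F$, so one must check compatibility of the two field diagrams. The clean way is to note that, since $\Phi_\lambda$ already records $\rho_{\pi,\lambda}$ and all its $\Gal(F/F^+)$-conjugates, the restriction $\Phi_\lambda|_{G_{F_{1,\pi}\cap \ol F}}$ has connected monodromy, hence so does $\rho_{\pi,\lambda}|_{G_{(F_{1,\pi}\cap\ol F)\cdot F}}$; as $F^\circ$ is the \emph{minimal} such extension of $F$, we get $F^\circ \subseteq F_{1,\pi}\cdot F$, and since $F \subset F_{1,\pi}$ (because $F/F^+$ is a CM subextension of $F_{1,\pi}/F^+$, using that $F$ is the maximal CM subextension), in fact $F_{1,\pi}\cdot F = F_{1,\pi}$, so $F^\circ \hookrightarrow F_{1,\pi}$ as desired. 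I expect the bookkeeping of which fields contain which to be the only place requiring care; the group-theoretic heart (connectedness is preserved under the projection $\Phi_\lambda \twoheadrightarrow \rho_{\pi,\lambda}$) is routine.
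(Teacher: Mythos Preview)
Your proposal is correct and takes essentially the same approach as the paper: restrict $\Phi_\lambda$ to $G_F$, observe that $\rho_{\pi,\lambda}$ occurs as a direct summand, use that a summand of a representation with connected algebraic monodromy again has connected monodromy, and conclude by minimality of $F^\circ$ together with $F\subset F_{1,\pi}$. The paper simply asserts $F\subset F_{1,\pi}$ at the outset (it is part of the standing hypothesis that $F$ is the maximal CM subextension of $F_{1,\pi}/F^+$), whereas you spell out this point explicitly; otherwise the arguments are identical.
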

\begin{proof}
Since \( F\subset F_{1,\pi} \), and \( \Phi_\lambda|_F \) is isomorphic to a direct sum of three representations, one of which is equal to \( \rho_{\pi,\lambda} \), we have that if \( \Phi_\lambda|_{F_{1,\pi}} \) has connected algebraic monodromy group then the same is true for \( \rho_{\pi,\lambda}|_{F_{1,\pi}} \). Therefore, since \( F^\circ \) was the minimal such extension for \( \rho_{\pi,\lambda} \), we immediately have that \( F^\circ\subset F_{1,\pi} \).
\end{proof}

This lemma tells us that \( G_{\rho_{\pi,\lambda}|_{F_{1,\pi}}} = G_{\rho_{\pi,\lambda}}^\circ \), since by definition \( G_{\rho_{\pi,\lambda}}^\circ = G_{\rho_{\pi,\lambda}|_{F^\circ}} \) and \( F^\circ\subset F_{1,\pi} \) is of finite index.

\section{Tensor product factorisation} \label{section: tensor decomposition}

We fix some notation which will be in place throughout this section.
Let $(\pi,\chi)$ be as in the statement of Theorem \ref{theorem: xia's reduction}.
We recall from \Cref{section: Xia reduction} that we will view the weakly compatible system attached to $(\pi,\chi)$ as defined over a sufficiently large CM field $M_\pi$.
By assumption, $F$ is the maximal CM subfield of a certain finite extension $F_{1,\pi}/F^+$.
We fix a finite extension $N/M_\pi$ containing the image of every embedding $F_{1,\pi} \hookrightarrow N$ such that $N/\bb{Q}$ is Galois.
We recall the positive density set \(\mathcal{L} = \{\ell \mid [\Frob_\ell] = [c], \ell \text{ is unramified in } N \}\) of rational primes.

Our goal will be to reduce the proof of Theorem \ref{theorem: xia's reduction} to the case where the Lie algebra $\mf{g}_\lambda$ is simple for some prime $\lambda$.
The strategy will be to decompose a Galois representation attached to $\pi$ as a tensor product over $F$ itself and then apply potential automorphy to each of the tensorands.

We begin with a general result about decomposing Galois representations into tensor products.

\begin{proposition}\label{proposition: decompose Galois rep as tensor product}
    Let \( E \) be a number field. 
    Let \( \rho:G_E\to\GL_{n}(\Qlbar) \) be a strongly irreducible representation such that the following conditions hold.
    \begin{enumerate}
        \item $\rho$ is unramified outside of finitely many places.
        \item There exists an isomorphism of semisimple Lie algebras  $\Lie G_{\rho}^{\circ,\der}  \cong \prod_{i \in I} \mf{g}_i$, where $I$ is a finite index set and, for $i \in I$, $\mf{g}_i$ is a semisimple Lie algebra, under which the representation of $\Lie G_{\rho}^{\circ,\der}$ on $\Qlbar^n$ is isomorphic to $\boxtimes_{i \in I} V_i$ where, for $i \in I$, $V_i$ is a faithful irreducible representation of $\mf{g}_i$ of degree $n_i > 1$.
        \item $\rho(G_E) \subset \mathrm{im}(\prod_{i \in I} \GL_{n_i} \xrightarrow{\otimes} \GL_n)(\Qlbar)$, where the Kronecker map $\otimes$ is induced by the isomorphism $\boxtimes_{i \in I} V_i \cong \Qlbar^n$.
    \end{enumerate}
    Then there exist strongly irreducible representations \( \rho_i: G_{E}\to \GL_{n_i}(\Qlbar) \), unramified outside finitely many places, such that \(\rho \cong \otimes_{i \in I} \rho_i\) inducing isomorphisms $\Lie G_{\rho_i}^{\circ,\der} \cong \mf{g}_i$.
    The $\rho_i$ are uniquely defined, up to twisting by characters.

    Suppose in addition that the following conditions hold.
    \begin{enumerate}[resume]
        \item $\rho$ is de Rham (resp. crystalline) at each place $v|\ell$ of $E$.
        \item $\rho \cong r|_{G_E}$ for some polarizable Galois representation
        \[
        r: G_{E^{\mathrm{CM}}} \to \GL_n(\Qlbar),
        \]
        where $E^{\mathrm{CM}}$, the maximal CM subfield of $E$, is imaginary CM.
    \end{enumerate}
     Then we can additionally take each $\rho_i$ to be de Rham (resp. crystalline) at each place $v|\ell$.
\end{proposition}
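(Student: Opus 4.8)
The plan is to first establish the tensor decomposition $\rho \cong \otimes_{i \in I} \rho_i$ and its uniqueness up to twist, and then upgrade the de Rham (resp.\ crystalline) property to each factor. For the existence of the $\rho_i$, the key point is that the Kronecker map $\kappa: \prod_i \GL_{n_i} \to \GL_n$ has kernel the central subtorus $\{(\zeta_i) : \prod_i \zeta_i = 1\}$, and that by hypothesis (3) the image $\rho(G_E)$ lands in $\mathrm{im}(\kappa)$. One would like to lift $\rho$ along $\kappa$ to a continuous homomorphism $G_E \to \prod_i \GL_{n_i}(\Qlbar)$, or at least to the quotient $\prod_i \PGL_{n_i}(\Qlbar)$; the obstruction to doing so lives in a Galois cohomology group with coefficients in the (commutative) kernel of $\kappa$. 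The clean way to organize this is to first project $\rho$ to $\prod_i \PGL_{n_i}(\Qlbar)$ — which is possible and canonical because $\mathrm{im}(\kappa)/(\text{scalars})$ is a quotient of $\prod_i \PGL_{n_i}$ matching the Lie algebra decomposition in (2) — obtaining projective representations $\bar\rho_i: G_E \to \PGL_{n_i}(\Qlbar)$, and then invoke a lifting result (in the style of \cite[\S2]{patrikis2019variations}) to lift each $\bar\rho_i$ to an honest representation $\rho_i: G_E \to \GL_{n_i}(\Qlbar)$; this is where one uses that $G_E$ is the absolute Galois group of a number field and that the relevant obstruction class can be killed after twisting. Strong irreducibility of each $\rho_i$ follows because $G_{\rho_i}^\circ$ surjects onto (an isogeny factor of) the simple-up-to-center group with Lie algebra $\mf{g}_i$ acting through the \emph{irreducible} representation $V_i$; irreducibility of the Lie algebra action forces irreducibility of $\rho_i|_{G_{E'}}$ for all finite $E'/E$. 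That the product $\prod_i \rho_i$ then differs from $\rho$ by a character (so that after twisting one factor we get an honest isomorphism $\rho \cong \otimes_i \rho_i$) comes from comparing the two maps $G_E \to \mathrm{im}(\kappa)$: they induce the same map to $\mathrm{im}(\kappa)/(\text{scalars})$, hence differ by a map into the scalars, i.e.\ a character. Uniqueness up to twist is immediate from this same argument: any two solutions induce the same projective representations, so differ by characters.

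For the second half, the goal is to show that if $\rho$ is de Rham (resp.\ crystalline) at $v \mid \ell$ and, via (5), extends to a polarizable $r$ over the imaginary CM field $E^{\mathrm{CM}}$, then the factors $\rho_i$ can be chosen de Rham (resp.\ crystalline). The natural approach: the de Rham (resp.\ crystalline) property is insensitive to twisting by a finite-order character and, more importantly, can be detected on the associated projective representation together with a correcting character. Concretely, $\rho_i^{\otimes n_i} \otimes (\text{something})$ or more simply $\mathrm{Sym}$- and $\wedge$-constructions applied to $\rho$ recover, up to a character twist, tensor powers of the $\rho_i$, and since $\rho$ is de Rham these are de Rham; so each $\rho_i^{\otimes N}$ is de Rham up to twisting by a character $\psi_i$. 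One then argues that $\psi_i$ itself is de Rham (a Hodge--Tate, hence de Rham, character — here is where Lemma \ref{lemma: HT chars are pure} and the fact that a compatible-system / motivic argument forces the correcting character to be Hodge--Tate come in), and therefore — using that $\rho_i^{\otimes N} \otimes \psi_i^{-1}$ is de Rham and that being de Rham can be propagated from a tensor power back to the representation after a further finite twist (the determinant of $\rho_i$ is de Rham, so its roots of unity-ambiguity is controlled) — one extracts a de Rham representative in the twist-equivalence class of $\rho_i$. The polarizability of $r$ over $E^{\mathrm{CM}}$ is used to ensure the correcting characters are themselves compatible-system characters (hence de Rham), via the kind of Schur's lemma / uniqueness-of-polarization argument appearing in the proof of Lemma \ref{lemma: oddness}. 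The crystalline case is identical, replacing ``de Rham'' by ``crystalline'' throughout and noting that a crystalline representation twisted by an unramified (here even finite-order unramified) character is again crystalline, and that the correcting characters are unramified at $v$ when $\rho$ is crystalline there.

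The main obstacle I anticipate is the lifting step: extracting honest representations $\rho_i$ from the projective representations $\bar\rho_i$ in a way that is simultaneously continuous, compatible with $\rho \cong \otimes_i \rho_i$, and — in the de Rham case — compatible with the Hodge theory. The existence of \emph{some} continuous lift is guaranteed by the vanishing/finiteness results for the relevant Galois cohomology of number fields (as in \cite{patrikis2019variations}), but one must be careful that the lift can be chosen de Rham (resp.\ crystalline) at $\ell$. The trick, which I would make precise following \cite{patrikis2019variations}, is that one is free to adjust the lift by any character of $G_E$, and the space of characters of $G_E$ is large enough (it surjects onto the characters of $G_{E_v}$ up to the global obstruction) to arrange the correct Hodge--Tate--Sen weights locally at every $v \mid \ell$ and to kill ramification in the crystalline case; the key input making this legitimate is precisely Lemma \ref{lemma: HT chars are pure}, which says Hodge--Tate characters of number fields are automatically de Rham and arise from algebraic Hecke characters, so the correcting character can be taken globally de Rham. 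A secondary, more bookkeeping-style difficulty is checking that the number field $E^{\mathrm{CM}}$ being imaginary CM (rather than just CM) genuinely gets used — it ensures the polarization on $r$ has a multiplier character of the right parity, which feeds into showing the correcting characters (obtained by comparing $r$, $r^c$, and $r^\vee$) are of the expected type, and hence de Rham.
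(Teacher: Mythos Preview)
Your approach to the first part---lifting $\rho$ through the Kronecker map by first passing to $\prod_i \PGL_{n_i}$ and then lifting each projective factor---is essentially what the paper does, which simply cites \cite[Lemma 3.2.13]{CEG22} and \cite[Proposition 5.3]{Conrad11} for existence and \cite[Lemma 3.2.14]{CEG22} for uniqueness. Your explanation of strong irreducibility and uniqueness up to twist is correct.

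The second part, however, misidentifies where polarizability enters. The paper does not argue via tensor powers of the $\rho_i$; it applies \cite[Theorem 3.2.10]{patrikis2019variations} directly to the surjection $\prod_i \GL_{n_i} \to \mathrm{im}(\kappa)$, whose kernel is a central torus. That theorem produces a \emph{geometric} (de Rham, unramified almost everywhere) lift, but only under \cite[Hypothesis 3.2.4]{patrikis2019variations}, which is a condition on the Hodge--Tate cocharacters of $\rho$: roughly, one must be able to lift each $\mu_\tau: \bb{G}_m \to \mathrm{im}(\kappa)$ to a cocharacter of $\prod_i \GL_{n_i}$, and the obstruction to this is genuinely nontrivial in general. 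Polarizability of $r$ over the imaginary CM field $E^{\mathrm{CM}}$ is exactly what forces the Hodge--Tate cocharacters to satisfy this hypothesis (via the relation $\HT_\tau + \HT_{\tau c} = \text{const}$ imposed by the polarization). It is \emph{not} used, as you suggest, to show that certain correcting characters are themselves de Rham, nor is Lemma~\ref{lemma: HT chars are pure} the relevant input here---that lemma concerns purity, not lifting of Hodge cocharacters.

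Your tensor-power detour also has a gap: knowing that $\rho_i^{\otimes N}$ is de Rham for some $N$ does not by itself imply that $\rho_i$ is de Rham up to a character twist; making this precise would essentially require re-proving Patrikis's lifting theorem. For the crystalline case the paper instead invokes \cite[Proposition 6.5]{Conrad11}, which handles crystalline lifts through torus-kernel surjections directly; your claim that the correcting characters are automatically unramified at $v$ when $\rho$ is crystalline there is not justified without further argument.
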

\begin{proof}
The first part of the proposition follows from \cite[Lemma 3.2.13]{CEG22} and \cite[Proposition 5.3]{Conrad11} (with the former reference not making use of their assumption that the number field $E$ is imaginary CM).
The proof of uniqueness is the same as that of \cite[Lemma 3.2.14]{CEG22}.

Suppose now that $\rho$ is also de Rham and a restriction of a polarizable representation $r$ of $E^{\mathrm{CM}}$.
We may therefore apply \cite[Theorem 3.2.10]{patrikis2019variations} to the homomorphism $\prod_{i \in I} \GL_{n_i} \to \mathrm{im}(\prod_{i \in I} \GL_{n_i} \xrightarrow{\otimes} \GL_n)$, with \cite[Hypothesis 3.2.4]{patrikis2019variations} holding because of the polarizability assumption on $r$ and the conditions it imposes on the Hodge--Tate cocharacters of $\rho$.
If $\rho$ is furthermore crystalline, applying \cite[Proposition 6.5]{Conrad11} to the homomorphism $\prod_{i \in I} \GL_{n_i} \to \mathrm{im}(\prod_{i \in I} \GL_{n_i} \xrightarrow{\otimes} \GL_n)$, whose kernel is a torus, we can twist each $\rho_i|_{G_{F_v}}$ by some finite order character for each $v|\ell$ so that they are crystalline.
By global class field theory, we can therefore twist each $\rho_i$ by a finite order character to obtain a crystalline representation.
\end{proof}

Using Proposition \ref{proposition: decompose Galois rep as tensor product}, we will write $\rho_{\pi,\lambda}|_{G_{F_{1,\pi}}}$ as a tensor product of de Rham Galois representations for suitably chosen $\lambda$.
In order to obtain a similar decomposition of $\rho_{\pi,\lambda}$, we will show that the action of $\Gal(F_{1,\pi}/F)$ on these tensorands is trivial.
If we knew that these Galois representations were both: (1) defined over a number field, and (2) pure, then we could firstly show (using the Chebotarev density theorem) that the dual of each tensorand is a twist of the complex conjugated (on coefficients) tensorand (cf. \cite[Lemma 1.2(1)]{PatTay15}).
Using polarizability and regularity of $\rho_{\pi,\lambda}$ together with our choice of $\lambda$, we would be able to deduce that a generating set of $\Gal(F_{1,\pi}/F)$ indeed acts trivially on the tensorands (by an argument similar to the proof of \cite[Proposition 4]{Xia19}).

Unfortunately, we do not know (a priori) that these tensorands are pure.
Instead, we will use the existence of a sufficiently generic element in the image of $\rho_{\pi,\lambda}|_{G_{F_{1,\pi}}}$ together with purity of Hodge--Tate characters to establish that the dual of each tensorand is a twist of the complex conjugated tensorand.
This motivates what follows below.

\begin{lemma}\label{lemma: generic element of image}
Let $E$ be a number field.
Let $\rho: G_E \to \GL_n(\Qlbar)$ be a Galois representation unramified outside of a finite set $S$ of primes of $E$ which is Hodge--Tate at each $v|\ell$ with regular Hodge--Tate weights.
Then there exists a positive Dirichlet density set of primes $v \not\in S$ such that, letting $\alpha_1,\ldots,\alpha_n$ denote the eigenvalues of $\rho(\Frob_v)^{\mathrm{ss}}$, we have for every $i \neq j$
\[
\big(\frac{\alpha_i}{\alpha_j}\big)^n \neq 1.
\]
\end{lemma}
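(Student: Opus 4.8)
The plan is to exploit the fact that, by Lemma \ref{lemma: formal char is multiplicity-free}, the formal character of $\rho$ is multiplicity-free, and to combine this with the Chebotarev density theorem applied to the algebraic monodromy group $G_\rho$. First I would pass to $\rho|_{G_{E'}}$ for the finite extension $E'/E$ cut out by $\pi_0(G_\rho)$, so that we may work with the connected group $G := G_\rho^\circ$; a positive density set of Frobenii in $G_{E'}$ lands in a positive density subset of $G_E$, so this reduction is harmless. Inside the connected reductive group $G \subset \GL_{n,\Qlbar}$, fix a maximal torus $T$; by multiplicity-freeness the $n$ characters $\chi_1,\dots,\chi_n$ of $T$ appearing in the standard representation are pairwise distinct. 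The key elementary point is then: the locus
\[
U = \Big\{ t \in T : \chi_i(t)^n = \chi_j(t)^n \text{ for some } i \neq j \Big\}
\]
is a proper Zariski-closed subset of $T$, because for each pair $i \neq j$ the character $(\chi_i/\chi_j)^n$ is nontrivial on $T$ (as $\chi_i \neq \chi_j$ and $X_*(T)$ is torsion-free, the $n$-th power $(\chi_i\chi_j^{-1})^n$ is still a nonzero element of $X^*(T)$), hence $\{t : \chi_i(t)^n = \chi_j(t)^n\}$ is a proper closed subgroup. Consequently the set $V$ of elements of $G$ whose semisimple part is $G$-conjugate into $T \setminus U$ is a nonempty Zariski-open subset of $G$ that is stable under conjugation, and an element $g \in V$ has the property that the eigenvalues $\alpha_1,\dots,\alpha_n$ of $g^{\mathrm{ss}}$ satisfy $(\alpha_i/\alpha_j)^n \neq 1$ for all $i \neq j$.

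Next I would invoke the Chebotarev density theorem for $\ell$-adic monodromy groups. Since $\rho$ is unramified outside $S$ and its image is Zariski-dense in $G$, a suitable version of Chebotarev (as used in \cite{BLGGT14}, building on Serre's work on $\ell$-adic Chebotarev) guarantees that the set of primes $v \notin S$ for which $\rho(\Frob_v)$ lies in any given nonempty conjugation-stable Zariski-open subset of $G$ has positive Dirichlet density, provided that open subset meets the support of the image measure — equivalently, provided it is nonempty and $G$-stable. Applying this to the open set $V$ constructed above yields a positive density set of $v$ with $\rho(\Frob_v)^{\mathrm{ss}}$ having eigenvalues $\alpha_i$ such that $(\alpha_i/\alpha_j)^n \neq 1$ for $i \neq j$, which is exactly the claim.

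The main obstacle I anticipate is making the Chebotarev step fully rigorous: one must ensure that ``positive density of Frobenii in a nonempty $G$-stable open subset'' holds, which requires knowing that the image of $\rho$ is Zariski-dense in $G$ (true by definition of $G = G_\rho$) and a form of the $\ell$-adic Chebotarev density theorem valid for non-connected, possibly non-semisimple $G$ — this is precisely why I would first reduce to the connected case by base-changing to $E'$, and then cite the equidistribution/density statement already employed in the proof of \cite[Proposition 5.3.2]{BLGGT14}. A secondary point to be careful about is the torsion-freeness argument showing $(\chi_i\chi_j^{-1})^n \neq 0$ in $X^*(T)$: this is immediate since $X^*(T) \cong \bb{Z}^r$ is torsion-free, so $n \cdot \psi = 0$ forces $\psi = 0$; thus multiplicity-freeness ($\chi_i \neq \chi_j$) genuinely upgrades to $\chi_i^n \neq \chi_j^n$ on the torus, which is the crux that prevents $V$ from being empty.
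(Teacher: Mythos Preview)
Your argument is correct and rests on the same two ingredients as the paper's: multiplicity-freeness of the formal character (Lemma \ref{lemma: formal char is multiplicity-free}) together with torsion-freeness of $X^*(T)$ to see that the bad locus is a proper closed subvariety, and then an equidistribution/Chebotarev input to find a positive density set of Frobenii avoiding it.

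The packaging differs slightly. The paper does not construct the open set $V$ directly; instead it passes to the $n$-fold tensor power $\varphi^{\otimes n}: G_\rho^\circ \to \GL_{n^n}$ and observes that if $h^{\otimes n}$ is $\Gamma$-regular in the sense of \cite[Definition~4.5]{LarPin92}, then in particular the distinct weights $n\chi_i$ of $V^{\otimes n}$ take distinct values on $h$, i.e.\ $\alpha_i^n \neq \alpha_j^n$. This allows a direct citation of \cite[Proposition~7.2]{LarPin92} (positive density of $\Gamma$-regular Frobenii) for the Chebotarev step, which neatly resolves the ``main obstacle'' you flagged. Your more hands-on construction of $V$ is arguably more transparent, but the Chebotarev input you need (positive density of Frobenii in an arbitrary nonempty conjugation-stable Zariski open) is exactly what the Larsen--Pink result provides once suitably reformulated; citing \cite[Proposition~7.2]{LarPin92} rather than the proof of \cite[Proposition~5.3.2]{BLGGT14} would make your version fully rigorous with no extra work.
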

\begin{proof}
Let $\varphi: G_{\rho}^\circ \to \GL_n$ be the natural representation arising from $\rho$.
Consider the $n$-fold tensor product representation, $\varphi^{\otimes n}: G_{\rho}^\circ \to \GL_{n^n}$.
Suppose $h \in G_{\rho}^\circ(\Qlbar)$ is such that $h^{\otimes n} \in \varphi^{\otimes n}(G_{\rho}^\circ(\Qlbar))$ is $\Gamma$-regular in the sense of \cite[Definition 4.5]{LarPin92} (with respect to the faithful quotient of $G_{\rho}^\circ$ through which $\varphi^{\otimes n}$ factors).
As the formal character of $\varphi$ is multiplicity-free by Lemma \ref{lemma: formal char is multiplicity-free}, we see that $h$ is regular semisimple and that the $n$th powers of distinct eigenvalues of $h$ are also distinct.
Since $G_{\rho}^\circ \subset G_{\rho}$ is open, the result now follows from combining Chebotarev density with \cite[Proposition 7.2]{LarPin92} applied to $\varphi^{\otimes n}$ (noting that their proof goes through without needing to assume that $\varphi^{\otimes n}$ is valued in $\GL_{n^n}(\bb{Q}_\ell)$, nor that $\rho^{\otimes n}$ is part of a compatible system).
\end{proof}

Let $\Omega$ be an algebraically closed field.
Given (multi-)sets $A,B$ of elements of $\Omega^\times$, let
\[
AB = \{ab: a \in A, b \in B\}
\]
denote the multi-set of pairwise products of elements of $A$ and $B$.
In the case that $A = \{a\}$ is a singleton, we let $aB$ denote $AB$.
Define an equivalence relation $\sim$ on such multi-sets by requiring $A \sim B$ if and only if there exists $\xi \in \Omega^\times$ such that $\xi A = B$. 
\begin{lemma}
Let $C$ be a non-empty finite set of elements of $\Omega^\times$.
Then there exists only finitely many inequivalent decompositions of the form 
\[
C = A_1 \ldots A_k,
\]
with each $\# A_i > 1$.
Here two decompositions $C = A_1 \ldots A_k$ and $C = B_1 \ldots B_l$ are said to be equivalent if $k = l$ and, after rearranging, we have $A_i \sim B_i$ for every $i$.
\end{lemma}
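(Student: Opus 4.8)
The plan is to reduce everything to a statement about multiplicative relations inside the finite set $C \subset \Omega^\times$, and then appeal to finiteness of the solution set of a system of equations in the divisible group generated by $C$. First I would normalize: in any decomposition $C = A_1\cdots A_k$, translate each $A_i$ by a chosen representative so that (say) a fixed element $1 \in A_i$ after scaling; concretely, since we work up to $\sim$, we may assume each $A_i$ contains a distinguished element equal to some fixed $c_0 \in C$ divided out — more cleanly, fix $c_0 \in C$ and rescale each decomposition so that $c_0 = a_1^{(1)}\cdots a_1^{(k)}$ with each $a_1^{(i)}$ a "leading" element of $A_i$; the residual scaling ambiguity is then a single global scalar which is pinned down by requiring $\prod_i a_1^{(i)} = c_0$. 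After this normalization, every element of each $A_i$ is a ratio $c/c'$ of products of elements of $C$, hence lies in the finitely generated subgroup $\Gamma \le \Omega^\times$ generated by $C$ (or rather by $C \cdot c_0^{-1}$, which is still finitely generated). So it suffices to bound the number of normalized decompositions with all entries in the fixed finitely generated abelian group $\Gamma$.

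The key step is then a counting/finiteness argument. Here is the cleanest route I would take. A decomposition $C = A_1\cdots A_k$ with $\#A_i > 1$ and $\#C = N$ forces $\prod_i \#A_i = N$ (counting with multiplicity — and note each product $a_1\cdots a_k$ with $a_i \in A_i$ must land in $C$, all distinct, and each element of $C$ is hit exactly once since $|C| = N = \prod|A_i|$). In particular there are only finitely many choices for the "shape" $(k; \#A_1, \ldots, \#A_k)$ of a decomposition, so fix one such shape. For a fixed shape, a decomposition amounts to choosing the elements of each $A_i$; the constraint $A_1\cdots A_k = C$ as multi-sets is then a finite conjunction of equations of the form $\prod_i a_{j_i}^{(i)} = c$ together with the requirement that the multiset of all such products is exactly $C$. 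Writing each unknown $a_j^{(i)} = c_0^{-1/k}\,\xi_j^{(i)}$ or just working directly with the unknowns $a_j^{(i)} \in \Gamma$, this is a system of multiplicative equations; the distinctness of the elements within each $A_i$ (which we have because $\#A_i > 1$ only needs elements, but actually the $A_i$ as written in the statement could a priori have repeated elements — I would first argue they cannot, since if $A_i$ had a repeated element then $C = A_1\cdots A_k$ would have a repeated element, contradicting $C$ being a set) means we are looking at solutions of a system of equations $x_1 \cdots x_k = (\text{element of } C)$ in $\Gamma^{(\text{number of unknowns})}$, no two $x$-coordinates within a block equal.

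The main obstacle — and where I'd spend the real effort — is ensuring finiteness of this solution set: a priori a multiplicative equation like $x_1\cdots x_k = c$ has infinitely many solutions in a finitely generated group of positive rank. The point that rescues us is that the full system is rigid: once we fix, for each $i$, which product $\prod a_{j_i}^{(i)}$ equals which element of $C$ (a finite combinatorial datum), taking ratios of these equations expresses every $a_j^{(i)}$ in terms of $a_1^{(1)}, \ldots, a_1^{(k)}$ and elements of $C$, and then the single normalization $\prod_i a_1^{(i)} = c_0$ — combined with the fact that, say, fixing $A_i$ means fixing all $a_j^{(i)} / a_1^{(i)}$, which are determined as ratios of elements of $C$ — pins down each $a_1^{(i)}$ up to the finitely many $k$-th roots of a fixed element of $\Gamma$ (the equation $\prod (a_1^{(i)}) = c_0$ with all other relations, upon eliminating variables, becomes $(a_1^{(1)})^{k} = (\text{fixed element of }\Gamma)$, or something of that flavor, which has at most $k$ solutions in $\Omega^\times$). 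So altogether: finitely many shapes, finitely many combinatorial assignments per shape, finitely many solutions per assignment, hence finitely many decompositions up to $\sim$. I would present this by first proving the structural lemma that the $A_i$ are genuine sets with $\prod \#A_i = \#C$, then fixing shape and combinatorial data, then carrying out the elimination to reduce to a single equation $x^k \in \Gamma$ fixed, and concluding.
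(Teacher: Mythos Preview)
Your strategy is essentially the paper's, but your final step contains a genuine error that happens to be unnecessary. Once the combinatorial datum is fixed (i.e., which product $\prod_i a_{j_i}^{(i)}$ equals which element of $C$), you correctly observe that all the ratios $a_j^{(i)}/a_1^{(i)}$ are determined as ratios of elements of $C$. But then you claim that ``upon eliminating variables'' the remaining constraint on $a_1^{(1)},\ldots,a_1^{(k)}$ becomes $(a_1^{(1)})^k = (\text{fixed element})$. This is false: the only relation surviving among the $a_1^{(i)}$ is the single equation $\prod_i a_1^{(i)} = c_0$, which has a whole $(k-1)$-parameter family of solutions in $\Omega^\times$, not finitely many. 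The fix is that you don't need to pin down the $a_1^{(i)}$ at all: the equivalence class under $\sim$ of each $A_i$ is already determined by the ratios $a_j^{(i)}/a_1^{(i)}$, so the combinatorial datum alone determines the $\sim$-class of the decomposition. Once you make this observation, your argument is complete and correct.

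The paper's proof is cleaner precisely because it avoids this tangle. It inducts on $\#C$, reducing to the two-factor case $C = AB$ with $\#A = a$, $\#B = b$ fixed. For each bijection $\sigma\colon \{1,\ldots,a\}\times\{1,\ldots,b\}\to C$, the system $x_i y_j = \sigma(i,j)$ is solved by $x_i = x_1\,\sigma(i,1)/\sigma(1,1)$ and $y_j = \sigma(1,j)/x_1$, so any two solutions differ by a single scalar and give equivalent $(A,B)$. Since there are only finitely many $\sigma$, finiteness follows. The reduction to $k=2$ makes the ``determined up to a scalar'' conclusion immediate, whereas your direct $k$-factor approach obscured this and led you to the incorrect $k$-th root claim.
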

\begin{proof}
By induction on $\# C$, it suffices to show that there are only finitely many inequivalent decompositions of the form $C = AB,$ with $\# A = a$ and $\# B = b$ fixed.
Let $\sigma: \{1,\ldots,a\} \times \{1,\ldots,b\} \to C$ be a bijection.
Consider the system of equations $x_i y_j = \sigma(i,j) \in \Omega^\times$ for $1 \leq i \leq a$ and $1 \leq j \leq b$.
If $x_i = \alpha_{i}$ and $y_j = \beta_{j}$ is a solution, then this solution is uniquely determined by $\alpha_1$, because $\alpha_i = \alpha_1 \frac{\sigma(i,1)}{\sigma(1,1)}$ and $\beta_j = \frac{\sigma(1,j)}{\alpha_1}$.
Hence if $x_i = \alpha_{i}'$ and $y_j = \beta_{j}'$ is a second solution, then $\{\alpha_i\} = \frac{\alpha_1}{\alpha_1'} \{\alpha_i'\}$ and similarly $\{\beta_j\} \sim \{\beta_j'\}$.
Since the data of a decomposition $C = AB$ together with a choice of bijections $A \cong \{1,\ldots,a\}$ and $B \cong \{1,\ldots,b\}$ uniquely determine a bijection $\sigma$, which in turn determines $A$ and $B$ uniquely up to equivalence, we see that the number of such inequivalent decompositions is at most $\frac{(ab)!}{a! b!}$.
\end{proof}

Consider the weakly compatible system $(\rho_{\pi,\lambda}|_{G_{F_{1,\pi}}})_\lambda$.
We can find a prime $v$ of $F_{1,\pi}$ such that the characteristic polynomial $Q_{v}(X) = \det(1- \rho_{\pi,\lambda}(\Frob_v)X)$ (independent of $\lambda$) has pairwise distinct roots $C$ with no ratio equal to an $n$th root of unity by applying Lemma \ref{lemma: generic element of image} to $\rho_{\pi,\lambda}|_{G_{F_{1,\pi}}}$ for any choice of $\lambda$.
Each $\alpha \in C$ lies in $\ol{\bb{Q}}$ and is a $q_v$-Weil number of some fixed weight $w$, where $q_v$ denotes the size of the corresponding residue field.

Let $C = A_1 \ldots A_k$ be a decomposition of $C$ with each $A_i$ a finite set of complex numbers of cardinality $n_i$ (on fixing an embedding $\ol{\bb{Q}} \hookrightarrow \bb{C}$).
Suppose that, for each $1 \leq i \leq k$, the product $\prod_{\alpha \in A_i} \alpha$ is algebraic and a $q_v$-Weil number of weight $w_i$.
If $\alpha \in A_i$, then $\alpha^{n_i}$ is also a $q_v$-Weil number of weight $w_i$,
since we can write \[\alpha^{n_i} = \prod_{\beta \in A_i} \beta \cdot \prod_{\beta \in A_i} \frac{\alpha \gamma}{\beta \gamma}\] for any choice of $\gamma \in A_1 \ldots \hat{A_i} \ldots A_k$, with $\alpha \gamma$ and each $\beta \gamma$ lying in $C$.

For each equivalence class of decomposition of $C$ admitting a representative with the above properties, choose such a representative $C = A_1 \ldots A_k$.
After replacing $M_\pi$ by a finite CM extension, we will suppose that $M_\pi$ contains every element of the form $\alpha^{n_i}$, where $i$ ranges from $1$ to $k$ and $\alpha \in A_i$.
Note that this may require redefining the extension $N/M_\pi$ and the set $\mathcal{L} = \{\ell: [\Frob_\ell] = [c], \ell \text{ is unramified in } N \}$ of \S \ref{section: Xia reduction} accordingly.

\begin{lemma} \label{lemma: existence of xi_i}
Suppose, in the setup above, that $C = A_1 \ldots A_k$ is a decomposition with each $A_i$ a finite set of complex numbers of cardinality $n_i$.
Suppose further that, for each $1 \leq i \leq k$, the product $\prod_{\alpha \in A_i} \alpha$ is algebraic and a $q_v$-Weil number of weight $w_i$.
Then for each $1 \leq i \leq k$ there exists $\xi_i \in \bb{Q}^\times$ such that for each $\alpha \in A_i$, the complex number $\xi_i \alpha^{n_i}$ lies inside $M_\pi \subset \bb{C}$ and moreover $\xi_i \alpha^{n_i}$ is a $q_v$-Weil number of some fixed weight $w_i \in \bb{Z}$.
\end{lemma}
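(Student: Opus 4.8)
The plan is to reduce to one of the finitely many chosen representatives, and then to extract $\xi_i$ from the $q_v$-Weil structure of the numbers involved.

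\emph{Reduction to a chosen representative.} Because the products $\prod_{\alpha \in A_i}\alpha$ are algebraic $q_v$-Weil numbers, the decomposition $C = A_1 \cdots A_k$ has exactly the properties used to single out the chosen representatives, so its equivalence class admits one; after relabelling I write this representative as $C = B_1 \cdots B_k$ with $A_i \sim B_i$, and fix $\zeta_i \in \bb{C}^\times$ with $B_i = \zeta_i A_i$. (Such a $\zeta_i$ is unique, since no $A_i$ has a nontrivial multiplicative self-symmetry: such a symmetry would be an $n$th root of unity realised as the ratio of two distinct elements of $C$, contrary to the choice of $v$ via \Cref{lemma: generic element of image}.) By the enlargement of $M_\pi$ made just before the lemma, every $\beta^{n_i}$ with $\beta \in B_i$ lies in $M_\pi$, i.e.\ $\zeta_i^{n_i}\alpha^{n_i} \in M_\pi$ for all $\alpha \in A_i$; dividing, all ratios $(\alpha/\alpha')^{n_i}$ with $\alpha, \alpha' \in A_i$ lie in $M_\pi^\times$. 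It therefore suffices to produce, for a single $\alpha_0 \in A_i$, a scalar $\xi_i \in \bb{Q}^\times$ with $\xi_i\alpha_0^{n_i} \in M_\pi$: for general $\alpha$ one then multiplies by $(\alpha/\alpha_0)^{n_i}$, and — since each $\alpha^{n_i}$ is already a $q_v$-Weil number of weight $w_i$ by the computation preceding the lemma — the ``fixed weight'' clause is automatic as soon as $\xi_i^2$ is a power of $q_v$, which I will arrange.

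\emph{The Weil structure of $\zeta_i^{n_i}$.} Writing $\beta_0 = \zeta_i\alpha_0 \in B_i$ and applying that same computation to $C = B_1 \cdots B_k$, the element $\beta_0^{n_i} = \zeta_i^{n_i}\alpha_0^{n_i} \in M_\pi$ is a $q_v$-Weil number of some weight $u_i$, so $\zeta_i^{n_i} = \beta_0^{n_i}/\alpha_0^{n_i}$ is a $q_v$-Weil number of weight $u_i - w_i$. Any $q_v$-Weil number generates a CM or totally real field, on which complex conjugation sends a $q_v$-Weil number $x$ of weight $w$ to $q_v^{w}/x$; as $M_\pi$ is CM, the compositum $K := M_\pi(\alpha_0^{n_i})$ is CM, and there $\alpha_0^{n_i}\,\overline{\alpha_0^{n_i}} = q_v^{w_i}$, $\beta_0^{n_i}\,\overline{\beta_0^{n_i}} = q_v^{u_i}$, hence $\zeta_i^{n_i}\,\overline{\zeta_i^{n_i}} = q_v^{\,u_i - w_i} \in \bb{Q}$.

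\emph{The descent, which I expect to be the main obstacle.} It remains to deduce that $\zeta_i^{n_i}$ — equivalently $\alpha_0^{n_i} = \beta_0^{n_i}/\zeta_i^{n_i}$ — lies in $M_\pi$ after multiplication by a rational, necessarily $\pm$ a power of $q_v$. I would obtain this by combining: that $\beta_0^{n_i}$ already lies in $M_\pi$; that $\zeta_i^{n_i}$ is a $q_v$-Weil number whose norm $q_v^{u_i-w_i}$ to the totally real subfield of $K$ is rational; and the rigidity of the tensor factorization of $\rho_{\pi,\lambda}|_{G_{F_{1,\pi}}}$, which is unique up to twists by \Cref{proposition: decompose Galois rep as tensor product} (using that $\rho_{\pi,\lambda}|_{G_{F_{1,\pi}}}$ is irreducible and defined over $M_{\pi,\lambda}$), so that the relevant Galois group permutes the tensorands — hence permutes the sets $A_i$ up to scalars, and with them the numbers $\zeta_i^{n_i}$. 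These constraints should pin $\zeta_i^{n_i}$ down to $M_\pi$ modulo $\bb{Q}^\times$ (perhaps after adjoining $\sqrt{q_v}$ to $M_\pi$, which is harmless since only the single prime $v$ is at play and this enlargement can be absorbed into the earlier one). Then $\xi_i$ is the resulting rational, and the first step concludes. Note that purity is genuinely needed here and is not a mere convenience: for a general algebraic number over a CM field $M$, no rational twist need lie in $M$, so without the $q_v$-Weil hypothesis the statement would be false.
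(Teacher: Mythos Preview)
Your first paragraph already contains the paper's entire argument, but you fail to notice this and then wander into an unnecessary and incomplete ``descent''. The paper simply takes $\xi_i = \zeta_i^{n_i}$ (in your notation): for $\alpha \in A_i$ one has $\zeta_i \alpha \in B_i$, whence $\xi_i \alpha^{n_i} = (\zeta_i \alpha)^{n_i} \in M_\pi$ by the enlargement of $M_\pi$, and this element is a $q_v$-Weil number of fixed weight by applying the computation preceding the lemma to the representative decomposition $C = B_1 \cdots B_k$. That is the whole proof.

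You were misled by the ``$\xi_i \in \bb{Q}^\times$'' in the statement, which is almost certainly a typo for $\ol{\bb{Q}}^\times$: the paper's own proof sets $\xi_i = \delta_i^{n_i}$ with $\delta_i$ merely algebraic and makes no attempt to show rationality, and the sole application of the lemma (in the proof of Proposition~\ref{prop: tensor decomposition over CM field}) explicitly invokes ``$\xi_i \in \ol{\bb{Q}}^\times$''. There is no reason to expect $\zeta_i^{n_i}$ to be rational in general, and your sketch of how to force this---via norms to the totally real subfield, Galois rigidity of the tensor factorization, and a possible further enlargement of $M_\pi$ by $\sqrt{q_v}$---is speculative rather than a proof (as your own hedging language acknowledges). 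Drop the descent entirely; once you have $\zeta_i^{n_i} \alpha^{n_i} \in M_\pi$ for all $\alpha \in A_i$, you are done. The uniqueness of $\zeta_i$ that you verify is correct but also superfluous.
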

\begin{proof}
By assumption, there exists an equivalent decomposition $C = A_1' \ldots A_k'$ (over $\bb{C}$) such that for each $1 \leq i \leq k$ and each $\alpha' \in A_i'$, the quantity $(\alpha')^{n_i}$ lies inside $M_\pi$.
For each $1 \leq i \leq k$, there exists $\delta_i \in \bb{C}^\times$ such that $\delta_i A_i = A_i'$.
Moreover, $\delta_i$ is algebraic because all elements of $A_i$ and $A_i'$ are algebraic.
It is now easily seen that $\xi_i = \delta_i^{n_i} \in \bb{Q}^\times$ will satisfy the conclusion of the lemma.
\end{proof}

\begin{proposition}\label{prop: tensor decomposition over CM field}
Let $\ell\in \mathcal{L}$ be sufficiently large such that $\ell \nmid q_v$ and $\rho = \rho_{\pi,\lambda}: G_F\to \GL_n(\Qlbar)$ is crystalline with Hodge--Tate weights in the Fontaine--Laffaille range for $\lambda| \ell$.
Suppose that we can write $\Lie G_{\rho}^{\circ,\der}  \cong \prod_{i \in I} \mf{g}_i$ and that under this identification, the representation $\Qlbar^{n}$ is isomorphic to $\boxtimes_{i \in I} V_i$ for some irreducible representation $V_i$ of a simple Lie algebra $\mf{g}_i$ of degree $n_i > 1$.
Then for every $\lambda | \ell$ and $i \in I$ there exists a strongly irreducible representation \( \rho_i: G_{F}\to \GL_{n_i}(\Qlbar)\) satisfying the following conditions.
\begin{enumerate}
    \item \(\rho \cong \otimes_{i \in I} \rho_i \) inducing isomorphisms $\Lie G_{\rho_i}^{\circ,\der} \cong \mf{g}_i$, under which the representation of $\mf{g}_i$ is isomorphic to $V_i$.
    \item Each $\rho_i$ is unramified outside of a finite set of primes.
    \item Each $\rho_i$ is crystalline at every place $w|\ell$ of $F$ with regular Hodge--Tate weights within the Fontaine--Laffaille range.
    \item Each $\rho_i$ is polarizable.
\end{enumerate}  
\end{proposition}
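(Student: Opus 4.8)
The plan is to obtain the factorisation in two stages. First, over the field $F_{1,\pi}$, where the algebraic monodromy group is connected, one decomposes $\rho|_{G_{F_{1,\pi}}}$ as a tensor product of de Rham representations using \Cref{proposition: decompose Galois rep as tensor product}; second, after showing that $\rho(G_F)$ already lies in the image of the Kronecker map $\kappa\colon\prod_{i\in I}\GL_{n_i}\to\GL_n$, one applies \Cref{proposition: decompose Galois rep as tensor product} a second time, now directly to $\rho\colon G_F\to\GL_n(\Qlbar)$, to produce the $\rho_i$. For the first stage: $\rho=\rho_{\pi,\lambda}$ is strongly irreducible by \Cref{lemma: lie-irreducible patrikis taylor prime} (since $\ell\in\mathcal{L}$), hence so is $\rho|_{G_{F_{1,\pi}}}$, and $G_{\rho|_{G_{F_{1,\pi}}}}=G_\rho^\circ$ is connected by \Cref{lemma: F1,pi contains Fcirc} and the remark following it. Strong irreducibility of $\rho$ forces $Z(G_\rho^\circ)$ into the scalars by Schur's lemma, while $\Lie G_\rho^{\circ,\der}\cong\prod_i\mf{g}_i$ sits inside $\Lie(\im\kappa)$ under the given identification; as $G_\rho^\circ$ is connected this yields $G_\rho^\circ\subset\im\kappa$, and in particular $\rho(G_{F_{1,\pi}})\subset\im(\kappa)(\Qlbar)$. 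The hypotheses of \Cref{proposition: decompose Galois rep as tensor product} are therefore in force (with $E=F_{1,\pi}$, whose maximal CM subfield is the imaginary CM field $F$, and with the polarizable representation $r=\rho_{\pi,\lambda}$), and we obtain strongly irreducible $\rho_i'\colon G_{F_{1,\pi}}\to\GL_{n_i}(\Qlbar)$, unramified outside a finite set and de Rham at each place above $\ell$, with $\rho|_{G_{F_{1,\pi}}}\cong\otimes_{i}\rho_i'$ inducing $\Lie G_{\rho_i'}^{\circ,\der}\cong\mf{g}_i$ acting through $V_i$; these are unique up to twisting by characters.

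Next one descends to $G_F$. Since $F$ is the maximal CM subextension of $F_{1,\pi}/F^+$, the group $\Gal(F_{1,\pi}/F)$ is generated by $S=\{(\sigma_1\sigma_2)|_{F_{1,\pi}}:\sigma_1,\sigma_2\in[c]\}$, where $[c]\subset\Gal(\ol{\bb{Q}}/\bb{Q})$ is the conjugacy class of a complex conjugation (each complex conjugation fixes the totally real field $F^+$, hence lies in $G_{F^+}$ and stabilises $F_{1,\pi}$). It therefore suffices to establish, for every $\sigma\in[c]$ and $i\in I$, an isomorphism
\[
(\rho_i')^{\sigma}\cong(\rho_i')^\vee\otimes\chi_{\sigma,i}
\]
for some character $\chi_{\sigma,i}$: granting this, composing the isomorphisms for $\sigma_1$ and $\sigma_2$ yields $(\rho_i')^{\gamma}\cong\rho_i'\otimes\varphi_{\gamma,i}$ for every $\gamma\in S$, and then a Schur's lemma argument, using the uniqueness of the tensor decomposition over $G_{F_{1,\pi}}$, shows that any lift $\wt\gamma\in G_F$ of $\gamma$ satisfies $\rho(\wt\gamma)\in\im(\kappa)(\Qlbar)$; as $S$ generates $\Gal(F_{1,\pi}/F)$ and $\rho(G_{F_{1,\pi}})\subset\im\kappa$, this gives $\rho(G_F)\subset\im(\kappa)(\Qlbar)$.

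Establishing the displayed isomorphism is the crux of the argument, and the step we expect to present the main obstacle; here we would follow \cite[\S7]{Xia19} closely. Polarizability of $\rho$ gives $\rho^{\sigma}\cong\rho^\vee\otimes\mu$ for the multiplier character $\mu$, so restricting to $G_{F_{1,\pi}}$ and applying the uniqueness clause of \Cref{proposition: decompose Galois rep as tensor product} shows that $(\rho_i')^{\sigma}$ is, after some permutation of the index set, a character twist of $(\rho_i')^\vee$; the point is to rule out a nontrivial permutation. Although the $\rho_i'$ are not known to be pure, each $\det\rho_i'$ is de Rham, hence pure of some weight by \Cref{lemma: HT chars are pure}; consequently, at the place $v$ of $F_{1,\pi}$ produced by \Cref{lemma: generic element of image}---at which the eigenvalue set $C$ of $\rho|_{G_{F_{1,\pi}}}(\Frob_v)^{\mathrm{ss}}$ has pairwise distinct $n$th powers, whence $\rho|_{G_{F_{1,\pi}}}(\Frob_v)$ is semisimple---each eigenvalue $\alpha$ of $\rho_i'(\Frob_v)$ has $\alpha^{n_i}$ an algebraic $q_v$-Weil number, by the computation carried out before this proposition. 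Since $M_\pi$ was enlarged, using \Cref{lemma: existence of xi_i}, so that a rational rescaling $\xi_i\alpha^{n_i}$ of each such quantity lies in the CM field $M_\pi$, the defining condition $[\Frob_\ell]=[c]$ of $\mathcal{L}$ lets one first relate the complex conjugate of $\rho_i'$ on coefficients to $(\rho_i')^\vee$ up to twist (cf. \cite[Lemma 1.2]{PatTay15}); the distinctness of the $n$th powers of the elements of $C$, together with the fact that the relevant $n_i$th powers of Frobenius eigenvalues lie in $M_\pi$, then forces this to be an isomorphism with $(\rho_i')^\vee$ rather than with $(\rho_j')^\vee$ for some other $j$; and a final appeal to the regularity of $\rho$ and the choice of $\mathcal{L}$ upgrades the statement on coefficients to the required identity $(\rho_i')^{\sigma}\cong(\rho_i')^\vee\otimes\chi_{\sigma,i}$.

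Once $\rho(G_F)\subset\im(\kappa)(\Qlbar)$ is established, $\rho\colon G_F\to\GL_n(\Qlbar)$ satisfies every hypothesis of \Cref{proposition: decompose Galois rep as tensor product} with $E=F=F^{\mathrm{CM}}$: it is strongly irreducible and unramified outside a finite set, $\Lie G_\rho^{\circ,\der}\cong\prod_i\mf{g}_i$ acts on $\Qlbar^n$ via $\boxtimes_{i}V_i$ with $\rho(G_F)$ inside the Kronecker image, $\rho$ is crystalline, and $\rho$ is polarizable. Applying the proposition produces strongly irreducible $\rho_i\colon G_F\to\GL_{n_i}(\Qlbar)$, unramified outside a finite set and crystalline at each place above $\ell$, with $\rho\cong\otimes_{i}\rho_i$ inducing $\Lie G_{\rho_i}^{\circ,\der}\cong\mf{g}_i$ acting through $V_i$; this gives (1) and (2). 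For (3): $\HT_\tau(\rho)$ is the multiset of sums obtained by choosing one weight from each $\HT_\tau(\rho_i)$, and since $\rho$ has regular Hodge--Tate weights lying in a range of length $\ell-2$, each $\HT_\tau(\rho_i)$ must consist of pairwise distinct integers whose spread is at most that of $\HT_\tau(\rho)$; taking the least of these as base point then places the (hence regular) Hodge--Tate weights of $\rho_i$ within the Fontaine--Laffaille range. For (4): feeding the polarization $\rho^{c}\cong\rho^\vee\otimes\mu$ into the uniqueness of $\rho\cong\otimes_{i}\rho_i$ over $G_F$ shows that $(\rho_i)^c$ is, up to a permutation of the index set and character twists, the dual of $\rho_i$; this permutation restricts to $G_{F_{1,\pi}}$ to the permutation shown above to be trivial, so $(\rho_i)^c\cong\rho_i^\vee\otimes\psi_i$ for some character $\psi_i$. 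Since $F$ is imaginary CM and $\rho_i$ is irreducible, the resulting pairing is unique up to scalar and of a definite symmetry type, so after replacing $\psi_i$ by $\psi_i\delta_{F/F^+}$ if necessary (as permitted in the definition of a polarization) $(\rho_i,\psi_i)$ is polarized; hence each $\rho_i$ is polarizable, which completes the plan.
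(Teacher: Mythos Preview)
Your proposal follows the paper's proof closely: decompose over $F_{1,\pi}$ via \Cref{proposition: decompose Galois rep as tensor product}, establish $(\rho_i')^{\sigma}\cong(\rho_i')^\vee\otimes\chi_{\sigma,i}$ for each complex conjugation $\sigma$ by combining the coefficient-conjugation/Weil-number argument (to kill the permutation) with a Hodge--Tate regularity step (to pass from coefficient conjugation to Galois conjugation), use products of complex conjugations to generate $\Gal(F_{1,\pi}/F)$ and Schur's lemma to force $\rho(G_F)\subset\im\kappa$, and then reapply \Cref{proposition: decompose Galois rep as tensor product} over $F$. The ingredients and overall architecture match the paper.

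Two small points where your write-up is looser than the paper and would benefit from care. First, in the ``crux'' paragraph you obtain a permutation from the \emph{Galois} conjugate $(\rho_i')^\sigma$ and then invoke \cite[Lemma 1.2]{PatTay15} for the \emph{coefficient} conjugate of $\rho_i'$; but that lemma needs purity, which you only know for $\rho$ itself. The paper's order is: apply purity to $\rho$ to get ${}^{\tilde c}\rho\cong\rho^\vee\otimes\eps^w$, deduce ${}^{\tilde c}\tilde\rho_i\cong\tilde\rho_{\kappa(i)}^\vee\otimes\varphi_i$ by uniqueness, kill $\kappa$ with the distinct-$n$th-powers argument, and only then pass to $(\tilde\rho_i)^{\ol\sigma}$ via the Hodge--Tate identity $\HT_\tau(\tilde\rho_i)=\HT_\tau({}^{\tilde c}\tilde\rho_i^{\ol\sigma})$ and regularity. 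Your sketch contains all of these pieces but splices them in a slightly misleading order.

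Second, your argument for (4) has a genuine (if minor) gap: from $\rho_i^c\cong\rho_i^\vee\otimes\psi_i$ you need $\psi_i$ to \emph{extend} to $G_{F^+}$, and replacing $\psi_i$ by $\psi_i\delta_{F/F^+}$ does not address this. The paper's fix is to observe that $\psi_i$ is the unique one-dimensional quotient of $\rho_i\otimes\rho_i^c$, hence $\psi_i^c=\psi_i$, so $\psi_i$ descends to $G_{F^+}$; you should insert this step.
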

\begin{proof}
The proof takes inspiration from \cite[Section 7]{Xia19}.
By Lemma \ref{lemma: F1,pi contains Fcirc} and Proposition \ref{proposition: decompose Galois rep as tensor product}, we may write
\begin{equation} \label{eqn: tensor decomposition over F1,pi}
\rho|_{G_{F_{1,\pi}}} \cong \otimes_{i \in I} \Tilde{\rho}_i    
\end{equation}
with each $\Tilde{\rho}_i: G_{F_{1,\pi}} \to \GL_{n_i}(\Qlbar)$ assumed to be de Rham with regular Hodge--Tate weights.
Recall that we have fixed a place $v$ of $F_{1,\pi}$ for which $\rho|_{G_{F_{1,\pi}}}$ is unramified with $\rho|_{G_{F_{1,\pi}}}(\Frob_v)$ regular semisimple and whose set of eigenvalues $C$ does not contain any distinct elements with equal $n$th powers.
After twisting each $\Tilde{\rho}_i$ by a finite character if necessary, we may suppose that each $\Tilde{\rho}_i$ is also unramified at $v$.

From the definition of $\mathcal{L}$, the (unique) complex conjugation \( c \) on $M_\pi$ extends to a continuous automorphism of $M_{\pi,\lambda}$.
Let $\Tilde{c} \in \Gal(\Qlbar/\bb{Q}_\ell)$ be a lift of $c \in \Gal(M_{\pi,\lambda}/\mathbb{Q}_\ell) \hookrightarrow \Gal(M_{\pi}/\bb{Q})$.
Since $\rho$ is pure of some weight $w$, we know by the Chebotarev density theorem that
\[
{}^{\Tilde{c}} \rho \cong \rho^\vee \otimes \eps^{w}
\]
(cf. \cite[Lemma 1.2(1)]{PatTay15}).
Thus, uniqueness of the tensor product decomposition (\ref{eqn: tensor decomposition over F1,pi}) up to twists implies the existence of a permutation $\kappa$ of $I$ 
and characters $\varphi_i: G_{F_{1,\pi}} \to \Qlbar^\times$ (unramified at $v$) such that
\begin{equation} \label{eqn: kappa defining property}
 {}^{\Tilde{c}} \Tilde{\rho}_i \cong \Tilde{\rho}_{\kappa(i)}^\vee \otimes \varphi_i.   
\end{equation}

We claim that $\kappa = \id$.
Let $i \in I$ be arbitrary and let $A_i$ denote the set of eigenvalues of $\Tilde{\rho}_i(\Frob_v)$ over $\Qlbar$, so that $C = \prod_{i \in I} A_i$.
Since $\Tilde{\rho}_i$ is de Rham, the character $\det \Tilde{\rho}_i$ is Hodge--Tate and therefore pure by Lemma \ref{lemma: HT chars are pure}.
In particular, $\prod_{\alpha \in A_i} \alpha$ is a $q_v$-Weil number.
Consequently, for $\alpha \in A_i$, we have $\alpha^{n_i}$ is also a $q_v$-Weil number.
Lemma \ref{lemma: existence of xi_i} guarantees the existence of $\xi_i \in \ol{\bb{Q}}^\times$ such that, for every $\alpha \in A_i$, the quantity $\xi_i \alpha^{n_i}$ lies inside $M_\pi$ and is also a $q_v$-Weil number of some fixed weight $w_i \in \bb{Z}$.
Since $\Tilde{c}$ restricts to complex conjugation on $M_\pi$, we therefore have \[\Tilde{c}(\xi_i \alpha^{n_i}) = q_v^{w_i} \xi_i^{-1} \alpha^{-n_i}.\]
Let $\gamma_\alpha = \alpha \cdot \Tilde{c}(\alpha) \in \Qlbar^\times$.
Then \[\gamma_\alpha^{n_i} = \frac{q_v^{w_i}}{\xi_i \Tilde{c}(\xi_i)}\] is independent of $\alpha \in A_i$.
On the other hand, by (\ref{eqn: kappa defining property}) we have
\begin{equation} \label{eqn: equality of eigenvalues}
\{\gamma_\alpha \alpha^{-1}: \alpha \in A_i\} =  \{\varphi_i(\Frob_v) \beta^{-1} : \beta \in A_{\kappa(i)}\}.    
\end{equation}
Under the equality (\ref{eqn: equality of eigenvalues}), we see that each $\beta \in A_{\kappa(i)}$ is of the form 
\[
\beta = \alpha \cdot \frac{\varphi_i(\Frob_v)}{\gamma_{\alpha}}
\]
for some $\alpha \in A_i$.

Now suppose that $\kappa(i) \neq i$.
Let $\alpha_1 \neq \alpha_2$ be elements of $A_i$ and choose some $\delta_j \in A_j$ for every $j \in I \setminus \{i,\kappa(i)\}$.
The following are two distinct elements of $C$
\begin{align*}
\alpha_1 (\alpha_2 \cdot \frac{\varphi_i(\Frob_v)}{\gamma_{\alpha_2}})  \prod_{j \in I \setminus \{i,\kappa(i)\}} \delta_j   \\
\alpha_2 (\alpha_1 \cdot \frac{\varphi_i(\Frob_v)}{\gamma_{\alpha_1}})  \prod_{j \in I \setminus \{i,\kappa(i)\}} \delta_j
\end{align*}
whose $n$th powers are equal.
This contradicts our assumption on the eigenvalues of $\rho|_{G_{F_{1,\pi}}}(\Frob_v)$, establishing our claim.

Let $\sigma \in [c] = [\Frob_\ell] \subset \Gal(N / \bb{Q})$ be a complex conjugation.
Let $\ol{\sigma} = \sigma|_{F_{1,\pi}}$ and choose a prime $\eta|\ell$ of $F_{1,\pi}$ such that $\Frob_\eta =  \ol{\sigma}$.
Choose an embedding $\tau: F_{1,\pi} \hookrightarrow \Qlbar$ inducing the prime $\eta$.
Since $\ol{\sigma} = \Frob_\eta$ has order $2$, we see that $(F_{{1,\pi}})_\eta/\bb{Q}_\ell$ is the unique unramified quadratic subextension of $\Qlbar/\bb{Q}_\ell$, with the unique continuous extension of $\ol{\sigma}$ to $(F_{{1,\pi}})_\eta$ acting as the order two element of $\Gal((F_{{1,\pi}})_\eta/\bb{Q}_\ell)$.
Similarly, $M_{\pi,\lambda}/\bb{Q}_\ell$ is also a quadratic unramified extension on which $c$ acts non-trivially.
Consequently, we can identify $M_{\pi,\lambda}$ and $(F_{{1,\pi}})_\eta$ as subfields of $\Qlbar$ via $\tau$, and $\Tilde{c}|_{(F_{{1,\pi}})_\eta} = \Frob_\eta$.
We deduce that $\Tilde{c}^{-1} \circ \tau = \tau \circ \Frob_\eta^{-1} = \tau \circ \ol{\sigma}^{-1}$.

We therefore have equalities
\begin{align*}
    \mathrm{HT}_{\tau}(\Tilde{\rho}_i) 
    & = \mathrm{HT}_{\tilde{c}^{-1} \tau \ol{\sigma}^{-1}}(\Tilde{\rho}_i) \\
    &= \mathrm{HT}_{\tau}({}^{\Tilde{c}}\tilde{\rho}_i^{\ol{\sigma}}) \\
    &= \mathrm{HT}_{\tau}((\Tilde{\rho}_{i}^{\vee})^{\ol{\sigma}} \otimes \varphi_i^{\ol{\sigma}}),
\end{align*}
where the second equality follows from \cite[\S 1]{PatTay15}.
Since $\rho$ is polarizable, the representation $\rho|_{G_{F_{1,\pi}}}$ admits both $\Tilde{\rho}_i$ and $(\Tilde{\rho}_{i}^{\vee})^{\ol{\sigma}} \otimes \varphi_i^{\ol{\sigma}}$ as tensorands.
Regularity of $\rho|_{G_{F_{1,\pi}}}$ and uniqueness of the decomposition into tensorands implies that $ \Tilde{\rho}_i \cong (\Tilde{\rho}_{i}^{\vee})^{\ol{\sigma}} \otimes \chi_{\sigma,i}$ for some character
$\chi_{\sigma,i}: G_{F_{1,\pi}} \to \Qlbar^\times$.
We see that \begin{equation} \label{eqn: CSD tensorand}
\Tilde{\rho}_i^{\ol{\sigma}} \cong \Tilde{\rho}_{i}^{\vee} \otimes \chi_{\sigma,i}^{\ol{\sigma}}.
\end{equation}

Note that \( [c]\subset\Gal(N/\bb{Q}) \) is contained in the subgroup \( \Gal(N/F^+) \), and let $f: \Gal(N/F^+) \twoheadrightarrow \Gal(F_{1,\pi}/F^+)$ be the surjection given by restriction. We also note that \( \Gal(F_{1,\pi}/F) \) is precisely the subgroup of \( \Gal(F_{1,\pi}/F^+) \) generated by \( \{\sigma_1 \sigma_2: \sigma_i \in f([c]) \} \), as $F$ is exactly the maximal subfield of $F_{1,\pi}$ on which all complex conjugations coincide.
It follows from (\ref{eqn: CSD tensorand}) that for every pair of elements $\sigma_1, \sigma_2 \in f([c]) \subset \Gal(F_{1,\pi}/F^+)$ and lift $\gamma \in \Gal(\overline{\bb{Q}}/F)$ of $\sigma_1 \sigma_2$, we have isomorphisms
\[
\Tilde{\rho}_i^{\gamma} \cong \Tilde{\rho}_i \otimes \delta_{i,\gamma}
\]
for some characters $\delta_{i,\gamma}: G_{F_{1,\pi}} \to \Qlbar^\times$, whose product $\prod_{i \in I} \delta_{i,\gamma}$ is trivial by \cite[Lemma 4.8 (1)]{kret2022galois}.
Write $\Tilde{\rho_i}^\gamma = A_i (\Tilde{\rho}_i \otimes \delta_{\gamma,i}) A_i^{-1}$ for some matrix $A_i \in \GL_n(\Qlbar)$.
On the one hand, $\rho$ is strongly irreducible so by Schur's lemma there is a unique (up to scalar) matrix $A \in \GL_n(\Qlbar)$ satisfying
\[(\otimes_{i \in I} \Tilde{\rho}_i)^\gamma = A (\otimes_{i \in I} \Tilde{\rho}_i) A^{-1}.\]
Clearly, we can take $A = \otimes_{i \in I} A_i$, where the tensor product denotes the Kronecker product of matrices.
On the other hand, we have by definition that $\rho^\gamma = \rho(\gamma) \rho \rho(\gamma)^{-1}$.
By uniqueness, we deduce that under the isomorphism $\Qlbar^n \cong \otimes_{i \in I} \Qlbar^{n_i}$, the matrix $\rho(\gamma)$ is given by $\beta \otimes_{i \in I} A_i$, for some scalar $\beta$.
Since $\Gal(F_{1,\pi}/F) \twoheadrightarrow \Gal(F^\circ/F) \cong \pi_0(G_\rho)$ is generated by the images of such elements $\gamma$, we deduce that under the isomorphism $\Qlbar^n \cong \otimes_{i \in I} \Qlbar^{n_i}$, the group $\rho(G_F)$ lies in the image of the Kronecker map.

We may therefore apply Proposition \ref{proposition: decompose Galois rep as tensor product} to $\rho$ itself, from which we obtain a decomposition $\rho \cong \otimes_{i \in I} \rho_i$, with each $\rho_i: G_F \to \GL_{n_i}(\Qlbar)$ assumed to be crystalline of distinct Hodge--Tate weights.
For each $i$,
we have an isomorphism
\[
\rho_i^c \cong \rho_i^\vee \otimes \mu_i
\]
for some character $\mu_i: G_{F} \to \Qlbar^\times$ and choice of complex conjugation $c$.
This uniquely determines the character $\mu_i$ by \cite[Lemma 3.2.8]{CEG22}.
Viewing $\mu_i$ as the unique $1$-dimensional quotient of $\rho_i \otimes \rho_i^c$, we must have that $\mu_i^c = \mu_i$.
Thus $\mu_i$ extends to a character of $G_{F^+}$ and $\rho_i$ is polarizable.
Since $\rho$ has Hodge--Tate weights in the Fontaine--Laffaille range, the same must also be true of each $\rho_i$.
\end{proof}

\begin{corollary} \label{cor: tensorands are potentially automorphic}
There exists a set $\mathcal{L}'$ of rational primes of positive Dirichlet density with the following properties.
Suppose that $\ell \in \mathcal{L}'$ and $\lambda|\ell$ is a place of $M_\pi$.
The Galois representation $\rho = \rho_{\pi,\lambda}$ is strongly irreducible and decomposes accordingly as a tensor product
\[
\rho \cong \otimes_{i \in I} \rho_i
\]
where $\rho_i: G_F \to \GL_{n_i}(\Qlbar)$ satisfy the following conditions.
\begin{enumerate}
    \item There is a natural identification 
    $\Lie G_{\rho_i}^{\circ,\der} \cong \mf{g}_i$ with the action on $\Qlbar^{n_i}$ isomorphic to $V_i$.
    \item Each $\rho_i$ is unramified outside of a finite set of primes
    \item Each $\rho_i$ is crystalline at each $v|\ell$ with regular Hodge--Tate weights lying in the Fontaine--Laffaille range.
    \item Each $\rho_i$ is polarizable.
    \item Each representation $\overline{\rho_i}|_{G_{F(\zeta_\ell)}}$ is absolutely irreducible.
    \item For any finite Galois extension $F^{\mathrm{(avoid)}}/F$, there exists a finite CM extension $K/F$ linearly disjoint from $F^{\mathrm{(avoid)}}/F$ such that $\rho_i|_{G_K}$ is automorphic: there exists a polarized regular algebraic cuspidal automorphic representation $\pi_i$ of $\GL_{n_i}(\bb{A}_K)$ with an associated $\ell$-adic Galois representation $\rho_{\pi_i,\ell}$ isomorphic to $\rho_i|_{G_K}$.
\end{enumerate}
Consequently, on replacing $M_\pi$ by a finite extension if necessary, for every finite place $\Tilde{\lambda}$ of $M_{\pi}$, there exists Galois representations $r_{\pi_i, \Tilde{\lambda}}: G_K \to \GL_n(\overline{M_{\pi,\Tilde{\lambda}}})$ attached to $\pi_i$ such that $\rho_{\pi,\Tilde{\lambda}}|_{G_K} \cong \otimes_{i \in I} r_{\pi_i, \Tilde{\lambda}}$.
\end{corollary}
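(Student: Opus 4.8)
The plan is to assemble Corollary~\ref{cor: tensorands are potentially automorphic} from three ingredients already available: the tensor decomposition of Proposition~\ref{prop: tensor decomposition over CM field}, the density one residual irreducibility statement of Proposition~\ref{proposition: density 1 residual irreducibility of summands}, and the potential automorphy theorem~\ref{thm: blggt potential automorphy}. First I would fix the set $\mathcal{L}'$: take the positive density set $\mathcal{L}$ of \S\ref{section: tensor decomposition} and intersect it with the density one set of rational primes produced by applying Proposition~\ref{proposition: density 1 residual irreducibility of summands} to $(\rho_{\pi,\lambda})_\lambda$, and with the cofinite set of $\ell$ that are unramified in both $F$ and $N$, satisfy $\ell \geq 2n+2$, $\ell\nmid q_v$ and $\zeta_\ell\notin F$, at which $\pi$ is unramified, and for which $\rho_{\pi,\lambda}$ is crystalline with Hodge--Tate weights in the Fontaine--Laffaille range for $\lambda\mid\ell$; this $\mathcal{L}'$ still has positive density. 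For $\ell\in\mathcal{L}'$ and $\lambda\mid\ell$, Theorem~\ref{lemma: lie-irreducible patrikis taylor prime} gives that $\rho=\rho_{\pi,\lambda}$ is strongly irreducible, so the representation of $\mf{g}_\rho=\Lie G_\rho^{\circ,\der}$ on $\Qlbar^n$ is irreducible; writing $\mf{g}_\rho$ as the product $\prod_{i\in I}\mf{g}_i$ of its simple factors, this representation is forced to be $\boxtimes_{i\in I}V_i$ for irreducible $V_i$ of $\mf{g}_i$, and since each $\mf{g}_i$ acts faithfully on $\Qlbar^n$, each $V_i$ is faithful and hence $\deg V_i=n_i>1$. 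Proposition~\ref{prop: tensor decomposition over CM field} then yields the decomposition $\rho\cong\otimes_{i\in I}\rho_i$ with $\rho_i:G_F\to\GL_{n_i}(\Qlbar)$ satisfying properties (1)--(4). Property (5) follows since $\ell$ lies in the set from Proposition~\ref{proposition: density 1 residual irreducibility of summands} and $\rho$ is its own unique irreducible subrepresentation, so $\overline{\rho}|_{G_{F(\zeta_\ell)}}$ is irreducible; as $\overline{\rho}$ is the semisimplification of $\otimes_i(\rho_i\bmod\ell)$, reducibility of any one $\overline{\rho_i}|_{G_{F(\zeta_\ell)}}$ would force reducibility of $\overline{\rho}|_{G_{F(\zeta_\ell)}}$.

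Next I would establish (6) by applying Theorem~\ref{thm: blggt potential automorphy} to the family $\{(\rho_i,\mu_i)\}_{i\in I}$ of $\ell$-adic representations of $G_F$, taking $F_0=F$, $\ell_i=\ell$, $\iota_i=\iota_\ell$, $d_i\leq n_i$, the given $F^{\mathrm{(avoid)}}/F$, and $\mu_i$ the multiplier character of $\rho_i$ from Proposition~\ref{prop: tensor decomposition over CM field}. The hypotheses are checked one by one: polarizability and regularity are parts (3)--(4); $\ell\geq 2n+2\geq 2(d_i+1)$ and $\zeta_\ell\notin F$ hold by construction of $\mathcal{L}'$; potential diagonalizability at places above $\ell$ follows from \cite[Lemma~1.4.3]{BLGGT14} via crystallinity, the Fontaine--Laffaille condition, and $\ell$ unramified in $F$; and $\overline{\rho_i}|_{G_{F(\zeta_\ell)}}$ is irreducible by (5). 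The one non-routine point is \emph{total oddness} of each $\rho_i$, and this is where $4\nmid n$ is used: if $n_i$ is odd then $\rho_i$ is totally odd by Lemma~\ref{lemma: oddness}(1), while since $4\nmid n=\prod_i n_i$ there is at most one index $i_0$ with $n_{i_0}$ even, necessarily $n_{i_0}\equiv 2\pmod 4$; an induction with Lemma~\ref{lemma: oddness}(2) shows $\otimes_{i\neq i_0}\rho_i$ is totally odd, and since $\rho=\rho_{i_0}\otimes(\otimes_{i\neq i_0}\rho_i)$ is irreducible and (being automorphic) totally odd, one further application of Lemma~\ref{lemma: oddness}(2) forces $\rho_{i_0}$ to be totally odd. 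Theorem~\ref{thm: blggt potential automorphy} then produces the finite CM extension $K/F$ linearly disjoint from $F^{\mathrm{(avoid)}}$ together with polarized regular algebraic cuspidal $\pi_i$ on $\GL_{n_i}(\bb{A}_K)$ with $r_{\ell,\iota_\ell}(\pi_i)\cong\rho_i|_{G_K}$, giving (6).

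For the final compatibility assertion, Remark~\ref{rmk: automorphic Galois reps compatible systems} attaches to each $\pi_i$ a weakly compatible system $(\rho_{\pi_i,\tilde\lambda})_{\tilde\lambda}$; after enlarging $M_\pi$ to contain all the coefficient fields $M_{\pi_i}$, both $(\rho_{\pi,\tilde\lambda}|_{G_K})_{\tilde\lambda}$ and $(\otimes_{i\in I}\rho_{\pi_i,\tilde\lambda})_{\tilde\lambda}$ are weakly compatible systems over $M_\pi$ (Remark~\ref{rmk: operations on compatible systems}) which agree at $\lambda$; since both are semisimple and weak compatibility pins down the Frobenius characteristic polynomials at almost all places from a single member, Brauer--Nesbitt and the Chebotarev density theorem give $\rho_{\pi,\tilde\lambda}|_{G_K}\cong\otimes_{i\in I}\rho_{\pi_i,\tilde\lambda}$ for all $\tilde\lambda$, and one sets $r_{\pi_i,\tilde\lambda}:=\rho_{\pi_i,\tilde\lambda}$. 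I expect the total oddness verification of the tensorands to be the main obstacle: it is the only step genuinely requiring $4\nmid n$, and as discussed in the introduction it does not appear to survive without that hypothesis.
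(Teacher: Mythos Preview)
Your proposal is correct and follows essentially the same route as the paper: apply Proposition~\ref{prop: tensor decomposition over CM field} for (1)--(4), intersect $\mathcal{L}$ with the density one set from Proposition~\ref{proposition: density 1 residual irreducibility of summands} and suitable cofinite conditions to obtain (5), use $4\nmid n$ together with Lemma~\ref{lemma: oddness} to establish total oddness of every tensorand, and then invoke Theorem~\ref{thm: blggt potential automorphy} for (6). Your write-up is somewhat more explicit than the paper's (e.g.\ spelling out the induction for oddness of $\otimes_{i\neq i_0}\rho_i$ and the Chebotarev/Brauer--Nesbitt argument for the final compatibility), but the logical structure is identical.
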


\begin{proof}
The first four parts follow from Proposition \ref{prop: tensor decomposition over CM field} applied to $\ell \in \mathcal{L}$ sufficiently large.
By Proposition \ref{proposition: density 1 residual irreducibility of summands}, we can choose a further subset $\mathcal{L}' \subset \mathcal{L}$ of positive Dirichlet density such that for each $\ell \in \mathcal{L}'$, we have $\ell \geq \max(2n+2,11)$, $\ell$ is unramified in $F$ and $\overline{\rho}|_{G_{F(\zeta_\ell)}}$ is absolutely irreducible.
Since $\overline{\rho}|_{G_{F(\zeta_\ell)}} \cong \otimes_{i \in I} \overline{\rho_i}|_{G_{F(\zeta_\ell)}}$, the fifth part now follows.

Let $J = \{i \in I: \rho_i \text{ is totally odd}\}$.
If $i \in I$ with $\dim \rho_i$ odd then $i \in J$ by Lemma \ref{lemma: oddness}.
By applying \cite[Theorem 1.2]{Hui23} to the weakly compatible system $(\rho_{\pi,\lambda} \otimes \rho_{\pi,\lambda}^\vee)_\lambda$, we may suppose, after excluding finitely many primes from $\mathcal{L}'$, that $\Sym^2 \overline{\rho_i}$ is irreducible whenever $i \in I$ with $\dim \rho_i = 2$.
This is because $\mathrm{ad}^0 \rho_i$ will occur as an irreducible summand of $\rho_{\pi,\lambda} \otimes \rho_{\pi,\lambda}^\vee$ with type $A_1$ image and loc. cit. guarantees that $\mathrm{ad}^0 \ol{\rho_i}$, a twist of $\Sym^2 \overline{\rho_i}$, will be irreducible.
Since $\ol{\rho_i}|_{G_{F(\zeta_\ell)}}$ is also irreducible, if $\Sym^2 \overline{\rho_i}|_{G_{F(\zeta_\ell)}}$ were reducible then $\overline{\rho_i}$ would have dihedral image, just as in the proof of \cite[Theorem 3.2]{CEG22}, contradicting the irreducibility of $\Sym^2 \overline{\rho_i}$, which therefore forces $\Sym^2 \overline{\rho_i}|_{G_{F(\zeta_\ell)}}$ to be irreducible.
Moreover, on excluding finitely many more primes (dependent only on $\pi$) from $\mc{L}'$ again, we have that $\Sym^2 \rho_i$ is Fontaine--Laffaille.
We therefore see by Lemma \ref{lemma: oddness} that $i \in J$.

It follows from our assumption that if $4|n$ then $n = 4p$ for some prime number $p$, that $|I \setminus J| \leq 1$.
Since $\rho \cong \otimes_{i \in I} \rho_i$ itself is totally odd, it follows from Lemma \ref{lemma: oddness} that $I = J$.
We see that the collection $(\rho_i)_{i \in I}$ satisfies the hypotheses of Theorem \ref{thm: blggt potential automorphy}.
Thus, there is a finite CM extension $K/F$ such that each $\rho_i|_{G_K}$ is automorphic as in the statement of the corollary and therefore each $\rho_i|_{G_K}$ gives rise to a weakly compatible system.
\end{proof}

\begin{remark}
In the setup of Corollary \ref{cor: tensorands are potentially automorphic}, by the Khare--Wintenberger method each $\rho_i$ is actually part of a weakly compatible system over $F$ itself (see \cite[Theorem 5.5.1]{BLGGT14}).
By Chebotarev density, it follows that for \emph{every} place $\lambda$, the Galois representation $\rho_{\pi,\lambda}$ can be written as a tensor product of $n_i$-dimensional Galois representations. 
\end{remark}

Using Corollary \ref{cor: tensorands are potentially automorphic}, we can reduce the proof of Theorem \ref{theorem: xia's reduction} to showing the following theorem.

\begin{theorem} \label{thm: reduction to simple tensorands}
    Let \( F \) be an imaginary CM field. Let \( (\pi,\chi) \) be a regular algebraic polarized cuspidal automorphic representation of \( \GL_n(\bb{A}_F) \) with associated weakly compatible system of Galois representations \( (\rho_{\pi,\lambda})_\lambda \) of \( G_F \) defined over an imaginary CM field \( M_\pi \).
    Suppose that \( 7\nmid n \) and, if \( 4\mid n \), then $n = 4p$ for some prime number $p$.
    Suppose that there exists a prime $\lambda_0$ of $M_\pi$ for which
    \begin{enumerate}
        \item $\rho_{\pi,\lambda_0}$ is strongly irreducible,
        \item $0 \neq \Lie G_{\rho_{\pi,\lambda_0}}^{\circ, \der}$ is simple,
        \item $\lambda_0 | \ell_0$ prime with $\ell_0 \geq 2n+2$ unramified in $F$,
        \item $\rho_{\pi,\lambda_0}$ is crystalline with $\HT_\tau(\rho_{\pi,\lambda_0}) \subset [a_\tau, a_\tau + 2 \ell_0 - 4]$ for each $\tau:~F~\hookrightarrow~\Qlzerobar$,
        \item $\ol{\rho_{\pi,\lambda_0}}|_{G_{F(\zeta_{\ell_0})}}$ is irreducible.
    \end{enumerate}
    Then there exists a Dirichlet density 1 set $\mathcal{L}''$ of rational primes such that for all places \( \lambda \) dividing a prime in $\mc{L}''$, \( \rho_{\pi,\lambda} \) is strongly irreducible.    
\end{theorem}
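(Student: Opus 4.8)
The plan is to reduce the assertion to a representation-theoretic classification of pairs (semisimple Lie algebra, faithful representation) with a fixed formal character, and then to eliminate the reducible possibilities by invoking potential automorphy. First I would set $\mc{L}''$ to be the intersection of the following density one sets of rational primes: the set furnished by Proposition~\ref{proposition: density 1 residual irreducibility of summands} applied both to the compatible system $(\rho_{\pi,\lambda})_\lambda$ and to its restriction to $G_{F^\circ}$, where $F^\circ/F$ is the ($\lambda$-independent, by Theorem~\ref{thm: formal char independent of l}(1)) finite extension over which the monodromy is connected; the set $\mc{L}_\pi$ of Theorem~\ref{thm: irreducibility from polarized summands}; and the cofinite set of $\ell$ which are unramified in $F$, satisfy $\ell \geq 2n+2$ and $\zeta_\ell \notin F$, are such that $\pi$ is unramified above $\ell$, and for which $\rho_{\pi,\lambda}$ has all Hodge--Tate weights in the Fontaine--Laffaille range. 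A finite intersection of density one sets has density one. Fixing $\lambda \mid \ell \in \mc{L}''$, by the remark following Definition~\ref{defn: strongly irreducible} strong irreducibility of $\rho_{\pi,\lambda}$ is equivalent to irreducibility of the tautological representation of $G_{\rho_{\pi,\lambda}}^\circ$ on $\Qlbar^n$, and since this group is connected reductive, to irreducibility of the induced faithful representation $t_\lambda\colon \mf{g}_\lambda \hookrightarrow \mf{gl}_n$ of its derived Lie algebra. By Theorem~\ref{thm: formal char independent of l}(2), $t_\lambda$ and $t_{\lambda_0}$ have the same formal character and $\mf{g}_\lambda$ has the same rank as the simple algebra $\mf{g}_{\lambda_0}$, and by Lemma~\ref{lemma: formal char is multiplicity-free}(1) this formal character is multiplicity-free.

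Thus $t_{\lambda_0}$ is a faithful multiplicity-free irreducible representation of a simple Lie algebra, and Theorem~\ref{thm: multiplicity-free irreps} restricts it to an explicit short list; the hypotheses $7\nmid n$ and $4\nmid n$ remove the cases $\mf{g}_{\lambda_0}\cong\mf{g}_2$ (where $n=7$) and $\mf{g}_{\lambda_0}\cong\mf{e}_7$ (where $n=56$), leaving the symmetric and exterior powers of the standard representation and its dual for $\mf{g}_{\lambda_0}\cong\sllie_{m+1}$, the standard and (half-)spin representations of the classical types $B$, $C$, $D$ (together with the $14$-dimensional representation of $\splie_6$), and the two $27$-dimensional representations of $\mf{e}_6$. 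For each such $(\mf{g}_{\lambda_0},t_{\lambda_0})$ I would classify all pairs $(\mf{g}_\lambda,t_\lambda)$ with $\mf{g}_\lambda$ semisimple of the prescribed rank and $t_\lambda$ an $n$-dimensional multiplicity-free representation sharing the formal character of $t_{\lambda_0}$. The tools, as recalled in the introduction, are Hui's theorems (every semisimple Lie algebra has an equal-rank subalgebra with only type-$A$ factors, and equality of formal characters forces equal numbers of $A_k$-factors for $k\geq 9$ and $k=6$), the isometric inner products that equality of formal characters puts on the weight lattices of $\mf{g}_\lambda$ and $\mf{g}_{\lambda_0}$, and explicit computation of the weight multiset of $t_{\lambda_0}$ and of the $\lambda$-independent multiset $W=\{w+cw : w \text{ a weight of } t_\lambda\}$. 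The aim is to show that the only pair with $t_\lambda$ irreducible is $(\mf{g}_{\lambda_0},t_{\lambda_0})$, leaving finitely many reducible candidates.

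To eliminate a reducible candidate $t_\lambda$ for $\lambda\mid\ell\in\mc{L}''$, I would check in each case that $0\in W$; multiplicity-freeness of the formal character of $\rho_{\pi,\lambda}$ then singles out a \emph{unique} irreducible summand $\sigma$ of $\rho_{\pi,\lambda}$ (over $F$, or over a $\lambda$-independent finite base change if $\rho_{\pi,\lambda}$ is irreducible over $F$ although $t_\lambda$ is not) carrying a weight $w$ with $cw=-w$, and this $\sigma$ is necessarily polarizable. Because $4\nmid n$, at most one irreducible summand is even-dimensional, so Lemma~\ref{lemma: oddness} makes every polarizable summand totally odd. The conditions defining $\mc{L}''$ then let Theorem~\ref{thm: blggt potential automorphy} apply, as packaged in Theorem~\ref{thm: irreducibility from polarized summands}, to show $\sigma$ is potentially automorphic, hence lies in a weakly compatible system whose rank and formal character are themselves $\lambda$-invariant by Theorem~\ref{thm: formal char independent of l}; comparing these invariants against those of $(\rho_{\pi,\lambda})_\lambda$, and using that $\rho_{\pi,\lambda_0}$ is strongly irreducible with $\mf{g}_{\lambda_0}$ simple, produces a contradiction. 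When a single polarizable summand is insufficient, I would instead verify that \emph{every} irreducible summand of $\rho_{\pi,\lambda}$ is polarizable and conclude directly from Theorem~\ref{thm: irreducibility from polarized summands}(2). The resulting intersection of density one sets is the desired $\mc{L}''$.

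I expect the hardest part to be the case-by-case classification, and within it the case $\mf{g}_{\lambda_0}\cong\sllie_{2k}$, where the formal character alone does not determine $(\mf{g}_\lambda,t_\lambda)$ and one genuinely needs the isometric structure on the weight lattices. A secondary recurring difficulty --- and the reason the hypothesis $4\nmid n$ is imposed --- is verifying total oddness of the summands, without which the potential automorphy input is unavailable.
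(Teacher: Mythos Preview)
Your overall architecture matches the paper's: set up a density-one set, reduce strong irreducibility to irreducibility of the Lie-algebra representation $t_\lambda$, classify multiplicity-free irreducibles of simple Lie algebras via Theorem~\ref{thm: multiplicity-free irreps}, and then eliminate reducible possibilities either by exhibiting a single polarizable summand whose compatible system forces a dimension/rank contradiction (Lemma~\ref{lemma: summand has same rank}), or by showing every summand is polarizable and invoking Theorem~\ref{thm: irreducibility from polarized summands}(2). The paper runs exactly this dichotomy for $\sllie_2$, $\sllie_{2k}$, types $B,C,D$, and $\mf{e}_6$.

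There is one case where your two strategies do not obviously suffice and the paper takes a genuinely different route: $\mf{g}_{\lambda_0}\cong\sllie_{2k+1}$ with $k\geq 1$. Here every multiplicity-free irreducible $V$ is non-self-dual and can have $n\gg 2k+1$. Your strategy (a) fails because the dimension bound from Lemma~\ref{lemma: summand has same rank} only gives $\dim\sigma\geq 2k+1$, not $\dim\sigma\geq n$, so no contradiction results. Your strategy (b) would require first proving $\mf{g}_\lambda\cong\sllie_{2k+1}$ for every $\lambda$ (so that the unique outer automorphism makes every summand satisfy $U^{c_\lambda}\cong U^\vee$), but Hui's count of $A_m$-factors only pins this down for $2k\geq 9$ or $2k=6$, and you would still need to rule out non-type-$A$ simple algebras of rank $2k$ admitting $A_{2k}$ as an equal-rank subalgebra. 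The paper instead observes (Proposition~\ref{prop: sl_2k+1 irred}) that $\rho_{\pi,\lambda_0}$ factors through the image of an irreducible $\theta\colon\GL_{2k+1}\to\GL_n$, lifts the resulting $\PGL_{2k+1}$-valued representation to a geometric $(2k+1)$-dimensional $\tilde\rho$, applies potential automorphy to $\tilde\rho$, and then uses that any rank-$2k$ semisimple Lie algebra with a faithful $(2k+1)$-dimensional representation must be $\sllie_{2k+1}$ acting standardly; pushing forward along $\theta$ gives irreducibility of $\rho_{\pi,\lambda}$ for every $\lambda$. This lifting idea is absent from your plan and is the cleanest way to handle the odd type-$A$ case.

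One smaller error: your claim that $4\nmid n$ forces at most one irreducible \emph{summand} of $\rho_{\pi,\lambda}$ to be even-dimensional is false (e.g.\ $n=6=2+2+2$); you have confused the direct-sum decomposition here with the tensor-product decomposition in Corollary~\ref{cor: tensorands are potentially automorphic}, where $4\nmid\prod n_i$ does force at most one even $n_i$. Fortunately this does not damage the argument: total oddness of each polarizable summand is already established inside the proof of Theorem~\ref{thm: irreducibility from polarized summands} (via the sign analysis of \cite[Proposition~4.8]{Hui23}), so you can simply drop this sentence and cite that theorem directly.
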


\begin{proof}[Proof of Theorem \ref{theorem: xia's reduction}, assuming Theorem \ref{thm: reduction to simple tensorands}]
Let $\pi$ be as in the statement of Theorem \ref{theorem: xia's reduction} and let $\mc{L}'$ be as in Corollary \ref{cor: tensorands are potentially automorphic} applied to $\pi$.
There exists $k \geq 1$ and an infinite subset $\mc{L}'' \subset \mc{L}'$ such that for every $\ell \in \mc{L}''$ and some (in fact, any) $\lambda|\ell$, the number of tensorands in the decomposition of $\rho_{\pi,\lambda}$ obtained via Proposition \ref{prop: tensor decomposition over CM field} is equal to $k$.
Fix $\ell' \in \mc{L}''$ and let $\lambda' | \ell'$.
Corollary \ref{cor: tensorands are potentially automorphic} applied to \( \lambda' \) produces a CM extension $K/F$ such that for every prime $\lambda$ of $M_\pi$ (extending $M_\pi$ if necessary and replacing $\lambda$ with any prime lying above), there are isomorphisms
\[
\rho_{\pi,\lambda}|_{G_K} \cong \otimes_{i \in I} \rho_{\pi_i,\lambda}
\]
where each $\pi_i$ is a polarized regular algebraic cuspidal automorphic representation of $\GL_{n_i}(\bb{A}_K)$.

For $i \in I$ and $\lambda$ a prime of $M_\pi$, let $\mf{g}_{i,\lambda} = \mf{g}_{\rho_{\pi_i,\lambda}}$.
We claim that $\mf{g}_{\rho_{\pi,\lambda}} \cong \prod_{i \in I} \mf{g}_{i,\lambda}$.
We know that $\mf{g}_{\rho_{\pi,\lambda}}$ is isomorphic to a subalgebra of $\prod_{i \in I} \mf{g}_{i,\lambda}$ surjecting onto each $\mf{g}_{i,\lambda}$.
On the other hand, we know that both $\rk \mf{g}_{\rho_{\pi,\lambda}}$ and $\rk \mf{g}_{i,\lambda}$ is independent of $\lambda$ by Theorem \ref{thm: formal char independent of l}.
Since the place $\lambda'$ satisfies
\[
\rk \mf{g}_{\rho_{\pi,\lambda'}} = \sum_{i \in I} \rk \mf{g}_{i,\lambda'},
\]
we must also have
\[
\rk \mf{g}_{\rho_{\pi,\lambda}} = \sum_{i \in I} \rk \mf{g}_{i,\lambda},
\]
from which the claim follows by Lemma \ref{lemma: Goursat consequence} stated below.

We next claim that we can find $\ell_0 \in \mc{L}''$ and $\lambda_0|\ell_0$ a prime of $M_\pi$ such that, for $i \in I$, $\pi_i$ satisfies the hypotheses of Theorem \ref{thm: reduction to simple tensorands} applied to $\lambda_0$.
(We cannot necessarily take $\ell_0 = \ell'$, since we do not know whether $\ell'$ is unramified in $K$.)
From what we already know about $\pi$, if we take $\ell_0 \in \mc{L}''$ to be sufficiently large, then we only need to check that each $\mf{g}_{i,\lambda_0}$ is simple.
By definition of $\mc{L}'$, we can write $\rho_{\pi,\lambda_0} \cong \otimes_{j \in J} \rho_j$ with $\#J = k$, each $\mf{g}_{\rho_j}$ simple and $\mf{g}_{\rho_{\pi,\lambda_0}} \cong \oplus_{j \in J} \mf{g}_{\rho_j}$.
There are isomorphisms $\otimes_{i \in I} \rho_{\pi_i,\lambda_0} \cong \rho_{\pi,\lambda_0}|_{G_K} \cong \otimes_{j \in J} \rho_j|_{G_K}$.
We saw above that $\mf{g}_{(\rho_{\pi,\lambda_0}|_{G_K})} \cong \prod_{i \in I} \mf{g}_{i,\lambda_0}$.
Moreover by Theorem \ref{thm: formal char independent of l}, each $\mf{g}_{i,\lambda_0}$ is non-zero since $\mf{g}_{i,\lambda'} \neq 0$.
Since $\#I = \#J = k$ and
\[
\prod_{j \in J} \mf{g}_{\rho_j} \cong
\mf{g}_{\rho_{\pi,\lambda_0}} \cong \mf{g}_{(\rho_{\pi,\lambda_0}|_{G_K})} \cong \prod_{i \in I} \mf{g}_{i,\lambda_0},
\]
each $\mf{g}_{i,\lambda_0}$ must be simple, as claimed.

Applying Theorem \ref{thm: reduction to simple tensorands} to $\pi_i$ and $\lambda_0$, we can find a Dirichlet density \( 1 \) set of primes \( \mc{L}_i \) such that \( \rho_{\pi_i,\lambda} \) is strongly irreducible for $\lambda| \ell \in \mathcal{L}_i$.
Let \( \mc{L} := \cap_{i \in I} \mc{L}_i \), also of Dirichlet density \( 1 \), and let $\lambda| \ell \in \mathcal{L}$.
Each $\rho_{\pi_i,\lambda}$ is strongly irreducible, so that each $\mf{g}_{i,\lambda}$ acts irreducibly on $\Qlbar^{n_i}$.
We saw above that $\mf{g}_{\rho_{\pi,\lambda}} \cong \prod_{i \in I} \mf{g}_{i,\lambda}$.
We deduce that $\mf{g}_{\rho_{\pi,\lambda}}$ acts irreducibly on $\Qlbar^n$, since the representation is isomorphic to an external tensor product of irreducible representations of each $\mf{g}_{i,\lambda}$.
\end{proof}

\begin{lemma} \label{lemma: Goursat consequence}
Let $\mf{h} = \prod_{i=1}^k \mf{h}_i$ be a semisimple Lie algebra and let $\mf{f} \subset \mf{h}$ be a semisimple subalgebra such that the induced map $\mf{f} \to \mf{h}_i$ is surjective for every $1 \leq i \leq k$.
If there is an equality of ranks $\rk \mf{f} = \rk \mf{h}$, then there is an equality of Lie algebras $\mf{f} = \mf{h}$.
\end{lemma}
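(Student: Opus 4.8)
The plan is to use Goursat's lemma to analyze the subalgebra $\mf{f} \subset \mf{h} = \prod_{i=1}^k \mf{h}_i$, and then play off the rank equality against the structure of the simple factors. First I would reduce to the case where each $\mf{h}_i$ is simple: decompose every $\mf{h}_i$ into its simple factors, and observe that since $\mf{f}$ is semisimple and surjects onto each $\mf{h}_i$, it surjects onto each simple factor of each $\mf{h}_i$; so we may as well assume from the start that $\mf{h} = \prod_{i=1}^k \mf{s}_i$ is a product of simple Lie algebras and that the projection $\mf{f} \to \mf{s}_i$ is surjective for each $i$.

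Next I would invoke Goursat's lemma (in its Lie-algebra form): any subalgebra $\mf{f}$ of a product $\prod_i \mf{s}_i$ of simple Lie algebras which surjects onto each factor is, after reordering, a ``diagonal-type'' subalgebra. More precisely, there is a partition of $\{1,\dots,k\}$ into blocks $B_1,\dots,B_m$ such that $\mf{f} = \prod_{j=1}^m \Delta_j$, where each $\Delta_j \subset \prod_{i \in B_j} \mf{s}_i$ is the graph of a system of isomorphisms: all the $\mf{s}_i$ for $i$ in a common block $B_j$ are isomorphic, and $\Delta_j \cong \mf{s}_i$ (for any $i \in B_j$) embedded diagonally via these isomorphisms. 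Consequently $\rk \mf{f} = \sum_{j=1}^m \rk \mf{s}_{i_j}$ where $i_j \in B_j$ is any representative, whereas $\rk \mf{h} = \sum_{i=1}^k \rk \mf{s}_i = \sum_{j=1}^m |B_j| \cdot \rk \mf{s}_{i_j}$. Since every $\rk \mf{s}_{i_j} \geq 1$ and $|B_j| \geq 1$, the equality $\rk \mf{f} = \rk \mf{h}$ forces $|B_j| = 1$ for every $j$; that is, $m = k$, each block is a singleton, and hence $\mf{f} = \prod_{i=1}^k \mf{s}_i = \mf{h}$.

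The only mildly technical point — and the one I'd treat as the main step rather than a real obstacle — is justifying the Goursat normal form for Lie algebras: one argues that $\mf{k}_i := \ker(\mf{f} \to \prod_{i' \neq i} \mf{s}_{i'})$ is an ideal of $\mf{f}$ mapping into $\mf{s}_i$, that the image is an ideal of $\mf{s}_i$ hence $0$ or all of $\mf{s}_i$ by simplicity, and from this one builds up the partition into blocks by an induction on $k$, identifying within each block the required isomorphisms. Since all Lie algebras here are finite-dimensional semisimple over an algebraically closed field of characteristic zero, there are no subtleties with decomposability. Alternatively, one can phrase the whole argument at the level of the simply connected algebraic groups and cite the analogous classical Goursat statement for products of (almost) simple groups. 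Either way, once the normal form is in place the rank bookkeeping is immediate and completes the proof.
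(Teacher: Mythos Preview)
Your argument is correct and rests on the same idea as the paper's proof---Goursat's lemma plus a rank count---though the packaging differs. The paper does not first reduce to simple factors; instead it inducts on $k$, applying the two-factor Goursat lemma directly to the (semisimple) $\mf{h}_i$ at each step and reading off the rank via a short exact sequence $0 \to I_1 \times I_2 \to \mf{f} \to \mf{h}_1/I_1 \to 0$. You instead split the $\mf{h}_i$ into simples and invoke the full Goursat normal form (the partition into diagonal blocks) in one stroke, which makes the final rank bookkeeping slightly cleaner at the cost of stating (and sketching) the stronger structural result. Both routes are standard and equally short.
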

\begin{proof}
We proceed by induction on $k$, first proving the lemma in the case where $k = 2$ (since the case $k=1$ is obvious).
A version of Goursat's lemma for Lie algebras states that there exists ideals $I_i \leq \mf{h}_i$ and an isomorphism $\varphi: \mf{h}_1/I_1 \xrightarrow{\sim} \mf{h}_2/I_2$ such that
\[\mf{f} = \{(x,y) \in \mf{h}_1 \times \mf{h}_2: \varphi(x + I_1) = y + I_2\}.\]
Since each $\mf{h}_i$ is semisimple, each ideal $I_i$ is a semisimple subalgebra and direct product factor of $\mf{h}_i$.
There is a short exact sequence
\[
0 \to I_1 \times I_2 \to \mf{f} \to \mf{h}_1/I_1 \to 0,
\]
from which we see that $ \rk \mf{h}_1 +  \rk I_2 =  \rk \mf{f} = \rk \mf{h}_1 + \rk \mf{h}_2 $.
We must therefore have $\mf{h}_2 = I_2$ and the result in this case now easily follows.

Now suppose that $k>2$.
Let $\mf{h}' = \prod_{i=1}^{k-1} \mf{h}_i$ and let $\pi: \mf{h} \to \mf{h}'$ denote the natural projection.
Let $\mf{f}' = \pi(\mf{f})$.
Then the map $\mf{f}' \to \mf{h}_i$ is surjective for every $1 \leq i \leq k-1$ and we have \[ \rk \mf{h}' = \rk \mf{f} - \rk \mf{h}_k \leq \rk \mf{f'} \leq \rk \mf{h'},\]
so equalities must hold throughout.
By induction, we can therefore apply the lemma to $\mf{f}' \subset \mf{h}'$ to deduce that $\mf{f}' = \mf{h}'$.
We see that $\mf{f}$ surjects onto both $\mf{h}'$ and $\mf{h}_k$, so applying the lemma once again, we see that $\mf{f} = \mf{h}$, which completes the proof.
\end{proof}

\section{Irreducibility for an elementary class of compatible systems} \label{section: simple irreducibility}

This section is dedicated to the proof of Theorem \ref{thm: reduction to simple tensorands}.

\subsection{Equalities of formal characters in type A} \label{subsect: type A formal characters}

We begin by recalling some notions regarding formal characters of Lie algebra representations, following \cite[\S 2]{hui2013monodromy}.
Suppose that we have, for $i=1,2$, complex semisimple Lie algebras $\mf{g}_i$ and $n$-dimensional faithful representations $V_i$ of $\mf{g}_i$. 
We can choose Cartan subalgebras $\mf{t}_i \subset \mf{g}_i$ and decompose $V_i|_{\mf{t}_i}$ to obtain the character of $V_i$.
We call the corresponding element of $\bb{Z}[\Lambda_i]$, where $\Lambda_i$ is the weight lattice of $\mf{g}_i$, the \emph{formal character} of the representation $\mf{g}_i \to \gllie(V_i)$.
The representations $V_i$ are said to have the same formal character if there is an isomorphism $F: \mf{t}_1^* \xrightarrow{\sim} \mf{t}_2^*$ sending the character of $V_1$ to the character of $V_2$. If \( \rho:G_F\to\GL_n(\Qlbar) \) is a continuous representation, and \( T'\subset T\subset \GL_n \) is the formal bi-character of \( G_\rho\subset \GL_n \), then \( \Lie(T')\subset \Lie(\GL_n) \) is the formal character of the induced representation of $\mf{g}_{\rho}$ in this setting.

If $\sum_j \alpha_j \in \bb{Z}[\Lambda_1]$ is the formal character of $\mf{g}_1$, we can define an inner product on $(\Lambda_1 \otimes \bb{R})^*$ by setting
\[
(x_1,x_2)_1 = \sum_j \alpha_j(x_1) \alpha_j(x_2).
\]
Let $\langle-,-\rangle_1$ denote the dual inner product on $(\Lambda_1 \otimes \bb{R})$.
Similarly, we obtain an inner product $\langle-,-\rangle_2$ on $\Lambda_2 \otimes \bb{R}$.
Since $\langle-,-\rangle_i$ is invariant with respect to the action of the Weyl group $W_{\mf{g}_i}$ of $\mf{g}_i$, this inner product respects the decomposition of the root system of $\mf{g}$ into irreducible root systems and restricts to the unique (up to scalar) Weyl group invariant inner product on the weight space of each simple factor of $\mf{g}_i$.

Suppose from now on that the formal characters of $V_1$ and $V_2$ are the same.
The isomorphism $F$ induces an isometry of the above inner product spaces.
Moreover, if $\alpha, \beta \in \Lambda_{1}$ are weights appearing in the character of $V_1$ then 
\begin{equation} \label{eqn: equality of inner products of weights}
\langle \alpha,\beta \rangle_1 = \langle F(\alpha),F(\beta) \rangle_2.    
\end{equation}
In order to constrain the possibilities for $\mf{g}_2$ in terms of $\mf{g}_1$, Hui reduces to the case where each $\mf{g}_i$ is of type $A$, using the following lemma.

\begin{lemma}[{\cite[Lemma 2.5]{hui2013monodromy}}] \label{lemma: maximal type A subalgebras}
If $\mf{g}$ is a semisimple Lie algebra then $\mf{g}$ contains a subalgebra $\mf{h} \subset \mf{g}$ of the same rank such that every simple factor of $\mf{h}$ is of type $A$.
Moreover, for $n=6$ or $n \geq 9$, the number of simple factors of type $A_n$ of $\mf{h}$ and $\mf{g}$ coincide.
\end{lemma}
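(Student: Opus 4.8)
The plan is to reduce at once to the case that $\mf{g}$ is simple, and then to run the Borel--de Siebenthal algorithm type by type, keeping track of which simple factors get produced.

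First I would note that it suffices to treat $\mf{g}$ simple. Writing $\mf{g} = \prod_j \mf{s}_j$ as a product of simple Lie algebras, I will produce for each $j$ an equal-rank subalgebra $\mf{h}_j \subseteq \mf{s}_j$ all of whose simple factors are of type $A$, taking $\mf{h}_j = \mf{s}_j$ whenever $\mf{s}_j$ is already of type $A$, and then set $\mf{h} = \prod_j \mf{h}_j$. Since rank and the multiset of simple factors are additive over direct products, $\mf{h} \subseteq \mf{g}$ will then be an equal-rank subalgebra with every simple factor of type $A$. Moreover, the ``moreover'' clause will follow as soon as we arrange that, whenever $\mf{s}_j$ is \emph{not} of type $A$, the algebra $\mf{h}_j$ has no simple factor of type $A_6$ or $A_n$ with $n \geq 9$: for then the $A_n$-factors of $\mf{h}$ with $n = 6$ or $n \geq 9$ are precisely the simple factors of $\mf{g}$ of such a type, which are retained unchanged (note that $A_6$ and $A_n$, $n\geq 9$, are not subject to any low-rank isogeny, so ``type $A_n$'' is unambiguous for these).

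For $\mf{g}$ simple and not of type $A$, I would invoke the standard fact (Borel--de Siebenthal / Dynkin) that deleting any node of the affine Dynkin diagram of $\mf{g}$ yields the Dynkin diagram of a semisimple subalgebra of the same rank --- the roots labelling the remaining $r$ nodes are linearly independent, generate a closed subsystem of full rank, and hence span a semisimple equal-rank subalgebra. Applying this repeatedly (and factor by factor once the diagram disconnects), I would verify case by case that one can reach a product of copies of $A_1$, $A_2$ and $A_3$ only: $\widetilde{B}_r$ and $\widetilde{C}_r$ split off $A_1$-factors and eventually give $A_1^r$; $\widetilde{D}_r$ splits (deleting the node adjacent to a fork, and using $D_2 \cong A_1\times A_1$, $D_3 \cong A_3$ at the end) into a product of $A_1$'s together with at most one $A_3$; $\widetilde{E}_6$ with its branch node removed gives $A_2^3$; $\widetilde{E}_7$ with its branch node removed gives $A_3 \times A_1 \times A_3$; $E_8$ contains $D_8$ (delete the appropriate node of $\widetilde{E}_8$), and $D_8 \supseteq D_4 \times D_4 \supseteq A_1^8$; $\widetilde{F}_4$ gives $A_2 \times A_2$; and $\widetilde{G}_2$ gives $A_2$ (the $\sllie_3 \subset \mf{g}_2$ of the introduction). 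In every case each simple factor produced is of type $A_1$, $A_2$ or $A_3$, in particular of type $A_k$ with $k \notin \{6\} \cup \{n : n \geq 9\}$.

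Finally, I would assemble $\mf{h} = \prod_j \mf{h}_j$ as above: it is a subalgebra of $\mf{g}$ of the same rank, all of whose simple factors are of type $A$, and its simple factors of type $A_n$ with $n = 6$ or $n \geq 9$ are exactly those coming from the retained type-$A$ simple factors of $\mf{g}$, so their number agrees with that for $\mf{g}$. The main obstacle is the case analysis in the third step, in particular confirming that the affine-diagram reductions of the series $D_r$ and of $E_7, E_8$ can genuinely be steered so as to avoid producing an $A_6$ or $A_{\geq 9}$ factor; this is elementary but is exactly where the specific exclusions ``$n=6$ or $n \geq 9$'' in the statement come from, and it requires some careful bookkeeping with extended Dynkin diagrams together with the low-rank isogenies $D_2 \cong A_1 \times A_1$ and $D_3 \cong A_3$.
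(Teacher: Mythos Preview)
The paper gives no proof of its own here, citing Hui instead; your Borel--de Siebenthal case analysis is correct and is essentially the argument one finds in Hui's paper for the existence of such an $\mf{h}$. The case-by-case reductions you list (in particular $\wt{E}_7 \leadsto A_3 \times A_3 \times A_1$ on deleting the branch node, and $E_8 \supset D_8 \supset A_1^8$) are all right.

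There is, however, an interpretive gap in the ``moreover'' clause. You read it as a property of the \emph{particular} $\mf{h}$ you construct, and under that reading your argument proves more than is stated: since each of your $\mf{h}_j$ for non-type-$A$ simple $\mf{s}_j$ has only $A_1$, $A_2$, $A_3$ factors, the $A_n$-factor counts of $\mf{h}$ and $\mf{g}$ would agree for \emph{every} $n \geq 4$, and the curious restriction to $n \in \{6\} \cup [9,\infty)$ becomes vacuous. The intended statement --- both in Hui and in this paper --- is that the ``moreover'' holds for \emph{every} equal-rank type-$A$ subalgebra $\mf{h} \subset \mf{g}$; this is the only reading under which the paper's remark immediately following the lemma makes sense (the inclusion $\sllie_5^2 \subset \mf{e}_8$ is a counterexample to the $n=4$ case only if one is quantifying over all such $\mf{h}$, since your $A_1^8 \subset \mf{e}_8$ would otherwise suffice). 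To prove the ``any'' version you must show that for simple $\mf{g}$ not of type $A$, \emph{no} closed full-rank type-$A$ subsystem has an $A_6$ or $A_{\geq 9}$ factor. For $B_r$, $C_r$, $D_r$ the closed full-rank subsystems are products of smaller $B$-, $C$-, $D$-systems, so the only type-$A$ factors arising are $A_1$ and $A_3$ (via $B_1, C_1, D_2, D_3$); for the exceptional types one enumerates by iterating Borel--de Siebenthal and finds that the type-$A$ factors occurring are among $A_1, \ldots, A_5, A_7, A_8$ (e.g.\ $A_5 \times A_1 \subset E_6$, $A_7 \subset E_7$, $A_8$ and $A_4^2 \subset E_8$) but never $A_6$ or $A_{\geq 9}$.
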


We note that the statement of \cite[Lemma 2.5]{hui2013monodromy} contains a small mistake, in that \textit{loc. cit.} states that the number of $A_4$ factors of $\mf{g}$ and $\mf{h} \subset \mf{g}$ must coincide, but this doesn't hold for $\sllie_5^2 \subset \mf{e}_8$, for example (with only the parity of the number of $A_4$ factors preserved in general).

Lemma \ref{lemma: maximal type A subalgebras} allows one to assume that each $\mf{g}_i$ is of type $A$, since restricting a representation to a subalgebra of the same rank yields an isomorphism of formal characters.
Via an intricate analysis of the possibilities for the inner products and norms of roots of $\mf{g}_i$, Hui proves the following theorem. 

\begin{theorem}[{\cite[Theorems 2.14, 2.17]{hui2013monodromy}}] \label{thm: formal char type a subalgebra equivalence} 
Let $\mf{g}$ (resp. $\mf{g}'$) be a semisimple complex Lie algebra of type $A$ and let $V$ (resp. $V'$) be a faithful representation of $\mf{g}$ (resp. $\mf{g}'$).
Suppose that $V$ and $V'$ have the same formal characters.
Let $a_n$ (resp. $a_n'$) denote the number of type $A_n$ factors of $\mf{g}$ (resp. $\mf{g}'$).
Then $a_4 \equiv a_4' \mod 2$ and $a_n = a_n'$ for $n = 6$ or $n \geq 9$.
\end{theorem}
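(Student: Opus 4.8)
The plan is to follow the metric analysis of \cite[\S 2]{hui2013monodromy}. The starting point is the isometry $F\colon(\Lambda_{\mf g}\otimes\bb{R},\langle-,-\rangle)\xrightarrow{\sim}(\Lambda_{\mf g'}\otimes\bb{R},\langle-,-\rangle')$ furnished by the equality of formal characters, which carries the multiset $S$ of weights of $V$ onto the multiset $S'$ of weights of $V'$, and hence preserves all pairwise inner products of weights as in \eqref{eqn: equality of inner products of weights}. First I would record the orthogonal decomposition of these metrics: writing $\mf g=\prod_i\mf s_i$ with each $\mf s_i$ simple of type $A_{m_i}$, the form $\langle-,-\rangle$ is invariant under $W_{\mf g}=\prod_i W_{\mf s_i}$, so averaging over each factor shows it is an orthogonal direct sum, over the simple factors $\mf s_i$, of positive scalar multiples $c_i$ of the respective standard invariant forms on the $\Lambda_{\mf s_i}\otimes\bb{R}$, and similarly for $\mf g'$ with scalars $c_j'$. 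Decomposing $V\cong\bigoplus_k\boxtimes_i W_{k,i}$ into irreducible constituents, each an external tensor product, and using that $\dim V$ is fixed while tensor products multiply dimensions, only a short explicit list of small representations of each $\mf s_i$ can occur, so the metric data attached to each summand $\Lambda_{\mf s_i}\otimes\bb{R}$ is severely constrained.

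The heart of the argument is then a numerical comparison of the two weight configurations through isometry invariants of $(S,\langle-,-\rangle)$: the second-moment (Casimir-type) functional $x\mapsto\sum_{\mu\in S}\langle\mu,x\rangle^2$, finer symmetric functions of the weights, and the lengths and pairwise inner products of the roots, all of which must be matched by $F$. The point to extract is that a type-$A$ factor of large rank leaves a rigid signature in this data which cannot be synthesised from any combination of lower-rank type-$A$ factors; pushing this through quantitatively would yield $a_m=a_m'$ whenever $m=6$ or $m\geq 9$, and the congruence $a_4\equiv a_4'\bmod 2$.

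The main obstacle is the small-rank range, where genuine coincidences of formal characters occur between non-isomorphic type-$A$ semisimple Lie algebras. For instance, restricting the adjoint representation of $\mf e_8$ to the full-rank subalgebras $\sllie_5\times\sllie_5$ and $\sllie_9$ produces faithful representations with identical formal characters but with $(a_4,a_8)=(2,0)$ and $(0,1)$ respectively, and restricting the adjoint of $\mf e_7$ to $\sllie_6\times\sllie_3$ and $\sllie_8$ does likewise with differing $a_5$ and $a_7$; already these show that only the parity of $a_4$, and nothing about $a_5,a_7,a_8$, can survive. The delicate work, which I would import wholesale from \cite{hui2013monodromy}, is to verify via the metric estimates above that such coincidences are confined to configurations living inside the exceptional and classical Lie algebras of rank at most $8$ — none of which admits a full-rank semisimple subalgebra with an $A_6$ factor, while $A_m$ for $m\geq 9$ is too large to be embedded accidentally — so that no spurious $A_6$ or $A_m$ ($m\geq 9$) factor is ever created.
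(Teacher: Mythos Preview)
The paper does not give its own proof of this theorem: it is stated with a direct citation to \cite[Theorems 2.14, 2.17]{hui2013monodromy}, prefaced only by the remark that ``via an intricate analysis of the possibilities for the inner products and norms of roots of $\mf{g}_i$, Hui proves the following theorem.'' So there is no in-paper argument to compare against; the paper simply imports the result.

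Your proposal is an accurate high-level outline of Hui's own strategy --- the isometry of weight spaces, the orthogonal decomposition along simple factors, the use of norms and pairwise inner products of weights as isometry invariants, and the identification of the exceptional coincidences (such as $\sllie_5^2$ versus $\sllie_9$ inside $\mf{e}_8$, and $\sllie_6 \times \sllie_3$ versus $\sllie_8$ inside $\mf{e}_7$) that force only the parity of $a_4$ to be invariant while leaving $a_5,a_7,a_8$ unconstrained. You are candid that the ``delicate work'' establishing that no such coincidence produces or destroys an $A_6$ or $A_{m}$ ($m\geq 9$) factor is imported wholesale from \cite{hui2013monodromy}; this is honest, but it means your write-up is a proof \emph{sketch} rather than a proof, and in that respect it matches what the paper itself does, namely defer entirely to Hui.
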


The following result constitutes part of the proof of Theorem \ref{thm: formal char type a subalgebra equivalence}. 

\begin{lemma}[{\cite[\S 2.3]{hui2013monodromy}}] \label{lemma: type A_n isomorphic for most n}
Suppose that we are in the setup of Theorem \ref{thm: formal char type a subalgebra equivalence} with $\mf{g}$ simple of type $A_n$.
If $n \in \{4,5,6\}$ or $n \geq 9$, then $\mf{g} \cong \mf{g}'$.
\end{lemma}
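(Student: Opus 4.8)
The plan is to combine the rank equality forced by a common formal character with Theorem~\ref{thm: formal char type a subalgebra equivalence}, reducing the assertion to a short list of possibilities for $\mf{g}'$ that I then eliminate by hand; the only substantial case is $n=5$. To set up, write $\mf{g}\cong\sllie_{n+1}$ and $\mf{g}'\cong\prod_{i=1}^{s}\sllie_{m_i+1}$ with each $m_i\geq 1$. Since $V$ and $V'$ share a formal character there is an isomorphism $F\colon\mf{t}_{\mf{g}}^{*}\xrightarrow{\sim}\mf{t}_{\mf{g}'}^{*}$ of weight spaces, so in particular $\rk\mf{g}=\rk\mf{g}'$, that is $\sum_{i=1}^{s}m_i=n$. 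Because of this rank equality, proving $\mf{g}\cong\mf{g}'$ amounts to proving that $\mf{g}'$ contains a simple factor of type $A_n$. Write $a_k$ (resp.\ $a_k'$) for the number of type-$A_k$ simple factors of $\mf{g}$ (resp.\ $\mf{g}'$), as in Theorem~\ref{thm: formal char type a subalgebra equivalence}; since $\mf{g}$ is simple of type $A_n$ we have $a_n=1$ and $a_k=0$ for $k\neq n$.

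For $n\geq 9$ and for $n=6$, Theorem~\ref{thm: formal char type a subalgebra equivalence} gives $a_n'=a_n=1$, so $\mf{g}'$ has a type-$A_n$ factor, which by the rank equality is all of $\mf{g}'$; hence $\mf{g}'\cong\mf{g}$. For $n=4$ the same theorem gives $a_4'\equiv a_4=1\pmod 2$, so $a_4'\geq 1$, and again $\mf{g}'$ has an $A_4$ factor exhausting its rank, so $\mf{g}'\cong\mf{g}$.

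The remaining case $n=5$ is the crux and the main obstacle. Here Theorem~\ref{thm: formal char type a subalgebra equivalence} only yields that $a_4'$ is even and that $a_k'=0$ for $k=6$ and for $k\geq 9$; together with $\sum m_i=5$ this forces $\mf{g}'$ to have type $A_5$, $A_3\times A_2$, $A_3\times A_1^{2}$, $A_2^{2}\times A_1$, $A_2\times A_1^{3}$, or $A_1^{5}$, and one must rule out the five non-simple options. For this I would use the isometry recorded in \eqref{eqn: equality of inner products of weights}: on the $\mf{g}$-side $\langle-,-\rangle_{\mf{g}}$ is a positive scalar multiple of the standard $A_5$ form, so its $30$ roots all have one and the same norm and, since $V$ is faithful (so each root space acts nontrivially on $V$), each root is a difference of two weights of $V$; on the $\mf{g}'$-side $\langle-,-\rangle_{\mf{g}'}$ is an orthogonal sum of positive multiples of the standard forms of the simple factors, the scalar on the $i$-th factor being fixed by the Dynkin index of the restriction of $V'$ there. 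Transporting the $30$ roots along $F$ and comparing these norms against the weight multiset of $V'$ (equivalently of $V$) is incompatible with $\mf{g}'$ being non-simple in each of the five cases; this is a finite but fiddly norm-counting analysis, essentially the content of \cite[\S2.3]{hui2013monodromy}. As a sanity check, in the illustrative subcase where $V$ is the standard representation of $\sllie_6$ one has $\dim V'=6$, and the conclusion is then immediate because no non-simple rank-$5$ Lie algebra of type $A$ admits a faithful representation of dimension at most $6$ (for instance $\sllie_4\times\sllie_3$ already requires dimension $7$), which also indicates the flavour of the general estimate.
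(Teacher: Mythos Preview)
The paper gives no proof of this lemma at all; it is simply cited from \cite[\S2.3]{hui2013monodromy}, with the remark that it ``constitutes part of the proof of Theorem~\ref{thm: formal char type a subalgebra equivalence}''. So there is nothing to compare methodologically---the paper defers entirely to Hui.

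Your argument for $n\in\{4,6\}$ and $n\ge 9$ is a valid deduction from Theorem~\ref{thm: formal char type a subalgebra equivalence}, but you should be aware of the circularity risk: the paper explicitly says this lemma is an ingredient in Hui's proof of that theorem, so invoking the theorem to establish the lemma reverses the logical order of \cite{hui2013monodromy}. If you are content to take Theorem~\ref{thm: formal char type a subalgebra equivalence} as a black box, your deduction is fine and is indeed how one extracts these cases a posteriori; if you want a self-contained argument, you would have to carry out Hui's norm analysis directly for these $n$ as well, not only for $n=5$.

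For $n=5$ you correctly identify this as the substantive case and point to the isometric inner product structure of \S\ref{subsect: type A formal characters} and \cite[\S2.3]{hui2013monodromy}, which is exactly right. Two small remarks on your sketch. First, your exclusion of $A_4\times A_1$ via the parity condition $a_4'\equiv 0\pmod 2$ is correct and a nice use of Theorem~\ref{thm: formal char type a subalgebra equivalence}. Second, the phrase ``transporting the $30$ roots along $F$'' is imprecise: $F$ is an isometry of weight spaces that matches the multisets of \emph{weights of $V$ and $V'$}, but there is no reason it should send roots of $\mf{g}$ to roots of $\mf{g}'$. Hui's argument proceeds by analysing the norms and pairwise inner products of the weights of $V$ themselves (as in Lemmas~\ref{lemma: alternating power inner products} and~\ref{lemma: symmetric power weights} of this paper), not by transporting roots. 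Your sanity check on $V=\Std$ is correct and captures the flavour of the general bound, but the full case analysis for arbitrary multiplicity-free $V$ is genuinely what Hui does and you have not supplied it here.
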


Suppose that $\mf{g} = \sllie_{n+1}$.
Let $e_1,\ldots,e_{n+1}$ denote the weights of the standard representation of $\mf{g}$, which span $\Lambda_{\mf{g}} \otimes \bb{R}$ and are subject to the relation $\sum_{i} e_i = 0$. 
Let $\langle-,-\rangle$ be a $W_{\mf{g}}$-invariant inner product on $\Lambda_{\mf{g}} \otimes \bb{R}$, normalised so that $\langle e_i, e_i \rangle = \frac{n}{n+1}$.
We then have formulas (\cite[\S 2.7]{hui2013monodromy})
\begin{align} 
\langle e_i, e_j \rangle &= \delta_{i,j} - \frac{1}{n+1} \label{eqn: A_n inner products of weights} \\
||\sum_i a_i e_i ||^2 &= \frac{\sum_i n a_i^2 -  \sum_{i \neq j} a_i a_j}{n+1}.  \label{eqn: A_n norms of weights}  
\end{align}
In the following two lemmas, let $[m] = \{1,\ldots,m\}$.

\begin{lemma} \label{lemma: alternating power inner products}
Let $V$ be the irreducible representation $\Lambda^a \Std$ of $\sllie_{n+1}$, for some $1 \leq a \leq n$.
\begin{enumerate}
\item The weights of $V$ (with multiplicity) are given by
\[ \mathcal{S}_V = \{\sum_{i \in I} e_i | I \subset [n+1], \# I = a \}.\]
\item Each $v \in \mathcal{S}_V$ satisfies $||v||^2 = \frac{a(n+1-a)}{n+1}$.
\item If $v \in \mathcal{S}_V$ then
\begin{align*}
\max_{u \in \mathcal{S}_V \setminus \{v\}} \langle u, v \rangle &= ||v||^2 - 1,    \\
\min_{u \in \mathcal{S}_V \setminus \{v\}} \langle u, v \rangle &= ||v||^2 - \min(a,n+1-a).
\end{align*}
\end{enumerate}
\end{lemma}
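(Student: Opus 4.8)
The plan is to verify each of the three claims by direct computation using the formulas \eqref{eqn: A_n inner products of weights} and \eqref{eqn: A_n norms of weights} for the $W_{\mf{g}}$-invariant inner product on $\Lambda_{\mf{g}} \otimes \bb{R}$ when $\mf{g} = \sllie_{n+1}$. For part (1), recall that the highest weight of $\Lambda^a \Std$ is $e_1 + \dots + e_a = \omega_a$, and the set of weights of $\Lambda^a$ of the standard representation is the $W_{\mf{g}} = S_{n+1}$-orbit of this vector together with the span condition; since $S_{n+1}$ permutes the $e_i$, the orbit is exactly $\{\sum_{i \in I} e_i : I \subset [n+1], \#I = a\}$, and a dimension count gives $\binom{n+1}{a} = \dim \Lambda^a \Std$, so these are all the weights and each occurs with multiplicity one. (This is essentially standard, e.g.\ \cite{fulton2013representation}, and can also be read off from Theorem \ref{thm: multiplicity-free irreps}.)

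For part (2), take $v = \sum_{i \in I} e_i$ with $\#I = a$, so in the notation of \eqref{eqn: A_n norms of weights} we have $a_i = 1$ for $i \in I$ and $a_i = 0$ otherwise. Then $\sum_i n a_i^2 = na$ and $\sum_{i \neq j} a_i a_j = a(a-1)$ (the number of ordered pairs of distinct elements of $I$), so
\[
\|v\|^2 = \frac{na - a(a-1)}{n+1} = \frac{a(n+1-a)}{n+1},
\]
as claimed. Alternatively, expand $\|v\|^2 = \sum_{i,j \in I} \langle e_i, e_j\rangle$ using \eqref{eqn: A_n inner products of weights}: this gives $a \cdot \frac{n}{n+1} + a(a-1)\cdot(-\frac{1}{n+1}) = \frac{a(n+1-a)}{n+1}$.

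For part (3), let $v = \sum_{i \in I} e_i$ and $u = \sum_{i \in J} e_i$ be two distinct weights in $\mathcal{S}_V$, so $\#I = \#J = a$ and $I \neq J$. Write $t = \#(I \cap J)$, so $0 \le t \le a-1$. Using \eqref{eqn: A_n inner products of weights},
\[
\langle u, v \rangle = \sum_{i \in I, j \in J} \langle e_i, e_j \rangle = t \cdot \frac{n}{n+1} + (a^2 - t)\cdot\left(-\frac{1}{n+1}\right) = \frac{t(n+1) - a^2}{n+1} = t - \frac{a^2}{n+1}.
\]
Since $\|v\|^2 = \frac{a(n+1-a)}{n+1} = a - \frac{a^2}{n+1}$, this says $\langle u, v\rangle = \|v\|^2 - (a - t)$, where $a - t = \#(I \setminus J) \in \{1, \dots, a\}$. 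The quantity $a - t$ is maximized when $t$ is minimized; its minimum possible value is $1$ (achieved whenever $I, J$ differ in exactly one element, which is possible since $a \le n$, i.e.\ $I^c$ is nonempty), giving $\max_{u} \langle u, v\rangle = \|v\|^2 - 1$. The quantity $a - t$ is maximized when $t = \max(0, 2a - (n+1))$ by inclusion--exclusion ($\#I + \#J - \#(I \cup J) \ge 2a - (n+1)$), so $a - t$ attains maximum value $a - \max(0, 2a-n-1) = \min(a, n+1-a)$, and this is achieved by a genuine pair $I \neq J$ since $a \le n$. Hence $\min_u \langle u, v\rangle = \|v\|^2 - \min(a, n+1-a)$, completing the proof.

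The main point requiring a little care is confirming in part (3) that the extreme values of the overlap $t = \#(I \cap J)$ are actually attained by admissible distinct subsets of $[n+1]$ of size $a$ --- for the maximum of $\langle u, v\rangle$ we need $I, J$ with $\#(I \cap J) = a - 1$ (so $J$ differs from $I$ by swapping one element for an element of $I^c$, which requires $I^c \neq \emptyset$, i.e.\ $a \le n$, guaranteed by hypothesis), and for the minimum we need the overlap to be as small as possible, namely $\max(0, 2a - n - 1)$, which is realized by any two $a$-subsets whose union is as large as possible; both constructions are elementary. Everything else is a substitution into \eqref{eqn: A_n inner products of weights} and \eqref{eqn: A_n norms of weights}.
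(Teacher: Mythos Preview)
Your proof is correct and follows essentially the same approach as the paper's: both compute $\langle u,v\rangle$ via \eqref{eqn: A_n inner products of weights} in terms of the overlap $t=\#(I\cap J)$, obtain $\langle u,v\rangle=\|v\|^2-(a-t)$, and then determine the range of $t$ (namely $[\max(0,2a-n-1),a-1]$) to read off the extremes. The only blemish is a phrasing slip in the first sentence of your part~(3) analysis (``$a-t$ is maximized when $t$ is minimized; its minimum possible value is $1$''), where you momentarily conflate the max and min cases before correctly concluding; the mathematics is unaffected.
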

\begin{proof}
The first part is standard.
Let $I,J \subset [n+1]$ and set $u = \sum_{i \in I} e_i$ and $v = \sum_{j \in J} e_j$.
Let $k = \#( I \cap J)$.
Then $\langle u, v \rangle = \frac{n k - (a^2 - k)}{n+1} = \frac{(n+1)k - a^2}{n+1}$ by (\ref{eqn: A_n norms of weights}).
The second part follows and we see that $\langle u , v \rangle = ||v||^2 - (a-k)$.
Fixing $v$ and varying $u$, we see that $k$ may take any integer value in $[\max(0,2a-n-1),a]$, from which the final part now follows.
\end{proof}

The next lemma is straightforward.

\begin{lemma} \label{lemma: symmetric power weights}
Let $V$ be the irreducible representation $\Sym^a \Std$ of $\sllie_{n+1}$, for some $a \geq 1$.
\begin{enumerate}
\item The weights of $V$ (with multiplicity) are given by
\[\mathcal{S}_V = \{\sum_{i \in [a]} e_{f(i)} | f: [a] \to  [n+1] \}.\]
\item  The set $\mathcal{S}_V$ has $n+1$ elements of maximal norm, given by $a e_i$ for $1 \leq i \leq n+1$.
\end{enumerate}
\end{lemma}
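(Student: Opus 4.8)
The plan is to deduce both assertions from the standard description of the weights of a symmetric power together with the norm formula (\ref{eqn: A_n norms of weights}). For part (1), recall that $\Std$ has weights $e_1,\dots,e_{n+1}$ (each of multiplicity one) and that, for any representation $W$ of $\sllie_{n+1}$ with weight basis $\{v_1,\dots,v_m\}$ of weights $w_1,\dots,w_m$, the space $\Sym^a W$ has basis the monomials $v_{i_1}\cdots v_{i_a}$ with $i_1\le\dots\le i_a$, where $v_{i_1}\cdots v_{i_a}$ has weight $w_{i_1}+\dots+w_{i_a}$. Taking $W=\Std$, the set of weights occurring in $\Sym^a\Std$ is therefore $\{\sum_{i=1}^{n+1}c_ie_i : c_i\in\bb{Z}_{\ge0},\ \sum_i c_i=a\}$, which is exactly $\mathcal{S}_V$ (write $c_i=\#f^{-1}(i)$ for a given $f\colon[a]\to[n+1]$). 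This gives (1).

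For part (2), I would write a general $v\in\mathcal{S}_V$ as $v=\sum_{i=1}^{n+1}c_ie_i$ with $c_i\in\bb{Z}_{\ge0}$ and $\sum_i c_i=a$, and substitute the identity $\sum_{i\ne j}c_ic_j=\big(\sum_i c_i\big)^2-\sum_i c_i^2=a^2-\sum_i c_i^2$ into (\ref{eqn: A_n norms of weights}) to obtain
\[ ||v||^2=\frac{n\sum_i c_i^2-(a^2-\sum_i c_i^2)}{n+1}=\sum_{i=1}^{n+1}c_i^2-\frac{a^2}{n+1}. \]
Maximising $||v||^2$ over $\mathcal{S}_V$ is thus the same as maximising $\sum_i c_i^2$ over nonnegative integers with $\sum_i c_i=a$; since $\sum_i c_i^2\le\big(\sum_i c_i\big)^2=a^2$ with equality exactly when at most one $c_i$ is nonzero, and $a\ge1$ forces some $c_i\ne0$, the maximum value $a^2$ is attained precisely at the $n+1$ vectors $v=ae_i$, $1\le i\le n+1$. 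Each of these has $||ae_i||^2=a^2-\tfrac{a^2}{n+1}=\tfrac{a^2n}{n+1}$, in agreement with $a^2\langle e_i,e_i\rangle$ via (\ref{eqn: A_n inner products of weights}).

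As the paper indicates, this lemma is straightforward, so I do not anticipate any genuine obstacle; the only point deserving a line of justification is the elementary fact that $\sum_i c_i^2$, restricted to the lattice points of the dilated standard simplex $\{c_i\ge0,\ \sum_i c_i=a\}$, is maximised only at the vertices, which is the convexity estimate used above.
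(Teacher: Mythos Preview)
Your proof is correct and is precisely the kind of direct computation the paper has in mind; the paper itself omits the proof, declaring the lemma ``straightforward'' (and noting elsewhere that $\Sym^a\Std$ is multiplicity-free, so the set description in part (1) indeed gives each weight exactly once). Your use of the norm formula (\ref{eqn: A_n norms of weights}) together with the elementary inequality $\sum_i c_i^2 \le (\sum_i c_i)^2$ with equality only at the vertices of the simplex is the natural argument.
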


The following lemma will allow us to prove a variant of Lemma \ref{lemma: type A_n isomorphic for most n} for every $n$ and a large class of $V$.

\begin{lemma} \label{lemma: bound weights of maximal norm}
Let $\mf{g} = \prod_{i=1}^k \mf{q}_i$ be a semisimple Lie algebra, with each $\mf{q}_i$ simple of type $A_{n_i}$.
Let $r$ denote the rank of $\mf{g}$ and let $V$ be a faithful representation of $\mf{g}$.
Let $\langle-,-\rangle$ be any $W_{\mf{g}}$-invariant inner product on $\Lambda_{\mf{g}} \otimes \bb{R}$ (giving rise to a norm \( \norm{\, \cdot\, } \)).
Let $W_{\max}$ denote the set of weights of $V$ of maximal norm.
If $W_{\max}$ spans $\Lambda_{\mf{g}} \otimes \bb{R}$, then $\# W_{\max} \geq r+1$, with equality holding possibly only if $k=1$.
\end{lemma}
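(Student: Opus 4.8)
The plan is to combine a structural decomposition of $V$ with an elementary fact about Weyl orbits in type $A$. Since $\mf{g} = \prod_{i=1}^k \mf{q}_i$, first decompose $V$ into irreducible constituents $V \cong \bigoplus_j V^{(j)}$ and write each $V^{(j)} \cong \boxtimes_{i=1}^k V_i^{(j)}$ as an external tensor product of irreducible representations $V_i^{(j)}$ of $\mf{q}_i$; let $S_j = \{ i : V_i^{(j)} \text{ is non-trivial} \}$ be its support. Any $W_{\mf{g}}$-invariant inner product makes the summands $\Lambda_{\mf{q}_i} \otimes \bb{R}$ of $\Lambda_{\mf{g}} \otimes \bb{R}$ pairwise orthogonal (this is standard, using irreducibility of the reflection representations), so a weight $\sum_i w_i$ of $V^{(j)}$ satisfies $\norm{\sum_i w_i}^2 = \sum_i \norm{w_i}^2$. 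Hence the maximal-norm weights of $V^{(j)}$ form the ``box'' $B_j := \prod_{i \in S_j} W_{\max}(V_i^{(j)})$, where $W_{\max}(V_i^{(j)})$ denotes the set of maximal-norm weights of $V_i^{(j)}$, and $W_{\max} = \bigcup_{j \in J} B_j$, where $J$ indexes those constituents whose largest weight-norm equals the global maximum $M$. As $V$ is faithful, hence non-trivial, $M > 0$, so $S_j \neq \emptyset$ for every $j \in J$.

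Next I would establish the key input for a single type-$A$ factor: if $U$ is a non-trivial irreducible representation of $\mf{q}_i \cong \sllie_{n_i+1}$ with highest weight $\lambda \neq 0$, then the maximal-norm weights of $U$ contain the Weyl orbit $W_{\mf{q}_i} \cdot \lambda$. This follows from the standard estimate $\norm{\lambda}^2 - \norm{\mu}^2 = \langle \lambda - \mu^+, \lambda + \mu^+ \rangle \geq 0$ for any weight $\mu$ with dominant conjugate $\mu^+$, using that $\lambda - \mu^+$ is a non-negative combination of simple roots and $\lambda + \mu^+$ is dominant. Since $W_{\mf{q}_i} = S_{n_i+1}$ acts on the irreducible reflection representation, the orbit of the non-zero vector $\lambda$ spans $\Lambda_{\mf{q}_i} \otimes \bb{R}$, and its size is the index of a proper Young subgroup of $S_{n_i+1}$, hence at least $n_i + 1$. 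Therefore each $W_{\max}(V_i^{(j)})$ with $i \in S_j$ spans $\Lambda_{\mf{q}_i} \otimes \bb{R}$ and has at least $n_i + 1$ elements.

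The final step is the count, and here lies the only real subtlety: distinct boxes can a priori overlap, but one observes that $B_j \cap B_{j'} = \emptyset$ whenever $S_j \neq S_{j'}$, since an element of the intersection would have, at an index lying in exactly one of the two supports, a $\mf{q}_i$-component that is both a non-zero weight and zero. Grouping $J$ by the value of $S_j$ and writing $\mathcal{S}$ for the resulting set of (non-empty) supports, the unions over different supports are disjoint, so
\[
\# W_{\max} \geq \sum_{S \in \mathcal{S}} \max_{j : S_j = S} \# B_j \geq \sum_{S \in \mathcal{S}} \prod_{i \in S} (n_i + 1) \geq \sum_{S \in \mathcal{S}} \Big( 1 + \sum_{i \in S} n_i \Big) = \# \mathcal{S} + \sum_{i=1}^k n_i \cdot \#\{ S \in \mathcal{S} : i \in S \}.
\]
The spanning hypothesis forces $\bigcup_{S \in \mathcal{S}} S = \{1,\ldots,k\}$ (the weights must span each $\Lambda_{\mf{q}_i} \otimes \bb{R}$, and a trivial $V_i^{(j)}$ contributes only $0$ in the $i$-th coordinate), so each $i$ lies in some member of $\mathcal{S}$ and $\# W_{\max} \geq \# \mathcal{S} + r \geq r + 1$. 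If equality holds then $\# \mathcal{S} = 1$, forcing $\mathcal{S} = \{\{1,\ldots,k\}\}$ and $\prod_{i=1}^k (n_i + 1) = 1 + \sum_i n_i$; since every $n_i \geq 1$, the product strictly exceeds $1 + \sum_i n_i$ as soon as $k \geq 2$ (the degree-two terms are positive), so $k = 1$. The main obstacle is precisely the disjointness-of-boxes observation, which upgrades the useless bound $\# W_{\max} \geq \max_j \# B_j$ to the summation above; the type-$A$ input in the second paragraph is routine representation theory.
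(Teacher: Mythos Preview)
Your proof is correct and follows essentially the same strategy as the paper's: decompose $V$ into irreducibles, use that in type $A_{n_i}$ every nonzero Weyl orbit has size at least $n_i+1$, and combine with the elementary inequality $\prod_{i \in S}(1+n_i) \geq 1+\sum_{i \in S} n_i$. The paper phrases the combinatorial step by directly choosing a partition $\mathcal{P}$ of $\{1,\ldots,k\}$ with each part contained in the support of some summand contributing to $W_{\max}$, whereas you group the contributing summands by their full support and invoke the observation that boxes with different supports are disjoint; your version makes the disjointness explicit and is arguably a bit cleaner, but the underlying idea and the resulting chain of inequalities are the same.
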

\begin{proof}
By our assumptions on $W_{\mathrm{max}}$ we can choose a partition $\mathcal{P}$ of $\{1,\ldots,k\}$ such that for each $I \in \mathcal{P}$ there exists an irreducible summand $U_I \leq V$ such that $U_I \cong \boxtimes_{i=1}^k U_{I,i}$ with $U_{I,i}$ non-trivial for $i \in I$ and $U_I$ admitting a weight in $W_{\max}$.
If $i \in I$, then $U_{I,i}$ admits at least $n_i+1$ weights of norm maximal with respect to the restriction of $||\cdot ||$ to $\Lambda_{\mf{q}_i} \otimes \bb{R}$ (since the size of the orbit of any non-zero weight of $\mf{q}_i$ under the action of $W_{\mf{q}_i} \cong S_{n_i+1}$ is given by a non-trivial multinomial coefficient, and is hence at least $n_i+1$).
It follows that \[\# W_{\max} \geq \sum_{I \in \mathcal{P}} \prod_{i \in I} (1 + n_i) \geq \sum_{I \in \mathcal{P}} (1 + \sum_{i \in I} n_i) \geq r + 1,\] 
with equality holding throughout only if $k=1$.
\end{proof}

\begin{proposition} \label{prop: symmetric power type A are iso}
Suppose that we are in the setup of Theorem \ref{thm: formal char type a subalgebra equivalence}.
Suppose additionally that $\mf{g} \cong \sllie_{n+1}$ with $V$ isomorphic to $\Sym^a \Std$ (or its dual), for some $a \geq 1$.
Then $\mf{g} \cong \mf{g}'$.
\end{proposition}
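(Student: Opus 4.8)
The plan is to compare the weights of maximal norm of $V$ and $V'$, exploiting the isometry furnished by the equality of formal characters together with \Cref{lemma: bound weights of maximal norm}. First I would recall (from the discussion preceding \eqref{eqn: equality of inner products of weights}) that the equality of formal characters of $V$ and $V'$ provides an isomorphism $F \colon \Lambda_{\mf{g}} \otimes \bb{R} \xrightarrow{\sim} \Lambda_{\mf{g}'} \otimes \bb{R}$ taking the character of $V$ to the character of $V'$; in particular $\rk \mf{g}' = \rk \mf{g} = n$, and $F$ is an isometry for the Weyl-invariant inner products on $\Lambda_{\mf{g}} \otimes \bb{R}$ and $\Lambda_{\mf{g}'} \otimes \bb{R}$ attached to the two formal characters. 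Being an isometry that matches up the two multisets of weights, $F$ restricts to a bijection between the sets $W_{\max}$ and $W'_{\max}$ of weights of maximal norm of $V$, resp.\ $V'$, and carries the $\bb{R}$-span of $W_{\max}$ onto that of $W'_{\max}$.

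Next I would record the structure of $W_{\max}$ when $V \cong \Sym^a \Std$ (the case $V \cong (\Sym^a \Std)^\vee$ reduces to this one by negating all weights, which affects neither norms nor $\bb{R}$-spans). By \Cref{lemma: symmetric power weights}, $W_{\max} = \{a e_1, \dots, a e_{n+1}\}$, a set of exactly $n + 1 = \rk \mf{g} + 1$ weights, each occurring with multiplicity one; and since $e_1, \dots, e_{n+1}$ span $\Lambda_{\mf{g}} \otimes \bb{R}$, so does $W_{\max}$. Transporting along $F$, it follows that $W'_{\max}$ consists of exactly $n + 1$ distinct weights and spans $\Lambda_{\mf{g}'} \otimes \bb{R}$.

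To conclude I would write $\mf{g}' = \prod_{j=1}^k \mf{q}_j$ with each $\mf{q}_j$ simple of type $A$ (as in the hypotheses of \Cref{thm: formal char type a subalgebra equivalence}), so that $\rk \mf{g}' = n$. Applying \Cref{lemma: bound weights of maximal norm} to $\mf{g}'$, to the faithful representation $V'$, and to the inner product on $\Lambda_{\mf{g}'} \otimes \bb{R}$ above, the spanning property of $W'_{\max}$ yields $n + 1 = \# W'_{\max} \geq \rk \mf{g}' + 1 = n + 1$; since equality holds, the lemma forces $k = 1$. Hence $\mf{g}'$ is simple of type $A$ and of rank $n$, so $\mf{g}' \cong \sllie_{n+1} \cong \mf{g}$, as desired.

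I do not expect a genuine obstacle here: the real content has been packaged into \Cref{lemma: symmetric power weights} and \Cref{lemma: bound weights of maximal norm}. The one point that needs a little care is checking that the hypotheses of \Cref{lemma: bound weights of maximal norm} are met on the $\mf{g}'$-side — that the maximal-norm weights of $V'$ span $\Lambda_{\mf{g}'} \otimes \bb{R}$ and number exactly $\rk \mf{g}' + 1$ — which is precisely what the isometry $F$ transports from the elementary computation for $\Sym^a \Std$. (Note that the analogous statement for $V \cong \Lambda^a \Std$ cannot be handled this way, since there every weight of $V$ has the same norm; that case instead relies on the finer input of \Cref{lemma: alternating power inner products} and \Cref{lemma: type A_n isomorphic for most n}.)
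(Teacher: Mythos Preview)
Your proposal is correct and follows essentially the same approach as the paper: use \Cref{lemma: symmetric power weights} to count the maximal-norm weights of $V$ and observe they span, transport via the isometry from the equality of formal characters, and apply \Cref{lemma: bound weights of maximal norm} to force $k=1$. Your write-up is somewhat more explicit about why the spanning hypothesis holds on the $\mf{g}'$-side, but the argument is the same.
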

\begin{proof}
We only do the case of $V \cong \Sym^a \Std$, with the dual case being similar.
The representation $V$ admits exactly $n+1$ weights of maximal norm with respect to $||\cdot ||$ by Lemma \ref{lemma: symmetric power weights}.
We see from (\ref{eqn: equality of inner products of weights}) that $V'$ must also have exactly $n+1$ weights of maximal norm (with respect to the inner product induced from $V'$) and that these weights must span $\Lambda_{\mf{g}'} \otimes \bb{R}$.
By Lemma \ref{lemma: bound weights of maximal norm}, we see that $k=1$ and the result follows.
\end{proof}

\subsection{Formal characters and complex conjugation}\label{subsect: formal chars and complex conj}

Suppose, throughout the rest of this section, that we are in the setup of Theorem \ref{thm: reduction to simple tensorands}.
Let $\rho = \rho_{\pi,\lambda_0}$ satisfying $\rho^c \cong \rho^\vee \otimes \rho_\chi|_{G_F}$.
Let $\mf{g}_\lambda = \Lie G_{\rho_\lambda}^{\circ, \der}$ and write $\mf{g} = \mf{g}_{\lambda_0}$, a simple Lie algebra.
Let $V_\lambda$ denote the representation of $\mf{g}_\lambda$ on $\Qlbar^n$, and write $V$ for $V_{\lambda_0}$, a multiplicity-free representation of $\mf{g}$ by Lemma \ref{lemma: formal char is multiplicity-free}.
Let $\mathcal{L}_\pi$ be a positive density set of rational primes as in Theorem \ref{thm: irreducibility from polarized summands}.

We recall a result of \cite{Xia19} about the formal characters of weakly compatible systems and the action of complex conjugation, applicable to our setup.
As in \cite[\S 8]{Xia19}, choose a certain abelian, semisimple, weakly compatible system $(\eps_{\mathfrak{m},\lambda})_\lambda$ of $F_{1,\pi}$ defined over $M_\pi$.
Define an auxiliary weakly compatible system
\[
(\beta_\lambda := (\Ind_{F^+}^F \rho_{\pi,\lambda}) \oplus  \rho_{\chi,\lambda} \oplus (\Ind_{F^+}^{F_{1,\pi}} \eps_{\mathfrak{m},\lambda}))_\lambda.
\]
Then $H_\lambda := G_{\beta_\lambda}^{\circ,\der}$, the identity component  of the derived group of the Zariski closure of the image of $\beta_\lambda$, naturally coincides with $G^{\circ,\der}_\lambda$.
Let $c_\lambda$ denote the image of a complex conjugation $c \in \Gal(\ol{\bb{Q}}/F^+)$ under $\beta_\lambda$. Choose a maximal torus and Borel subgroup $T_\lambda \subset B_\lambda \subset H_\lambda$ stabilized by $c_\lambda$, as in \cite[Proposition 8]{Xia19}. 
Let $W_\lambda \subset X^*(T_\lambda)$ denote the size $n$ (multi-)set of weights of $T_\lambda$ induced by $\rho_\lambda$.
Let $W_{c_\lambda} = \{w + w^{c_\lambda} : w \in W_\lambda\}$, a multi-subset of $X^*(T_\lambda)^{c_\lambda = 1}$.

\begin{proposition}[{\cite[Proposition 17]{Xia19}}] \label{prop: complex conjugation formal character action}
For each pair of primes $\lambda, \lambda'$ of $M_\pi$, there exists an isomorphism $i_{\lambda,\lambda'}: X^*(T_\lambda) \xrightarrow{\sim} X^*(T_{\lambda'})$ such that $i_{\lambda,\lambda'}(W_\lambda) = W_{\lambda'}$, and an isomorphism $r_{\lambda,\lambda'}: X^*(T_\lambda)^{c_\lambda = 1} \xrightarrow{\sim} X^*(T_{\lambda'})^{c_{\lambda'} = 1}$ such that $r_{\lambda,\lambda'}(W_{c_\lambda}) = W_{c_{\lambda'}}$.
\end{proposition}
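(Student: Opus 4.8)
The plan is to treat Proposition~\ref{prop: complex conjugation formal character action} as a refinement of the $\lambda$-independence of the formal bi-character (Theorem~\ref{thm: formal char independent of l}) which additionally tracks the action of complex conjugation, carried out by the invariant-theoretic method of Larsen--Pink \cite{LarPin92} in the form developed by Hui \cite{hui2013monodromy} and Xia \cite{Xia19}. The isomorphism $i_{\lambda,\lambda'}$ is essentially immediate: by construction $H_\lambda = G_{\beta_\lambda}^{\circ,\der}$ coincides with $G_{\rho_{\pi,\lambda}}^{\circ,\der}$, and $W_\lambda$ is precisely the weight multiset of the torus $T_\lambda \subset \GL_n$ acting via $\rho_{\pi,\lambda}$, i.e. the ``derived-torus'' datum $T' \subset \GL_n$ of the formal bi-character of $\rho_{\pi,\lambda}$. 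Applying Theorem~\ref{thm: formal char independent of l} to $(\rho_{\pi,\lambda})_\lambda$ thus furnishes a common $\bb{Z}$-torus datum $T' \subset \bb{G}_m^n$ base-changing to each $T_\lambda \subset \GL_n$, and hence a lattice isomorphism $i_{\lambda,\lambda'}\colon X^*(T_\lambda)\xrightarrow{\sim} X^*(T_{\lambda'})$ compatible with the ambient embeddings into $X^*(\bb{G}_m^n)$; in particular $i_{\lambda,\lambda'}(W_\lambda) = W_{\lambda'}$.

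The new content is the isomorphism $r_{\lambda,\lambda'}$, and the key point is a local computation at the level of the torus. Fix $t \in T_\lambda$ and decompose $\Qlbar^n = \bigoplus_w V_w$ into $T_\lambda$-weight spaces. Since $c_\lambda$ normalizes $T_\lambda$ it permutes the $V_w$ according to $w \mapsto w^{c_\lambda}$, and since a complex conjugation has order two we have $c_\lambda^2 = 1$. Working $\langle c_\lambda\rangle$-orbit by orbit -- on a two-element orbit $\{V_w, V_{w^{c_\lambda}}\}$ one computes that $(c_\lambda t)^2$ acts by the scalar $t^{w + w^{c_\lambda}}$ on each of the two spaces, and on a $c_\lambda$-stable weight space $V_w$ it acts by $t^{2w} = t^{w + w^{c_\lambda}}$ -- one checks that the multiset of eigenvalues of $(c_\lambda t)^2$ on $\Qlbar^n$ is exactly $\{t^\mu : \mu \in W_{c_\lambda}\}$. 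Consequently $W_{c_\lambda} \subset X^*(T_\lambda)^{c_\lambda = 1}$ is recovered from the $\GL_n$-conjugacy class of the morphism $T_\lambda \to \GL_n$, $t \mapsto (c_\lambda t)^2$.

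To make this $\lambda$-independent, I would feed in Frobenii. By Chebotarev there is a positive-density set of primes $v$ of $F^+$ for which $\beta_\lambda(\Frob_v)$ lies, up to conjugacy, in the connected component of $G_{\beta_\lambda}$ containing $c_\lambda$; using multiplicity-freeness of the formal character (Lemma~\ref{lemma: formal char is multiplicity-free}) together with the regularity criteria of \cite[\S 4--7]{LarPin92}, in the spirit of the proof of Lemma~\ref{lemma: generic element of image}, one may further arrange $\beta_\lambda(\Frob_v)$ to be conjugate to a sufficiently regular element of a coset $z_v\cdot c_\lambda T_\lambda$, with $z_v$ recording the abelian contribution of $\rho_{\chi,\lambda}$ and $\eps_{\mathfrak m,\lambda}$. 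The characteristic polynomial of $\beta_\lambda(\Frob_v)^2$ is an invariant of the weakly compatible system, hence lies in $\ol{\bb{Q}}[X]$ independently of $\lambda$; the abelian corrections $\rho_{\chi,\lambda}(\Frob_v)^2$ and $\eps_{\mathfrak m,\lambda}(\Frob_v)^2$ are likewise $\lambda$-independent algebraic numbers, as $\rho_\chi$ and $\eps_{\mathfrak m}$ come from algebraic Hecke characters -- this is precisely why the auxiliary abelian system $(\eps_{\mathfrak m,\lambda})_\lambda$ is built into $\beta_\lambda$, namely to rigidify the abelian quotient. Dividing these out recovers, $\lambda$-independently, the multiset $\{t_v^\mu : \mu \in W_{c_\lambda}\}$ where $t_v \in T_\lambda$ is the image of $\Frob_v$. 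Letting $v$ range over all such primes and invoking the same reconstruction of a $\bb{Z}$-torus equipped with a multiset of its characters from Frobenius-eigenvalue data that underlies Theorem~\ref{thm: formal char independent of l} (see \cite[\S 3]{hui2013monodromy} and \cite{LarPin92}) produces the desired $r_{\lambda,\lambda'}\colon X^*(T_\lambda)^{c_\lambda = 1}\xrightarrow{\sim} X^*(T_{\lambda'})^{c_{\lambda'}=1}$ with $r_{\lambda,\lambda'}(W_{c_\lambda}) = W_{c_{\lambda'}}$.

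I expect the last step to be the main obstacle. Passing from ``for every $v$ the collection of numbers $\{t_v^\mu\}$ is the same for all $\lambda$'' to ``the multiset $W_{c_\lambda}$ inside the lattice $X^*(T_\lambda)^{c_\lambda=1}$ is the same for all $\lambda$, compatibly with $i_{\lambda,\lambda'}$'' requires running the Larsen--Pink invariant theory with some care, since the characteristic-polynomial data only pins down the torus modulo the ambiguities inherent to that machinery; one must also keep precise track of how the abelian twist (and hence the Serre-group gadget $\eps_{\mathfrak m}$) interlocks with the derived-group torus $T_\lambda$, and verify that genuinely regular elements of the $c_\lambda$-component occur at a positive density of primes. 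This is the technical heart of \cite[\S 8]{Xia19}.
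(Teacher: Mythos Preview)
The paper does not give its own proof of this proposition: it is simply cited as \cite[Proposition 17]{Xia19}, so there is no in-paper argument to compare against. Your sketch is a reasonable outline of the method actually used in the cited reference---the first isomorphism $i_{\lambda,\lambda'}$ is indeed just the $\lambda$-independence of the formal bi-character, and the construction of $r_{\lambda,\lambda'}$ does proceed by analyzing eigenvalues of $(c_\lambda t)^2$ on weight spaces and then feeding in Frobenii via the Larsen--Pink / Serre machinery, with the abelian piece $\eps_{\mathfrak{m},\lambda}$ present precisely to rigidify the center as you say.

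That said, your write-up is a sketch rather than a proof, and you are right to flag the last step as the main obstacle. The passage from ``the multisets $\{t_v^\mu : \mu \in W_{c_\lambda}\}$ coincide for all $\lambda$ at a density of primes $v$'' to ``there is a lattice isomorphism $X^*(T_\lambda)^{c_\lambda=1} \cong X^*(T_{\lambda'})^{c_{\lambda'}=1}$ matching the multisets $W_{c_\lambda}$'' genuinely requires the careful invariant-theoretic reconstruction carried out in \cite[\S 8]{Xia19}, and one has to be precise about how the torus $T_\lambda^{c_\lambda=1}$ sits inside the ambient diagonal torus and how the abelian correction interacts with it. You have not supplied these details, only pointed at where they live; for the purposes of this paper that is fine, since the authors themselves are content to cite the result.
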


We can use Proposition \ref{prop: complex conjugation formal character action} to obtain criteria for polarizability of summands of Galois representations.

\begin{lemma} \label{lemma: conjugate self dual summand criterion}
Let $\lambda$ be a prime of $M_\pi$ and let $U$ be an irreducible summand of $\rho_{\pi,\lambda}$.
Suppose that there exists a weight $w$ of $U$, viewed as a representation of $\mf{g}_\lambda$, such that $-w$ is a weight of $U^c$.
Then $U$ is polarizable.
\end{lemma}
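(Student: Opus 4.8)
The plan is to show that the weight hypothesis forces $U$ to be conjugate-self-dual up to the \emph{global} multiplier character of the polarization of $\rho_{\pi,\lambda}$, from which polarizability of $U$ follows immediately. Write $\rho := \rho_{\pi,\lambda}$ and let $\rho_\chi$ denote this multiplier, a character of $G_{F^+}$, so that $\rho^c \cong \rho^\vee \otimes \rho_\chi|_{G_F}$; this relation holds for every $\lambda$, being part of the polarization theorem recalled earlier (equivalently, a relation in the compatible system). Decompose $\rho = \bigoplus_j W_j$ into irreducible $G_F$-subrepresentations, with $U$ one of the $W_j$. Since $\rho$ is semisimple and $U^c$ is a direct summand of $\rho^c \cong \rho^\vee \otimes \rho_\chi|_{G_F}$, the twisted dual $U' := (U^c)^\vee \otimes \rho_\chi|_{G_F}$ is again a direct summand of $\rho$. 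The proposition will follow once I show $U \cong U'$: dualising then gives $U^c \cong U^\vee \otimes \rho_\chi|_{G_F}$, and as $\rho_\chi$ (or $\rho_\chi\delta_{F/F^+}$, whichever realizes the sign $\epsilon_v = -\mu(c_v)$ — note $\delta_{F/F^+}|_{G_F}$ is trivial, so this twist does not affect the isomorphism) is a character of $G_{F^+}$, this isomorphism furnishes a non-degenerate pairing on the representation space, symmetric or alternating by Schur's lemma on the irreducible $U$, with the equivariance demanded of a polarization.

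To prove $U \cong U'$ I would compare $\mf{g}_\lambda$-weights. Dualising a $\mf{g}_\lambda$-representation negates its weights, while tensoring by a character leaves the $\mf{g}_\lambda$-weights unchanged (a character is trivial on the identity component of the derived group of its monodromy group, hence contributes nothing to the bi-character). Therefore the $\mf{g}_\lambda$-weight multiset of the summand $U' = (U^c)^\vee \otimes \rho_\chi|_{G_F}$ of $\rho$ is exactly the negative of that of $U^c$. By hypothesis there is a $\mf{g}_\lambda$-weight $w$ of $U$ with $-w$ a weight of $U^c$; hence $w$ is a $\mf{g}_\lambda$-weight of $U'$. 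Thus $w$ occurs among the $\mf{g}_\lambda$-weights of \emph{both} summands $U$ and $U'$ of $\rho$.

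Finally I would upgrade this to an isomorphism via multiplicity-freeness. Since we are in the situation of \Cref{thm: reduction to simple tensorands}, $\rho_{\pi,\lambda_0}$ is strongly irreducible with regular Hodge--Tate weights, so its formal bi-character is multiplicity-free by \Cref{lemma: formal char is multiplicity-free}(2); by \Cref{thm: formal char independent of l} the formal bi-character of the compatible system is independent of $\lambda$, so the multiset of $\mf{g}_\lambda$-weights of $\rho$ has no repeated element for \emph{every} $\lambda$. This multiset is the disjoint union of the $\mf{g}_\lambda$-weight multisets of the summands $W_j$, so distinct summands have disjoint weight multisets; since $U$ and $U'$ share the weight $w$, they must coincide, so $U \cong U'$ as required, and the argument concludes as in the first paragraph.

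The step I expect to be the main obstacle is the appeal to multiplicity-freeness in the last paragraph: the argument genuinely needs the multiset of $\mf{g}_\lambda$-weights of $\rho_{\pi,\lambda}$ to contain no repetitions at the given place $\lambda$ — which is in general a place where $\rho_{\pi,\lambda}$ may be reducible — and the only reason this holds is the $\lambda$-independence of the formal bi-character, which is multiplicity-free at the strongly irreducible place $\lambda_0$. The remaining care is purely in the weight bookkeeping under $(-)^c$, $(-)^\vee$ and character twists being consistent with the torus fixed in \Cref{subsect: formal chars and complex conj}.
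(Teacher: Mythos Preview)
Your proposal is correct and follows essentially the same approach as the paper: both arguments use multiplicity-freeness of the formal bi-character to identify the summand containing a given weight, and both invoke the global polarization $\rho^c \cong \rho^\vee \otimes \rho_{\chi,\lambda}|_{G_F}$ to conclude $U^c \cong U^\vee \otimes \rho_{\chi,\lambda}|_{G_F}$. The only difference is cosmetic: the paper tracks the weight $-w$ inside $\rho^c$ and $\rho^\vee$ separately and matches the unique summands containing it, whereas you pull everything back into $\rho$ via $U' := (U^c)^\vee \otimes \rho_{\chi,\lambda}|_{G_F}$ and match at the weight $w$. Your justification of bi-character multiplicity-freeness at the possibly reducible place $\lambda$ (via strong irreducibility at $\lambda_0$ and $\lambda$-independence from \Cref{thm: formal char independent of l}) is in fact more explicit than the paper's one-line citation of \Cref{lemma: formal char is multiplicity-free}.
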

\begin{proof}
The formal bi-character of $\rho_{\pi,\lambda}$ is multiplicity-free by Lemma \ref{lemma: formal char is multiplicity-free}.
Let $X$ (resp. $Y$) be the unique irreducible $G_F$-subrepresentation of $\rho_{\pi,\lambda}^c$ (resp. $\rho_{\pi,\lambda}^\vee$) which, when viewed as a representation of $\mf{g}_\lambda$, admits $-w$ as a weight.
By uniqueness, we must have $X \cong U^c$ and $Y \cong U^\vee$.
It follows that, under the isomorphism $\rho_{\pi,\lambda}^c \cong \rho_{\pi,\lambda}^\vee \otimes \rho_{\chi,\lambda}$, $U^c$ is identified with $U^\vee \otimes \rho_{\chi,\lambda}$ as representations of $G_F$. 
\end{proof}

\begin{proposition}\label{prop: conjugate self-duality criteria from single prime}
Let $\lambda,\lambda'$ be primes of $M_\pi$. Suppose that $c_{\lambda}$ acts as a trivial outer automorphism of $G_\lambda^{\circ,\der}$.
\begin{enumerate}
    \item Then $c_{\lambda'}$ acts trivially on $X^*(T_{\lambda'})$.
    \item Suppose, in addition, that $U$ is an irreducible $G_F$-subrepresentation of $\rho_{\pi,\lambda'}$ such that, when viewed as a representation of $\mf{g}_{\lambda'}$, contains a non-trivial irreducible self-dual subrepresentation.
Then $U$ is polarizable.
\end{enumerate}
\end{proposition}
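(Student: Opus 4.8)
For part (1), the plan is to first show that $c_\lambda$ itself acts trivially on $X^*(T_\lambda)$, and then to propagate this to $\lambda'$ via Proposition \ref{prop: complex conjugation formal character action} and the $\lambda$-independence of the rank. Recall that $T_\lambda \subset B_\lambda \subset H_\lambda = G_\lambda^{\circ,\der}$ is a Borel pair stabilised by $c_\lambda$, and that $H_\lambda$ is semisimple. An inner automorphism of $H_\lambda$ preserving this Borel pair is conjugation by an element of $T_\lambda$, because $N_{H_\lambda}(B_\lambda) = B_\lambda$ and $N_{B_\lambda}(T_\lambda) = T_\lambda$; such an automorphism acts trivially on $X^*(T_\lambda)$. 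Hence the hypothesis that $c_\lambda$ acts as a trivial outer automorphism of $H_\lambda$ forces it to act trivially on $X^*(T_\lambda)$, so $X^*(T_\lambda)^{c_\lambda = 1} = X^*(T_\lambda)$ is free of rank $\dim T_\lambda = \rk \mf{g}_\lambda$. By Proposition \ref{prop: complex conjugation formal character action} there is an isomorphism $X^*(T_\lambda)^{c_\lambda = 1} \xrightarrow{\sim} X^*(T_{\lambda'})^{c_{\lambda'} = 1}$, so the latter is also free of rank $\rk \mf{g}_\lambda = \rk \mf{g}_{\lambda'} = \dim T_{\lambda'}$, the middle equality by Theorem \ref{thm: formal char independent of l}. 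Since $c_{\lambda'}$ acts on the free abelian group $X^*(T_{\lambda'})$ as an involution (as $c^2 = 1$) whose fixed sublattice has full rank $\dim T_{\lambda'}$, it acts trivially, which proves part (1).

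For part (2), we appeal to Lemma \ref{lemma: conjugate self dual summand criterion} with $\lambda$ there taken to be $\lambda'$. By part (1), $c_{\lambda'}$ acts trivially on $X^*(T_{\lambda'})$. Consequently, for any $G_F$-subrepresentation $W$ of $\rho_{\pi,\lambda'}$ the multiset of $T_{\lambda'}$-weights of $W^c$ coincides with that of $W$: in the setup of \S\ref{subsect: formal chars and complex conj}, the restriction of $\beta_{\lambda'}$ to $G_F$ contains $\rho_{\pi,\lambda'}\oplus\rho_{\pi,\lambda'}^c$ with $c_{\lambda'}$ interchanging the two summands, so the weights of $W^c$ are obtained from those of $W$ by applying $c_{\lambda'}$, which is trivial. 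Now let $U_0 \subset U|_{\mf{g}_{\lambda'}}$ be a non-trivial irreducible self-dual subrepresentation and let $w$ be a non-zero weight of $U_0$. Then $w$ is a weight of $U$, and since $U_0 \cong U_0^\vee$ the weight $-w$ is also a weight of $U_0$, hence of $U$, hence of $U^c$ by the previous observation. Lemma \ref{lemma: conjugate self dual summand criterion} then gives that $U$ is polarizable.

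The delicate point is the propagation step in part (1): everything hinges on Proposition \ref{prop: complex conjugation formal character action}, which tells us that the $c_\lambda$-invariant part of $X^*(T_\lambda)$ is, up to isomorphism, independent of $\lambda$. This is what allows triviality of the $c_\lambda$-action at the single prime appearing in the hypothesis --- itself a consequence of ``trivial outer automorphism'' together with the choice of a $c_\lambda$-stable Borel pair --- to spread to every $\lambda'$; part (2) is then essentially formal once Lemma \ref{lemma: conjugate self dual summand criterion} is in hand.
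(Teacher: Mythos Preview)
Your proof is correct and follows essentially the same approach as the paper's. The paper's proof is terser: for part~(1) it cites the discussion following \cite[Proposition~21]{Xia19} for the fact that a trivially outer $c_\lambda$ acts trivially on $X^*(T_\lambda)$, and then invokes Proposition~\ref{prop: complex conjugation formal character action} directly; you have unpacked both of these steps (the Borel-pair argument for the first, and the full-rank fixed-lattice argument for the second), which is exactly what underlies the cited references. Part~(2) is identical to the paper's argument.
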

\begin{proof}
Note that $c_{\lambda}$ acts trivially on $X^*(T_{\lambda})$, as is argued in the discussion following \cite[Proposition 21]{Xia19}.
By Proposition \ref{prop: complex conjugation formal character action}, we see that $c_{\lambda'}$ acts trivially on $X^*(T_{\lambda'})$.
For the second part, let $w$ be a non-zero weight of a non-trivial irreducible self-dual summand of $U$, on viewing $U$ as a representation of $\mf{g}_{\lambda'}$.
Then $-w$ is also a weight of $U$.
Then since $c_{\lambda'}$ acts trivially, we see that $-w$ is a weight of $U^c$ and the result follows from Lemma \ref{lemma: conjugate self dual summand criterion}.
\end{proof}

\begin{lemma} \label{lemma: summand has same rank}
Let $K/F$ and $M/M_\pi$ be finite extensions.
Let $(\sigma_\eta)_\eta$ be a weakly compatible system of Galois representations of $K$ defined over $M$.
Suppose that $0 \not\in W_{\lambda_0}$ and that there exists a prime $\eta$ of $M$ lying above a prime $\lambda$ of $M_\pi$ such that
$\sigma_\eta$ is a direct summand of $\rho_{\pi,\lambda}|_{G_K}$.
Then $\rk \sigma_\eta = \rk \mf{g}$ and $\dim \sigma_\eta$ is at least the smallest possible dimension of a multiplicity-free faithful irreducible representation of $\mf{g}$.
\end{lemma}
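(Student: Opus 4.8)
The plan is to transport the hypothesis along the relevant compatible systems so as to reduce everything to the prime $\lambda_0$, where strong irreducibility of $\rho_{\pi,\lambda_0}$ is available. I would first dispose of the trivial case $\sigma_\eta = 0$ (for which the asserted conclusion is vacuous or false, so it must be implicitly excluded). Next I would note, using \Cref{rmk: operations on compatible systems}, that $(\rho_{\pi,\lambda}|_{G_K})_\lambda$ is a weakly compatible system of $G_K$ over $M_\pi$, and, after reindexing primes and enlarging the coefficient field, that it may be regarded as a weakly compatible system $(R_\eta)_\eta$ of $G_K$ defined over $M$ with $R_\eta = \rho_{\pi,\lambda}|_{G_K}$ whenever $\eta \mid \lambda$. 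Each $\rho_{\pi,\lambda}$ is semisimple, hence so is $R_\eta$; since $\sigma_\eta$ is a direct summand of $R_\eta$, the characteristic polynomial $\det(1 - \sigma_\eta(\Frob_w)X)$ divides $\det(1 - R_\eta(\Frob_w)X)$ for all but finitely many primes $w$ of $K$.

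Now fix a prime $\eta_0$ of $M$ lying over $\lambda_0$. Chaining together the weak compatibility of $(\sigma_\eta)_\eta$ and of $(R_\eta)_\eta$, for $w$ outside a finite set one obtains
\[ \det(1 - \sigma_{\eta_0}(\Frob_w)X) = \det(1 - \sigma_\eta(\Frob_w)X) \mid \det(1 - R_\eta(\Frob_w)X) = \det(1 - R_{\eta_0}(\Frob_w)X). \]
Since $\rho_{\pi,\lambda_0}$ is strongly irreducible and $K/F$ is finite, $R_{\eta_0} = \rho_{\pi,\lambda_0}|_{G_K}$ is irreducible. Combining the Chebotarev density theorem with the Brauer--Nesbitt theorem (the usual ``completion'' argument; cf. \cite[Lemma 1.2]{PatTay15}), the divisibility of Frobenius characteristic polynomials at a density one set of primes forces the semisimplification $\sigma_{\eta_0}^{\mathrm{ss}}$ to be isomorphic to a subrepresentation of $R_{\eta_0}$. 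As $R_{\eta_0}$ is irreducible and $\dim \sigma_{\eta_0} = \dim \sigma_\eta > 0$, we conclude $\sigma_{\eta_0}^{\mathrm{ss}} \cong R_{\eta_0} = \rho_{\pi,\lambda_0}|_{G_K}$.

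The statement then follows formally: $\dim \sigma_{\eta_0} = n$, and since the identity component of the algebraic monodromy group is unaffected both by semisimplification and by restriction to the finite extension $K/F$, we have $\mf{g}_{\sigma_{\eta_0}} = \mf{g}_{\rho_{\pi,\lambda_0}} = \mf{g}$, so $\rk \sigma_{\eta_0} = \rk \mf{g}$. Because $(\sigma_\eta)_\eta$ is a weakly compatible system, $\dim \sigma_\eta = \dim \sigma_{\eta_0} = n$, and by \Cref{thm: formal char independent of l} its rank is independent of the prime, so $\rk \sigma_\eta = \rk \mf{g}$. Finally, $V = \Qlbar^n$ is a representation of $\mf{g}$ that is faithful (as $\mf{g} = \mf{g}_{\lambda_0} \hookrightarrow \gllie_n$), irreducible (as $\rho_{\pi,\lambda_0}$ is strongly irreducible, so that $G_{\rho_{\pi,\lambda_0}}^{\circ,\der}$, and hence $\mf{g}$, acts irreducibly on $\Qlbar^n$), and multiplicity-free by \Cref{lemma: formal char is multiplicity-free}; therefore $\dim \sigma_\eta = n = \dim V$ is at least the smallest dimension of a multiplicity-free faithful irreducible representation of $\mf{g}$ (these being classified in \Cref{thm: multiplicity-free irreps}).

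The main obstacle, such as it is, will be the passage from divisibility of Frobenius characteristic polynomials at a density one set of primes to an honest inclusion of semisimple representations; everything else is routine bookkeeping with weakly compatible systems, together with care about coefficient fields and semisimplification. I note that this approach does not seem to use the hypothesis $0 \notin W_{\lambda_0}$. A more hands-on alternative that would use it is to work directly at $\lambda$, where $\sigma_\eta$ corresponds to a $\mf{g}_\lambda$-stable subspace $U \subseteq V_\lambda$ whose multiset of weights is a sub-multiset of $W_\lambda \cong W_{\lambda_0}$, and then to try to show that these weights still span $X^*(T_\lambda) \otimes \bb{R}$; but carrying that out appears to require essentially the same input as the reduction to $\lambda_0$, so I would prefer the argument above.
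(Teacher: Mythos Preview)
There is a genuine gap at the step where you pass from divisibility of Frobenius characteristic polynomials to the assertion that $\sigma_{\eta_0}^{\mathrm{ss}}$ is a subrepresentation of $R_{\eta_0}$. Brauer--Nesbitt gives isomorphism from \emph{equality} of characteristic polynomials, but no such statement holds for divisibility. A concrete counterexample: let $G_K \twoheadrightarrow A_5$ and pull back the trivial representation $A$ and a $3$-dimensional irreducible $B$. Since $B$ lands in $\SO(3)$ (it is real, and $A_5$ has no nontrivial one-dimensional characters), every $B(g)$ has $1$ as an eigenvalue, so $(X-1)\mid \chi_{B(g)}$ for all $g$; yet $A$ is not a constituent of the irreducible $B$. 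Thus even with $R_{\eta_0}$ irreducible, divisibility at all Frobenii does not force $\sigma_{\eta_0}^{\mathrm{ss}}\hookrightarrow R_{\eta_0}$, and your conclusion $\dim\sigma_\eta = n$ does not follow. In fact the lemma only asserts a lower bound on $\dim\sigma_\eta$, not that it equals $n$; your unused hypothesis $0\notin W_{\lambda_0}$ was the signal that something was being skipped.

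The paper's argument avoids this pitfall entirely. Rather than comparing $\sigma_{\eta_0}$ and $R_{\eta_0}$ directly, one forms the auxiliary weakly compatible system $\tau_\eta = \rho_{\pi,\eta}|_{G_K}\oplus\sigma_\eta$. At the prime $\eta$, the summand relation $\sigma_\eta\subset R_\eta$ gives $\mf{g}_{\tau_\eta}\cong\mf{g}_{R_\eta}$, hence $\rk\mf{g}_{\tau_\eta}=\rk\mf{g}$. Independence of rank (Theorem~\ref{thm: formal char independent of l}) transports this to $\eta_0$, where $\mf{g}_{\tau_{\eta_0}}$ contains the simple factor $\mf{g}\cong\mf{g}_{R_{\eta_0}}$ and therefore equals $\mf{g}$. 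Goursat's lemma then forces $\mf{g}_{\sigma_{\eta_0}}\in\{0,\mf{g}\}$, and the hypothesis $0\notin W_{\lambda_0}$ is precisely what eliminates the first option: if $\mf{g}_{\sigma_{\eta_0}}=0$ then $\mf{g}_{\sigma_\eta}=0$, so $\sigma_\eta$ contributes a zero weight to $W_\lambda = W_{\lambda_0}$. From $\mf{g}_{\sigma_{\eta_0}}\cong\mf{g}$ one reads off the rank, and the dimension bound follows because $\sigma_{\eta_0}$ is a multiplicity-free faithful representation of $\mf{g}$, hence contains a multiplicity-free faithful irreducible summand.
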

\begin{proof}
We may form a new weakly compatible system
\[
(\tau_\eta := \rho_{\pi,\eta}|_{G_{K}} \oplus \sigma_\eta)_\eta
\]
of $(n+m)$-dimensional Galois representations of $G_{K}$, where $m = \dim \sigma_\eta$.
Since $\sigma_\eta$ is a direct summand of $\rho_{\pi,\lambda}|_{G_K}$, we see that $\mf{g}_{\tau_\eta} \cong \mf{g}_{\rho_{\pi,\eta}}$ has rank equal to that of $\mf{g}$.
It follows from Theorem \ref{thm: formal char independent of l} that $\rk \mf{g}_{\tau_{\eta_0}} = \rk \mf{g}$, where $\eta_0| \lambda_0$ is a prime of $M$.
Since $\rho_{\pi,\eta_0}$ is strongly irreducible with $\mf{g}_{\rho_{\eta_0}} \cong \mf{g}$ simple, it follows that $\mf{g}_{\tau_{\eta_0}}$ admits $\mf{g}$ as a simple factor.
Since $\rk \mf{g}_{\tau_{\eta_0}} = \rk \mf{g}$, we deduce that $\mf{g}_{\tau_{\eta_0}} \cong \mf{g}$.
By a version of Goursat's lemma, $\mf{g}_{\sigma_{\eta_0}}$ must either be isomorphic to $\mf{g}$ or $0$.
We cannot have $\mf{g}_{\sigma_{\eta_0}} = 0$, else by Theorem \ref{thm: formal char independent of l} we would have $\mf{g}_{\sigma_{\eta}} = 0$ and $\rho_{\pi,\lambda}$ would admit $0$ as a weight, contradicting that $0 \not\in W_{\lambda}$.
Thus $\rk \sigma_\eta = \rk \sigma_{\eta_0} = \rk \mf{g}$.
Since $\sigma_{\eta_0}$ is a faithful representation of $\mf{g}_{\sigma_{\eta_0}} \cong \mf{g}$ and is multiplicity-free (because $\sigma_\eta$ is), the final part now follows.
\end{proof}

\subsection{Proofs of irreducibility} \label{subsect: simple type irreducibility proofs}

We are almost ready to begin proving each case of Theorem \ref{thm: reduction to simple tensorands}.
The following lemma will be useful in controlling the component group of $G_\rho$.

\begin{lemma} \label{lemma: pi_0 embeds in out(g)}
Let $\Omega$ be an algebraically closed field of characteristic zero and let $H \subset \GL_{n,\Omega}$ be a possibly disconnected, algebraic subgroup with $H^\circ$ acting irreducibly.
Let $Z = Z(\GL_n)$ and let $\mf{h}' = \Lie(H^{\circ,\der})$.
Then there are natural inclusions
\[Z H/(Z H^{\circ,\der}) \hookrightarrow N_{\GL_n}(Z H^{\circ,\der})/(ZH^\circ) \hookrightarrow \Out(\mf{h}').\]
\end{lemma}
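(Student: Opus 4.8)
The plan is to use the fact that \(H\) normalizes \(H^\circ\), hence acts on \(H^\circ\) by conjugation, and this action preserves \(H^{\circ,\der}\) and therefore induces an action on \(\mf{h}' = \Lie(H^{\circ,\der})\); this gives a homomorphism \(c: H \to \Aut(\mf{h}')\). The center \(Z = Z(\GL_n)\) acts trivially by conjugation, so \(c\) factors through \(ZH/Z\). The inner automorphisms of \(\mf{h}'\) coming from elements of \(H^{\circ,\der}\) land in \(\Inn(\mf{h}')\), so composing with \(\Aut(\mf{h}') \twoheadrightarrow \Out(\mf{h}')\) we get a homomorphism \(\bar{c}: ZH \to \Out(\mf{h}')\) which kills \(ZH^{\circ,\der}\), hence descends to \(ZH/(ZH^{\circ,\der}) \to \Out(\mf{h}')\). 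It then remains to show this descended map is injective.

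For injectivity, I would argue as follows. Suppose \(h \in H\) represents a class in the kernel, i.e.\ conjugation by \(h\) acts on \(\mf{h}'\) as an inner automorphism. After multiplying \(h\) by a suitable element of \(H^{\circ,\der}\), we may assume conjugation by \(h\) acts trivially on \(\mf{h}'\), hence \(h\) centralizes \(H^{\circ,\der}\). The key input now is that \(H^\circ\) acts irreducibly on \(\Omega^n\). Write \(H^\circ\) as an almost-direct product \(Z(H^\circ) \cdot H^{\circ,\der}\); since \(H^\circ\) acts irreducibly, \(Z(H^\circ)\) consists of scalars by Schur's lemma, so \(H^{\circ,\der}\) already acts irreducibly and its centralizer in \(\GL_{n,\Omega}\) consists of scalars, again by Schur. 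Therefore \(h \in Z\), so the class of \(h\) in \(ZH/(ZH^{\circ,\der})\) is trivial. (One should check that the adjustment of \(h\) by an element of \(H^{\circ,\der}\) is legitimate: an inner automorphism of \(\mf{h}'\) by an element \(X\) of \(\mf{h}'\) integrates, over the algebraically closed characteristic-zero field \(\Omega\), to conjugation by an element of the (connected, semisimple) group \(H^{\circ,\der}\), using that \(\Aut(\mf{h}')^\circ = \Inn(\mf{h}')\) is the image of \(H^{\circ,\der}\) in its adjoint form — and conjugation by a preimage in \(H^{\circ,\der}\) has the same effect on \(\mf{h}'\).)

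I expect the main obstacle to be purely bookkeeping: making precise the passage from "acts as an inner automorphism of \(\mf{h}'\)" to "may assume centralizes \(H^{\circ,\der}\)", which requires knowing that every inner automorphism of the Lie algebra \(\mf{h}'\) is realized by conjugation by an actual element of the group \(H^{\circ,\der}\) (not merely of its adjoint quotient). This is standard over an algebraically closed field of characteristic zero — \(\Inn(\mf{h}') = (\Aut \mf{h}')^\circ\) and this coincides with the image of \(H^{\circ,\der} \to \GL(\mf{h}')\), whose kernel is central in \(H^{\circ,\der}\) and hence acts trivially by conjugation on \(H^{\circ,\der}\) anyway — so the argument goes through cleanly. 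The irreducibility hypothesis on \(H^\circ\) is used only at the very end, via Schur's lemma, to pin down that the residual centralizing element is a scalar.
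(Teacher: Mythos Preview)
Your proposal is correct and follows essentially the same approach as the paper: both construct the map via the conjugation action of $ZH$ on $H^{\circ,\der}$ (or equivalently on $\mf{h}'$), and both establish injectivity by noting that an element acting as an inner automorphism can be adjusted by an element of $H^{\circ,\der}$ to centralize $H^{\circ,\der}$, whence it lies in $Z$ by Schur's lemma. The paper's presentation is slightly more compressed, phrasing things via the normalizer quotient $N_{\GL_n}(ZH^{\circ,\der})/(ZH^\circ)$ and $\Out(H^{\circ,\der})$ rather than $\Out(\mf{h}')$ directly, and it records the equality $ZH^{\circ,\der} = ZH^\circ$ (via Schur) up front; your version is more explicit about the Lie-algebra-to-group passage for inner automorphisms, which the paper leaves implicit.
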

\begin{proof}
Since $Z H$ normalizes $H^{\circ,\der}$, there is an inclusion
\[
Z H/ (Z H^{\circ,\der}) = Z H/ (Z H^\circ) \hookrightarrow N_{\GL_n}(Z H^{\circ,\der})/(ZH^\circ),
\]
with the first equality holding by Schur's lemma.
If $g \in N_{\GL_n}(H^{\circ,\der})$ acts as an inner automorphism of $H^{\circ,\der}$, then $g = hz$, for some $h \in H^{\circ,\der}$ and $z \in Z_{\GL_n}(H^{\circ,\der}) = Z$.
We deduce that there is an injection
\[
N_{\GL_n}(Z H^{\circ,\der})/(ZH^\circ) \hookrightarrow \Out(H^{\circ,\der}) \cong \Out(\mf{h}'). \qedhere
\]
\end{proof}

Due to the next lemma, it will suffice to only establish \emph{irreducibility} of $\rho_{\pi,\lambda}$ in a density one set, in order to prove Theorem \ref{thm: reduction to simple tensorands}.

\begin{lemma}
Let $\lambda$ be a prime of $M_\pi$ such that $\rho_{\pi,\lambda}$ is irreducible.
Then $\rho_{\pi,\lambda}$ is strongly irreducible.
\end{lemma}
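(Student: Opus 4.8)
The plan is to leverage the fact that $\rho = \rho_{\pi,\lambda}$ is part of a weakly compatible system whose formal character is independent of $\lambda$, together with strong irreducibility of $\rho_{\pi,\lambda_0}$ at the auxiliary prime $\lambda_0$. First I would recall that, by Theorem \ref{thm: formal char independent of l}, the component group $\pi_0(G_{\rho_{\pi,\lambda}})$ cuts out a finite extension $L/F$ independent of $\lambda$, and moreover that $\mf{g}_\lambda = \Lie G_{\rho_{\pi,\lambda}}^{\circ,\der}$ has the same rank and the induced representation $V_\lambda$ on $\Qlbar^n$ has the same formal character as $V = V_{\lambda_0}$, for every $\lambda$. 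Since $\rho_{\pi,\lambda_0}$ is strongly irreducible, the corresponding representation of $G_{\rho_{\pi,\lambda_0}}^\circ$ is irreducible (as in \cite[Remark 3.2.7]{CEG22}), so in particular $V$ is an irreducible representation of $\mf{g}$; being a summand of a regular (hence Hodge--Tate regular) compatible system, it is also multiplicity-free by Lemma \ref{lemma: formal char is multiplicity-free}.

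The key step is to transfer irreducibility of the connected monodromy group from $\lambda_0$ to $\lambda$. Suppose $\rho_{\pi,\lambda}$ is irreducible but $\rho_{\pi,\lambda}|_{G_{F'}}$ is reducible for some finite extension $F'/F$; equivalently, the restriction of $V_\lambda$ to $\mf{g}_\lambda$ is a reducible representation. Then $V_\lambda$ decomposes under $\mf{g}_\lambda$ as a direct sum of at least two irreducible pieces, which the (finite) component group $\pi_0(G_{\rho_{\pi,\lambda}})$ permutes transitively (since $\rho_{\pi,\lambda}$ itself is irreducible). In particular all these irreducible $\mf{g}_\lambda$-summands have the same dimension $d < n$, so $d \mid n$ and $d \le n/2$, and the formal character of $V_\lambda$ — equivalently, that of $V$ — is the $(n/d)$-fold "sum" of the formal character of a $d$-dimensional representation of $\mf{g}_\lambda$. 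But the formal character of $V$ is multiplicity-free, so this forces the formal character of that $d$-dimensional piece to itself be multiplicity-free and, crucially, for the $n/d$ translated copies to be disjoint. I would then derive a contradiction with the fact that $V$, being an \emph{irreducible multiplicity-free} representation of the \emph{simple} Lie algebra $\mf{g}$, has a formal character that cannot be written as such a nontrivial disjoint union of translates: concretely, the weights of an irreducible representation of a simple Lie algebra form a single Weyl-orbit-saturated set containing a dominant weight with multiplicity one and a unique lowest weight, and if its character equalled $\sum_{j} (\chi_j + \text{shift}_j)$ with $\ge 2$ disjoint multiplicity-free summands of equal size, one could separate weights by their values against a regular element, contradicting the Weyl-group symmetry (equivalently, contradicting connectedness/irreducibility of the weight diagram).

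Concretely in the write-up I would argue: let $G = G_{\rho_{\pi,\lambda}}$, $G^\circ$ its identity component. Strong irreducibility fails iff $G^\circ$ acts reducibly on $\Qlbar^n$; by Clifford theory applied to $G^\circ \trianglelefteq G$ (with $G/G^\circ$ finite), $\Qlbar^n = \bigoplus_{i=1}^r W_i$ with the $W_i$ irreducible $G^\circ$-modules permuted transitively by $G$, $r \ge 2$, all of common dimension $d = n/r$. Passing to a maximal torus $T \subset G^\circ$ and recalling that the multiset of $T$-weights of $\Qlbar^n$ (the formal character) is independent of $\lambda$ by Theorem \ref{thm: formal char independent of l}, this multiset is therefore multiplicity-free (as it is for $\lambda_0$ by Lemma \ref{lemma: formal char is multiplicity-free}), yet also equals an $r$-fold disjoint union $\bigsqcup_{i=1}^r \mathrm{ch}(W_i)$ with all the $\mathrm{ch}(W_i)$ of equal cardinality $d$. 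Now apply this with $\lambda = \lambda_0$: since $G_{\rho_{\pi,\lambda_0}}^\circ$ acts irreducibly (strong irreducibility), the restriction $V$ to $\mf{g} = \mf{g}_{\lambda_0}$ is irreducible, and the same torus/weight data would exhibit the character of the irreducible representation $V$ of the simple Lie algebra $\mf{g}$ as a disjoint union of $r \ge 2$ equal-size multiplicity-free sub-multisets — each stable under no proper subsystem obstruction, but the set of weights of an irreducible representation of a connected simple group is connected under the "add/subtract a root" relation, so it cannot be partitioned into $\ge 2$ nonempty pieces that are each translates of a common pattern unless $r=1$. This contradiction proves $\rho_{\pi,\lambda}$ strongly irreducible.

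The main obstacle I anticipate is making the last combinatorial step fully rigorous and clean: ruling out that the formal character of an irreducible multiplicity-free representation of a simple Lie algebra can be a nontrivial disjoint union of equal-sized translated copies of a smaller character. The slickest route is probably to observe that $V$ irreducible over $\mf{g}$ simple means its weights, together with the action of the Weyl group and root translations, form a single connected "crystal"-type graph, whereas a decomposition $\Qlbar^n|_{G^\circ} = \bigoplus W_i$ with $r\ge 2$ would make the full $G$ (not $G^\circ$) the relevant group at $\lambda$ and there would be \emph{no} constraint forcing $\mf{g}_\lambda$ simple — so in fact the cleanest argument may instead go: since $\mathrm{rk}\,\mf{g}_\lambda = \mathrm{rk}\,\mf{g}$ and $V_\lambda, V$ have the same formal character which is that of an irreducible (hence, in particular, a representation with a unique highest weight line) representation, $\mf{g}_\lambda$ must also act irreducibly, because the highest weight of $V$ has multiplicity one and generates $V$ under $\mf{g}$, a property detectable purely from the multiplicity-free formal character together with the fact (Theorem \ref{thm: formal char type a subalgebra equivalence} and its surrounding analysis, plus Lemma \ref{lemma: maximal type A subalgebras}) that the type-$A$-decomposition data is preserved. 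I would check against the explicit list in Theorem \ref{thm: multiplicity-free irreps} if a uniform argument proves elusive, but I expect the uniform "connectedness of the weight diagram" argument to suffice.
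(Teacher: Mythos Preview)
Your approach has a genuine gap at the ``connectedness of the weight diagram'' step, and the paper's proof proceeds along a quite different line.

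The identification of formal characters in Theorem~\ref{thm: formal char independent of l} is only an identification of tori together with their weight multisets; it does \emph{not} identify the root systems or Lie algebra structures of $\mf{g}_\lambda$ and $\mf{g}$. So when you transport the Clifford partition $\bigsqcup_i \mathrm{ch}(W_i)$ from $\lambda$ to $\lambda_0$, you obtain merely an abstract partition of the weight set of $V$ into $r$ equal-size pieces. There is no reason these pieces should be unions of $\mf{g}$-root strings, nor translates of one another, nor otherwise respect the $\mf{g}$-structure; they only respect the $\mf{g}_\lambda$-structure, and $\mf{g}_\lambda$ may be a completely different Lie algebra (cf.\ the $\sllie_3 \subset \mf{g}_2$ phenomenon discussed in the paper's Limitations section). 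Your sentence ``apply this with $\lambda = \lambda_0$'' is therefore circular: at $\lambda_0$ there \emph{is} no nontrivial $G^\circ$-decomposition, and the partition imported from $\lambda$ carries no $\mf{g}$-module meaning. The fallback of checking Theorem~\ref{thm: multiplicity-free irreps} case by case would amount to re-proving that the formal character of $V$ cannot arise from any reducible $\mf{g}_\lambda$-representation with transitively permuted summands --- strictly harder than needed, and close in spirit to the full case analysis of \S\ref{subsect: simple type irreducibility proofs}.

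The paper instead exploits the $\lambda$-independence of $F^\circ/F$ in a different way. Working at $\lambda_0$, one bounds $[F^\circ:F] \le 2$: after a twist so that $Z(\GL_n) \subset G_{\rho_{\lambda_0}}^\circ$, Lemma~\ref{lemma: pi_0 embeds in out(g)} embeds $\pi_0(G_{\rho_{\lambda_0}}) \cong \Gal(F^\circ/F)$ into $\Out(\mf{g})$, which is cyclic of order $\le 2$ by Lemma~\ref{lemma: outer automorphisms simple lie algebras} and the classification in Theorem~\ref{thm: multiplicity-free irreps} (using $n \neq 8$). Now if $\rho_{\pi,\lambda}$ were irreducible but not strongly irreducible, it would become reducible over the \emph{same} $F^\circ$, and Clifford theory for the quadratic extension gives $\rho_{\pi,\lambda} \cong \rho_{\pi,\lambda} \otimes \delta_{F^\circ/F}$. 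This self-twist identity transfers to $\lambda_0$ by Chebotarev, and $\rho_{\pi,\lambda_0} \cong \rho_{\pi,\lambda_0} \otimes \delta_{F^\circ/F}$ contradicts strong irreducibility of $\rho_{\pi,\lambda_0}$ (e.g.\ by \cite[Lemma~4.8(1)]{kret2022galois}). The point you missed is that the \emph{self-twist relation}, unlike the $G^\circ$-decomposition, is a statement about Frobenius characteristic polynomials and therefore genuinely propagates through the compatible system.
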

\begin{proof}
After possibly replacing $\pi$ with its twist by an infinite order character, we may suppose that $Z(\GL_n) \subset G_{\rho}^\circ$, as can be seen by considering determinants.
We then claim that $F^\circ/F$ is a cyclic extension of degree at most $2$.
In the case that $\mf{g} \not\cong \solie_8$, the claim follows from combining Lemma \ref{lemma: outer automorphisms simple lie algebras} and Lemma \ref{lemma: pi_0 embeds in out(g)}.
If $\mf{g} \cong \solie_8$, then by Lemma \ref{lemma: formal char is multiplicity-free} and Theorem \ref{thm: multiplicity-free irreps} we see that $V$ is given by either the standard representation or one of the two half-spin representations.
Since these three representations are permuted under the action of the $\Out(\solie_8) \cong S_3$, it is straightforward to see that $N_{\GL_8}(G_{\rho}^\circ) \cong \GO_8$.
The claim now follows from Lemma \ref{lemma: pi_0 embeds in out(g)} again, since $\pi_0(\GO_8) \cong \mathbb{Z}/2\mathbb{Z}$.

Suppose that $\rho_{\pi,\lambda}$ is not strongly irreducible. Then $\rho_{\pi,\lambda}$ becomes reducible over $F^\circ$ and we therefore have (by Clifford theory and Frobenius reciprocity) an isomorphism
$\rho_{\pi,\lambda} \cong \rho_{\pi,\lambda} \otimes \delta_{F^\circ/F}$, where $\delta_{F^\circ/F}$ is the quadratic character cut out by $F^\circ/F$.
We deduce (by Chebotarev density, say) that $\rho_{\pi,\lambda_0} \cong \rho_{\pi,\lambda_0} \otimes \delta_{F^\circ/F}$.
This contradicts strong irreducibility of $\rho_{\pi,\lambda_0}$, by \cite[Lemma 4.8 (1)]{kret2022galois}.
\end{proof}

The following four propositions handle the cases that $\mf{g}$ is of type $A$.

\begin{proposition} \label{prop: sl_2 irred}
Suppose that $\mf{g} \cong \sllie_{2}$.
Then the conclusion to Theorem \ref{thm: reduction to simple tensorands} holds.
\end{proposition}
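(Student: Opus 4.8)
The plan is to prove the slightly stronger statement that $\rho_{\pi,\lambda}$ is strongly irreducible for \emph{every} prime $\lambda$ of $M_\pi$, so that one may take $\mathcal{L}''$ to be the set of all rational primes (which has Dirichlet density $1$). In this case neither potential automorphy nor the divisibility hypotheses on $n$ enter; the argument is purely Lie-theoretic, exploiting the rigidity of representations of $\sllie_2$.

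First I would pin down the representation $V = V_{\lambda_0}$ of $\mf{g} = \sllie_2$. Since $\rho_{\pi,\lambda_0}$ is strongly irreducible, $G_{\rho_{\pi,\lambda_0}}^\circ$ acts irreducibly on $\Qlbar^n$ (as noted after Definition~\ref{defn: strongly irreducible}); as its centre then acts through scalars by Schur's lemma and $G^\circ = Z(G^\circ)\cdot G^{\circ,\der}$, the derived subgroup $G^{\circ,\der}_{\rho_{\pi,\lambda_0}}$, and hence $\mf{g}$, also acts irreducibly. Thus $V$ is a faithful irreducible representation of $\sllie_2$, multiplicity-free by Lemma~\ref{lemma: formal char is multiplicity-free}. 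The non-trivial multiplicity-free irreducible representations of $\sllie_2 \cong \sllie_{m+1}$ ($m=1$) appearing in Theorem~\ref{thm: multiplicity-free irreps}(1) are exactly the symmetric powers $\Sym^a\Std$, $a\ge 1$ — the alternating powers $\Lambda^a\Std$ being trivial or isomorphic to $\Std \cong \Std^\vee = \Sym^1\Std$ — so $V \cong \Sym^{n-1}\Std$ with $n\ge 2$, and its weights form the multiplicity-free set $\{n-1,\, n-3,\, \dots,\, 1-n\}$ in the standard normalisation.

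Next I would transport this description to an arbitrary prime $\lambda$ by invoking Theorem~\ref{thm: formal char independent of l}. That theorem gives $\rk\rho_{\pi,\lambda} = \rk\rho_{\pi,\lambda_0} = 1$, so $\mf{g}_\lambda = \Lie G^{\circ,\der}_{\rho_{\pi,\lambda}}$ is a nonzero semisimple Lie algebra of rank $1$, forcing $\mf{g}_\lambda \cong \sllie_2$; it also gives that the formal bi-character of $\rho_{\pi,\lambda}$ coincides with that of $\rho_{\pi,\lambda_0}$, so the multiset of weights of $\mf{g}_\lambda$ on $\Qlbar^n$ is again $\{n-1,\, n-3,\, \dots,\, 1-n\}$. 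Since a finite-dimensional representation of $\sllie_2$ is determined up to isomorphism by its character, this forces $V_\lambda \cong \Sym^{n-1}\Std$, which is irreducible. Consequently $\mf{g}_\lambda$, and hence the connected group $G^\circ_{\rho_{\pi,\lambda}}$, acts irreducibly on $\Qlbar^n$, so $\rho_{\pi,\lambda}$ is strongly irreducible, again by the characterisation recalled after Definition~\ref{defn: strongly irreducible}.

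I do not anticipate a genuine obstacle here — this is the easiest case of the case analysis. The only two points needing a word of care are: (i) that the formal-character formalism of Section~\ref{subsect: formal characters} records the weights of $\mf{g}_\lambda$ itself (via the bi-character torus $T'$), not merely those of a maximal torus of $G^\circ_{\rho_{\pi,\lambda}}$; and (ii) that representations of $\sllie_2$ are distinguished by their weight multisets, equivalently that the irreducible $\sllie_2$-characters are linearly independent. Both are standard.
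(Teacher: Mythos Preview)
Your argument contains a genuine gap. The formal bi-character pins down the multiset of weights of the torus $T' = T \cap G_\lambda^{\circ,\der}$ on $\Qlbar^n$, but \emph{not} the position of the root of $\mf{g}_\lambda \cong \sllie_2$ inside $X^*(T')$: two copies of $\sllie_2$ in $\gllie_n$ sharing the Cartan $\Lie T'$ can have roots differing by a scalar, and hence give rise to non-isomorphic representations with identical $T'$-weights. Concretely, for $n=3$ the torus $T' = \{\mathrm{diag}(s,1,s^{-1})\} \subset \GL_3$, with weight set $\{1,0,-1\}$, is simultaneously the maximal torus of $\PGL_2$ acting via $\Sym^2\Std$ (root $\alpha = 1$ in $X^*(T')$) and of $\SL_2$ acting via $\Std \oplus \mathbf{1}$ (root $\alpha = 2$), the latter reducible. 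More generally, for any odd $n$ the reducible representation $\Sym^{(n-1)/2}\Std \oplus \Sym^{(n-3)/2}\Std$ has the same $T'$-weights as $\Sym^{n-1}\Std$. So your point (ii), that ``representations of $\sllie_2$ are distinguished by their weight multisets'', is correct only after one has fixed the identification of $X^*(T')$ with the weight lattice of $\sllie_2$ --- and the formal bi-character does not do this. (For $n$ even your argument can be salvaged: the $T'$-weights are then $\{n-1,n-3,\ldots,1-n\}$ with $\gcd = 1$, and one checks that the root is forced to equal $2$, since otherwise the highest-weight summand would already have more than $n$ weights.)

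The paper's proof sidesteps this issue. Rather than identifying $V_\lambda$, it uses that every irreducible $\sllie_2$-representation is self-dual and that $\Out(\sllie_2)$ is trivial; Proposition~\ref{prop: conjugate self-duality criteria from single prime} then shows every irreducible summand of $\rho_{\pi,\lambda}$ is polarizable (those on which $\mf{g}_\lambda$ acts trivially being characters, hence automatically polarizable), and Theorem~\ref{thm: irreducibility from polarized summands} --- which rests on potential automorphy --- gives irreducibility for $\lambda$ lying over a density-one set of rational primes. This yields only the density-one conclusion rather than irreducibility for every $\lambda$, but that is exactly what Theorem~\ref{thm: reduction to simple tensorands} asks for.
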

\begin{proof}
We know that $V$ is isomorphic to the representation $\Sym^{n-1} \Std$ of $\sllie_2$ of dimension $n$.
Let $\lambda | \ell$ be a prime of $M_\pi$.
By Theorem \ref{thm: formal char independent of l}, we must have $\mf{g}_\lambda \cong \sllie_2$, since it is the unique semisimple Lie algebra of rank $1$ (up to isomorphism).
It follows that every irreducible summand of $\rho_{\pi,\lambda}$ is self-dual as a representation of $\mf{g}_\lambda$.
Since $\sllie_2$ has no non-trivial outer automorphisms, Proposition \ref{prop: conjugate self-duality criteria from single prime} shows that every summand of $\rho_{\pi,\lambda}$ is either polarizable, or a character (by multiplicity-freeness) and therefore also polarizable.
The result in this case now follows from Theorem \ref{thm: irreducibility from polarized summands}.    
\end{proof}

\begin{proposition} \label{prop: sl_2k+1 irred}
Suppose that $\mf{g} \cong \sllie_{2k+1}$ with $k \geq 1$.
Then the conclusion to Theorem \ref{thm: reduction to simple tensorands} holds.
\end{proposition}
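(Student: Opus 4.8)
The plan is as follows. By Theorem~\ref{thm: multiplicity-free irreps}, the multiplicity-free faithful irreducible representation $V = V_{\lambda_0}$ of $\sllie_{2k+1}$ is a symmetric or alternating power of $\Std$ or of $\Std^\vee$ for some $1 \le a \le 2k$; replacing $\pi$ by its contragredient if necessary (which affects neither the hypotheses nor the irreducibility of any $\rho_{\pi,\lambda}$), we may assume $V \cong \Sym^a\Std$ or $V \cong \Lambda^a\Std$. By the lemma preceding Proposition~\ref{prop: sl_2 irred} it suffices to produce a Dirichlet density one set $\mathcal{L}''$ of primes for which $\rho_{\pi,\lambda}$ is merely \emph{irreducible} whenever $\lambda\mid\ell\in\mathcal{L}''$; in fact the argument below applies at every prime $\lambda$. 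So fix an arbitrary prime $\lambda$ of $M_\pi$ and let $\mf{g}_\lambda = \mf{g}_{\rho_{\pi,\lambda}}$ act on $\Qlbar^n$ via $V_\lambda$. By strong irreducibility of $\rho_{\pi,\lambda_0}$, Lemma~\ref{lemma: formal char is multiplicity-free} and Theorem~\ref{thm: formal char independent of l}, the formal bi-character of $(\rho_{\pi,\lambda})_\lambda$ is multiplicity-free and independent of $\lambda$; hence $\rk\mf{g}_\lambda = 2k$ and $V_\lambda$ is a multiplicity-free representation of $\mf{g}_\lambda$ with the same formal character as $V$.

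\textbf{Step 1 ($\mf{g}_\lambda\cong\sllie_{2k+1}$).} Choose a type $A$ equal-rank subalgebra $\mf{h}_\lambda\subseteq\mf{g}_\lambda$ (Lemma~\ref{lemma: maximal type A subalgebras}), so that $V_\lambda|_{\mf{h}_\lambda}$ still has the same formal character as $V$. If $V$ is a symmetric power, Proposition~\ref{prop: symmetric power type A are iso} gives $\mf{h}_\lambda\cong\sllie_{2k+1}$. If $V\cong\Lambda^a\Std$: when $a\in\{1,2k\}$ the representation $V$ is $\Std^{(\vee)}$, which has exactly $2k+1$ weights, all of maximal (indeed common) norm, and Lemma~\ref{lemma: bound weights of maximal norm} forces $\mf{h}_\lambda$ to have a single simple factor; when $2\le a\le 2k-1$ we apply Lemma~\ref{lemma: type A_n isomorphic for most n} if $2k\in\{4,6\}$ or $2k\ge 9$, while the only remaining case $2k=8$ does not arise, since $\binom{9}{a}$ is divisible by $4$ or by $7$ for every $2\le a\le 7$ and we are assuming $4\nmid n$ and $7\nmid n$. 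So in all cases $\mf{h}_\lambda\cong\sllie_{2k+1}$, a simple subalgebra of $\mf{g}_\lambda$ of full rank $2k$. This forces $\mf{g}_\lambda$ to be simple: projecting $\mf{h}_\lambda$ to the simple factors of $\mf{g}_\lambda$, some projection is injective, and its image contains a $2k$-dimensional toral subalgebra, so there can be only one factor. A Borel--de Siebenthal analysis shows the only simple rank $2k$ Lie algebras admitting $A_{2k}$ as an equal-rank subalgebra are $A_{2k}$ itself, together with $\mf{g}_2$ if $2k=2$ and $\mf{e}_8$ if $2k=8$. The latter two are excluded: $\mf{e}_8$ has no multiplicity-free faithful representation (no non-trivial irreducible $\mf{e}_8$-representation is multiplicity-free, by Theorem~\ref{thm: multiplicity-free irreps}), and the only multiplicity-free faithful $\mf{g}_2$-representation is the $7$-dimensional $V_{\omega_1}$ — impossible here since $7\nmid n$, while a reducible multiplicity-free faithful $\mf{g}_2$-representation cannot exist because every irreducible $\mf{g}_2$-representation (including $V_{\omega_1}$) has the zero weight. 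Hence $\mf{g}_\lambda\cong\sllie_{2k+1}$.

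\textbf{Step 2 ($V_\lambda$ irreducible).} Write $V_\lambda = \bigoplus_j U_j$ with each $U_j$ irreducible, hence multiplicity-free, hence of the form listed in Theorem~\ref{thm: multiplicity-free irreps}, and with $\dim U_j\le n$. If $V\cong\Sym^a\Std$, then by Lemma~\ref{lemma: symmetric power weights} the maximal weight-norm of $V_\lambda$ is $a^2\tfrac{2k}{2k+1}$ and is attained by exactly $2k+1$ weights; comparing with the norm formulas of Lemmas~\ref{lemma: alternating power inner products} and~\ref{lemma: symmetric power weights}, the summand $U_j$ realizing this norm must be $\Sym^a\Std$ or $\Sym^a\Std^\vee$, which already has dimension $n$. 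If $V\cong\Lambda^a\Std$, then by Lemma~\ref{lemma: alternating power inner products} every weight of $V_\lambda$ has common norm $\tfrac{a(2k+1-a)}{2k+1}$, which forces each $U_j$ to be $\Lambda^a\Std$ or $\Lambda^{2k+1-a}\Std$ (possibly dualised), again of dimension $n$. Either way $V_\lambda$ has a single summand, so $V_\lambda\cong V$ or $V^\vee$, and in particular $V_\lambda$ is irreducible. Thus $G_{\rho_{\pi,\lambda}}^\circ$ — and hence $G_{\rho_{\pi,\lambda}}$ — acts irreducibly on $\Qlbar^n$, so $\rho_{\pi,\lambda}$ is irreducible, and strongly irreducible by the lemma preceding Proposition~\ref{prop: sl_2 irred}. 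Since this holds for every $\lambda$, one may take $\mathcal{L}''$ to be the set of all rational primes.

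The step I expect to be most delicate is Step~1 in the alternating-power range $2\le a\le 2k-1$: there Hui's formal-character comparison (Lemma~\ref{lemma: type A_n isomorphic for most n}) does not directly pin down $\mf{g}_\lambda$ when $2k=8$, and it is exactly the hypotheses $4\nmid n$ and $7\nmid n$ that rescue the argument by rendering that case vacuous, so one must check the divisibility bookkeeping (and the Borel--de Siebenthal input used to discard $\mf{g}_2$ and $\mf{e}_8$) carefully. Should the weight-norm analysis of Step~2 be awkward in some sub-case, there is a fallback via potential automorphy: any polarizable irreducible summand of $\rho_{\pi,\lambda}$ forms part of a compatible system (Theorem~\ref{thm: irreducibility from polarized summands}), its algebraic monodromy has full rank $2k$ inside $\mf{g}_\lambda\cong\sllie_{2k+1}$ by Lemma~\ref{lemma: summand has same rank} (applicable since $0\notin W_{\lambda_0}$ whenever $V$ is an alternating power, or a symmetric power $\Sym^a\Std$ with $2k+1\nmid a$), and since $A_{2k}$ is its own unique maximal-rank semisimple subalgebra, such a summand must equal $\rho_{\pi,\lambda}$.
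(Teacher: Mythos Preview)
Your argument is correct and establishes irreducibility of $\rho_{\pi,\lambda}$ for \emph{every} prime $\lambda$, but it proceeds quite differently from the paper's proof. The paper observes that since $V$ is not self-dual, the monodromy group $G_\rho$ lies inside the image of an irreducible representation $\theta:\GL_{2k+1}\to\GL_n$; it then uses Patrikis' lifting theorem to produce a geometric $(2k{+}1)$-dimensional representation $\tilde\rho$ with $\theta\circ\tilde\rho\cong\rho\otimes\delta$, applies potential automorphy (Theorem~\ref{thm: blggt potential automorphy}) to $\tilde\rho$ at the single prime $\lambda_0$, and obtains a compatible system $(\sigma_\eta)_\eta$ of $(2k{+}1)$-dimensional representations over some $K/F$. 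The key point there is elementary---a rank-$2k$ semisimple Lie algebra with a faithful $(2k{+}1)$-dimensional representation must be $\sllie_{2k+1}$ acting standardly---so $\theta(\sigma_\eta)\cong\rho_{\pi,\lambda}|_{G_K}\otimes\delta_\eta$ is irreducible for every $\eta$.

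By contrast, you avoid potential automorphy and geometric lifting entirely, instead pinning down $\mf{g}_\lambda\cong\sllie_{2k+1}$ directly via Hui's type-$A$ comparison (Proposition~\ref{prop: symmetric power type A are iso}, Lemma~\ref{lemma: type A_n isomorphic for most n}) together with a Borel--de Siebenthal step, and then recovering irreducibility of $V_\lambda$ from the weight-norm formulas of Lemmas~\ref{lemma: alternating power inner products} and~\ref{lemma: symmetric power weights}. Your route is more elementary and self-contained within \S\ref{subsect: type A formal characters}, at the cost of additional case analysis (the $2k=8$ alternating-power sub-case, the Borel--de Siebenthal enumeration, and the exclusion of $\mf{g}_2$ and $\mf{e}_8$). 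The paper's route is more uniform in $V$ and reuses the automorphy machinery already in place, but invokes substantially heavier input (\cite{patrikis2019variations}, Theorem~\ref{thm: blggt potential automorphy}). Both yield the same conclusion with the same strength.
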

\begin{proof}
Observe that $V$ is not self-dual, by Theorem \ref{thm: multiplicity-free irreps}.
There exists an irreducible algebraic representation $\theta: \GL_{2k+1} \to \GL_n$ such that $\rho$ (up to conjugacy) factors through $H = \theta(\GL_{2k+1}) \subset \GL_n$.
Indeed, $G_{\rho}^\circ \subset G_{\rho}^{\circ,\der} Z(\GL_n)$ is contained inside such a subgroup $H$.
However, $\pi_0(G_\rho)$ cannot act as a non-trivial outer automorphism of $\mf{g}$, since $V$ is not self-dual; Lemma \ref{lemma: pi_0 embeds in out(g)} then implies that $G_{\rho} \subset G_{\rho}^\circ Z(\GL_n) \subset H$.

Consider the composition $\rho': G_F \to H(\Qlbar) \to \PGL_{2k+1}(\Qlbar)$.
We can find a geometric Galois representation $\tilde{\rho}: G_F \to \GL_{2k+1}(\Qlbar)$ lifting $\rho'$ by \cite[Theorem 3.2.10]{patrikis2019variations}.
On twisting $\tilde{\rho}$ by a character, we may suppose that $\tilde{\rho}$ is moreover crystalline (by applying \cite[Theorem A.0.5]{liu2016automorphy} to the crystalline representation $\tilde{\rho} \otimes \tilde{\rho}^\vee \cong \mathrm{ad}(\rho') \oplus \mathbf{1}$, together with global class field theory).
Since $(\rho')^{c} \cong (\rho')^\vee$, it follows that $\tilde{\rho}^c \cong \tilde{\rho}^\vee \otimes \varphi$ for some character $\varphi: G_F \to \Qlzerobar^\times$.
Since the dimension of the space of $G_F$-coinvariants of $\Tilde{\rho} \otimes \Tilde{\rho}^c$ can be at most one dimensional, we see that $\varphi$ extends to a character of $G_{F^+}$.
We see that $\tilde{\rho}$ is polarizable and totally odd by Lemma \ref{lemma: oddness}.
Moreover, $\theta \circ \tilde{\rho} \cong \rho \otimes \delta$ for some (crystalline) geometric character $\delta: G_F \to \Qlzerobar^\times$.
If $\tilde{\rho}$ was not of regular weight (resp. Fontaine--Laffaille), then since $\theta$ is given by (possibly the dual of) some symmetric  or exterior power, it would follow that $\rho$ was not of regular weight (resp. Fontaine--Laffaille), a contradiction to our assumptions on $\rho$.
Since $\theta(\ol{\tilde{\rho}|_{G_{F(\zeta_{\ell_0})}}})$ is irreducible, we see that $\ol{\tilde{\rho}|_{G_{F(\zeta_{\ell_0}})}}$ is also irreducible (as $\theta$ is isomorphic a non-trivial to a symmetric or alternating power of the standard representation, by multiplicity-freeness).
We can therefore apply Theorem \ref{thm: blggt potential automorphy} to $\tilde{\rho}$ to obtain finite extensions $K/F$ and $M/M_\pi$, and a weakly compatible system $(\sigma_\eta)_\eta$ of $G_K$ defined over $M$ such that 
\(
\tilde{\rho}_{\pi,\lambda_0}|_{G_K} \cong \sigma_{\eta_0}
\)
for some prime $\eta_0|\lambda_0$ of $M$.
Since $\delta: G_F \to \Qlzerobar^\times$ is geometric, it is also part of a weakly compatible system $(\delta_\eta)_\eta$ defined over $M$ (on possibly replacing $M$ by a further finite extension).

Let $\eta$ be a prime of $M$.
Let $\mf{h}_\eta = \mf{g}_{\sigma_\eta}$.
We claim that $\mf{h}_\eta \cong \sllie_{2k+1}$ acting irreducibly via the standard representation or its dual.
This is because $\rk \mf{h}_\eta = 2k$ by Theorem \ref{thm: formal char independent of l}, while the only faithful representations of a rank $2k$ semisimple Lie algebra of dimension $2k+1$  (up to isomorphism) are given by the standard representation of $\sllie_{2k+1}$ or its dual.

By the Chebotarev density theorem, we must have $\theta(\sigma_\eta) \cong \rho_{\pi,\lambda}|_{G_K} \otimes \delta_\eta$ for $\lambda$ the prime of $M_\pi$ above $\eta$, since, for all but finitely primes $v$ of $F$, the characteristic polynomials of the image of $\Frob_v$ under either representation coincide. 
Since $\theta$ induces an irreducible representation of $\mf{h}_\eta$, we see that $\rho_{\pi,\lambda}|_{G_K}$ is irreducible too, which concludes the proof.
\end{proof}

\begin{proposition}\label{prop: sl_2k non-self-dual irred}
Suppose that $\mf{g} \cong \sllie_{2k}$ for some $k \geq 2$ and some non-self-dual representation $V$.
Then the conclusion to Theorem \ref{thm: reduction to simple tensorands} holds.
\end{proposition}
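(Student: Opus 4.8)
The plan is to run the argument of Proposition~\ref{prop: sl_2k+1 irred} when $V$ is an \emph{odd} symmetric or alternating power, and to replace potential automorphy by Hui's analysis of formal characters when $V$ is an \emph{even} one. By Theorem~\ref{thm: multiplicity-free irreps}, after possibly replacing $V$ by its dual, I may assume $V \cong \theta \circ \Std$ where $\theta = \Sym^a$ for some $a \geq 1$ or $\theta = \Lambda^a$ for some $1 \leq a \leq 2k-1$, $a \neq k$. Since $V$ is not self-dual, no element of $G_\rho$ can act on $\mf{g} = \sllie_{2k}$ as the non-trivial (duality) outer automorphism; after twisting $\pi$ by an infinite-order character so that $Z(\GL_n) \subset G_\rho^\circ$, Lemma~\ref{lemma: pi_0 embeds in out(g)} forces $G_\rho$ to be connected, hence conjugate into $H := \theta(\GL_{2k}) \subset \GL_n$.

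For the odd case, I would projectivise to get $\rho' : G_F \to \PGL_{2k}(\Qlbar)$ with Zariski-dense image and $(\rho')^c \cong (\rho')^\vee$, and lift it to a geometric $\tilde\rho : G_F \to \GL_{2k}(\Qlbar)$ with regular Hodge--Tate weights using \cite[Theorem~3.2.10]{patrikis2019variations}. Exactly as in Proposition~\ref{prop: sl_2k+1 irred}, $\tilde\rho^c \cong \tilde\rho^\vee \otimes \varphi$ with $\varphi$ extending to $G_{F^+}$, so $\tilde\rho$ is polarizable, $\theta \circ \tilde\rho \cong \rho \otimes \delta$ for some geometric character $\delta$, and $\ol{\tilde\rho}|_{G_{F(\zeta_{\ell_0})}}$ is irreducible. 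Since $\rho$ is totally odd, the (unique up to scalar, by Schur) conjugate-self-dual pairing on $\rho \otimes \delta \cong \theta \circ \tilde\rho$ is symmetric; but this pairing is induced from the one on $\tilde\rho$, so its sign equals $\epsilon^a$, where $\epsilon \in \{\pm1\}$ is the sign of $\tilde\rho$. Hence $\epsilon^a = 1$, and since $a$ is odd, $\epsilon = 1$, i.e.\ $\tilde\rho$ is totally odd. Applying Theorem~\ref{thm: blggt potential automorphy} to $\tilde\rho$ and then arguing verbatim as in Proposition~\ref{prop: sl_2k+1 irred} --- produce a finite CM extension $K/F$ and a weakly compatible system $(\sigma_\eta)_\eta$ with $\tilde\rho|_{G_K} \cong \sigma_{\eta_0}$, note that $\rk \mf{g}_{\sigma_\eta} = 2k-1$ by Theorem~\ref{thm: formal char independent of l} forces $\mf{g}_{\sigma_\eta} \cong \sllie_{2k}$ acting via the standard representation or its dual, and use Chebotarev density to get $\theta(\sigma_\eta) \cong \rho_{\pi,\lambda}|_{G_K} \otimes \delta_\eta$ --- gives that $\rho_{\pi,\lambda}|_{G_K}$, hence $\rho_{\pi,\lambda}$, is irreducible for every $\lambda$.

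For the even case the sign argument is vacuous and $\tilde\rho$ may fail to be totally odd, so I would instead work directly with formal characters. Fix any prime $\lambda | \ell$ of $M_\pi$; then $V_\lambda$ is a multiplicity-free faithful representation of the semisimple Lie algebra $\mf{g}_\lambda$ with the same formal character as $V$ (Theorem~\ref{thm: formal char independent of l}, Lemma~\ref{lemma: formal char is multiplicity-free}), and this persists on restriction to an equal-rank type-$A$ subalgebra $\mf{h}_\lambda \subset \mf{g}_\lambda$ (Lemma~\ref{lemma: maximal type A subalgebras}). When $\theta = \Sym^a$, Proposition~\ref{prop: symmetric power type A are iso} gives $\mf{h}_\lambda \cong \sllie_{2k}$; when $\theta = \Lambda^a$, one checks that the constraint $4 \nmid n$ rules out all cases except those in which $\mf{g} = \sllie_{2k}$ is of type $A_m$ with $m = 5$ or $m \geq 9$, so Lemma~\ref{lemma: type A_n isomorphic for most n} again gives $\mf{h}_\lambda \cong \sllie_{2k}$. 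Then $\rk \mf{g}_\lambda = 2k-1$ and, $\mf{h}_\lambda$ being simple and of full rank (and any equal-rank subalgebra of a product of simple algebras being a product over the factors), $\mf{g}_\lambda$ is simple; a short inspection of simple Lie algebras of rank $2k-1$ that contain $\sllie_{2k}$ as an equal-rank subalgebra and admit an $n$-dimensional multiplicity-free faithful representation --- excluding the orthogonal, symplectic and (when $2k-1 \in \{3,7\}$) exceptional possibilities by comparing dimensions of small representations and the norm/inner-product data of Lemmas~\ref{lemma: symmetric power weights}, \ref{lemma: alternating power inner products} under the isometry~(\ref{eqn: equality of inner products of weights}) --- leaves only $\mf{g}_\lambda \cong \sllie_{2k}$. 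Finally, a multiplicity-free representation of $\sllie_{2k}$ with the formal character of $V$ must be $V$ or its dual: for $\Sym^a \Std$ because the $2k$ weights of maximal norm force a single dominant highest weight (Lemma~\ref{lemma: symmetric power weights}, Lemma~\ref{lemma: bound weights of maximal norm}), and for $\Lambda^a \Std$ because its weights form a single Weyl orbit whose only dominant element is $\omega_a$. Thus $V_\lambda$ is irreducible, so $\mf{g}_\lambda$ acts irreducibly on $\Qlbar^n$ and $\rho_{\pi,\lambda}$ is strongly irreducible for every $\lambda$. In both cases one obtains irreducibility for a density one (in fact cofinite) set of primes.

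The hard part will be the even case: because the $2k$-dimensional projective lift is even-dimensional it can genuinely be conjugate-symplectic rather than conjugate-orthogonal, so potential automorphy is unavailable, and one must extract enough rigidity from the formal character alone --- which is only possible because $4 \nmid n$ happens to push the underlying type-$A$ Lie algebra into the range handled by Hui's theorems (Theorem~\ref{thm: formal char type a subalgebra equivalence}, Lemma~\ref{lemma: type A_n isomorphic for most n}). A secondary nuisance is the rank-$(2k-1)$ case-check ruling out $\solie$, $\splie$ and $\mf{e}_7$, which requires knowing that their small representations are not multiplicity-free of the relevant dimension.
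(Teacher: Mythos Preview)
Your approach differs substantially from the paper's, which treats all non-self-dual $V$ uniformly without splitting by the parity of $a$. The paper's argument runs as follows: since $V$ is not self-dual, $c_{\lambda_0}$ must act as the non-trivial outer automorphism of $\sllie_{2k}$, so $\dim X^*(T_{\lambda_0})^{c_{\lambda_0}=1}=k$. One then shows $\mf{h}_\lambda\cong\sllie_{2k}$ (exactly as you do, case-by-case in $k$), deduces $\mf{g}_\lambda\cong\sllie_{2k}$ by inspecting which simple Lie algebras admit $A_{2k-1}$ as an equal-rank subalgebra (only $A_{2k-1}$ and $E_7$, the latter excluded by $7\mid 56$), and then invokes Proposition~\ref{prop: complex conjugation formal character action} to transport the $c$-action: $c_\lambda$ must also act as the non-trivial outer automorphism, so every irreducible summand $U\leq V_\lambda$ satisfies $U^{c_\lambda}\cong U^\vee$. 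Lemma~\ref{lemma: conjugate self dual summand criterion} then makes every summand of $\rho_{\pi,\lambda}$ polarizable, and Theorem~\ref{thm: irreducibility from polarized summands} gives irreducibility for $\ell\in\mc{L}_\pi$.

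Your odd-$a$ case is a genuinely different and valid route, essentially Proposition~\ref{prop: sl_2k+1 irred} augmented by the sign calculation $\epsilon(\theta\circ\tilde\rho)=\epsilon(\tilde\rho)^a$ to secure total oddness of the $2k$-dimensional lift. This yields irreducibility for \emph{every} $\lambda$, slightly stronger than the density-one conclusion, which is a nice bonus.

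Your even-$a$ case, however, has a real gap in the final step. You assert that a multiplicity-free representation $V_\lambda$ of $\sllie_{2k}$ with the same formal character as $V$ must be $V$ or $V^\vee$, justifying this for $\Lambda^a\Std$ by ``its weights form a single Weyl orbit whose only dominant element is $\omega_a$''. But the isomorphism $\phi$ of formal characters coming from Theorem~\ref{thm: formal char independent of l} is merely a linear isomorphism of weight lattices, not a priori an element of $W_{\sllie_{2k}}\rtimes\Out(\sllie_{2k})$; so the Weyl-orbit structure on the weights of $V$ need not transfer to $V_\lambda$. Knowing that all weights of $V_\lambda$ have equal norm only tells you each summand is some $\Lambda^b\Std$ or its dual with $b(2k-b)$ fixed, which does not by itself force $b=a$ and a single summand. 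The analogous issue arises for $\Sym^a$: Lemma~\ref{lemma: bound weights of maximal norm} and Proposition~\ref{prop: symmetric power type A are iso} pin down $\mf{g}_\lambda$, not $V_\lambda$. Completing this would require a substantially more delicate norm/inner-product analysis than you have sketched. The paper sidesteps this entirely: once $\mf{g}_\lambda\cong\sllie_{2k}$, the complex-conjugation argument forces polarizability of all summands regardless of what they are, and the $L$-function machinery of Theorem~\ref{thm: irreducibility from polarized summands} does the rest. You should replace your even-case endgame with this $c_\lambda$-action argument.
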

\begin{proof}
We know that $c_{\lambda_0}$ must act as a non-trivial outer automorphism of $\mf{g}$.
It follows that $\dim X^*(T_{\lambda_0})^{c_{\lambda_0} = 1} = k$.
Let $\lambda| \ell$ be a prime.
Let $\mf{h}_\lambda$ be a maximal type $A$ subalgebra of $\mf{g}_\lambda$.
We claim that $\mf{h}_\lambda$ is isomorphic to $\mf{g}_\lambda$.

If $k \geq 5$, then by Lemma \ref{lemma: maximal type A subalgebras} and Theorem \ref{thm: formal char type a subalgebra equivalence} we see that $\mf{h}_\lambda \cong \sllie_{2k}$ and the claim follows.
If $k=3$, then 
Lemma \ref{lemma: type A_n isomorphic for most n} shows that $\mf{h}_\lambda \cong \sllie_6$.
If $k=2$ or $k = 4$, we firstly show that $V$ must be isomorphic to $\Sym^a \Std$ (or its dual), where $\Std$ is the standard representation of $\sllie_{2k}$ and $a \geq 1$.
By Theorem \ref{thm: multiplicity-free irreps}, the only other possible faithful irreducible multiplicity-free representations of $\sllie_{2k}$ are the alternating powers of the standard representation.
When $k=4$, the possible dimensions of these proper alternating powers are $28$, $56$ or $70$, each of which cannot occur by our assumption that $7 \nmid n$.
When $k=2$, the only possibility is $\Lambda^2 \Std$, which we can exclude because $V$ is not self-dual.
We therefore see that $\mf{h}_\lambda \cong \sllie_{2k}$ by Proposition \ref{prop: symmetric power type A are iso}, establishing the claim.

With the claim in hand note that, by considering maximal type $A$ subalgebras of simple Lie algebras obtained via \cite[Table 1]{hui2013monodromy} and Lemma \ref{lemma: maximal type A subalgebras}, in every case we must have $\mf{g}_\lambda$ itself is of type $A$ or $\mf{g}_\lambda \cong \mf{e}_7$.
The case of $\mf{g}_\lambda \cong \mf{e}_7$ can be ruled out because multiplicity-freeness, Theorem \ref{thm: multiplicity-free irreps} and the fact that $0$ is not a weight of $V$ (ruling out any $1$-dimensional summands) would together imply that $n = 56$, a multiple of $7$.
We therefore have that $\mf{g}_\lambda \cong \sllie_{2k}$,
with $c_\lambda$ acting as the unique non-trivial outer automorphism of $\mf{g}_\lambda$ by Proposition \ref{prop: complex conjugation formal character action}.
We deduce that every irreducible summand $U \leq V_\lambda$ satisfies $U^{c_\lambda} \cong U^\vee$.
It follows from Lemma \ref{lemma: conjugate self dual summand criterion} that each summand of $\rho_{\pi,\lambda}$ is polarizable and the result now follows from Theorem \ref{thm: irreducibility from polarized summands}.
\end{proof}
\begin{proposition}\label{prop: sl_2k self-dual irred}
Suppose that $\mf{g} \cong \sllie_{2k}$ for some $k \geq 3$ with $V$ self-dual.
Then the conclusion to Theorem \ref{thm: reduction to simple tensorands} holds.
\end{proposition}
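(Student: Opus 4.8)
The plan is to identify $V$ explicitly, pin down the Lie algebras $\mf{g}_\lambda$ for \emph{all} $\lambda$, and then use the isometric structure on weight lattices to force irreducibility directly. By Theorem~\ref{thm: multiplicity-free irreps} together with self-duality of $V$, the representation $V$ must be isomorphic to $\Lambda^k \Std$ (symmetric powers of $\Std$ are never self-dual for $k\geq 1$, while $\Lambda^a\Std \cong (\Lambda^{2k-a}\Std)^\vee$ is self-dual only for $a = k$), so $n = \binom{2k}{k}$. Since $\binom{6}{3} = 20$ is divisible by $4$ and $\binom{8}{4} = 70$ is divisible by $7$, the hypotheses $4 \nmid n$ and $7 \nmid n$ rule out $k = 3$ and $k = 4$; as $k \geq 3$, we conclude $k \geq 5$, hence $2k-1 \geq 9$. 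This last point is what makes Hui's results applicable below.

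Next I would show $\mf{g}_\lambda \cong \sllie_{2k}$ for every prime $\lambda$ of $M_\pi$. By Theorem~\ref{thm: formal char independent of l}, $\mf{g}_\lambda$ is semisimple of rank $2k-1$ and the representation $V_\lambda$ on $\Qlbar^n$ has the same formal character as $V = \Lambda^k\Std$; moreover $V_\lambda$ is multiplicity-free by Lemma~\ref{lemma: formal char is multiplicity-free}(2) (using strong irreducibility of $\rho_{\pi,\lambda_0}$) and Theorem~\ref{thm: formal char independent of l}. Choosing a maximal-rank subalgebra $\mf{h}_\lambda \subseteq \mf{g}_\lambda$ with all simple factors of type $A$ (Lemma~\ref{lemma: maximal type A subalgebras}), the restriction $V_\lambda|_{\mf{h}_\lambda}$ is faithful with the same formal character as $\Lambda^k\Std$, so Theorem~\ref{thm: formal char type a subalgebra equivalence} (applicable since $2k-1 \geq 9$) shows $\mf{h}_\lambda$ has exactly one $A_{2k-1}$ factor, which by the rank equality must be all of $\mf{h}_\lambda$; thus $\mf{h}_\lambda \cong \sllie_{2k}$. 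Since $\mf{h}_\lambda$ is simple, $\mf{g}_\lambda$ is simple, and inspecting the maximal type $A$ subalgebras of simple Lie algebras via \cite[Table~1]{hui2013monodromy} (only $A_{2k-1}$ has a simple rank $2k-1 \geq 9$ maximal type $A$ subalgebra equal to $A_{2k-1}$), we get $\mf{g}_\lambda \cong \sllie_{2k}$.

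Finally I would exploit the isometric structure of \S\ref{subsect: type A formal characters}. The $W_{\mf{g}_\lambda}$-invariant inner product $\langle-,-\rangle_\lambda$ on the weight lattice of $\mf{g}_\lambda \cong \sllie_{2k}$ induced by the formal character of $V_\lambda$ is identified, via the isometry coming from the equality of formal characters, with the corresponding inner product for $V = \Lambda^k\Std$; since the weights of $\Lambda^k\Std$ form a single Weyl orbit they all have equal norm, and hence so do all weights of $V_\lambda$. Write $V_\lambda = \bigoplus_j V_{\mu_j}$ for the decomposition into irreducible representations of $\sllie_{2k}$: each summand is nontrivial ($0$ is not a weight of $\Lambda^k\Std$) and has all weights of equal norm, and a short argument with the dominance order shows that such an irreducible representation of $\sllie_{2k}$ must be minuscule, so $V_{\mu_j} \cong \Lambda^{i_j}\Std$ for some $1 \leq i_j \leq 2k-1$. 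Equality of norms across the $V_{\mu_j}$ forces $i_j(2k-i_j)$ to be independent of $j$, and then multiplicity-freeness of $V_\lambda$ together with the dimension identity $\sum_j \binom{2k}{i_j} = \binom{2k}{k}$ forces a single summand with $i_j = k$ — the only alternative being a two-term decomposition $\Lambda^i\Std \oplus \Lambda^{2k-i}\Std$ with $i \neq k$, which is impossible since $\binom{2k}{k} \neq 2\binom{2k}{i}$ for $1 \leq i < k$. Hence $V_\lambda \cong \Lambda^k\Std$ and $\rho_{\pi,\lambda}$ is irreducible — and therefore strongly irreducible, by the lemma preceding Proposition~\ref{prop: sl_2 irred} — for every $\lambda$, so the conclusion of Theorem~\ref{thm: reduction to simple tensorands} holds with $\mathcal{L}''$ the set of all rational primes.

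The main obstacle is the middle step: controlling $\mf{g}_\lambda$ requires the rank $2k-1$ to be at least $9$ so that the comparison of type $A$ factors in Theorem~\ref{thm: formal char type a subalgebra equivalence} is strong enough, which is precisely why the divisibility hypotheses on $n$ are needed here. The combinatorial endgame is then routine: ruling out $\Lambda^i\Std \oplus \Lambda^{2k-i}\Std$ reduces to the elementary inequality, which I would verify via the estimate $\binom{2k}{k-1} = \frac{k}{k+1}\binom{2k}{k} > \frac{1}{2}\binom{2k}{k}$ together with a short polynomial argument in $k$ (and a finite check for small $k$) showing $\binom{2k}{i} \neq \frac12\binom{2k}{k}$.
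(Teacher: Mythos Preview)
Your approach mirrors the paper's closely through the identification $V \cong \Lambda^k\Std$, the elimination of $k \in \{3,4\}$, and the conclusion $\mf{h}_\lambda \cong \sllie_{2k}$ (you go slightly further to $\mf{g}_\lambda \cong \sllie_{2k}$, which is fine but unnecessary). Your reduction of the possible summands of $V_\lambda$ to alternating powers via the equal-norm/minuscule observation is a pleasant alternative to the paper's route through multiplicity-freeness plus Theorem~\ref{thm: multiplicity-free irreps}. You also correctly arrive at the dichotomy: either $V_\lambda \cong \Lambda^k\Std$, or $V_\lambda \cong \Lambda^a\Std \oplus \Lambda^{2k-a}\Std$ for some $1 \leq a < k$.

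The gap is in your elimination of the second case. You reduce to the claim $2\binom{2k}{a} \neq \binom{2k}{k}$ for $1 \leq a < k$ and propose to verify it by ``a short polynomial argument in $k$ (and a finite check for small $k$)''. This does not work as stated: for each fixed offset $j = k-a$, the equation $2\binom{2k}{k-j} = \binom{2k}{k}$ is a degree-$j$ polynomial condition on $k$, but the relevant $j$ is not bounded---by the normal approximation $\binom{2k}{k-j}/\binom{2k}{k} \approx e^{-j^2/k}$, the crossing of the value $\tfrac12$ occurs near $j \sim \sqrt{k\ln 2}$. So there is no single polynomial to analyze, and no finite check suffices. The binomial identity may well never hold, but it is not routine, and you have only used the dimensional and norm consequences of the isometry of formal characters, discarding finer information.

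The paper sidesteps this number-theoretic difficulty by exploiting the full isometry, comparing the invariant $\max_{v \neq w}\langle w,v\rangle/\|w\|^2$ on the two sides. For $\Lambda^k\Std$ this equals $1 - 2/k$ (Lemma~\ref{lemma: alternating power inner products}). For $\Lambda^a\Std \oplus \Lambda^{2k-a}\Std$ with $a<k$ and $w$ a weight of $\Lambda^a\Std$, the maximum over $v$ in $\Lambda^a\Std$ contributes $1 - 2k/(a(2k-a))$, while over $v$ in $\Lambda^{2k-a}\Std$ one gets $a/(2k-a)$. Equating either with $1 - 2/k$ forces, respectively, $a = k$ or $a = k - 1 - \tfrac{1}{k-1}$, both impossible for an integer $1 \leq a \leq k-1$. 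This is the clean way to close your argument.
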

\begin{proof}
As a representation of $\sllie_{2k}$, we know from Theorem \ref{thm: multiplicity-free irreps} that $V \cong \Lambda^k \Std$ with $n = {{2k}\choose{k}}$.
We claim that $\mf{g}_\lambda$ contains a maximal rank type $A$ subalgebra $\mf{h}_\lambda$ isomorphic to $\sllie_{2k}$.
If $k \geq 5$, then the claim follows from Lemma \ref{lemma: maximal type A subalgebras} and Theorem \ref{thm: formal char type a subalgebra equivalence}.
If $k=4$ then $n = 70$, so this case does not occur by our assumption that $7 \nmid n$.
If $k=3$, then Lemma \ref{lemma: type A_n isomorphic for most n} shows that $\mf{h}_\lambda = \mf{g}_\lambda \cong \sllie_6$.

Recall from \S \ref{subsect: type A formal characters} that there is a natural inner product $\langle-,-\rangle_{\lambda_0}$ (resp. $\langle-,-\rangle_{\lambda}$) on the weight space of $\mf{g}$ (resp. $\mf{h}_\lambda$).
As mentioned there, these inner product spaces are isometric under the isomorphism of formal characters given by Theorem \ref{thm: formal char independent of l}.
Since every weight $w \in W_{\lambda_0}$ has the same norm with respect to this inner product (Lemma \ref{lemma: alternating power inner products}), the same is therefore true of weights in $W_\lambda$.

Suppose that the representation $U$ of $\mf{h}_\lambda$ on $\Qlbar^n$ is reducible.
As a representation of $\sllie_{2k}$, $U$ is isomorphic to a direct sum of (distinct) alternating or symmetric powers of the standard representation, by Theorem \ref{thm: multiplicity-free irreps}.
The symmetric powers $\Sym^a \Std$ (or their duals) for $a \geq 2$ do not appear as a constituent of $U$.
This is because any such symmetric power admits weights with distinct norms by Lemma \ref{lemma: symmetric power weights}.
Furthermore, we must have $U \cong \Lambda^a \Std \oplus \Lambda^{2k-a} \Std$ for some $1 \leq a \leq k-1$.
Indeed, Lemma \ref{lemma: alternating power inner products} states that (up to a fixed scalar) the squared norms of the weights of the representation $\Lambda^a \Std$ are all given by $a(2k-a)$. 
The expression $a(2k-a)$ is quadratic in $a$, attaining its maximum at $a = k$, and can therefore only take on the repeated values at $a$ and $2k-a$.
We see from Lemma \ref{lemma: alternating power inner products} that
\begin{align*}
 1 - \frac{2}{k} &= \frac{\max_{v \in W_{\lambda_0} \setminus \{w\}} \langle w, v \rangle_{\lambda_0}}{||w||_{\lambda_0}^2} \\
 &= \frac{\max_{v' \in W_{\lambda} \setminus \{w'\}} \langle w', v' \rangle_{\lambda}}{||w'||_{\lambda}^2} \\
 &= \max(1- \frac{2k}{a(2k-a)},\frac{2k}{2k-a} - 1).
\end{align*}
Similarly to the above, we cannot have $\frac{2}{k} = \frac{2k}{a(2k-a)}$, since $a < k$.
If $1- \frac{2}{k} = \frac{2k}{2k-a} - 1$ then $a = k-1 - \frac{1}{k-1}$ cannot be an integer, since $k \geq 5$.
In either case we obtain a contradiction, from which we see that $\mf{h}_\lambda$, and hence $\mf{g}_\lambda$, must  act irreducibly.
The result now follows.
\end{proof}

\begin{remark}\label{rmk: hui's comment}
    Hui brought to our attention that a significantly shorter proof of \Cref{prop: sl_2k self-dual irred} is possible using a modified version of \cite[Theorem 3.10]{Hui18}; while the theorem assumes that each member of our compatible system is defined over \( \bb{Q}_l \) (coming from the étale cohomology of a smooth projective variety), these assumptions are not actually necessary, as the proof only relies on the existence of an isomorphism of formal characters after base-change to $\bb{C}$.
    Proposition \ref{prop: sl_2k self-dual irred} for $k \geq 5$ immediately follows from this, and the remaining cases of $k=2,3$ can be excluded in an identical manner to the proof of Proposition \ref{prop: sl_2k self-dual irred}. Although this observation may also be applied to \Cref{prop: sl_2k+1 irred} and \Cref{prop: sl_2k non-self-dual irred} for large values of \( k \), we do not know how to give a proof for small \( k \) without using the arguments that we have given (which essentially work for all \( k \)).
\end{remark}

We now consider the cases of the other classical simple Lie algebras, where the following lemma will be useful in finding polarizable summands.

\begin{lemma} \label{lemma: so_q possible formal characters}
Let $\mathfrak{h}$ be a semisimple Lie algebra and $W$ a faithful representation of $\mf{h}$.
Suppose that $W$ has the same formal character as the standard $m$-dimensional representation of $\solie_{m}$, $m \geq 3$.
Then every irreducible summand of $W$ is self-dual.
\end{lemma}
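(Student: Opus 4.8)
The plan is to extract from the formal‑character hypothesis that the weights of $W$ are, after fixing a sign on each, a family of pairwise orthogonal vectors of a common nonzero norm (plus at most one zero weight), and then to rule out a non‑self‑dual irreducible summand purely by a dimension count, thereby avoiding the classification of multiplicity‑free irreducibles in Theorem \ref{thm: multiplicity-free irreps}.

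First I would unwind the hypothesis. Put $r=\lfloor m/2\rfloor$. The standard representation of $\solie_m$ is multiplicity‑free, with weights $\pm e_1,\dots,\pm e_r$ and, when $m$ is odd, also $0$; with respect to the Weyl‑invariant inner product attached to the formal character as in \S\ref{subsect: type A formal characters}, the $e_i$ are pairwise orthogonal of equal norm. The isomorphism of weight spaces underlying the equality of formal characters is linear, is an isometry for these inner products (this is precisely (\ref{eqn: equality of inner products of weights})), and carries the character of $W$ to that of the standard representation of $\solie_m$; moreover, since $-\mathrm{id}$ on the $\solie_m$ side preserves the latter character and conjugates back to $-\mathrm{id}$ on the $\mf{h}$ side, the character of $W$ is negation‑stable. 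I therefore conclude: $W$ is multiplicity‑free; its weight set $\mathcal{S}$ is stable under negation and consists of $r$ pairs $\{\pm v_i\}$ of nonzero weights with the $v_i$ pairwise orthogonal of a common nonzero norm, together with a single zero weight when $m$ is odd. Negation‑stability of the character gives $W\cong W^\vee$ as $\mf{h}$‑representations, so, comparing characters of irreducibles, the multiset of irreducible summands of $W$ is permuted by $U\mapsto U^\vee$.

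Now suppose for contradiction that some irreducible summand $U$ of $W$ is not self‑dual. Then $U^\vee$ is a summand distinct from $U$, so by multiplicity‑freeness of $W$ the weight sets of $U$ and $U^\vee$ are disjoint; equivalently, if $w$ is a weight of $U$ then $-w$ is not. Since $0=-0$ this forbids $0$ from being a weight of $U$, and since distinct weights of $U$ cannot be negatives of one another, any two weights of $U$ lie in different pairs $\{\pm v_i\}$ and hence are orthogonal. Thus $U$ is non‑trivial and has exactly $\dim U$ weights, pairwise orthogonal and nonzero. Factoring $U$ through a faithful representation of a nonzero semisimple quotient $\mf{h}_U$ of $\mf{h}$, the weights of $U$ span the dual Cartan of $\mf{h}_U$, so $\dim U\geq\rk\mf{h}_U$, while pairwise orthogonality forces $\dim U\leq\rk\mf{h}_U$; hence $\dim U=\rk\mf{h}_U$. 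But $U$ is an external tensor product $\boxtimes_k U_k$ of non‑trivial irreducible representations of the simple factors $\mf{s}_k$ of $\mf{h}_U$, and every non‑trivial irreducible representation of a simple Lie algebra has dimension at least $\rk\mf{s}_k+1$, so $\dim U\geq\prod_k(\rk\mf{s}_k+1)\geq 1+\sum_k\rk\mf{s}_k=1+\rk\mf{h}_U$, a contradiction. The only delicate point I anticipate is the bookkeeping in the first two paragraphs — transporting the orthogonality and norm data faithfully through the formal‑character isomorphism and verifying that a non‑self‑dual summand can contain neither the exceptional zero weight nor both members of any pair $\{\pm v_i\}$; once the weight configuration is pinned down, the dimension count is immediate.
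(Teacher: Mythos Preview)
Your argument is correct, and it proceeds along a genuinely different line from the paper's proof. The paper first reduces to the case where $\mf{h}$ is of type $A$ via Lemma~\ref{lemma: maximal type A subalgebras}, then for a putative non-self-dual summand $U$ (trivial on the factors indexed by $i>j$) bounds $\dim U$ from above by observing that the complement of $U\oplus U^\vee$ in $W$ must restrict to a faithful self-dual representation of $\prod_{i>j}\sllie_{m_i+1}$, hence has dimension at least twice the rank of that product; combined with $\dim W\le 1+2\rk\mf{h}$, this yields $\dim U\le \tfrac12+\sum_{i\le j}m_i$, contradicting $\dim U\ge\prod_{i\le j}(1+m_i)$. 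You instead exploit the isometry of \S\ref{subsect: type A formal characters} directly: the weights of $W$ are (up to sign) an orthogonal system, so the weights of a non-self-dual $U$ are pairwise orthogonal and nonzero, forcing $\dim U=\rk\mf{h}_U$ exactly, which is immediately incompatible with $\dim U\ge\prod_k(\rk\mf{s}_k+1)\ge 1+\rk\mf{h}_U$. Your route avoids the type~$A$ reduction entirely and is more geometric; the trade-off is that your final inequality $\dim U_k\ge\rk\mf{s}_k+1$ must hold for arbitrary simple $\mf{s}_k$ rather than just $\sllie_{m_k+1}$ (still standard, but a marginally stronger input), and you lean on the inner-product machinery of \S\ref{subsect: type A formal characters}, whereas the paper's argument for this lemma is self-contained modulo Lemma~\ref{lemma: maximal type A subalgebras}.
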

\begin{proof}
Without loss of generality, we may suppose that $\mf{h}$ is of type $A$.
To see this, firstly note that $\mf{h}$ contains a type $A$ subalgebra $\mf{h}' \subset \mf{h}$ of equal rank by Lemma~\ref{lemma: maximal type A subalgebras} and that the restriction $W|_{\mf{h}'}$ has the same formal character as $\mf{h}$.
Moreover, if every irreducible summand of $W|_{\mf{h}'}$ is self-dual then the same conclusion holds for $W$, since each irreducible summand will of $W$ be a sum of irreducible summands of $W|_{\mf{h}'}$ and self-duality can be checked on the level of characters.

Write $\mf{h} \cong \prod_{i=1}^k \sllie_{m_i+1}$ for some integers $m_i \geq 1$.
Suppose that (after reordering) $W$ contains an irreducible, non-self-dual summand of the form $U \cong (\boxtimes_{i=1}^j U_i) \boxtimes (\boxtimes_{i=j+1}^k \mathbf{1})$ for some $1 \leq j \leq k$ and faithful irreducible representations $U_i$ of $\sllie_{m_i}$.
Let $X \cong U \oplus U^\vee$ be the minimal self-dual summand of $W$ containing $U$.
Then $\dim X \leq \dim W - 2 \sum_{i=j+1}^k m_i$.
This is because the complement of $X$ inside $W$ must restrict to a faithful self-dual representation of $\prod_{i=j+1}^k \sllie_{m_i+1}$, while the dimension of any faithful self-dual representation of a semisimple Lie algebra is always at least twice the rank.
Since $\dim W \leq 1 + 2\sum_{i=1}^k m_i$ (because $\solie_m$ has the same rank as $\mf{h}$), we have
\[
2 \dim U = \dim X \leq 1 + 2\sum_{i=1}^j m_i.
\]
Since each $U_i$ is a faithful irreducible representation of $\sllie_{m_i}$, we have $\dim U_i \geq 1+m_i$.
We obtain a contradiction, since we have inequalities
\[
1+ \sum_{i=1}^j m_i \leq \prod_{i=1}^j (1+m_i) \leq \dim U \leq \frac{1}{2} + \sum_{i=1}^j m_i. \qedhere
\] 
\end{proof}

\begin{proposition}\label{prop: types B, C, D irred}
Suppose that $\mf{g}$ is of type $B$, $C$ or $D$ with $\dim V \leq 2 \rk \mf{g} + 1$.
Then the conclusion to Theorem \ref{thm: reduction to simple tensorands} holds.
\end{proposition}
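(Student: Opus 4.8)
\emph{Proof plan.} First I would use \Cref{thm: multiplicity-free irreps} to identify $V$: when $\mf{g}$ is of type $B$, $C$ or $D$ the only multiplicity-free non-trivial irreducible representations besides the $n$-dimensional standard representation are the spin representation of $\solie_{2m+1}$ (of dimension $2^m$, divisible by $4$ since $m \geq 2$), the representation $V_{\omega_3}$ of $\splie_6$ (of dimension $14$), and the half-spin representations of $\solie_{2m}$ (of dimension $2^{m-1}$, divisible by $4$ since $m \geq 4$). The hypotheses $7 \nmid n$ and $4 \nmid n$ exclude all of these, so $V$ is the standard representation of $\mf{g}$. In particular $V$ is self-dual, its formal character (equivalently, that of $t_{\lambda_0}$) agrees with that of the standard $n$-dimensional representation of $\solie_n$ in all three cases, and $0 \in W_{\lambda_0}$ only when $\mf{g}$ is of type $B$. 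Moreover, in type $D_m$ the condition $4 \nmid n = 2m$ forces $m$ odd, so $m \geq 5$ and $\Out(\mf{g}) \cong \bb{Z}/2\bb{Z}$, whereas $\Out(\mf{g})$ is trivial in types $B$ and $C$ by \Cref{lemma: outer automorphisms simple lie algebras}.

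By \Cref{thm: formal char independent of l}, for every prime $\lambda$ the representation $V_\lambda$ of $\mf{g}_\lambda$ has the same formal character as the standard representation of $\solie_n$, so \Cref{lemma: so_q possible formal characters} shows that every irreducible summand of $V_\lambda$ is self-dual. The plan is to take $\mc{L}''$ to be the density one set $\mc{L}_\pi$ furnished by \Cref{thm: irreducibility from polarized summands} and to prove that $\rho_{\pi,\lambda}$ is irreducible — whence strongly irreducible, by the lemma at the start of \S\ref{subsect: simple type irreducibility proofs} — for every $\lambda \mid \ell \in \mc{L}_\pi$.

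In types $B$ and $C$, and in type $D$ when $c_{\lambda_0}$ acts as a trivial outer automorphism of $G_{\rho_{\pi,\lambda_0}}^{\circ,\der}$, this is immediate: \Cref{prop: conjugate self-duality criteria from single prime} then shows $c_\lambda$ acts trivially on $X^*(T_\lambda)$ and that every irreducible $G_F$-summand of $\rho_{\pi,\lambda}$ on which $\mf{g}_\lambda$ acts non-trivially is polarizable (having a non-trivial self-dual $\mf{g}_\lambda$-constituent by the previous paragraph), while any remaining summand is a character by multiplicity-freeness and hence polarizable too. \Cref{thm: irreducibility from polarized summands}(2) then gives irreducibility. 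In types $B$ and $C$ the hypothesis on $c_{\lambda_0}$ is automatic since $\Out(\mf{g})$ is trivial.

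The remaining and most delicate case is $\mf{g} \cong \solie_{2m}$ ($m\geq 5$) with $c_{\lambda_0}$ acting as the non-trivial graph automorphism, where not all summands of $\rho_{\pi,\lambda}$ need be polarizable; instead I would extract a single polarizable summand and show it exhausts $\rho_{\pi,\lambda}$. A direct computation of $W_{c_{\lambda_0}} = \{w + c_{\lambda_0}w : w \in W_{\lambda_0}\}$ — using that $c_{\lambda_0}$ interchanges the weights $e_m$ and $-e_m$ of the standard representation and fixes the remaining $\pm e_i$ up to the central character — shows $0 \in W_{c_{\lambda_0}}$, hence $0 \in W_{c_\lambda}$ for every $\lambda$ by \Cref{prop: complex conjugation formal character action}. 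Thus there is a weight $w \in W_\lambda$ with $c_\lambda(w) = -w$; taking $U$ to be the unique irreducible $G_F$-summand of $\rho_{\pi,\lambda}$ with $w$ as a weight, \Cref{lemma: conjugate self dual summand criterion} shows $U$ is polarizable. By \Cref{thm: irreducibility from polarized summands}(1) there is a finite CM extension $F'/F$ with $U|_{G_{F'}}$ automorphic, hence part of a weakly compatible system over $F'$; since $0 \notin W_{\lambda_0}$, \Cref{lemma: summand has same rank} applies and forces $\dim U$ to be at least the smallest dimension of a multiplicity-free faithful irreducible representation of $\solie_{2m}$, namely $2m = n$ (the half-spin representations having dimension $2^{m-1} > 2m$ for $m \geq 5$). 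As $U$ is a summand of the $n$-dimensional $\rho_{\pi,\lambda}$, this forces $U = \rho_{\pi,\lambda}$, which is therefore irreducible. The hard part is precisely this sub-case: polarizability of all summands genuinely fails, so one must pivot from \Cref{thm: irreducibility from polarized summands}(2) to the combination of its part (1) with the rank rigidity of \Cref{lemma: summand has same rank}.
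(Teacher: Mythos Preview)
Your proposal is correct and follows essentially the same approach as the paper's proof: identify $V$ as the standard representation via \Cref{thm: multiplicity-free irreps} and the divisibility hypotheses, use \Cref{lemma: so_q possible formal characters} to get self-duality of all $\mf{g}_\lambda$-summands, split according to whether $c_{\lambda_0}$ is inner, and in the non-inner type $D$ case exhibit $0 \in W_{c_{\lambda_0}}$ to extract a single polarizable summand whose dimension is forced to be $n$ by \Cref{lemma: summand has same rank}.

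One minor streamlining worth noting: the paper's proof treats a residual $n=6$ sub-case in the non-inner branch (where $\solie_6 \cong \sllie_4$ would admit the $4$-dimensional standard representation, so \Cref{lemma: summand has same rank} only yields $\dim U \geq 4$), whereas you correctly observe up front that in type $D_m$ the hypothesis $4 \nmid n = 2m$ forces $m$ odd and hence $m \geq 5$, so $n \geq 10$ and that sub-case is vacuous. Your argument that $2^{m-1} > 2m$ for $m \geq 5$ then cleanly gives that the standard representation is the smallest multiplicity-free faithful irreducible of $\solie_{2m}$, and the proof concludes as in the paper.
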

\begin{proof}
If $\mf{g}$ is of rank $1$, this case has already been covered in Proposition \ref{prop: sl_2 irred}.
Suppose that $\mf{g} \cong \solie_{n}$ for $n \geq 5$.
Then by Theorem \ref{thm: multiplicity-free irreps}, we may suppose that $V$ is given by the standard representation (noting in the case that $\mf{g} \cong \solie_8$, we may change this isomorphism by an outer automorphism of $\solie_8$ so that this is still the case; see \cite[20.3]{fulton2013representation}).
Let $\lambda|\ell$ be a prime of $M_\pi$.
We see from Lemma \ref{lemma: so_q possible formal characters} and Theorem \ref{thm: formal char independent of l} that every irreducible summand of the representation of $\mf{g}_\lambda$ on $\Qlbar^n$ is self-dual.
If $c_{\lambda_0}$ acts as an inner automorphism, Proposition \ref{prop: conjugate self-duality criteria from single prime} shows that every summand of $\rho_{\pi,\lambda}$ is either polarizable, or a character (by multiplicity-freeness) and therefore also polarizable.
The result in this case follows from Theorem \ref{thm: irreducibility from polarized summands}.
Otherwise, by Lemma \ref{lemma: outer automorphisms simple lie algebras}, $n$ must be even and $c_{\lambda_0}$ must act via the order $2$ outer automorphism of $\solie_n$ arising from the action of the orthogonal group (in order for the standard representation to be preserved under its action). 
In this case, one can check (cf. \cite[Exercise 19.9]{fulton2013representation}) that $W_{c_{\lambda_0}}$ contains $0$ with multiplicity $2$.

It follows from Proposition \ref{prop: complex conjugation formal character action} that there exists an irreducible summand $U \leq \rho_{\pi,\lambda}$ which, as a representation of $\mf{g}_\lambda$, admits a weight $w$ for which $-w$ is a weight of $U^{c_\lambda}$.
Suppose that $\lambda|\ell$ with $\ell \in \mathcal{L}_\pi$.
Then by Proposition \ref{proposition: density 1 residual irreducibility of summands} and Theorem \ref{thm: blggt potential automorphy} there exists a finite CM extension $F'/F$ and a regular algebraic cuspidal polarized automorphic representation $\pi'$ of $\GL_m(\bb{A}_{F'})$ whose associated $\ell$-adic Galois representation is isomorphic to $U|_{G_{F'}}$, where $m = \dim U$.
Let $M/M_\pi$ be a finite extension such we can attach a compatible system $(\sigma_\eta)_\eta$ of Galois representations to $\pi'$ defined over $M$.
We see from Lemma \ref{lemma: summand has same rank} that if $n >6$, then $m \geq n$, since $n$ is then the minimal dimension of a multiplicity-free faithful representation of $\solie_n$ by Theorem \ref{thm: multiplicity-free irreps}.
If $n=6$ and $m < n$, then Lemma \ref{lemma: summand has same rank} implies that $m \geq 4$ with $U$ non-self-dual.
Self-duality of $V_{\lambda}$ would force $V_\lambda$ to contain both $U$ and $U^\vee$, which gives a contradiction as $n=6$.
Consequently, $m=n$ and $\rho_{\pi,\lambda}$ is irreducible.

Suppose that $n \geq 4$ is even and $\mf{g} \cong \splie_n$.
Then $V$ isomorphic to the standard representation, by Theorem \ref{thm: multiplicity-free irreps}.
Then the formal character of $V$ is the same as that of the standard representation of $\solie_n$ and $\Out(\splie_n)$ is trivial.
The result now follows by the same argument as in the case of $\solie_n$.    
\end{proof}

There is one remaining case where $\mf{g}$ is a classical Lie algebra, which is when $\mf{g} \cong \solie_7$ with $V$ given by the spin representation. 

\begin{lemma} \label{lemma: gspin regular cocharacter}
Let $\alpha: \mathbb{G}_m \to \GSpin_7$ be a cocharacter.
Let $\beta$ (resp. $\gamma$) denote the composition $\Spin \circ \alpha: \mathbb{G}_m \to \GSpin_7 \xrightarrow{\Spin} \GL_8$ (resp. $\Std \circ \alpha: \mathbb{G}_m \to \GSpin_7 \xrightarrow{\Std} \GL_7$).
Viewing $\beta$ (resp. $\gamma$) as a representation of $\mathbb{G}_m$, we obtain integers $b_1,\ldots,b_8$ (resp. $c_1,\ldots,c_7$) corresponding to the weights of the representation.
\begin{enumerate}
    \item If the multiset $\{b_1,\ldots,b_8\}$ contains no repeated elements, then the multiset $\{c_1,\ldots,c_7\}$ contains no repeated elements.
    \item The inequality $\max_{i,j} |c_i-c_j| \leq 2 \max_{i,j} |b_i-b_j|$ always holds.
\end{enumerate}
\end{lemma}
\begin{proof}
From the weights of the $\Spin$ and $\Std$ representations, we have equalities of multisets 
\begin{align*}
  \{b_i\} &= \{d + \sum_{i=1}^3 \eps_i a_i : (\eps_i)_i \in \{\pm 1\}^3 \} \\
  \{c_i\} &= \{2d \} \cup \{2d + 2\eps_i a_i : \eps_i \in \{\pm 1\}, 1 \leq i \leq 3\}
\end{align*}
for some integers $a_i$ and $d$.
For $\gamma$ to admit a repeated weight, we must have either some $a_i = 0$ or $a_i = \pm a_j$ for some $i \neq j$.
In either case, $\beta$ would admit a repeated weight, from which the first part now follows.
For the second part, note that 
\[
\max_{i,j} |c_i-c_j| = 2 \max_{i}  |a_i| \leq 2 \sum_{i=1}^3 |a_i| = 2 \max_{i,j}  |b_i-b_j|. \qedhere
\]
\end{proof}

\begin{proposition}\label{prop: so_7 spin rep}
Suppose that $\mf{g}$ is of type $B$, $C$ or $D$ with $\dim V > 2 \rk \mf{g} + 1$.
Then the conclusion to Theorem \ref{thm: reduction to simple tensorands} holds.
\end{proposition}
\begin{proof}
We see from Lemma \ref{lemma: formal char is multiplicity-free}, Theorem \ref{thm: multiplicity-free irreps} and the assumptions that $7 \nmid n$ and $16 \nmid n$ that $\mf{g} \cong \solie_7$ with $V$ isomorphic to the $8$-dimensional spin representation.
We claim that the image of $\rho$ is a subgroup of $\GSpin_7(\Qlzerobar)$.
To see this, note that the outer automorphism group of $\Spin_7$ is trivial by Lemma \ref{lemma: outer automorphisms simple lie algebras}.
By Lemma \ref{lemma: pi_0 embeds in out(g)}, we see that the normalizer of $\GSpin_7$ inside $\GL_8$ is equal to $\GSpin_7$, from which the claim follows.

It is easy to see (from highest weight considerations) that the tensor square of the spin representation of $\solie_7$ decomposes as a direct sum of four irreducible representations, corresponding to the $i$th exterior powers of the standard representation for $0 \leq i \leq 3$.
It follows that there exists a unique $7$-dimensional irreducible summand $\sigma \subset \rho \otimes \rho$.
Since $(\rho \otimes \rho)^c \cong (\rho \otimes \rho)^\vee \otimes \chi|_{G_F}^2$ for some character $\chi$ of $G_{F^+}$, we see from uniqueness of $\sigma$ that $\sigma^c \cong \sigma^\vee \otimes \chi|_{G_F}^2$.
Thus $\sigma$ is polarizable and moreover totally odd by Lemma \ref{lemma: oddness}.

The representation $\sigma$ is crystalline, since $\sigma$ is a summand of the crystalline representation $\rho \otimes \rho$.
We additionally claim that $\sigma$ has regular Hodge--Tate weights in the Fontaine--Laffaille range.
Indeed, if $\mu_\tau: (\mathbb{G}_m)_C \to (\GL_8)_C$ is the Hodge--Tate cocharacter of $\rho$ labelled by some embedding $\tau: \Qlzerobar \hookrightarrow C := \widehat{\Qlzerobar}$ (as in \cite[2.4]{buzzardgee}), then $\mu_\tau$ is valued in $(\GSpin_7)_C$, thought of as a subgroup via (a twist of) the spin representation.
It is straightforward to see from the construction of $\sigma$ that the Hodge--Tate cocharacter of $\sigma$ attached to $\tau: \Qlzerobar \hookrightarrow C$ arises (up to twist) from the composition $(\mathbb{G}_m)_C \to (\GSpin_7)_C \xrightarrow{\Std} (\GL_7)_C$.
The claim now follows from Lemma \ref{lemma: gspin regular cocharacter} together with our assumption that $\HT_\tau(\rho) \subset [a_\tau, a_\tau + 2 \ell_0 - 4]$ for every $\tau: F \hookrightarrow \Qlzerobar$.

We next claim that $\overline{\sigma}(G_{F(\zeta_{\ell_0})})$ is irreducible.
Indeed, the irreducible representation $\overline{\rho}|_{G_{F(\zeta_{\ell_0})}}$ factors as $G_{F(\zeta_{\ell_0})} \to \GSpin_7(\overline{\mathbb{F}_{\ell_0}}) \hookrightarrow \GL_8(\overline{\mathbb{F}_{\ell_0}})$.
This shows that the subgroup $\tilde{\Gamma} := \rho(G_{F(\zeta_{\ell_0})}) \subset \GSpin_7(\overline{\mathbb{F}_{\ell_0}})$ is $\GSpin_7$-irreducible, in the sense that $\tilde{\Gamma}$ does not lie in any proper parabolic subgroup of $\GSpin_7$.
Let $\Gamma \subset \GO_7(\overline{\mathbb{F}_{\ell_0}})$ denote the image of $\tilde{\Gamma} \subset \GSpin_7(\overline{\mathbb{F}_{\ell_0}})$ under the standard representation of $\GSpin_7$, a $\GO_7$-irreducible subgroup due to the natural bijection between parabolic subgroups of $\GSpin_7$ and $\GO_7$. 
Thus, if $\Gamma$ was contained in a proper parabolic subgroup of $\GL_7(\overline{\mathbb{F}_{\ell_0}})$, then $\Gamma$ must stabilize a proper subspace $W$ on which the quadratic form \( f \) defining $\GO_7$ is non-degenerate. It follows that \( f|_W \) is a non-degenerate quadratic form on \( W \). Write \( V=W\oplus W^\perp \), so that $\operatorname{Stab}_{\GSpin_7}(W) \cong (\GSpin(W) \times \GSpin(W^\perp))/\mu_2$ contains \( \wt{\Gamma} \).
The restriction of the spin representation of $\GSpin_7$ to $\operatorname{Stab}_{\GSpin_7}(W)$ is reducible.
Indeed, it is isomorphic to the external tensor product of the spin representations of $\GSpin(W)$ and $\GSpin(W^\perp)$, one of which must be of type $D$ and whose corresponding spin representation decomposes as a direct sum of two half-spin representations.
Since $\tilde{\Gamma}$ acts irreducibly, it follows that $\Gamma$ is an irreducible subgroup of $\GL_7(\overline{\mathbb{F}_{\ell_0}})$.
Since the projective image of $\Gamma$ is conjugate to the projective image of $\overline{\sigma}(G_{F(\zeta_{\ell_0})})$ in $\PGL_7(\overline{\mathbb{F}_{\ell_0}})$, the claim follows.

By Theorem \ref{thm: blggt potential automorphy}, we can find a finite CM extension $K/F$ and an automorphic representation $\pi'$ such that $\sigma|_{G_K} \cong \eta_{\lambda_0'}$, where $(\eta_\lambda)_\lambda$ is the weakly compatible system attached to $\pi'$, defined over some finite extension $M/M_\pi$ and $\lambda_0'|\lambda_0$ is some prime in $M$.
By the same argument as in Proposition \ref{prop: types B, C, D irred}, we can find a density one set of primes $\mathcal{L}_{\pi'}$ such that for every $\ell \in \mathcal{L}_{\pi'}$ and prime $\lambda|\ell$ of $M$, $\eta_\lambda$ is strongly irreducible.

Consider the weakly compatible system $(\rho_{\pi,\lambda}|_{G_K} \oplus \eta_\lambda)_\lambda$.
The derived subalgebra of the Lie algebra of $\rho_{\pi,\lambda_0}|_{G_K} \oplus \eta_{\lambda_0'} \cong (\rho_{\pi,\lambda_0} \oplus \sigma_{\pi,\lambda_0})|_{G_K}$ has rank $3$.
If $\lambda$ is a prime of $M$, then the derived subalgebra of the Lie algebra of $\rho_{\pi,\lambda_0}|_{G_K} \oplus \eta_{\lambda_0'}$ is also of rank $3$ by Theorem \ref{thm: formal char independent of l}.
If additionally $\lambda|\ell$ with $\ell \in \mathcal{L}_{\pi'}$, then since $\eta_\lambda$ is a strongly irreducible representation with rank $3$ derived image, the derived subalgebra of the Lie algebra of the image must be isomorphic to $\solie_7$ acting via the standard representation by Theorem \ref{thm: multiplicity-free irreps}.
It follows that $\mf{g}_\lambda \cong \solie_7$.
Since $0$ does not occur as a weight of $W_{\lambda_0}$ (nor therefore $W_\lambda$), we deduce that $V_\lambda$ is given by the spin representation of $\mf{g}_\lambda \cong \solie_7$ and $\rho_{\pi,\lambda}$ is irreducible.
\end{proof}

Using Theorem \ref{thm: multiplicity-free irreps}, we will show that the classical simple Lie algebras together with $\mf{e}_6$ cover all possibilities for $\mf{g}$, due to our assumptions on $n$.

\begin{proposition} \label{prop: e_6 irred}
Suppose that $\mf{g} \cong \mf{e}_6$.
Then the conclusion to Theorem \ref{thm: reduction to simple tensorands} holds.
\end{proposition}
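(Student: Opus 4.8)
The plan is to run the same strategy as the outer‑automorphism case of Proposition~\ref{prop: types B, C, D irred}: produce, for every $\lambda$ above a prime in a density one set, a single polarizable irreducible summand of $\rho_{\pi,\lambda}$, and then force that summand to be all of $\rho_{\pi,\lambda}$ by a rank/dimension count. First I would pin down the shape of $V$. Since $\rho = \rho_{\pi,\lambda_0}$ is strongly irreducible with $\mf{g}\cong\mf{e}_6$ simple, $V$ is an irreducible faithful representation of $\mf{g}$ on $\Qlbar^n$, and it is multiplicity‑free by Lemma~\ref{lemma: formal char is multiplicity-free}; by Theorem~\ref{thm: multiplicity-free irreps} it must be one of the two minuscule $27$‑dimensional representations $V_{\omega_1}$, $V_{\omega_6}$. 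Applying the outer automorphism of $\mf{e}_6$ if necessary, I would assume $V\cong V_{\omega_1}$, so that $n=27$ (which indeed satisfies $7\nmid n$ and $4\nmid n$), $0\notin W_{\lambda_0}$ by minusculeness, and $V$ is not self‑dual.

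Next I would analyze complex conjugation. From $\rho^c\cong\rho^\vee\otimes\rho_{\chi,\lambda_0}$, together with the fact that the twisting character acts trivially on $\mf{g}$, one obtains an isomorphism of $\mf{g}$‑modules $V^c\cong V^\vee$; since $V\not\cong V^\vee$, the automorphism of $\mf{g}$ induced by $c_{\lambda_0}$ must be the nontrivial outer automorphism of $\mf{e}_6$. Choosing $T_{\lambda_0}\subset B_{\lambda_0}$ stabilized by $c_{\lambda_0}$ as in the setup, $c_{\lambda_0}$ then acts on $X^*(T_{\lambda_0})$ through the diagram automorphism $\sigma$ (interchanging $\omega_1\leftrightarrow\omega_6$), so that $\sigma(W_{\lambda_0})=-W_{\lambda_0}$. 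Hence $w\mapsto -\sigma(w)$ is an involution of the $27$‑element set $W_{\lambda_0}$, and since $27$ is odd it has a fixed point $w$; any such $w$ satisfies $w+w^{c_{\lambda_0}}=w+\sigma(w)=0$, so $0\in W_{c_{\lambda_0}}$. By Proposition~\ref{prop: complex conjugation formal character action} it follows that $0\in W_{c_\lambda}$ for every prime $\lambda$ of $M_\pi$.

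Finally I would conclude as follows. Fix $\lambda\mid\ell$ with $\ell\in\mathcal{L}_\pi$. Since $0\in W_{c_\lambda}$, there is a weight $w$ of $\rho_{\pi,\lambda}$, viewed as a $\mf{g}_\lambda$‑module, with $w^{c_\lambda}=-w$; by multiplicity‑freeness (Lemma~\ref{lemma: formal char is multiplicity-free}) there is a unique irreducible $G_F$‑summand $U$ of $\rho_{\pi,\lambda}$ whose restriction to $\mf{g}_\lambda$ admits $w$ as a weight, and then $-w$ is a weight of $U^{c_\lambda}$, so $U$ is polarizable by Lemma~\ref{lemma: conjugate self dual summand criterion}. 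By Theorem~\ref{thm: irreducibility from polarized summands}(1) there is a finite CM extension $F'/F$ with $U|_{G_{F'}}$ automorphic, hence a member $\sigma_{\eta_0}\cong U|_{G_{F'}}$ of a weakly compatible system $(\sigma_\eta)_\eta$ which is a direct summand of $\rho_{\pi,\lambda}|_{G_{F'}}$. As $0\notin W_{\lambda_0}$, Lemma~\ref{lemma: summand has same rank} applies and gives $\dim U=\dim\sigma_{\eta_0}\geq 27$, the last inequality because $27$ is the minimal dimension of a multiplicity‑free faithful irreducible representation of $\mf{e}_6$ (Theorem~\ref{thm: multiplicity-free irreps}). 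Therefore $\dim U=27=n$, so $\rho_{\pi,\lambda}$ is irreducible; this holds for all $\lambda$ above a prime in the density one set $\mathcal{L}_\pi$, and strong irreducibility follows from the lemma preceding Proposition~\ref{prop: sl_2 irred}.

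The only genuinely $\mf{e}_6$‑specific input is the combinatorial fact $0\in W_{c_{\lambda_0}}$, which I expect to be the main point to get right: the delicate step is that stabilizing a Borel forces $c_{\lambda_0}$ to act on the character lattice exactly through the diagram automorphism with no Weyl twist, after which the parity argument on the $27$ weights is immediate. (Alternatively one can see $0\in W_{c_{\lambda_0}}$, with multiplicity $3$, from the branching $V_{\omega_1}|_{\mf{f}_4}\cong\mathbf{1}\oplus\mathbf{26}$ of $\mf{e}_6$ to its $\sigma$‑fixed subalgebra $\mf{f}_4$.) Everything else is parallel to the $\solie_n$, $n$ even, case of Proposition~\ref{prop: types B, C, D irred}.
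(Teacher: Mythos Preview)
Your proposal is correct and follows essentially the same route as the paper's proof: identify $V$ as one of the $27$-dimensional representations, use the parity argument on the involution $w\mapsto -c_{\lambda_0}(w)$ of the odd set $W_{\lambda_0}$ to obtain $0\in W_{c_{\lambda_0}}$, transfer this via Proposition~\ref{prop: complex conjugation formal character action}, extract a polarizable summand, and bound its dimension below by $27$ via Lemma~\ref{lemma: summand has same rank}. Your worry about the ``no Weyl twist'' point is unnecessary---the paper simply observes that $V^{c_{\lambda_0}}\cong V^\vee$ gives $-c_{\lambda_0}(W_{\lambda_0})=W_{\lambda_0}$, and since $c_{\lambda_0}$ has order dividing $2$ the map $w\mapsto -c_{\lambda_0}(w)$ is automatically an involution on $W_{\lambda_0}$, regardless of whether $c_{\lambda_0}$ acts precisely as the diagram automorphism.
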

\begin{proof}
By Theorem \ref{thm: multiplicity-free irreps}, $V$ is isomorphic to one of the non-self-dual $27$-dimensional irreducible representations of $\mf{e}_6$.
Since $V^{c_{\lambda_0}} \cong V^\vee$, we see that 
$\{-c_{\lambda_0}(w): w \in~W_{\lambda_0}\} = W_{\lambda_0} $.
We can therefore consider the involution 
\begin{align*}
s: W_{\lambda_0} &\to W_{\lambda_0} \\
w &\mapsto -c_{\lambda_0}(w)
\end{align*}
on the set $W_{\lambda_0}$ of size $27$.
Since $27$ is odd, this involution must have a fixed point.
It follows that $0$ occurs with non-zero multiplicity in $W_{c_{\lambda_0}}$.

Now let $\lambda| \ell$ for $\ell \in \mathcal{L}_\pi$.
Note that $V$, and hence $V_\lambda$, do not admit $0$ as a weight (\cite[\S 4.1]{boxer2019e6}).
Therefore, by Lemma \ref{lemma: conjugate self dual summand criterion} and Proposition \ref{prop: complex conjugation formal character action} we can find an irreducible, polarizable summand $U \leq \rho_{\pi,\lambda}$.
We see from Theorem \ref{thm: irreducibility from polarized summands} that $U$ is potentially automorphic.
By Lemma \ref{lemma: summand has same rank}, we see that $\dim U \geq 27 = \dim \rho_{\pi,\lambda}$, since $27$ is the smallest dimension of any faithful irreducible representation of $\mf{g}$.
It follows that $U = \rho_{\pi,\lambda}$ is irreducible.
\end{proof}

\begin{proof}[Proof of Theorem \ref{thm: reduction to simple tensorands}]
If $\mf{g}$ is of type $A$, the result follows from combining Propositions \ref{prop: sl_2 irred}, \ref{prop: sl_2k+1 irred}, \ref{prop: sl_2k non-self-dual irred} and \ref{prop: sl_2k self-dual irred}.
If $\mf{g}$ is of type $B$, $C$ or $D$, the result follows from Proposition \ref{prop: types B, C, D irred} and Proposition \ref{prop: so_7 spin rep}.
If $\mf{g}$ is of type $E_6$, the result follows from Proposition \ref{prop: e_6 irred}.
We see from Lemma \ref{lemma: formal char is multiplicity-free}, Theorem \ref{thm: multiplicity-free irreps} and the assumption that $7 \nmid n$, that there are no other possibilities for $\mf{g}$ and $V$, which concludes the proof.
\end{proof}

With Theorem \ref{thm: reduction to simple tensorands} (and hence Theorem \ref{thm: intro main theorem}) now established, we can deduce an irreducibility result for residual Galois representations.
In what follows, we drop our assumption that we are in the setup of Theorem \ref{thm: reduction to simple tensorands}, instead placing ourselves in the setup of Theorem \ref{thm: intro main theorem}.

\begin{proof}[Proof of Corollary \ref{cor: intro residual irreducibility}]
Let $\mc{L}$ (resp. $\mc{L}_\pi$) denote the density one set of rational primes obtained by applying Theorem \ref{thm: intro main theorem} (resp. Proposition \ref{proposition: density 1 residual irreducibility of summands}) to $(\pi,\chi)$.
Let $\ol{\mc{L}} = \mc{L} \cap \mc{L}_\pi$, a density one set of rational primes, and let $\ell \in \ol{\mc{L}}$ and $\lambda|\ell$.
As $\ell \in \mc{L}$, the representation $\rho_{\pi,\lambda}$ is irreducible.
Since in addition $\ell \in \mc{L}_\pi$, the representation $\ol{\rho_{\pi,\lambda}}|_{G_{F(\zeta_\ell)}}$ is therefore irreducible, which concludes the proof.
\end{proof}

\subsection{An example to illustrate the limitations of our methods}

As noted in the introduction, the assumption that $7 \nmid n$ is made because we cannot handle the case where the monodromy group at primes of irreducibility is given by $G_2$ with its $7$-dimensional representation.
The following example illustrates why we also need to impose some constraint on the power of $2$ dividing $n$.

\begin{example} \label{example: n=32 fails}
Suppose we have \( \pi \) in the setup of Theorem \ref{theorem: xia's reduction}, but without any assumption on $n$.
For a prime $\lambda$ of $M_\pi$, define $\mf{g}_\lambda$, $V_\lambda$ and $c_\lambda$ analogously to Section \ref{subsect: formal chars and complex conj}.
Let $\mc{L}$ be the positive Dirichlet density set of primes of \Cref{lemma: lie-irreducible patrikis taylor prime}. Suppose that for $\lambda_0|\ell_0$ with $\ell_0 \in \mathcal{L}$, $\rho_{\pi,\lambda_0}: G_F \to \GL_{32}(\Qlzerobar)$ satisfies $\mf{g}_{\lambda_0} \cong \splie_4 \times \solie_7$ under which $V_{\lambda_0} \cong \Std \boxtimes \Spin$.

We cannot show that the corresponding tensorands of $\rho_{\pi,\lambda_0}$ (cf. \Cref{prop: tensor decomposition over CM field}) are potentially automorphic because we cannot guarantee that they are both totally odd.
The representation of $\mf{g}_{\lambda_0}$ has the same formal character as $\solie_{10}$ acting via the direct sum of the two non-self-dual half-spin representations, $\Spin^+ \oplus \Spin^-$, because the latter has the same formal character as the spin representation of $\solie_{11}$, which in turn has the same formal character as the external tensor product of the spin representations of $\solie_4 \times \solie_7$.
Therefore, for \( \lambda\mid \ell \) in a positive Dirichlet density set, we cannot exclude the possibility that $\mf{g}_\lambda \cong \solie_{10}$ with $\rho_{\pi,\lambda}$ a direct sum of two non-polarizable summands. 
This is because neither summand would be self-dual up to twist and $c_\lambda$ must act trivially ($\Out(\splie_4 \times \solie_7)=1$ implies $c_{\lambda_0}$ acts trivially).

Thus, we cannot apply the potential automorphy results of \cite{BLGGT14} to any irreducible summand of $\rho_{\pi,\lambda}$.
Moreover, no functorial transfer arising from an irreducible representation of $\mf{g}_\lambda \cong \solie_{10}$ (cf. \Cref{prop: sl_2k+1 irred} and \Cref{prop: so_7 spin rep}) would be helpful, because the standard representation of $\solie_{10}$ is even dimensional, and we therefore cannot guarantee that the corresponding transfer would be a totally odd representation in order to apply potential automorphy.
\end{example}

\bibliographystyle{alpha}
\bibliography{odd}

\end{document}